\newtheorem{theorem}{Theorem} 
\newtheorem{lemma}[theorem]{Lemma} 
\newtheorem{proposition}[theorem]{Proposition} 
\newtheorem{definition}[theorem]{Definition} 
\newtheorem{remark}[theorem]{Remark} 
\numberwithin{equation}{section}
\numberwithin{theorem}{section}
\newcommand{\Ai}{\operatorname{Ai}}
\newcommand{\R}{\mathbb{R}}
\newcommand{\bff}{\mathbf{f}}
\newcommand{\bp}{\mathbf{p}}
\newcommand{\bq}{\mathbf{q}}
\newcommand{\bgamma}{\boldsymbol \gamma}
\newcommand{\mA}{\mathcal{A}}
\newcommand{\bQ}{\mathbf{Q}}
\newcommand{\mE}{\mathcal{E}}
\newcommand{\mF}{\mathcal{F}}
\newcommand{\mR}{\mathcal{R}}
\newcommand{\bL}{\mathscr{L}}
\newcommand{\bA}{\mathscr{A}}
\newcommand{\bH}{\mathscr{H}}
\newcommand{\bepsilon}{\boldsymbol{\epsilon}}
\newcommand{\bxi}{\boldsymbol{\xi}}
\newcommand{\btheta}{\boldsymbol{\theta}}
\newcommand{\bTheta}{\boldsymbol{\Theta}}
\newcommand{\beeta}{\boldsymbol{\eta}}
\newcommand{\bx}{\mathbf{x}}
\newcommand{\bX}{\mathbf{X}}
\newcommand{\by}{\mathbf{y}}
\newcommand{\1}{\mathds{1}}
\newcommand{\e}{\epsilon}
\newcommand{\sh}{s_{\vdash}}
\newcommand{\tauh}{\tau_{\vdash}}
\newcommand{\Thetastar}{\theta^{\diamond}}
\newcommand{\shh}{s_{\vdash}}
\newcommand{\tauhh}{\tau_{\vdash}}
\newcommand{\htheta}{\theta_{\vdash}}
\DeclareMathOperator{\supp}{supp}
\definecolor{bl}{rgb}{0,0,0}
\let\@wraptoccontribs\wraptoccontribs
\title{Non-local competition slows down front acceleration during dispersal evolution}
\author{Vincent Calvez}
\address{Institut Camille Jordan, UMR 5208 CNRS \& Universit\'{e} Claude Bernard Lyon 1,  France}
\email{vincent.calvez@math.cnrs.fr, tdumont@math.univ-lyon1.fr}
\author{Christopher Henderson}
\address{Department of Mathematics, University of Arizona, Tucson, AZ 85721}
\email{ckhenderson@math.arizona.edu}
\author{Sepideh Mirrahimi}
\address{Institut de Math\'ematiques de Toulouse; UMR 5219, Universit\'e de Toulouse; CNRS, UPS, F-31062 Toulouse Cedex 9, France}
\email{sepideh.mirrahimi@math.univ-toulouse.fr}
\author{Olga Turanova}
\address{Michigan State University, East Lansing, MI 48824}
\email{turanova@msu.edu}
\begin{document}

\begin{abstract}
We investigate the super-linear spreading in a reaction-diffusion model analogous to the Fisher-KPP equation, but in which the population is heterogeneous with respect to the dispersal ability of individuals, and the saturation factor is non-local with respect to one variable. We prove that the rate of acceleration is slower than the rate of acceleration predicted by the linear problem, that is, without saturation. This hindering phenomenon is the consequence of a subtle interplay between   the non-local saturation and the non-trivial dynamics of some particular curves that carry the mass at the front. A careful analysis of these trajectories allows us to identify the value of the rate of acceleration. The article is complemented with numerical simulations that illustrate some behavior of the model that is beyond our analysis.    
\end{abstract}

\keywords{Reaction-diffusion, Dispersal evolution, Front acceleration, Linear determinacy, Approximation of geometric optics, Lagrangian dynamics, Explicit rate of expansion.}

\maketitle

\section{Introduction and Main result}\label{sec:intro}

It is commonly acknowledged that the rate of front propagation for logistic reaction-diffusion equations is determined by the linear problem, that is, without growth saturation. This is indeed the case for the celebrated Fisher-KPP equation,
\begin{equation}\label{eq:FKPP}
	n_t = \theta n_{xx} + n(1-n).
\end{equation}
It is known~\cite{Fisher,KPP,AronsonWeinberger2} that the level lines of the solution propagate with speed $2\sqrt{\theta}$, provided the initial data is localized (e.g., compactly supported). This coincides with the spreading speed of the linear problem $\bar n_t =  \theta \bar n_{xx} + \bar n$, which can be seen, for instance, from its fundamental solution,
\begin{equation}\label{eq:fund sol FKPP}
\bar n(t,x) = \dfrac{1}{2\sqrt{\pi \theta t}}\exp\left( -\dfrac{t}{4\theta} \left [ \left( \dfrac{x}t\right )^2 - 4\theta \right ]\right)\, . 
\end{equation}
The linear determinacy of the wave speed for reaction-diffusion equation is a long-standing question, see {\em e.g.} \cite{hadeler_travelling_1975,crooks_travelling_2003} and \cite{lucia_linear_2004,weinberger_sufficient_2012} for recent developments on scalar homogeneous equations. 
It has been established in many other contexts, such as for related inhomogeneous models (see, e.g., 
\cite{BerestyckiHamel02,berestycki_speed_2005}, and the  recent \cite{nadin_generalized_2017} and references therein) as well as for systems  under certain conditions (see, e.g.
\cite{lewis_spreading_2002,weinberger_analysis_2002,
li_spreading_2005}, the recent work in~\cite{girardin_non-cooperative_2018-1,girardin_non-cooperative_2018}, and references therein).  More recently, linear determinacy has been established for many non-local equations as well~(see, e.g., \cite{berestycki_non-local_2009,AlfaroCovilleRaoul, BerestyckiJinSilvestre, HamelRyzhik}).  This is necessarily only a small sampling of the enormous body of literature utilizing the relationship between spreading speeds and linearization in reaction-diffusion equations arising in ecology and evolution.

In the present work, we report on a similar equation, called the cane toads equation, that describes a  population that is heterogeneous with respect to its dispersal ability. Namely, we consider the population density $f(t,x,\theta)$ whose dynamics are described by the following equation:
\begin{equation}\label{eq:intro_toads}
	\begin{cases}
		f_t = \theta f_{xx} + f_{\theta\theta} + f(1-\rho) \quad &\text{ in } \R_+^* \times \R \times (1,\infty),\\
		 f_\theta = 0   & \text{ on }  \R_+^*\times \R\times \left\{1\right\},
	\end{cases}
\end{equation}
where $\rho(t,x) = \int_{1}^\infty f(t,x,\theta) d\theta$ is the spatial density.    The zeroth order term $f(1-\rho)$ is referred to as the reaction term. The equation is complemented with an initial datum $f_0$ such that, for some $C_0\geq 1$,
\begin{equation}\label{eq:initial_data}
	C_0^{-1}\1_{(-\infty, - C_0) \times (1,1 + C_0^{-1})}
		\leq f_0
		\leq C_0 \1_{(-\infty, C_0) \times (1,1 + C_0)}.
\end{equation}

Equation \eqref{eq:intro_toads} was proposed as a minimal model to describe the interplay between ecological processes (population growth and migration) and evolutionary processes (here, dispersal evolution) during the invasion of an alien species in a new environment, see \cite{BenichouEtAl} following earlier work in \cite{ChampagnatMeleard} and \cite{ArnoldDesvillettesPrevost}. 
The population is structured with respect to the dispersal ability of individuals, which is encoded in the trait $\theta>1$. Offspring may differ from their parents with respect to mobility. Deviation of mobility at birth is accounted for as $f + f_{\theta\theta}$, with Neumann boundary conditions at $\theta = 1$. Finally, growth becomes saturated as the population density $\rho(t,x)$ reaches unit capacity locally in space. 
We note that we use the trait space $\theta \in (1,\infty)$ for simplicity, but our proof applies to the case when the trait space is $(\underline\theta, \infty)$ for any $\underline\theta>0$.

Problem (\ref{eq:intro_toads}) shares some similarities with kinetic equations
(see, for example, the review \cite{Villani}), as the structure variable $\theta$ acts on the higher
order differential operator. However, here the differential operator is of
second order, whereas it is of first order (transport) in the case
of kinetic equations.

%
%
%
%

The goal of this study is to understand spreading in~\eqref{eq:intro_toads}, and, in particular, to emphasize the comparison with the  rate of propagation of the linearized problem $\bar f_t = \theta \bar f_{xx} + \bar f_{\theta\theta} + \bar f$. Indeed, our main results, Theorems \ref{thm:propagation} and \ref{thm:alpha^*}, imply that propagation in (\ref{eq:intro_toads}) is slower than that predicted by the  linearized  problem.
This is surprising at first glance, as  linearization in homogeneous scalar reaction-diffusion equations always {\em underestimates} the actual wave speed \cite{hadeler_travelling_1975,
crooks_travelling_2003,lucia_linear_2004}, whereas it {\em overestimates} the spreading rate in our case study.

Another noticeable fact is that we are able to characterize algebraically   the critical value for the rate of expansion, despite the fact that the problem becomes genuinely non-linear because of the impact of competition on spreading.

%
%
%
%
%
%

It is important to note that, although~\eqref{eq:intro_toads} and~\eqref{eq:FKPP} seem strongly related at first, the two have deep structural differences stemming from the interaction of the non-local saturation term $-f\rho$ and the unbounded diffusivity  $\theta \in (1,\infty)$.   The most obvious consequence of  the former is the fact that~\eqref{eq:intro_toads} lacks a comparison principle.  This is a serious technical issue that forces us to rely on and extend earlier techniques of Bouin, Henderson, and Ryzhik~\cite{BHR_acceleration}.  There are, however, further phenomenonological differences between the two models, leading to additional difficulties which are discussed in greater detail below.

One  salient feature of \eqref{eq:intro_toads} is the accelerated propagation that results from the interplay between ecology and evolution. One may heuristically derive the rate of acceleration from the linear equation as follows: first, we ignore the ecological part, so that we are reduced to the linear Fisher-KPP equation in the $\theta$ direction: $\bar f_t =  \bar f_{\theta\theta} + \bar f$, and we find that $\bar \theta(t) = \mathcal O(t)$, where $\bar \theta(t)$ is roughly the location of the front (with respect to $\theta$); second, we focus on the ecological part: $\bar f_t =  \bar \theta(t) \bar f_{xx} + \bar f$, and we find that $\bar x(t) = t \mathcal O(\bar \theta(t)^{1/2}) =  \mathcal O(t^{3/2}) $. This heuristic argument can be rephrased as a ``spatial sorting'' phenomenon: individuals with higher dispersal abilities travel far away, where they give birth to possibly better dispersers, yielding sustained acceleration of the front. 

Acceleration was reported in a series of studies about the cane toads invasion~\cite{phillips2006invasion,urban_toad_2008}  after their introduction in the 1930's in Queensland, Australia.   
It is hypothesized that spatial sorting is one of the major causes for this acceleration \cite{shine}.
Our analysis  enables us to quantify this interplay between ecology (species invasion) and dispersal evolution.

\subsection*{Super-linear spreading (front acceleration)}

In \cite{BCMetal}, Bouin et.~al.\ argued formally that the linear problem (omitting the quadratic saturation term) should propagate super-linearly as $(4/3) t^{3/2}$ at the leading order.
This prediction was rigorously confirmed for the local version of~\eqref{eq:intro_toads}, that is, when $f(1-\rho)$ is replaced by $f(1-f)$,
by Berestycki, Mouhot and  Raoul~\cite{BerestyckiMouhotRaoul} using  probabilistic techniques, and by Bouin, Henderson and Ryzhik \cite{BHR_acceleration} using PDE arguments (see also 
Henderson, Perthame and Souganidis~\cite{HendersonPerthameSouganidis} for a  more general model).   While the local model is unrealistic for the context of spatial sorting, it allows the difficulties due to the unbounded diffusion to be isolated from those caused by the non-local saturation.  In particular, the comparison principle is not available for~\eqref{eq:intro_toads} but is for the local version.

Due to the inherent difficulties, less precise information is known about  the full non-local model.  In~\cite{BerestyckiMouhotRaoul}, the authors investigated and established the same spreading result for a related model in which the saturation term is limited in range; that is, $\rho$ is replaced with $\rho_A(t,x,\theta) = \int_{(\theta-A)\vee 1}^{\theta+A} f(t,x,\theta')d\theta'$ for any $A>0$.  On the other hand, in~\cite{BHR_acceleration}, the authors studied~\eqref{eq:intro_toads} and showed that the front is located, roughly speaking, between $(8/3^{7/4}) t^{3/2}$ and $(4/3) t^{3/2}$ in a weak sense (see~\cite[Theorem~1.2]{BHR_acceleration}).

Here, we establish that, contrary to immediate intuition, the propagation of level lines is slower than $(4/3) t^{3/2}$ for \eqref{eq:intro_toads}.  Namely, there exists a constant $\alpha^* \in (0,4/3)$ such that the front is located around $\alpha^* t^{3/2}$ in a weak sense (see Theorem \ref{thm:propagation} for a precise statement). By refining some calculations performed in \cite{BHR_acceleration}, we can prove without too much effort that $\alpha^* > 5/4 > 8/3^{7/4}$.  Characterizing $\alpha^*$ requires more work. We find eventually that $\alpha^*$ is the root of an algebraic equation involving the Airy function and its first derivative. This allows to get a numerical value for $\alpha^*$ of arbitrary precision, {\em e.g.} $\alpha^* \approx 1.315135$. It is immediate to check that this value is compatible with all previous bounds. Indeed, we notice that $\alpha^*$ is much closer to $4/3$ than any of the above lower bounds, so that the relative difference is below $2\%$.

\begin{figure}
\begin{center}
\includegraphics[width=.8\linewidth]{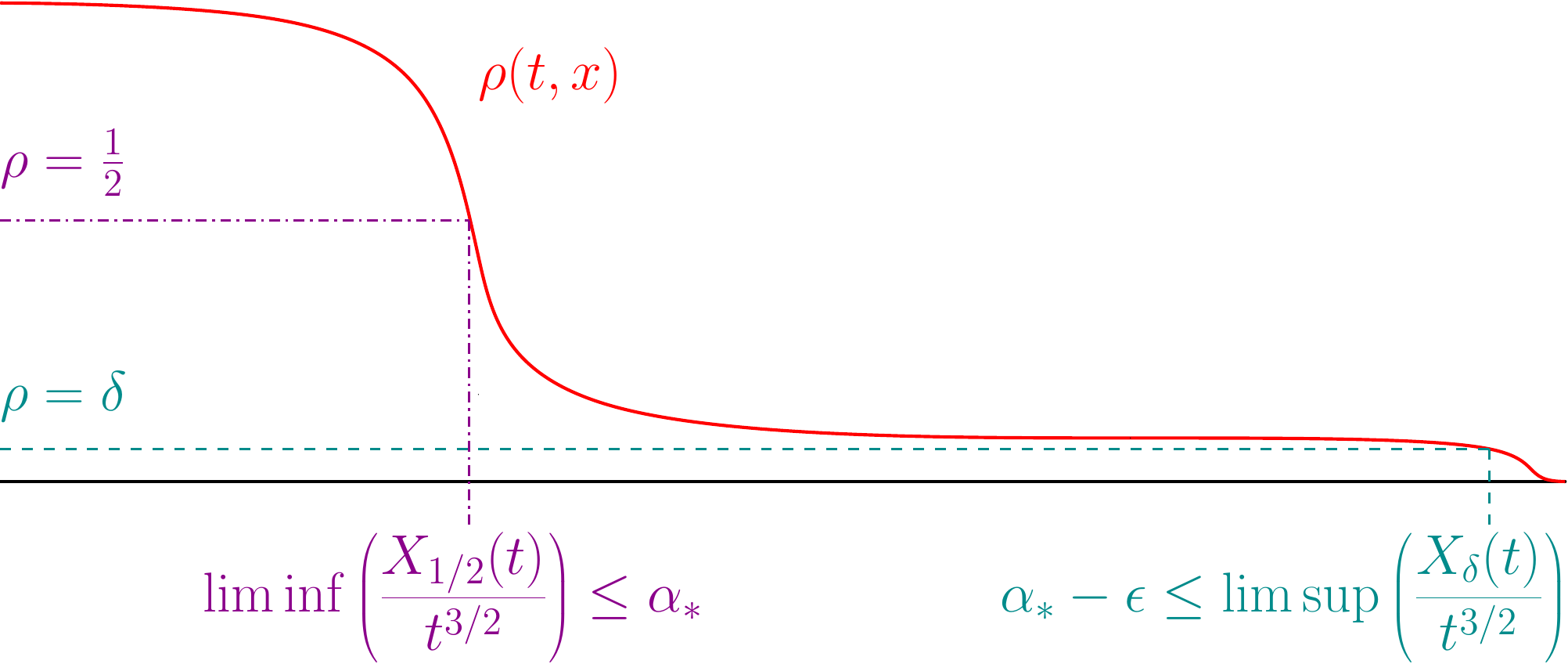} 
\caption{Illustration of Theorem \ref{thm:propagation}, by means of a cartoon picture that cannot be ruled out by \Cref{thm:propagation}. The problem is to disprove the fact that different level sets may propagate at different rates.}\label{fig:stretched_front}
\end{center}
\end{figure}

\subsection*{Abstract characterization of the critical value $\alpha^*$}

In order to give precise results, we need some notation. Let $\alpha\in[0,4/3]$ and $\mu \geq 1/2$.   
Let $U_{\alpha,\mu}$ denote the value function of the following variational problem:
\begin{equation}\label{eq:minimization}
	U_{\alpha,\mu}(x,\theta)
		= \inf \left\{ \int_0^1 L_{\alpha,\mu}(t,\bx(t),\btheta(t),\dot \bx(t), \dot\btheta(t))dt 
				: (\bx(\cdot),\btheta(\cdot)) \in \mA(x,\theta)\right\},
\end{equation}
where the Lagrangian is given by
\begin{equation*}
	L_{\alpha,\mu}(t,x,\theta,v_x,v_\theta) = \frac{v_x^2}{4 \theta} + \frac{v_\theta^2}{4} - 1 + \mu \1_{\left\{ x <  \alpha t^{3/2}\right\}}\,,
\end{equation*}
and $\mA(x,\theta)$ denotes the set of trajectories $\bgamma: [0,1] \to \R\times \R_+$ such that $\bgamma(0) = (0,0), \bgamma(1) = (x,\theta)$ and the integral quantity in \eqref{eq:minimization} is well-defined. 
\textcolor{bl}{We use the shorter notations $U_\alpha$ and $L_\alpha$ for $U_{\alpha,1}$ and $L_{\alpha,1}$ respectively.}
	It is one of our important results \textcolor{bl}{(see Proposition \ref{lem:good_trajectories})} that $U_{\alpha,\mu}(x, \theta)$ does not depend on the value of $\mu$, when $\mu\geq 1/2$ and $x\geq \alpha$. \textcolor{bl}{As a consequence $U_{\alpha,\mu}(x, \theta)=U_{\alpha}(x, \theta)$ for all $\mu\geq 1/2$ and $x\geq \alpha$.} In this context, when there is no possible ambiguity, we write $U_\alpha$ and $L_\alpha$ for $U_{\alpha,\mu}$ and $L_{\alpha,\mu}$, respectively.  The critical value is
\begin{equation}\label{eq:alpha^*}
\begin{split}
	\alpha^*
		= \sup\left\{ \alpha \in [0,4/3]: \min_\theta U_\alpha(\alpha, \theta) \leq 0\right\}.
\end{split}
\end{equation}

We have the following properties of $\alpha^*$:
\begin{proposition}
\label{prop:alpha}
The constant $\alpha^*$ is well-defined and it satisfies
$5/4 < \alpha^* < 4/3$.
\end{proposition}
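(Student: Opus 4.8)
The plan is to study the scalar function $g(\alpha) := \min_\theta U_\alpha(\alpha,\theta)$ on $[0,4/3]$, for which $\alpha^*=\sup\{\alpha\in[0,4/3]: g(\alpha)\le 0\}$ by \eqref{eq:alpha^*}. Well-definedness is immediate: the supremum is by fiat at most $4/3$, and taking the constant trajectory $\bgamma\equiv(0,0)\in\mA(0,0)$, whose action is $-1$, gives $g(0)\le -1<0$, so the set is nonempty. Everything then reduces to the two quantitative facts $g(5/4)<0$ and $g(4/3)>0$, plus enough continuity to transfer these to nearby $\alpha$. It is worth recording the monotonicity of $g$, which will make the upper bound robust: for $0<\alpha_1<\alpha_2$ and any $(\bx,\btheta)\in\mA(\alpha_2,\theta)$, the rescaled trajectory $(\tfrac{\alpha_1}{\alpha_2}\bx,\btheta)\in\mA(\alpha_1,\theta)$ has smaller $x$-kinetic term, the same $\theta$-kinetic term, and the \emph{same} indicator contribution, because $\tfrac{\alpha_1}{\alpha_2}\bx(t)<\alpha_1 t^{3/2}$ iff $\bx(t)<\alpha_2 t^{3/2}$; hence $U_{\alpha_1}(\alpha_1,\theta)\le U_{\alpha_2}(\alpha_2,\theta)$ for every $\theta$, so $g$ is nondecreasing.

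For the upper bound I would first invoke Proposition~\ref{lem:good_trajectories}: since $U_{4/3}(4/3,\theta)=U_{4/3,\mu}(4/3,\theta)$ for all $\mu\ge 1/2$, letting $\mu\to\infty$ (standard penalization) identifies this common value with that of the \emph{constrained} problem in which the indicator is dropped and trajectories are confined to $\{\bx(t)\ge\tfrac43 t^{3/2}\}$. Now the \emph{un}constrained minimization of $\int_0^1(\tfrac{\dot\bx^2}{4\btheta}+\tfrac{\dot\btheta^2}{4}-1)\,dt$ over $\mA(\tfrac43,\theta)$ with $\theta$ also free is solved by the Euler--Lagrange/Hamilton equations with transversality $p_\theta(1)=0$: the optimizer is $\btheta(t)=t(2-t)$, $\bx(t)=2t^2-\tfrac23 t^3$, with value exactly $0$, and it is the unique minimizer since $\tfrac{v_x^2}{4\theta}+\tfrac{v_\theta^2}{4}$ is convex in $(\theta,v_x,v_\theta)$ and strictly convex in $(v_x,v_\theta)$. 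Because $\bx(\tfrac14)=\tfrac{11}{96}<\tfrac16=\tfrac43(\tfrac14)^{3/2}$, this minimizer violates the constraint, so by coercivity and lower semicontinuity of the action, together with closedness of the feasible set under uniform limits, the constrained infimum is strictly positive; thus $g(4/3)>0$, and continuity (or monotonicity together with a quantified gap near $4/3$) gives $\alpha^*<4/3$.

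For the lower bound it suffices to exhibit a single trajectory reaching $(5/4,\theta)$ for some $\theta$ with strictly negative action, and by the same reduction it is enough to work in the indicator-free problem subject to $\bx(t)\ge\tfrac54 t^{3/2}$. I would take the competitor that runs along the boundary curve $\bx(t)=\tfrac54 t^{3/2}$ on an initial interval $[0,\tau]$ and then follows a free Euler--Lagrange arc on $[\tau,1]$ that leaves the curve, lands at $(5/4,\theta)$, and stays above $\tfrac54 t^{3/2}$, with the trait profile $\btheta$ chosen to minimize the resulting scalar problem. Optimizing over $\tau$ and over $\btheta$ is precisely the ``refinement of \cite{BHR_acceleration}'' alluded to in the text --- note that the pure curve-following trajectory with \emph{linear} $\btheta$ already produces the lower bound $8/3^{7/4}$, so one only needs to gain from a smarter $\btheta$ and the free final arc --- and yields a trajectory with kinetic energy strictly below $1$, hence action strictly below $0$, at $\alpha=5/4$. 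Since this competitor can be chosen to depend continuously on $\alpha$ and incurs no indicator penalty, $g(\alpha)<0$ on a right-neighbourhood of $5/4$, so $\alpha^*>5/4$.

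I expect the upper bound to be the main obstacle, and more precisely the two ingredients it rests on: that Proposition~\ref{lem:good_trajectories} plus the penalization limit genuinely reduces $g(4/3)$ to the constrained variational problem, and that the constrained infimum is \emph{strictly} positive rather than merely nonnegative. The latter needs the right compactness/lower-semicontinuity for the constrained problem together with a clean continuity statement for $\alpha\mapsto g(\alpha)$; alternatively one can prove a quantitative form of ``the unconstrained minimizer sits a fixed distance inside the forbidden region'', uniform for $\alpha$ near $4/3$, which upgrades strict positivity to a uniform gap and bypasses the continuity discussion (and pairs naturally with the monotonicity of $g$). The lower bound is comparatively routine, the only delicate point being to check that the free arc on $[\tau,1]$ indeed remains above $\tfrac54 t^{3/2}$ for the chosen $\tau$ so that the penalty is never activated.
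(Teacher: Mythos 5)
Your treatment of well-definedness, of the monotonicity of $g(\alpha)=\min_\theta U_\alpha(\alpha,\theta)$, and of the upper bound $g(4/3)>0$ is essentially the paper's argument (Lemma \ref{lem:well-defined-alpha}): the paper uses the very same unconstrained optimizer $\bx_0(t)=2t^2-\tfrac23 t^3$, $\btheta_0(t)=t(2-t)$ with zero action, the same observation that $\bx_0(t)<\tfrac43 t^{3/2}$ on $(0,1)$, and the same appeal to uniqueness of that minimizer — it just phrases the conclusion as a dichotomy on the actual $\alpha=4/3$ minimizer rather than routing through the state-constrained problem and a compactness argument, which avoids your penalization detour. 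You are also right that passing from $g(4/3)>0$ to $\alpha^*<4/3$ strictly needs more than monotonicity; the paper obtains this from the exact scaling identity \eqref{eq:U_scaling}, which gives continuity (indeed an explicit formula) for $g$.

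The genuine gap is in the lower bound $g(5/4)<0$. You describe a family of competitors (boundary arc on $[0,\tau]$, free Euler--Lagrange arc on $[\tau,1]$, optimized $\btheta$) and then simply assert that the optimization ``yields a trajectory with kinetic energy strictly below $1$.'' That assertion is the entire content of the bound, and it is quantitatively delicate: since $\alpha^*\approx 1.3151$, the value $5/4$ is beatable only by a small margin, and the naive boundary-follower with linear $\btheta$ certifies only $8/3^{7/4}\approx 1.17$. Actually carrying out your optimization of the boundary phase is precisely the Airy-function analysis of Sections 5--7, so it cannot be invoked here. What the paper does instead is exhibit one explicit competitor — $\bx(t)=\tfrac54 t^{3/2}$ for \emph{all} $t$ (no free arc is needed, since the indicator $\1_{\{x<\alpha t^{3/2}\}}$ is not triggered on the boundary), with $\btheta$ piecewise linear of slope $\tfrac32$ on $[0,\tfrac13]$ and $\tfrac34$ on $[\tfrac13,1]$ — and evaluates the action exactly as $\tfrac{25}{64}\bigl(\tfrac{33}{50}-\ln 2\bigr)\approx -0.013<0$. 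Without producing such a competitor and verifying an inequality of this kind, your claim $\alpha^*>5/4$ is unproven; and the ``only delicate point'' is not whether a free arc stays above the barrier, but this numerical evaluation itself.
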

We refer to Subsection \ref{sec:prop:alpha} and Section \ref{sec:basic_trajectories} for the proof of this statement.

\subsection*{Exact rate of acceleration (in a weak sense)}

In order to state our first result, we introduce the following time-dependent spatial locations, as in Figure \ref{fig:stretched_front},
\begin{equation*}
\underline{X}_{1/2}(t) = \min\left\{x : \rho(t,x) \leq 1/2\right\}  \, , \quad \overline{X}_{\delta}(t) =  \max\left\{x: \rho(t,x) \geq \delta\right\}.
\end{equation*}
Interestingly, the value $\alpha^*$ gives a reasonable (but weak) description of the spreading properties of \eqref{eq:intro_toads}. 
\begin{theorem}\label{thm:propagation}
Suppose that $f$ satisfies~\eqref{eq:intro_toads} with initial data $f_0$ localized in the sense of~\eqref{eq:initial_data}. Then,
\begin{equation*}
	\liminf_{t\to\infty}  \left ( \frac{\underline{X}_{1/2}(t)}{t^{3/2}}\right ) \leq \alpha_*  .
\end{equation*}
Moreover, for all $\e>0$ there exists $\delta>0$ such that
\begin{equation}\label{eq:lower_bound}
	\limsup_{t\to\infty} \left ( \frac{\overline{X}_{\delta}(t)}{t^{3/2}} \right ) \geq \alpha^* - \e.
\end{equation}
\end{theorem}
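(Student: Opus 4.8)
I would prove the two inequalities separately. The main difficulty, shared by both, is that \eqref{eq:intro_toads} has no comparison principle, so $f$ may only be compared to solutions of \emph{linear} equations obtained by freezing the reaction rate $1-\rho$ between a suitable lower bound and its trivial upper bound $1$; after such a freezing, the core of the argument is a geometric-optics (WKB) estimate for the linear equation in the parabolic self-similar variables $(s,y,\vartheta)=(t,\,x\,t^{-3/2},\,\theta\,t^{-1})$, in which the rate function is precisely $U_{\alpha,\mu}$. Throughout I would use the soft a priori bound that $\rho$ is bounded on $\R_+\times\R$ (so $1-\rho$ lies between a fixed negative constant and $1$); this follows from parabolic regularity and does not need a comparison principle.

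\textbf{Upper estimate.} To prove $\liminf_{t\to\infty}\underline X_{1/2}(t)/t^{3/2}\le\alpha_*$, I would argue by contradiction: if it fails, there are $\alpha>\alpha_*$ and $T$ with $\underline X_{1/2}(t)\ge\alpha t^{3/2}$ for $t\ge T$. Then $\rho>1/2$, hence $1-\rho<1/2$, on $\{t\ge T,\ x\le\alpha t^{3/2}\}$, while $1-\rho\le1$ elsewhere, so $f$ is a subsolution of the \emph{linear} equation
\[
g_t=\theta g_{xx}+g_{\theta\theta}+g\bigl(1-\tfrac12\1_{\{x<\alpha t^{3/2}\}}\bigr),
\]
for which comparison is available. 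Comparing with the solution issued from the upper envelope in \eqref{eq:initial_data} and running the large-deviations/Hamilton--Jacobi analysis in the self-similar variables (as in \cite{BHR_acceleration} and the companion estimates of this paper, which also give the super-exponential decay of $f$ for $\vartheta\gg1$ that makes the $\vartheta$-integral defining $\rho$ legitimate) yields
\[
\limsup_{s\to\infty}\tfrac1s\log\rho\bigl(s,\alpha s^{3/2}\bigr)\le-\min_{\vartheta}U_{\alpha,1/2}(\alpha,\vartheta).
\]
Since $U_{\alpha,\mu}$ is independent of $\mu\ge1/2$ when $x\ge\alpha$ (Proposition~\ref{lem:good_trajectories}), the right-hand side equals $-\min_\vartheta U_\alpha(\alpha,\vartheta)$; by the definition of $\alpha_*$ this minimum is positive for $\alpha>\alpha_*$, so the limit above is negative and $\rho(s,\alpha s^{3/2})\to0$, contradicting $\rho(s,\alpha s^{3/2})\ge1/2$. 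Letting $\alpha\downarrow\alpha_*$ gives the claim.

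\textbf{Lower estimate.} To prove $\limsup_{t\to\infty}\overline X_\delta(t)/t^{3/2}\ge\alpha^*-\e$, fix $\e>0$, pick $\alpha\in(\alpha^*-\e,\alpha^*)$ so that $\min_\vartheta U_\alpha(\alpha,\vartheta)\le0$, and let $\vartheta_\alpha$ attain this minimum. \emph{(i)} First I would establish an a priori lower bound inside the invaded region: there are $\kappa,c>0$ with $f\ge\kappa$ on a fixed ``launching set'' near the trait boundary $\theta=1$ inside $\{|x|\le c\,t^{3/2}\}$ for all large $t$. This comes from the known crude spreading estimate for \eqref{eq:intro_toads} (\cite{BHR_acceleration}), a parabolic Harnack inequality for $f_t\ge\theta f_{xx}+f_{\theta\theta}-Cf$ (licit since $\rho$ is bounded), and a persistence argument — none using comparison for the full equation. \emph{(ii)} Following and extending the sub-solution constructions of \cite{BHR_acceleration}, I would build a function concentrated in a thin tube around an optimal trajectory of \eqref{eq:minimization} that starts on the launching set and reaches $(\alpha t^{3/2},\vartheta_\alpha t)$ at time $t$. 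As long as $\rho<\delta$ along the tube, growth is lost relative to the full rate $1$ only in $\{x<\alpha t^{3/2}\}$, and there only by $\rho\le C$; so the pessimistic choice $\mu=\max(C,\tfrac12)$ over-estimates the loss, and by Proposition~\ref{lem:good_trajectories} the incurred exponential cost is at most $U_{\alpha,\mu}(\alpha,\vartheta_\alpha)=U_\alpha(\alpha,\vartheta_\alpha)\le0$. \emph{(iii)} Two cases: if $\rho$ reaches $\delta$ at some point of the tube, that point stays at $x\ge(\alpha^*-\e)t^{3/2}$, so $\overline X_\delta(t)\ge(\alpha^*-\e)t^{3/2}$ there; otherwise the sub-solution is valid up to time $t$ and gives $f(t,\alpha t^{3/2},\vartheta_\alpha t)\ge e^{-o(t)}$, hence $\rho(t,\alpha t^{3/2})\ge e^{-o(t)}$, and then, since locally $1-\rho\ge1-\delta$, a local Harnack estimate together with this growth over an extra time $o(t)$ pushes $\rho$ up to $\delta$ at a point that has moved by only $o(t^{3/2})$, hence is still $\ge(\alpha^*-\e)t^{3/2}$ for large $t$. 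Choosing $\delta$ small enough gives $\overline X_\delta(t)\ge(\alpha^*-\e)t^{3/2}$ along a sequence $t\to\infty$, which is the $\limsup$ bound.

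\textbf{Main obstacle.} The hard part is genuinely the lack of a comparison principle for \eqref{eq:intro_toads}: it forces all comparisons to go through linearized equations, so the two-sided control of $\rho$ (the bound $\rho\le C$ and the bulk lower bound $\rho\ge\kappa$) must be extracted by soft means — parabolic regularity, Harnack, persistence, and the already-known crude spreading bounds — rather than by global sub/super-solutions, and the geometric-optics limits must be justified in this restricted setting. The single most delicate point is step (ii): verifying that the sub-solution tube does not pay more than $U_\alpha(\alpha,\vartheta_\alpha)$ while it grazes the moving free boundary $\{x=\alpha t^{3/2}\}$. This is exactly the ``careful analysis of the trajectories that carry the mass at the front'' advertised in the abstract, and the $\mu$-independence of $U_{\alpha,\mu}$ for $\mu\ge1/2$ (Proposition~\ref{lem:good_trajectories}) is the structural fact that makes the worst-case choice of $\mu$ harmless.
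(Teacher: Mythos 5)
Your overall architecture (two contradiction arguments, linearization of the reaction term, geometric optics with rate function $U_{\alpha,\mu}$, moving subsolutions along optimal trajectories, and the $\mu$-independence from Proposition~\ref{lem:good_trajectories}) matches the paper's, and the upper estimate as you present it is essentially the paper's proof. But there is a genuine gap running through the rest: you assert at the outset that ``$\rho$ is bounded on $\R_+\times\R$\,\dots\ follows from parabolic regularity,'' and you then lean on this bound in step (i) (to run Harnack for $f_t\ge\theta f_{xx}+f_{\theta\theta}-Cf$) and in step (ii) (to bound the growth loss in $\{x<\alpha t^{3/2}\}$ by $\rho\le C$ and take $\mu=\max(C,\tfrac12)$). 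A uniform $L^\infty$ bound on $\rho$ for \eqref{eq:intro_toads} with unbounded trait space is \emph{not} a consequence of parabolic regularity; it is an open problem (the paper states this explicitly — such a bound is only known for bounded $\theta$, by Turanova), and the absence of a comparison principle is precisely why no soft argument delivers it. So as written, steps (i) and (ii) rest on an unproved and possibly false estimate.

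The fix — and the way the paper actually proceeds — is to never let the subsolution see the region where $\rho$ could be large. Under the contradiction hypothesis $\rho<\delta$ on $\{x\ge(\alpha^*-\e)t^{3/2}\}$, one takes the optimal trajectory for $U_{\alpha^*-\e/2}$ with a strictly negative action margin $r$, uses Proposition~\ref{lem:good_trajectories} to get $\bx(s)\ge(\alpha^*-\e/2)s^{3/2}$, and then translates the trajectory in $x$ by a fixed $x_0$ so that the entire moving Dirichlet ball $E_{\bX(t),\bTheta(t),R}$ lies in $\{x\ge(\alpha^*-\e)t^{3/2}\}$ for all $t\in[0,T]$. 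There $f$ is a genuine supersolution of the \emph{linear} Dirichlet problem with reaction $1-\delta$, no upper bound on $\rho$ is ever invoked, and the constructed subsolution grows like $e^{\delta T}$ (with $\delta=r/3$), forcing $\rho\ge 2\delta$ at the tip and the contradiction directly — which also replaces your Harnack/persistence machinery in steps (i) and (iii) by the elementary choice $\beta=\tfrac12\min_{E_{x_0,\theta_0,R}}f(0,\cdot)>0$ after a time shift. In short: Proposition~\ref{lem:good_trajectories} should be used to \emph{relocate the tube into the good region}, not to absorb an unavailable bound $\rho\le C$ into the parameter $\mu$.
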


Roughly, following \Cref{thm:propagation}, there exist infinite sequences of times $t_1, t_2, \dots \to \infty$ and $s_1,s_2,\dots \to\infty$ such that, the level line $\left\{\rho = \delta\right\}$ has reached $\alpha^* t_i^{3/2}$ at $t_i$, whereas the  level line $\left\{\rho = 1/2\right\}$ is no further than $\alpha^* s_i^{3/2}$ at $s_i$. A few comments  are in order.  First, as with other non-local Fisher-KPP-type equations that lack the comparison principle~\cite{berestycki_non-local_2009}, we are unable to establish spatial monotonicity of $\rho$ and $f$. Thus, we cannot rule out that the front oscillates, in contrast to what is depicted in Figure \ref{fig:stretched_front}. 
Second, 
we cannot rule out front stretching, even along sequences of times \textcolor{bl}{(this is the situation depicted in Figure \ref{fig:stretched_front})}. The reason is that the upper threshold value $1/2$ cannot be made arbitrarily small in our approach. Our result is compatible with a monotonic front in which $\underline{X}_{1/2}(t)$ is moving at rate $(5/4)t^{3/2}$ and $\overline{X}_{1/10}(t)$ is moving at rate $(4/3)t^{3/2}$, for instance. Getting stronger results following our approach seems out of reach  at present. 

The appendix is devoted to numerical computations that indicate that the front profile is monotonically decreasing and that all level lines move at the same rate.  Together with Theorem \ref{thm:propagation}, this suggests that the front propagates at rate $\alpha^* t^{3/2}$ in the usual sense.

\subsection*{Further characterization of the critical value $\alpha^*$}

We give two other characterizations of $\alpha^*$. The following definitions are required.  Let $\Xi_0\approx -2.34$ be the largest zero of the Airy function $\Ai$.  For $\xi > \Xi_0$, we define the function
\begin{equation}\label{eq:riccati}
	\mR(\xi) = - \dfrac{ \Ai'(\xi)}{  \Ai(\xi) } = - \dfrac{d \log \Ai}{d\xi} (\xi) \, .
\end{equation}
Note that $\Xi_0$ is a singular point for $\mR$, and that 
 $\mR$ is well-defined and smooth on $(\Xi_0,\infty)$. We provide further discussion of these and related functions in Subsection \ref{subsubsec:Airy}.

In addition, we define the following algebraic function $V$  for $\tau \in (0,1)$,
\begin{equation}\label{eq:V}
	V(\tau) = \left[\frac{(1-\tau)^{1/2}(2+\tau)}{2(1+\tau)^{3/2}}\right]^\frac13.
\end{equation}

\begin{theorem}\label{thm:alpha^*}
The constant $\alpha^*$ has the following two characterizations.
\begin{enumerate}[(i)]
\item For all $\alpha \in (0,4/3]$, we have
\begin{equation}\label{eq:U_scaling}
	\min_\theta U_\alpha(\alpha,\theta) = \left(\frac{3\alpha}{4}\right)^{4/3} \min_\theta U_{\frac43}\left ( \frac43 , \theta\right ) -1  + \left(\frac{3\alpha}{4}\right)^{4/3} \,.
\end{equation}
Hence,
\[
	\alpha^* = \frac{4}{3} \left(\frac{1}{\displaystyle\min_\theta U_{4/3}(4/3, \theta) + 1}\right)^{3/4}.
\]
\item There is a unique solution $\tau_0\in (0,1)$ of
\begin{equation}\label{eq:V_condition}
	V(\tau_0)^2 = \mR\left( V(\tau_0)^4 - \frac{\tau_0 V(\tau_0)}{\left (1-\tau_0^2\right )^\frac12}\right)
\end{equation}
such that the argument of $\mR$ belongs to $(\Xi_0,\infty)$.
Then, 
\begin{equation}\label{eq:alpha^* TH}
	\alpha^*
		= \frac{4}{3} \left[\left(\frac{2(1-\tau_0)}{2+\tau_0}\right)^\frac13 \frac{2(1+\tau_0)^2}{2 + 3 \tau_0 - \tau_0^2} \right]^{\frac34}.
\end{equation}
\end{enumerate}
\end{theorem}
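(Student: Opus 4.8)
**Proof proposal for Theorem \ref{thm:alpha^*}.**

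The plan is to prove the two characterizations sequentially, reducing everything to the analysis of the single minimization problem $\min_\theta U_{4/3}(4/3,\theta)$. For part (i), I would first establish the scaling identity \eqref{eq:U_scaling}. The key observation is that the Lagrangian $L_{\alpha,1}$ has a self-similar structure adapted to the parabola $x = \alpha t^{3/2}$: under the change of variables $\bx(t) = \lambda^{3/2} \tilde\bx(t)$, $\btheta(t) = \lambda \tilde\btheta(t)$ (with $t$ unchanged), the kinetic part $\frac{v_x^2}{4\theta} + \frac{v_\theta^2}{4}$ scales by $\lambda^2$, and the constraint region $\{x < \alpha t^{3/2}\}$ becomes $\{\tilde x < (\alpha/\lambda^{3/2}) t^{3/2}\}$, so choosing $\lambda = (3\alpha/4)^{2/3}$ turns an $\alpha$-problem into a $\frac43$-problem. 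Tracking the constant term $-1$ and the fact that the penalization $\mu = 1$ survives the rescaling (here invoking that by Proposition \ref{lem:good_trajectories} the value does not depend on $\mu \ge 1/2$, so we may freely renormalize it), one gets \eqref{eq:U_scaling}. The formula for $\alpha^*$ then follows by plugging into \eqref{eq:alpha^*}: setting $m = \min_\theta U_{4/3}(4/3,\theta)$, the condition $\min_\theta U_\alpha(\alpha,\theta) \le 0$ reads $(3\alpha/4)^{4/3}(m+1) \le 1$, so the supremum is attained when equality holds, giving $\alpha^* = \frac43 (m+1)^{-3/4}$.

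For part (ii), the work is to compute $m = \min_\theta U_{4/3}(4/3,\theta)$ explicitly. I would first argue, using the good-trajectory analysis underlying Proposition \ref{lem:good_trajectories}, that the optimal trajectory for $U_{4/3}(4/3,\theta)$ has a specific structure: it travels inside the penalized region $\{x < \frac43 t^{3/2}\}$ up to some time $\tau_0 \in (0,1)$, then exits and moves freely (no penalization) on $[\tau_0, 1]$, reaching $(\frac43, \theta)$ exactly at time $1$ while touching the parabola tangentially at the switching time. On each piece the Euler--Lagrange equations are explicitly solvable: the free piece gives straight-line motion in appropriate coordinates, while the penalized piece is a pure Fisher-KPP-in-$\theta$ type problem whose minimizer connects to the Airy function — this is where $\mR = -\Ai'/\Ai$ enters, as the logarithmic derivative governing the optimal $\theta$-profile via a Riccati equation. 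Optimizing over the exit point and over the terminal $\theta$ yields the matching condition \eqref{eq:V_condition}, where $V(\tau)$ from \eqref{eq:V} records the optimal speed ratio, and then back-substitution produces the closed form \eqref{eq:alpha^* TH}. Uniqueness of $\tau_0$ would follow from monotonicity of the two sides of \eqref{eq:V_condition} on $(0,1)$, using that $\mR$ is smooth and monotone on $(\Xi_0,\infty)$.

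The main obstacle I expect is the explicit solution of the variational problem on the penalized piece and the correct identification of the boundary/matching conditions at the switching time $\tau_0$. The penalization discontinuity across $\{x = \frac43 t^{3/2}\}$ means the trajectory is only $C^1$ there, not $C^2$, and one must carefully derive the transversality condition at the tangency point; getting the exact algebraic relation between $\tau_0$, the terminal $\theta$, and the argument of $\mR$ in \eqref{eq:V_condition} requires precise bookkeeping of the Airy asymptotics and the Hamiltonian conservation on each arc. A secondary subtlety is justifying that the optimal trajectory indeed has exactly one switching time (rather than several, or entering and leaving the region repeatedly); this should follow from convexity of the free Lagrangian and the one-sided nature of the penalty, but it must be argued carefully. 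Once the structure of minimizers is pinned down, the remaining computations reducing \eqref{eq:V_condition}–\eqref{eq:alpha^* TH} to part (i)'s formula for $\alpha^*$ are routine algebra.
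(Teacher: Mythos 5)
Your part (i) is essentially the paper's argument: rescale an optimal trajectory by $(\bx,\btheta)\mapsto(\beta\bx,\beta^{2/3}\btheta)$ with $\beta=3\alpha/4$, use Proposition \ref{lem:good_trajectories} to guarantee that the indicator term vanishes along both the original and the rescaled trajectories, and track the factor $\beta^{4/3}$ on the kinetic part together with the constant $-1$; the formula for $\alpha^*$ then follows from the definition \eqref{eq:alpha^*} exactly as you say.

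Part (ii), however, contains a genuine structural error. You posit that the minimizer for $U_{4/3}(4/3,\theta)$ ``travels inside the penalized region $\{x<\tfrac43 t^{3/2}\}$ up to some time $\tau_0$, then exits and moves freely.'' This contradicts Proposition \ref{lem:good_trajectories}, which you invoke in part (i): any optimal trajectory with endpoint $x\geq\alpha$ satisfies $\bx(t)\geq\alpha t^{3/2}$ for all $t$, so it never enters the penalized region. The correct picture (Proposition \ref{prop:trajectories}) is that, in self-similar variables, the trajectory \emph{sticks to the boundary line} $\{y=\alpha\}$ on $(-\infty,\shh]$ and then makes a single free excursion into $\{y>\alpha\}$ on $(\shh,0]$. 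This matters because the Airy function does not come from a ``Fisher-KPP-in-$\theta$ problem on the penalized piece'': it comes from the constrained dynamics \emph{on the line}, where the Lagrangian acquires the potential $\overline\alpha^2/\eta$ (the cost of the horizontal velocity needed to ride the parabola), and the admissibility condition $e^s\beeta(s)\to0$ as $s\to-\infty$ selects the branch $\bq=\bQ_\alpha(\beeta)$, with $\bQ$ defined implicitly through $\mR=-\Ai'/\Ai$ (Lemma \ref{lem:Airy}). The relations that produce \eqref{eq:V_condition} are the continuity of $\dot\by$ and of $\bq$ at the detachment time (so $\dot\by(\shh)=0$ and $\bq(\shh)=\bQ(\beeta(\shh))$) together with the first-order condition $\bq(0)=0$ from minimizing over $\theta$ --- not a tangency condition for a trajectory arriving from inside the penalized region, whose Euler--Lagrange equations would be entirely different and would never involve $\Ai$. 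Finally, the uniqueness of $\tau_0$ does not follow from the monotonicity of $\mR$ alone: the paper reduces it to the strict monotonicity of $\theta\mapsto\bQ(\theta)/\theta$, which in turn requires the nontrivial positivity of the auxiliary function $\mF$ built from $\mR$ and $\mE=\mR'$ (Lemmas \ref{lem:mF>0} and \ref{lem:Qtheta}). As written, your part (ii) would not produce the stated formulas.
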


The main purpose of \Cref{thm:alpha^*} is to provide an analytic formula for $\alpha^*$ that can be easily (numerically) computed. It is from this representation that we obtain the decimal approximation $\alpha^* \approx 1.315135$ given above.

We mention that, in fact, a stronger scaling relationship than~\eqref{eq:U_scaling} holds that takes into account the scaling in $\theta$.  This is straightforward to obtain and \Cref{thm:alpha^*}.(i) follows directly from it.  The advantages of~\Cref{thm:alpha^*}.(i) are twofold.  First, it provides a direct way of determing that $\alpha^* < 4/3$.  Second, it reduces the task of finding $\alpha^*$ to computing only $U_{4/3}$, instead of the whole range of $U_\alpha$ for $\alpha \in [0,4/3]$.  \Cref{thm:alpha^*}.(i) also allows us to simplify the proof of \Cref{thm:alpha^*}.(ii).

\subsection*{Motivation and state of the art}

The interplay between evolutionary processes and spatial heterogeneities has a long history in theoretical biology \cite{clobert_dispersal_2012}. It is commonly accepted that migration of individuals can dramatically alter the local adaptation of species when colonizing new environments \cite{macarthur_theory_2001}. This phenomenon is of particular importance at the margin of a species' range where individuals experience very low competition with conspecifics, and where gene flow plays a key role. An important related issue in evolutionary biology is dispersal evolution, see {\em e.g.} \cite{ronce_how_2007}.

An evolutionary spatial sorting process has been
described in \cite{shine}. Intuitively, individuals with higher dispersal reach new areas first, and there they produce offspring
having possibly higher abilities. Based on numerical simulations of an individual-based model, it has been predicted that this process generates biased phenotypic distributions towards high dispersive phenotypes at the expanding
margin \cite{travis_dispersal_2002,travis_accelerating_2009}. As a by-product, the front accelerates, at least transiently before the process stabilizes. Evidence of biased distributions and accelerating fronts have been reported \cite{thomas_ecological_2001,phillips2006invasion}. It is worth noticing that ecological (species
invasion) and evolutionary processes (dynamics of phenotypic distribution) can arise over similar time scales.

Equation \eqref{eq:intro_toads} was introduced in \cite{BenichouEtAl} and built off the previous contributions \cite{ChampagnatMeleard} and \cite{ArnoldDesvillettesPrevost}.  It has proven amenable to mathematical analysis.
In the case of bounded dispersal ($\theta\in (1,10)$, say) Bouin and Calvez \cite{BouinCalvez} constructed travelling waves which are stationary  solutions in the moving frame $x-c^*t$ for a well-chosen (linearly determined) speed $c^*$. Turanova obtained uniform $L^\infty$ bounds for the Cauchy problem and deduced the first spreading properties, again for bounded $\theta$ \cite{Turanova}. The propagation result was later refined by Bouin, Henderson and Ryzhik \cite{BHR_LogDelay} using Turanova's $L^\infty$ estimate. We highlight this point since no uniform $L^\infty$ bound is known for the unbounded case \eqref{eq:intro_toads}.  In addition, their strategy depended on a ``local-in-time Harnack inequality'' that is not applicable in our setting.    It is also interesting to note that the spreading speed is determined by the linear problem in the case of bounded $\theta$. The same conclusion was drawn by Girardin who investigated a general model which is continuous in the space variable, but discrete in the trait (diffusion) variable \cite{girardin_non-cooperative_2018-1,girardin_non-cooperative_2018}.

On the one hand, our work belongs to the wider field of structured reaction-diffusion equations, which are combinations of ecological and evolutionary processes. In a series of works initiated by \cite{AlfaroCovilleRaoul}, and followed by \cite{BouinMirrahimi,BerestyckiJinSilvestre,
AlfaroBerestyckiRaoul}, various authors studied reaction-diffusion models structured by a phenotypic trait, including a non-local competition similar to (and possibly more general than) $-f\rho$. However, in that series of studies, the trait is assumed not to impact dispersion, but reproduction, as for {\em e.g.} $f_t = f_{xx} + f_{\theta\theta} + f(a(\theta)-\rho)$. In particular, no acceleration occurs, and the linear determinacy of the speed is always valid. Note that more intricate dependencies are studied, in particular the presence of an environmental cline, which results in a mixed dependency on the growth rate $a(\theta - Bx)$, as, for instance, in \cite{AlfaroCovilleRaoul,
AlfaroBerestyckiRaoul} \textcolor{bl}{(see also \cite{BouinMirrahimi})}.

On the other hand, our work also fits into the analysis of accelerating fronts in reaction-diffusion equations. We refer to \cite{CabreRoquejoffre,CoulonRoquejoffre} for the variant of the Fisher-KPP equation where the spatial diffusion is replaced with a non-local fractional diffusion operator. In this case, the front propagates exponentially fast, see also \cite{meleard_singular_2015,mirrahimi_singular_2018}. The case where spatial dispersion is described by a  convolution operator with a fat-tailed kernel was first analyzed in \cite{Garnier}, see also \cite{bouin_thin_2018}. The rate of acceleration depends on the asymptotics of the kernel tails. In \cite{BouinCalvezNadin,bouin_large_2016}, the authors investigated the acceleration dynamics of a kinetic variant of the Fisher-KPP equation, structured with respect to the velocity variable. The main difference with the current study is that the kinetic model of \cite{BouinCalvezNadin} (see also \cite{cuesta_traveling_2012}) enjoys the maximum principle.

A natural question in front propagation, related to the issue of linear determinacy, is whether the long-time and long-range behavior can be described via a geometric equation. This is not always possible -- see, for example, \cite[Chapter 6.2]{Freidlin1} and   \cite[examples B(i) and B(ii)]{MajdaSouganidis}. In our setting, finding a PDE governing the asymptotic behavior of \eqref{eq:intro_toads} remains an interesting open problem (see the discussion in Section 2).

\subsection*{Notation}   We use $C$ to denote a general constant independent of all parameters but $C_0$ in \eqref{eq:initial_data}.  In addition,  when there is a limit being taken, we use $A = \mathcal O(B)$ and $A = o(B)$ to mean that $A \leq C B$ and $A/B \to 0$ respectively. 

We set $\R_\pm = \left\{x \in \R : \pm x \geq 0\right\}$ and $\R_\pm^* = \R_\pm \setminus\left\{0\right\}$.  All function spaces are as usual; for example, $L^2(X)$ refers to square integrable functions on a set $X$.

In order to avoid confusion between trajectories and their endpoints, we denote trajectories with bold fonts $(\bx,\btheta)$.  In general, we use $x$, $\bx$, $\theta$, and $\btheta$ for points and trajectories in the original variables and $y$, $\by$, $\eta$, and $\beeta$ for points and trajectories in the self-similar variables, which shall be introduced in Subsection \ref{sec:self-similar}.

\medskip
\subsection*{Acknowledgements.} The authors are grateful to Emmanuel Tr\'elat for shedding light on the connection with sub-Riemannian geometry. Part of this work was completed when VC was on temporary leave to the PIMS (UMI CNRS 3069) at the University of British Columbia.  This project has received funding from the European Research Council (ERC) under the European Union’s Horizon 2020 research and innovation programme (grant agreement No 639638).  CH was partially supported by NSF grants DMS-1246999 and DMS-1907853.  Part of this work was performed within the framework of the LABEX MILYON (ANR10-LABX-0070) of Universit\'e de Lyon, within the program ``Investissements d’Avenir'' (ANR-11-IDEX-0007) operated by the French National Research Agency (ANR).  SM was partially supported by the french ANR projects KIBORD ANR-13-BS01-0004
and MODEVOL ANR-13-JS01-0009. OT was partially supported by NSF grants DMS-1502253 and DMS-190722, and by the Charles Simonyi Endowment at the Institute for Advanced Study.

\section{Strategy of proof}\label{sec:strategy}

This section is intended to sketch the main ideas underlying the  proof. As a by-product, we explain, in rough terms, the reason for slower propagation than linearly determined.

\subsection*{Approximation of geometric optics from the PDE viewpoint}

Our argument follows pioneering works by Freidlin  on the approximation of geometric optics for reaction-diffusion equations \cite{Freidlin2,freidlin_geometric_1986}. In fact, we follow the PDE viewpoint of Evans and Souganidis~\cite{EvansSouganidis}. The approach is based on a long-time, long-range rescaling of the equation that captures the front while ``ignoring'' the microscopic details.  In our case, this involves defining the rescaled functions
$$
	f_h(t,x,\theta)
		= f\left(\frac{t}{h},\frac{x}{h^{3/2}},\frac{\theta}{h}\right) \quad \text{ and } \quad
	\rho_h(t,x )
		= \rho\left(\frac{t}{h},\frac{x}{h^{3/2}}\right).
$$
Note that this scaling comes from the fact that the expected position of the front $\overline x(t)$ scales like $\mathcal{O}(t^{3/2})$ and the expected mean phenotypic trait of the  population at the front $\bar \theta(t)$ scales like $\mathcal{O}(t)$. We then use the Hopf-Cole transformation; that is, we let
\begin{equation}\label{eq:u_h}
u_h(t,x,\theta) = - h\log f_h(t,x,\theta). 
\end{equation} 
Then, after a simple computation,
$$
\begin{cases}
 u_{h,t} +\theta | u_{h,x}|^2+|u_{h,\theta}|^2+1-\rho_h=h (\theta u_{h,\theta\theta} + u_{h,xx}) &\text{ in } \R_+^* \times \R \times (h,\infty),\\ 
 u_h=\infty  & \text{ on } \left\{0\right\} \times [(-\infty, C_0 h) \times (h, (1 + C_0)h)]^c,\\
 |u_h|\leq h |\log C_0| & \text{ on } \left\{0\right\}\times (-\infty,-C_0h )\times  (h, h(1+C_0^{-1})),
\end{cases}
$$
where the complement of the set is taken in $\R\times[h,\infty)$.  Formally passing to the limit as $h\to 0$, we find 
\begin{equation}\label{eq:formal_equation}
\begin{cases}
u_t+  \theta| u_{x}|^2+|u_{\theta}|^2+1-\rho(t,x) =0 &\text{ in } \R_+^* \times \R \times \R_+^*,\\ 
u=\infty  & \text{ on } \left\{0\right\}\times [(-\infty,0)\times\left\{0\right\}]^c,\\
\textcolor{bl}{u= 0} & \text{ on } \left\{0\right\} \times (-\infty,0)\times \left\{0\right\},
\end{cases}
\end{equation}
where the complement of the set is now taken in $\R\times[0,\infty)$. We note that no bounds on the regularity of $\rho$ are available in the literature so it is not clear that \eqref{eq:formal_equation} can be interpreted rigorously.

Another informal candidate for the global limiting Hamilton-Jacobi problem, if any, is the following equation:
\begin{equation}\label{eq:good candidate}
u_t+  \theta| u_{x}|^2+|u_{\theta}|^2+\1_{\left\{\min_{\theta'} u(t,x,\theta')>0\right\}} = 0\,.
\end{equation}
It is based on the heuristics that the spatial density $\rho$ saturates to the value 1 at the back of the front, {\em i.e.} when $\min_{\theta'} u(t,x,\theta')=0$, see Figure \ref{fig:data-thierry-2} for a numerical evidence. However, we are lacking tools to address this issue. Actually, the uniform boundedness of $\rho$ is not yet established to the best of our knowledge (but see \cite{Turanova} for a uniform $L^\infty$ bound in the case of bounded $\theta$). 
One could also search for an alternative formulation of \eqref{eq:good candidate} in the framework of obstacle Hamilton-Jacobi equations (see, e.g. \cite{EvansSouganidis,bouin_large_2016}); however, the non-trivial behavior behind the front complicates this.

Nevertheless, in our proof we are still able to use the toolbox of Hamilton-Jacobi equations, such as variational representations and half-relaxed limits (see below) \cite{barles_discontinuous_1987} in order to reach our goal.
We explain in the next paragraph our strategy to replace $\rho$ by a given indicator function $\mu \1_{\left\{ x <  \alpha t^{3/2}\right\}}$. This translates into the
equation 
\begin{equation}
u_t+  \theta| u_{x}|^2+|u_{\theta}|^2+1 - \mu \1_{\left\{x<\alpha t^{3/2}\right\}} = 0\, ,
\end{equation}
which admits a comparison principle, together with the variational representation  \eqref{eq:minimization}.



\subsection*{Arguing by contradiction to obtain the spreading rate}

Clearly, $\rho$ is the important unknown here. As we mentioned above, no information on $\rho$ other than nonnegativity has been established;  even a uniform $L^\infty$ bound is lacking.   Thus, it is necessary to take an alternate approach, initiated in \cite{BHR_acceleration}. 
We argue by contradiction: 

\begin{itemize}
\item On the one hand, suppose  that the front is spreading ``too fast,''  that is, at least as fast as $\alpha_1 t^{3/2}$ for some $\alpha_1>\alpha^*$.   Roughly, we take this to mean that $\rho$ is uniformly bounded below by  $1/2$ behind $\alpha_1 t^{3/2}$ (the value of the threshold $\mu = 1/2$ matters). 
With this information at hand, $\rho$ can be replaced by $0$ for $x > \alpha_1 t^{3/2}$ and by $1/2$ for $x < \alpha_1 t^{3/2}$, at the expense of $f$ being a \emph{subsolution} of the new problem (because the actual $\rho$ is certainly worse than this crude estimate). In other words, we can replace $\rho$ by $\frac12 \1_{\left\{x< \alpha_1 t^{3/2}\right\}}$ in the  problem \eqref{eq:formal_equation} as in the definition of the variational solution $U_{\alpha,\mu}$ \eqref{eq:minimization}. Therefore, we have $\lim_{h\to0} u_h(1,x,\theta) \geq 
U_{\alpha_1,\frac12}(x,\theta)$. Letting, $h = t^{-1} \ll 1$, we thus expect
\begin{equation}\label{eq:parabolic_HJ_connection}
	f(t, x t^{3/2}, \theta t)
		= f_h(1,x,\theta)
		\lesssim \exp \left(- \frac{U_{\alpha_1,\frac12}(x,\theta)}{h} \right).
\end{equation}
We then notice that  $\min_\theta U_{\alpha_1,\frac12}(\alpha_1,\theta)>0$ by the very definition of $\alpha^*$ \eqref{eq:alpha^*}. This implies that $\rho(t, x t^{3/2}) = \rho_h(1,x)$ is exponentially small around $x=\alpha_1$, which is a contradiction. We recall that the derivation of the above Hamilton-Jacobi equation was only formal. To make this proof rigorous, we use the method of half-relaxed limits that is due to Barles and Perthame~\cite{barles_discontinuous_1987}. 
 
\item On the other hand, suppose that the front is spreading ``too slow,'' that is, no faster than $\alpha_2 t^{3/2}$ for some $\alpha_2<\alpha^*$.   Roughly, we take this to mean that $\rho$ is uniformly bounded above by some   small $\delta$ ahead of $\alpha_2 t^{3/2}$. 
With this information at hand, $\rho$ can be replaced by $\delta $  for $x > \alpha_2 t^{3/2}$ and $+\infty$ for $x < \alpha_2 t^{3/2}$, at the expense of $f$ being a \emph{supersolution} of the new problem (because the actual $\rho$ is certainly better than this crude estimate). In other words, we can replace $-1+\rho(t,x)$ by $-1+\delta+\infty   \1_{\left\{ x<\alpha_2 t^{3/2}\right\}}$ in the variational problem above.  This property implies that 
$\lim_{h\to0} u_h(1,x,\theta) \leq U_{\alpha_2}(x,\theta)+\delta$.
 Choosing $\delta$ small enough, with a similar reasoning as in the previous item, we get another contradiction, since then the front should have emerged ahead of $\alpha_2 t^{3/2}$ because $\min_\theta U_{\alpha_2}(\alpha_2,\theta) +\delta<0$.  While the arguments to prove the emergence of the front are still based on the variational problem, we do not study directly the function $u_h$.  Instead, we build subsolutions on moving, widening ellipses -- these are actually balls following geodesics in the Riemannian metric associated to the diffusion operator -- for the parabolic problem \eqref{eq:intro_toads} following the optimal trajectories in~\eqref{eq:minimization}, using the ``time-dependent'' principle eigenvalue problem of~\cite{BHR_acceleration}.
\end{itemize}

Three important comments are to be made.  First, this argument is made of two distinct pieces: the rigorous connection between the variational formulation~\eqref{eq:minimization} and the parabolic problem \eqref{eq:intro_toads} and the precise characterization of $U_\alpha(\alpha,\theta)$ in order to determine $\alpha^*$.  Second, the effective value of $\rho$ is always small on the right side of $\alpha t^{3/2}$ in both cases (either 0 or small $\delta$), but it takes very different values on the left hand side (either $1/2$ or $+\infty$). Note that $\rho$ is assigned a $+\infty$ value in the absence of any $L^\infty$ bound. At first glance, it is striking that the same threshold $\alpha^*$ could arise in both arguments. What saves the day is that the value of $U_{\alpha,\mu}(\alpha,\theta)$ does not depend on $\mu$ provided that $\mu \geq 1/2$. With our method, the latter bound could be lowered at the expense of more complex computations, but certainly not down to any arbitrary small number. Finally, we make a technical but useful comment. To study the variational problem \eqref{eq:minimization} we often use the following self-similar variables $t = e^s$, $x = t^{3/2} y$,  and $\theta = t \eta$ and study the problem written in terms of such variables. One of  immediate advantage, beyond the compatibility with the problem, is that $\mu \1_{\left\{x< \alpha t^{3/2}\right\}}$, the indicator function in~\eqref{eq:minimization}, becomes $\mu \1_{\left\{y<\alpha \right\}}$, which is stationary. Now, the problem can be seen as the propagation of curved rays in a discontinuous medium with index $\mu$ on the left side, $\left\{y < \alpha\right\}$, and $0$ (or small $\delta$) on the right side $\left\{y\geq \alpha\right\}$.

\subsection*{Optimal trajectories}

In the sequel, the qualitative and quantitative description of the trajectories in~\eqref{eq:minimization} play an important role.  We say that a trajectory $(\bx,\btheta) \in \mA(x,\theta)$ is {\em optimal} if it is a minimizer in~\eqref{eq:minimization} with endpoint $(x,\theta)$. 
We note that the existence and uniqueness of these minimizers is not obvious since the Lagrangian is discontinuous; however, we establish this fact below using lower semi-continuity  (see \Cref{lem:minimizer_existence}).

\begin{figure}
\begin{center}
\includegraphics[width = 0.5\linewidth]{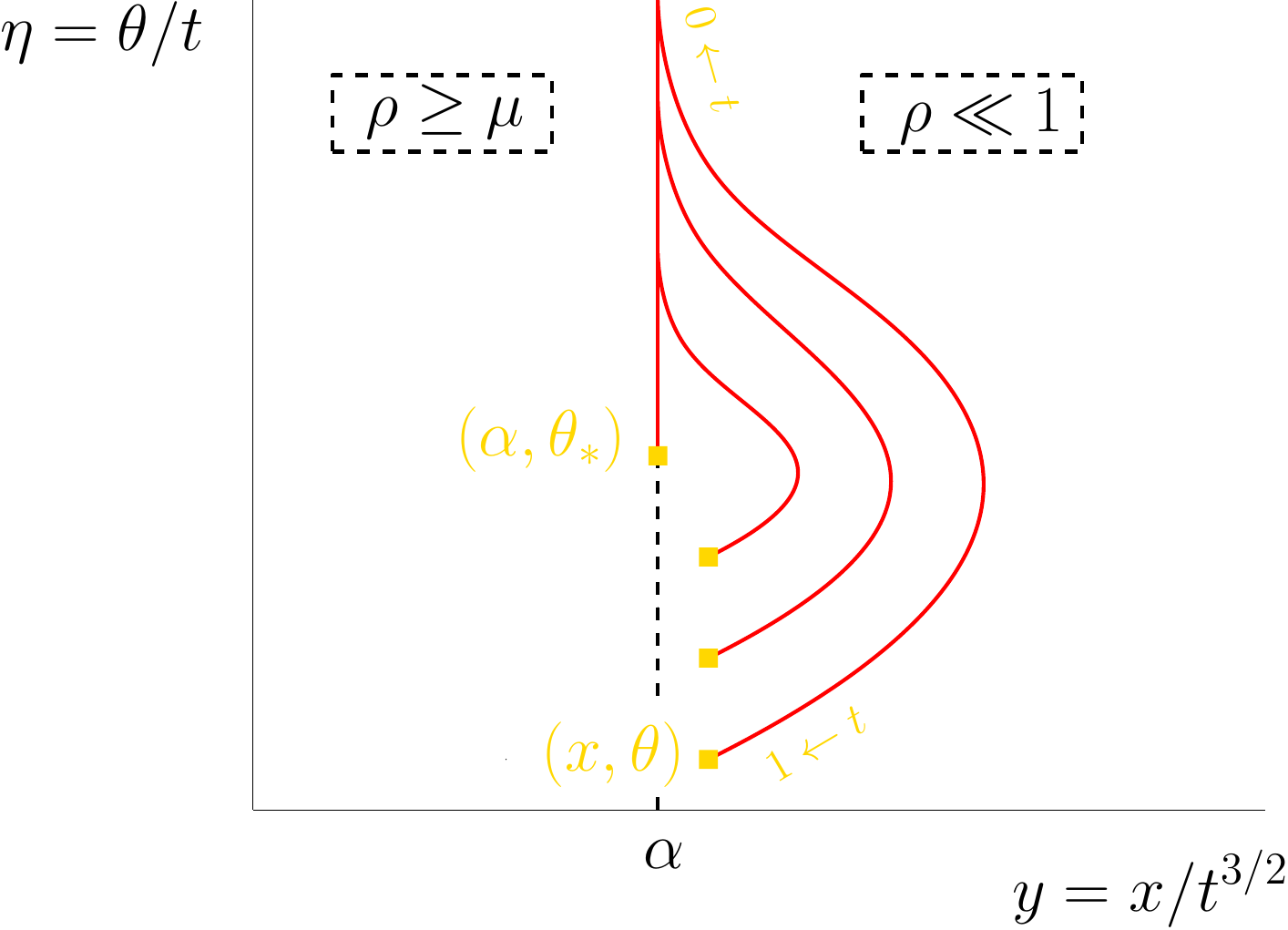} 
\caption{Typical optimal trajectories of \eqref{eq:minimization} depicted in the self-similar variables plane $(y,\eta) = (x/t^{3/2}, \theta/t)$. The endpoint at time $t = 1$ is $(x,\theta)$. The line $\left\{y = \alpha\right\}$ acts as a barrier due to the jump discontinuity in $\rho$ in our argumentation by contradiction. The trajectories with endpoints on the left side of the line $\left\{y = \alpha\right\}$ may come from the right side (not shown). However, the trajectories with endpoints on the right side  never visit the left side. Moreover, for  times $t\to 0$, they  stick to the line $\left\{y = \alpha\right\}$, together with $\beeta\to +\infty$. This behavior holds true if $\alpha\leq \frac43$ and $\mu \geq \frac12$. }
\label{fig:cartoon trajectories}
\end{center}
\end{figure}

Why is it that  $U_{\alpha,\mu}(\alpha,\theta)$ does not depend on (large) $\mu$? The answer lies in the optimal trajectories associated with the variational problem \eqref{eq:minimization}.  It happens that the  optimal trajectories  in~\eqref{eq:minimization} cannot cross, from right to left, the interface $\left\{y = \alpha\right\}$ if the jump discontinuity is too large ($\mu \geq 1/2$).  
In short,  we prove that the trajectories having their endpoint on the right side of the interface (including the interface itself) resemble exactly Figure \ref{fig:cartoon trajectories}. The nice feature is that they never fall into the left-side of the interface, but they ``stick to it'' for a while.  During the proof, we can, thus, replace the minimization problem \eqref{eq:minimization} by the state constraint problem where the curves are forced to stay on the right-side of the interface, so that the actual value of $\mu$ does not matter. 

In fact,  we obtain analytical expressions for the optimal trajectories that lead to the formula for $\alpha^*$ involving polynomials of Airy functions.

\subsection*{Evidence for the hindering phenomenon}

\begin{figure}
\begin{center}
\subfloat[]{
\includegraphics[width = 0.48\linewidth]{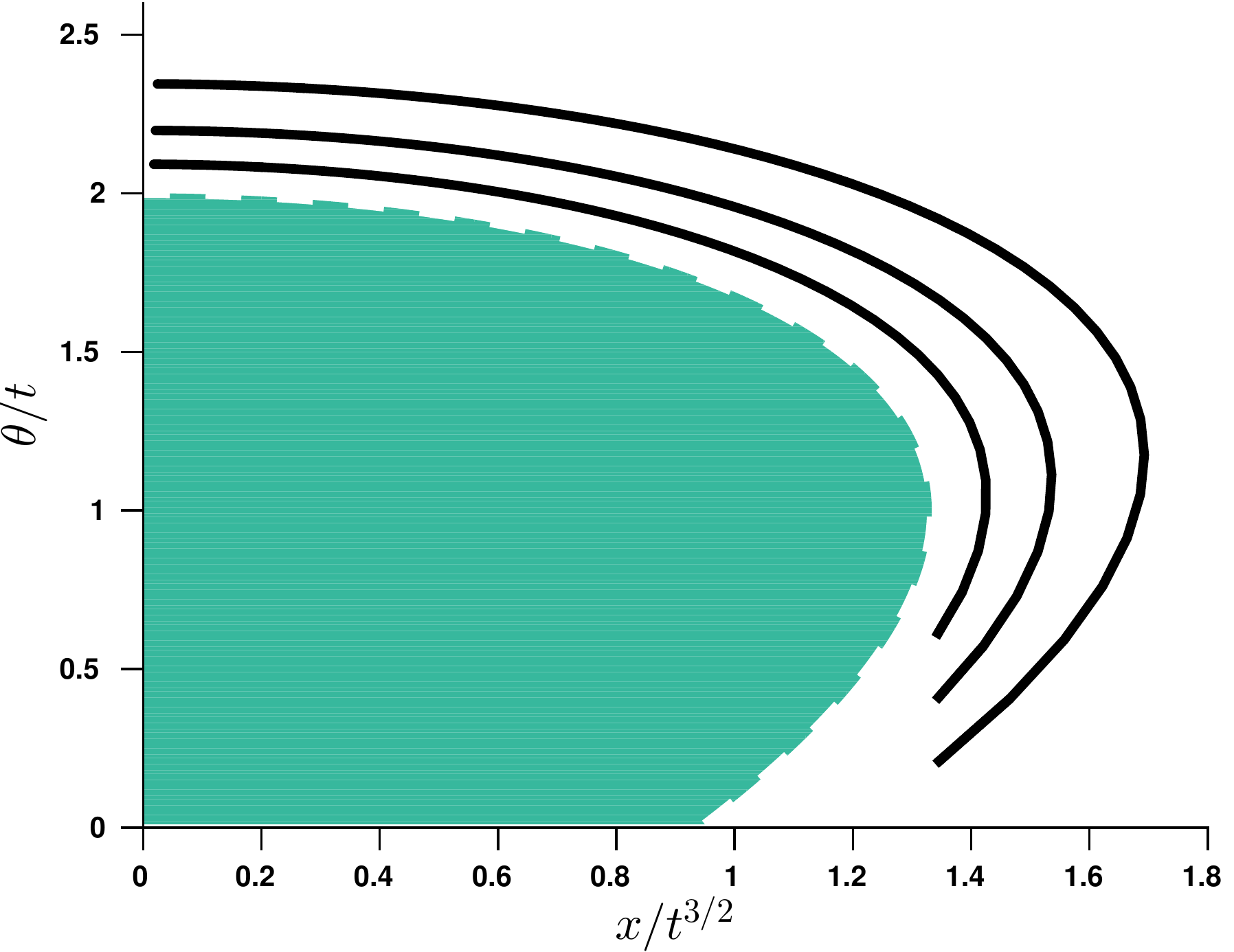}}
\;
\subfloat[]{
\includegraphics[width = 0.48\linewidth]{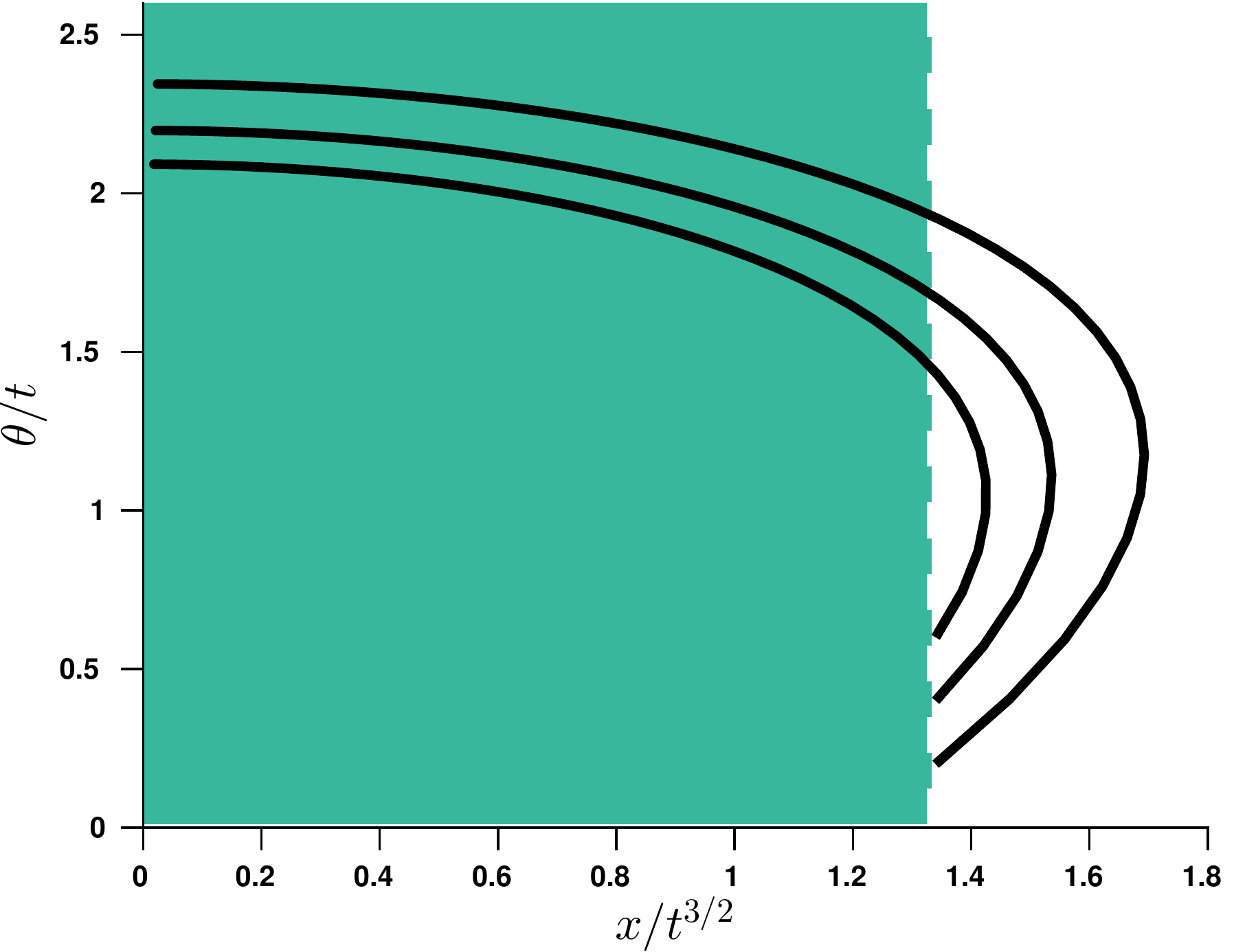}} \\
\subfloat[]{
\includegraphics[width = 0.48\linewidth]{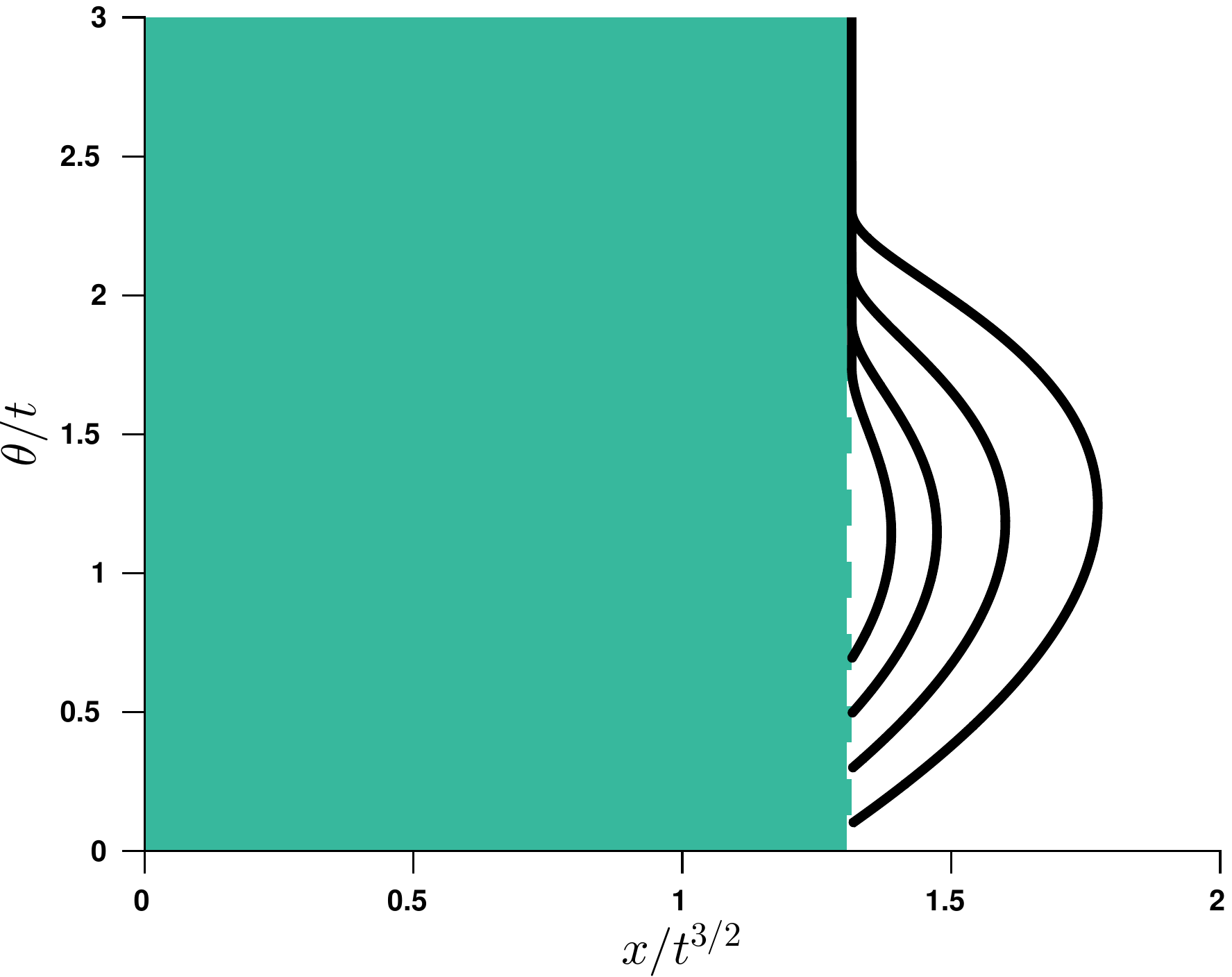}} 
\caption{Illustration of the hindering phenomenon. The green shaded area represents the saturation zone, and the bold lines represent a sampling of optimal trajectories ending beyond the saturation zone. (A) In the case of a local saturation, that is, when $f(1-\rho)$ is replaced by $f(1-f)$ in \eqref{eq:intro_toads}, the saturation zone  $\left\{f\geq \delta\right\}$, for a small $\delta>0$, is genuinely a curved area in the phase plane $(x/t^{3/2},\theta/t)$. The optimal trajectories associated with \eqref{eq:minimization} without saturation ($\mu = 0$) are curved in a similar way. It can be shown that they do not intersect the saturation zone if their endpoint is outside the saturation zone \cite{BHR_acceleration,BerestyckiMouhotRaoul}. (B) In the case of a non-local saturation, the saturation zone $\left\{\rho\geq \delta\right\}$ is a strip along the vertical direction. The main observation is that the optimal trajectories without saturation ($\mu = 0$) intersect the saturation zone. This yields a contradiction as they are computed by ignoring the effect of saturation. (C) The optimal trajectories \textcolor{bl}{of the nonlocal problem} with high enough saturation ($\mu \geq 1/2$) do not intersect the saturation zone. Instead, they stick to the interface for some interval of time. The discrepancy between the ``local'' trajectories (A) and the "non-local" trajectories (C) induces a change in the value function $U_\alpha$, which itself is responsible for the lowering of the critical value from $4/3$ (in the local version) to $\alpha^* \approx 1.315$ (in the non-local version).}
\label{fig:linear determinacy}
\end{center}
\end{figure}

We now explain why the non-local and local saturation act differently.  It is useful to begin by discussing why \textcolor{bl}{in the local saturation problem the speed of propagation is determined by the linear problem}.  Recall that the linear problem is subject to the same asymptotics as~\eqref{eq:minimization} but with the choice of $\mu = 0$ everywhere, simply because saturation has been ignored. The optimal curves \textcolor{bl}{of the linear problem} were computed in \cite{BCMetal}. Rather than giving formulas, we draw them in self-similar variables, see Figure \ref{fig:linear determinacy}. Beneath the trajectories, we also draw the zero level line of the value function $U_{0}$, which separates small from large values of $\bar f$ (the solution of the linear problem).  An important observation is that \textcolor{bl}{an optimal trajectory with ending point at the zone where $\bar f$ is small,} remains on the good side of the curved interface at all intermediate times.  This means that the trajectory stays in the unsaturated zone where the growth is $f(1-f) \approx f$.  Hence, the trajectory only ``sees'' the linear problem \textcolor{bl}{implying that the optimal trajectories of the linear and the nonlinear problem coincide}.

In the non-local problem \eqref{eq:intro_toads}, the characterization of the interface does not involve $\theta$.  Indeed, the saturated region is given by $\rho \approx 1$ and the unsaturated region by $\rho \ll 1$ (or, better, by $\min_\theta U_0(x,\theta) = 0$ and $\min_\theta U_0(x,\theta) > 0$ respectively).  
 However, the trajectories \textcolor{bl}{of the linear problem with ending points on the saturated zone} cannot remain on the right side of any stationary interface                                                                                                                                                                                                                                                                                                                                                                                                                                                                                                                                                      as \textcolor{bl}{illustrated in Figure \ref{fig:linear determinacy}}. 
  Therefore, the saturation term does matter, and it is expected that the location of the interface is a delicate balance between growth, dispersion and saturation.

This impeding phenomena could be rephrased in a more sophisticated formulation, saying that Freidlin's (N) condition~\cite{Freidlin2} is not satisfied. Indeed the optimal trajectories \textcolor{bl}{of the linear problem} ending ahead of the front do not stay ahead at all intermediate times. Therefore, they must have experienced saturation at some time, and so it is not possible to ignore it.

What is more subtle in our case (and leads to explicit results), is that the optimal trajectories \textcolor{bl}{of} the non-local problem hardly experience saturation, as can be viewed on Figure \textcolor{bl}{\ref{fig:linear determinacy}(C)}: they get deformed by the presence of the putative saturated area, but they do not pass through it so that a uniform lower bound $\rho\geq 1/2$ in the saturation zone is sufficient to compute all important features explicitly.

\subsection*{Connection with sub-Riemannian geometry}

The connection between $f$ and $U_\alpha$ that is seen, for example, in~\eqref{eq:parabolic_HJ_connection} solicits some comment about a connection with geometry that was first leveraged in~\cite{BouinChanHendersonKim}. We ignore the zeroth order term in \eqref{eq:intro_toads} and focus on the diffusion part of the equation. Anticipating the details of the proof in \Cref{sec:contradiction}, let $t\in (0,T)$ for $T\gg1$ (fixed), and consider the rescaling $t = T^2\tau$, $x = T^{3/2} X $ and $\theta  = T \Theta $. Notice the anomalous $T^2$ in the change of time, so that $\tau$ is small, $\tau<1/T$. The diffusion part of the equation   \eqref{eq:intro_toads} does not change due to the homogeneity of the second order operator, 
\begin{equation}\label{sec:hypo}
F_{\tau} =    \Theta F_{XX} + F_{\Theta\Theta}\, ,  \quad \tau\in (0,1/T)\,,\;X\in \R \,,\; \Theta\in (1/T,\infty) \, ,  
\end{equation}
and the initial data shrinks to the indicator function $\1_{(-\infty, \mathcal O(1/T^{3/2})] \times (0,\mathcal O(1/T))}$. In particular, the problem is not uniformly elliptic in the limit $T\to +\infty$. However, it is hypoelliptic in the sense of H\"{o}rmander. Moreover, it is a Gurshin operator as the sum of the squares of $\sqrt{\Theta} \partial_X$ and $\partial_\Theta$ respectively. In particular, it satisfies the strong H\"{o}rmander condition of hypoellipticity.

Therefore, after appropriate rescaling, our problem relies on short time asymptotics of the hyppoelliptic heat kernel \eqref{sec:hypo}. Precise results are known since the 1980's. In particular, from L\'eandre~\cite{leandre_majoration_1987,leandre_minoration_1987}, see also \cite{ben_arous_developpement_1988}, we find
\begin{equation*}
\lim_{\tau\to 0} -\tau \log P(\tau,(X_1,\Theta_1),(X_0,\Theta_0)) = \mathrm{dist}((X_1,\Theta_1),(X_0,\Theta_0))^2,
\end{equation*}
where $P$ is the heat kernel associated to~\eqref{sec:hypo} and $\mathrm{dist}$ is the geodesic distance associated with the appropriate sub-Riemannian metric, which coincides with \eqref{eq:minimization} up to the zeroth order terms.  In particular, we find
\[
	F(\tau,X, \Theta) \approx \exp\left(- \frac{\mathrm{dist}(X,\Theta, (0,0))^2}{\tau}\right)
		= \exp\left(- T \mathrm{dist}\left(\frac{x}{T^{3/2}}, \frac{\theta}{\textcolor{bl}{T}}, (0,0)\right)^2\right)\,,
\]
where the second equality follows by reversing the scaling at time $t = T$.  Notice that this is the formulation of~\eqref{eq:parabolic_HJ_connection} but in the absence of reaction terms.

\section{The propagation rate}

\label{sec:contradiction}

\subsection{Some basic properties of trajectories and the proof of Proposition \ref{prop:alpha}}
\label{sec:prop:alpha}
In this subsection, we collect some results about $U_\alpha$ along with the associated optimal trajectories. In particular, we state two lemmas, which are the main elements of the proof of Proposition \ref{prop:alpha}. We also state a proposition that is crucial in the proof of \Cref{thm:propagation}.   The proofs of these facts may be found in \Cref{sec:basic_trajectories}.

First, we note that minimizing trajectories exist.  The uniqueness of the minimizer associated with an endpoint $(x,\theta)\in [\alpha,\infty)\times\R_+^*$ is addressed in \Cref{sec:alpha^*}.
\begin{lemma}\label{lem:minimizer_existence}
Fix any $\alpha \in [0,4/3]$ and $\mu \geq 0$. 
Fix any endpoint $(x,\theta) \in [\alpha,\infty)\times\R_+^*$.  There exists a minimizing trajectory $(\bx,\btheta) \in \mA(x,\theta)$ of the action $U_{\alpha,\mu}(x,\theta)$.
\end{lemma}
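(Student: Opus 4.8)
The plan is to use the direct method of the calculus of variations, with the one subtlety that the Lagrangian $L_{\alpha,\mu}$ is discontinuous in its space variables because of the term $\mu\mathds{1}_{\{x<\alpha t^{3/2}\}}$. First I would fix the endpoint $(x,\theta)\in[\alpha,\infty)\times\mathbb{R}_+^*$ and observe that $\mathcal{A}(x,\theta)$ is nonempty: the straight-line path $\bgamma(t)=(tx,t\theta)$ (or, near $t=0$, any path that stays in $\mathbb{R}_+$) is admissible and has finite action, since $v_x^2/(4\theta)+v_\theta^2/4$ is integrable for a Lipschitz path bounded away from $\theta=0$, and the $\mu$-term only adds a bounded contribution. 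Hence $U_{\alpha,\mu}(x,\theta)<\infty$, and I may take a minimizing sequence $(\bx_n,\btheta_n)$ whose actions converge to $U_{\alpha,\mu}(x,\theta)$, discarding those with action exceeding $U_{\alpha,\mu}(x,\theta)+1$.

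Next I would extract uniform bounds. Because $-1+\mu\mathds{1}_{\{\cdot\}}$ is bounded (it lies in $[-1,\mu-1]$), the kinetic part $\int_0^1\big(\dot\bx_n^2/(4\btheta_n)+\dot\btheta_n^2/4\big)dt$ is bounded uniformly in $n$. From $\int_0^1\dot\btheta_n^2\,dt\le C$ and $\btheta_n(1)=\theta$ one gets a uniform $C^{1/2}$-Hölder bound on $\btheta_n$, hence $\btheta_n$ is uniformly bounded on $[0,1]$, say $\btheta_n\le M$. Then $\int_0^1\dot\bx_n^2\,dt\le 4M\int_0^1\dot\bx_n^2/(4\btheta_n)\,dt\le C M$, giving a uniform $C^{1/2}$-Hölder bound on $\bx_n$ as well (using $\bx_n(1)=x$). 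By Arzelà–Ascoli, along a subsequence $(\bx_n,\btheta_n)\to(\bx,\btheta)$ uniformly on $[0,1]$, with $(\dot\bx_n,\dot\btheta_n)\rightharpoonup(\dot\bx,\dot\btheta)$ weakly in $L^2(0,1)$; the uniform limit satisfies the endpoint conditions, and one checks $\btheta\ge 0$ on $[0,1]$ so that the limit path is admissible (and that $\btheta$ stays strictly positive except possibly at $t=0$, which is enough for the integral to make sense).

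Finally I would pass to the limit in the functional via lower semicontinuity, as flagged in the statement. The quadratic form $(v_x,v_\theta)\mapsto v_x^2/(4\theta)+v_\theta^2/4$ is convex in $(v_x,v_\theta)$ and, since $\btheta_n\to\btheta$ uniformly with $\btheta$ bounded below away from $0$ on $[\e,1]$, standard weak lower semicontinuity of integral functionals (Tonelli's theorem, applied on each interval $[\e,1]$ and then letting $\e\downarrow 0$ by monotone convergence) gives $\int_0^1\big(\dot\bx^2/(4\btheta)+\dot\btheta^2/4\big)\,dt\le\liminf_n\int_0^1\big(\dot\bx_n^2/(4\btheta_n)+\dot\btheta_n^2/4\big)\,dt$. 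For the discontinuous term I would use that $\mathds{1}_{\{x<\alpha t^{3/2}\}}$ is lower semicontinuous in $x$ (the set $\{x<\alpha t^{3/2}\}$ is open), so by Fatou, $\int_0^1\mu\mathds{1}_{\{\bx(t)<\alpha t^{3/2}\}}\,dt\le\liminf_n\int_0^1\mu\mathds{1}_{\{\bx_n(t)<\alpha t^{3/2}\}}\,dt$ (using pointwise a.e.\ convergence $\bx_n(t)\to\bx(t)$ and $\mu\ge 0$). Adding the constant $-1$ and combining, $\int_0^1 L_{\alpha,\mu}(t,\bx,\btheta,\dot\bx,\dot\btheta)\,dt\le U_{\alpha,\mu}(x,\theta)$, so $(\bx,\btheta)$ is a minimizer.

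The main obstacle, and the only place the discontinuity of $L_{\alpha,\mu}$ really bites, is the lower semicontinuity of the obstacle term together with the degeneracy of the kinetic part at $\theta=0$. The first is handled precisely because the jump is "upward" as one enters $\{x<\alpha t^{3/2}\}$ and $\mathds{1}$ of an open set is lower semicontinuous — if the discontinuity had the opposite sign the infimum might fail to be attained. The degeneracy at $\theta=0$ is harmless here because the endpoint has $\theta>0$ and a minimizing path cannot afford $\btheta$ to touch $0$ on a set of positive measure (that would force $\int\dot\btheta^2$ to blow up near such times, or at least it can be excluded by a competitor argument); I would make this precise by noting that the relevant integral in \eqref{eq:minimization} is defined only when it is well-defined, and restricting attention to paths with $\btheta>0$ on $(0,1]$ — the exhaustion by $[\e,1]$ in the lower semicontinuity step is exactly what copes with the remaining issue at $t=0$.
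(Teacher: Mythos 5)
Your proposal follows essentially the same route as the paper's proof: take a minimizing sequence, establish uniform bounds giving uniform convergence of the paths and weak $L^2$ convergence of the velocities, use weak lower semicontinuity of the convex kinetic part, and handle the discontinuous term via Fatou together with the lower semicontinuity of $\1_{\left\{x<\alpha t^{3/2}\right\}}$ (the paper obtains the bounds on $\btheta_n$ by citing an appendix of Henderson--Perthame--Souganidis rather than deriving them from the energy as you do, but the substance is the same). One small slip in your nonemptiness step: the straight-line path $(tx,t\theta)$ is \emph{not} admissible when $x\neq 0$, since along it $\dot\bx^2/(4\btheta)=x^2/(4t\theta)$ fails to be integrable at $t=0$; to see that $\mA(x,\theta)\neq\emptyset$ take instead, e.g., $\bx(t)=xt^2$ and $\btheta(t)=\theta t$, for which the action is finite. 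This does not affect the rest of the argument.
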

Second, we provide a lemma that implies Proposition \ref{prop:alpha}.
\begin{lemma}\label{lem:well-defined-alpha}
	For $\alpha \in [0,4/3]$ and $x\geq \alpha$, the map $U_\alpha(x, \theta)$ is increasing in $\alpha$ and is strictly increasing in $x$.  Hence, $\min_\theta U_\alpha(\alpha,\theta)$ is strictly increasing in $\alpha$.  Further $\min_\theta U_{4/3}(4/3, \theta) >0$ and $\min_\theta U_{5/4}(5/4,\theta) <0$.
\end{lemma}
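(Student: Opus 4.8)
The plan is to prove Lemma \ref{lem:well-defined-alpha} by exploiting the structure of the Lagrangian $L_{\alpha,\mu}$ and the admissible trajectory class $\mathcal{A}(x,\theta)$, together with the self-similar change of variables mentioned in the strategy section, which will be my main tool. I would proceed in four steps: monotonicity in $\alpha$, strict monotonicity in $x$, the resulting strict monotonicity of $\min_\theta U_\alpha(\alpha,\theta)$, and finally the two explicit sign computations at $\alpha = 4/3$ and $\alpha = 5/4$.

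\textbf{Monotonicity in $\alpha$.} This is the easy direction. For a fixed trajectory $(\bx,\btheta)\in\mathcal{A}(x,\theta)$, the only $\alpha$-dependence in the action $\int_0^1 L_{\alpha,\mu}\,dt$ is through the term $\mu\1_{\{\bx(t)<\alpha t^{3/2}\}}$, and for each $t$ the quantity $\1_{\{\bx(t)<\alpha t^{3/2}\}}$ is nondecreasing in $\alpha$ (with $\mu\geq 0$). Hence the action of every fixed trajectory is nondecreasing in $\alpha$, and taking the infimum preserves the inequality, so $U_\alpha(x,\theta)$ is nondecreasing in $\alpha$. (If a strict statement were needed one could argue that optimal trajectories spend positive time strictly left of the interface; but the lemma only claims "increasing" here, which I read as nondecreasing.)

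\textbf{Strict monotonicity in $x$.} Fix $\alpha\le 4/3$, $\theta>0$, and take $\alpha\le x_1 < x_2$. Let $(\bx,\btheta)$ be an optimal trajectory for the endpoint $(x_2,\theta)$, which exists by Lemma \ref{lem:minimizer_existence}. The idea is to produce from it an admissible trajectory to $(x_1,\theta)$ with strictly smaller action. The cleanest approach is a scaling/reparametrization argument in the self-similar variables $t=e^s$, $x=t^{3/2}y$, $\theta=t\eta$: in those variables the interface $\{x<\alpha t^{3/2}\}$ becomes the stationary half-plane $\{y<\alpha\}$, and one can use that an optimal trajectory to the right of the interface stays to the right (this is exactly the content of Figure \ref{fig:cartoon trajectories}, and is proved in \Cref{sec:basic_trajectories}), so the indicator term contributes the full $\mu$ over the whole time interval regardless. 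Then, decreasing the spatial endpoint from $x_2$ to $x_1$ while keeping $\theta$ fixed can only decrease the kinetic cost $\int_0^1 \frac{\dot\bx^2}{4\btheta}\,dt$ (by a truncation-and-rescaling of the $x$-component of the path, using convexity of $v\mapsto v^2$ and that $\btheta\ge$ some positive bound away from $t=0$); moreover this decrease is strict because the straight-line component contributes a strictly positive amount proportional to $(x_2-x_1)^2$. Wrapping the resulting trajectory back to $\mathcal{A}(x_1,\theta)$ gives $U_\alpha(x_1,\theta) < U_\alpha(x_2,\theta)$. Then $\min_\theta U_\alpha(\alpha,\theta)$, being increasing in $\alpha$ through both the explicit-$\alpha$ dependence and the endpoint-$x=\alpha$ dependence (and using the pointwise monotonicity to move the $\min$ through), is strictly increasing in $\alpha$; combining the two monotonicities: for $\alpha_1<\alpha_2$, $\min_\theta U_{\alpha_1}(\alpha_1,\theta) \le \min_\theta U_{\alpha_2}(\alpha_1,\theta) < \min_\theta U_{\alpha_2}(\alpha_2,\theta)$.

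\textbf{The two sign computations.} For $\min_\theta U_{5/4}(5/4,\theta)<0$ and $\min_\theta U_{4/3}(4/3,\theta)>0$ I would construct explicit competitor trajectories. For the upper bound ($\alpha=5/4$, want negativity), exhibit a specific trajectory $(\bx,\btheta)$ from $(0,0)$ to $(5/4,\theta)$ for a well-chosen $\theta$ — a parabolic-type arc in the spatial variable with $\btheta(t)$ growing like $ct$, mimicking the optimal curve of the linear ($\mu=0$) problem that reaches $4/3$ with exactly zero action — and compute that its action including the $\mu\1=\1$ penalty is strictly negative when the endpoint is pulled back to $5/4$; here the key quantitative input is that the linear problem's value function vanishes exactly at $x=4/3$, together with the strict $x$-monotonicity giving a definite gap at $x=5/4$. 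For the lower bound ($\alpha=4/3$, want positivity) I would instead bound $U_{4/3}(4/3,\theta)$ below by the corresponding quantity with $\mu$ replaced by its essential contribution, and use the sub-Riemannian/geodesic distance lower bound: since optimal trajectories to $x=4/3$ do not stay ahead of the interface at all times (Freidlin's (N) condition fails, as explained in the strategy section), the penalty $+\mu$ is genuinely picked up, pushing the action strictly above $0$; quantitatively this should reduce to checking that $\mathrm{dist}((4/3,\eta),(0,0))^2 - 1 + (\text{penalty contribution}) > 0$, i.e. an inequality between explicit Gushin-metric geodesic lengths and $1$.

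\textbf{Main obstacle.} The delicate part is the strict $x$-monotonicity and, especially, the two explicit sign computations, because they require quantitative control of the value function and hence of the optimal trajectories — precisely the analytical (Airy-function) description of geodesics developed later in the paper. I expect the honest proof to invoke the self-similar reformulation and the trajectory structure from \Cref{sec:basic_trajectories} (the fact that right-side trajectories stick to the interface), reducing both sign claims to one-variable calculus inequalities that can be checked by hand or numerically; setting up that reduction cleanly, while only using facts available "so far," is where the real work lies.
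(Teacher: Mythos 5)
Your overall skeleton (monotonicity in $\alpha$, strict monotonicity in $x$ by truncation, then two explicit sign computations) matches the paper's, and the first two steps are essentially right: the $\alpha$-monotonicity is exactly the paper's one-line observation, and for the $x$-monotonicity the paper uses a cleaner version of your truncation idea — it takes the minimizer to $(x+h,\theta)$, lets $s_h$ be the \emph{last} time it passes through $x$, and freezes the $x$-component at the constant value $x\ (\geq\alpha\geq\alpha s^{3/2})$ afterwards, so no rescaling is needed and the interface is never crossed; the strict gain is the discarded kinetic term $\int_{s_h}^1|\dot\bx_h|^2/(4\btheta_h)\,ds>0$. Be careful that a genuine rescaling of the $x$-component (multiplying by $x_1/x_2<1$) can push the path below the barrier $\alpha t^{3/2}$ and create a penalty, which is why the freeze-after-last-hit construction is the right one.

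The two sign computations, however, contain real gaps. For $\alpha=5/4$ your proposed competitor — the linear ($\mu=0$) optimal arc rescaled to end at $5/4$ — \emph{fails}: that arc is $\tfrac{15}{16}(2t^2-\tfrac23 t^3)$, and the identity $\tfrac43-2u+\tfrac23u^3=\tfrac23(u-1)^2(u+2)\geq0$ (with $u=t^{1/2}$) shows it lies \emph{strictly below} $\tfrac54 t^{3/2}$ for all $t\in(0,1)$, so it incurs the full penalty $\int_0^1 1\,dt=1$, while the gap you are banking on is only $\min_\theta U_0(5/4,\theta)=(15/16)^{4/3}-1\approx-0.08$; the total action is about $+0.92$, not negative. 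The paper instead rides \emph{exactly on} the barrier, $\bx(t)=\tfrac54 t^{3/2}$ (the indicator uses strict inequality, so the penalty vanishes identically), with a piecewise-linear $\btheta$, and the explicit integral comes out to $\tfrac{25}{64}(\tfrac{33}{50}-\ln2)\approx-0.01$ — the margin is tiny, so no generic competitor will do. For $\alpha=4/3$ your argument that ``the penalty is genuinely picked up'' only applies to the \emph{linear} minimizer; the minimizer of $U_{4/3}$ could a priori be a different curve that avoids the saturation zone. The paper closes this with a dichotomy: if the $U_{4/3}$-minimizer coincides with the linear one $(\bx_0,\btheta_0)$, then (since $\bx_0(t)<\tfrac43 t^{3/2}$ for all $t\in(0,1)$) the penalty is on the whole time and the action equals the purely kinetic integral $>0$; if it does not coincide, then dropping the nonnegative penalty and invoking the \emph{uniqueness} of the linear minimizer gives an action strictly above the linear minimum, which is $0$. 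You need both branches; neither a geodesic-distance bound nor the failure of Freidlin's condition alone delivers strict positivity.
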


Next, we show that the optimal trajectories with endpoints on the right side of the front always stay to the right of the front.  This is crucial, since, if this were not true, a uniform upper bound on $\rho$ would be required in order to proceed.  
\begin{proposition}\label{lem:good_trajectories}
	Let $\alpha \in [0,4/3]$ and $\mu \geq 1/2$.  Let $(x,\theta)$ be the endpoint of a minimizing trajectory $(\bx, \btheta)$ with $x \geq \alpha$ and $\theta > 0$.  Then, for all $t \in [0,1]$, $\bx(t) \geq \alpha t^{3/2}$. \textcolor{bl}{As a consequence, $U_{\alpha,\mu}(x,\theta)= U_{\alpha }(x,\theta)$ for all $x\geq \alpha$ and $\mu\geq \frac 12$.}
\end{proposition}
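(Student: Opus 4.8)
The plan is to prove Proposition \ref{lem:good_trajectories} by contradiction: suppose $(\bx,\btheta)$ is a minimizing trajectory with endpoint $(x,\theta)$, $x\ge\alpha$, $\theta>0$, but $\bx(t_*)<\alpha t_*^{3/2}$ for some $t_*\in(0,1)$. Since $\bx(1)=x\ge\alpha=\alpha\cdot 1^{3/2}$ and $\bx(0)=0$, and since $t\mapsto\alpha t^{3/2}$ starts at $0$ with infinite... well, zero slope, the continuous trajectory $\bx$ must cross the interface $\{x=\alpha t^{3/2}\}$. The strategy is to show that any excursion of the trajectory strictly to the left of the interface can be replaced by a cheaper trajectory that stays on the interface (or to its right), contradicting optimality. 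The gain from not paying the penalty $\mu\ge 1/2$ while inside $\{x<\alpha t^{3/2}\}$ must be shown to dominate the extra kinetic cost incurred by the rerouted path; this is exactly where the threshold $\mu\ge 1/2$ enters.

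The key steps I would carry out, in order: (1) \emph{Reduce to a single excursion.} Pick a maximal interval $(a,b)\subseteq(0,1)$ on which $\bx(t)<\alpha t^{3/2}$, so that $\bx(a)=\alpha a^{3/2}$ (or $a=0$) and $\bx(b)=\alpha b^{3/2}$; note that to the right of $b$ and before $a$ we may assume the trajectory is already on the good side, or iterate. (2) \emph{Pass to self-similar variables.} Using $t=e^s$, $x=t^{3/2}y$, $\theta=t\eta$, the interface becomes the \emph{stationary} line $\{y=\alpha\}$ and the indicator $\mu\1_{\{x<\alpha t^{3/2}\}}$ becomes $\mu\1_{\{y<\alpha\}}$; this makes the comparison geometric and $t$-independent, and is the point of the self-similar reformulation emphasized in Section \ref{sec:strategy}. (3) \emph{Construct a competitor.} Replace the portion of $\bgamma$ over $[a,b]$ by the trajectory that follows the interface $\bx(t)=\alpha t^{3/2}$ exactly on $[a,b]$ (keeping $\btheta$ free to be optimized, or more simply keeping the same $\btheta$), and estimate the change in action. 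The running the interface path has $\dot{\bx}=\tfrac32\alpha t^{1/2}$, contributing $\int_a^b \tfrac{(\tfrac32\alpha t^{1/2})^2}{4\theta}\,dt$ to the kinetic cost, while the original excursion saves $\mu(b-a)\ge\tfrac12(b-a)$ on the penalty; one must check the net change is negative (or $\le 0$), using $\alpha\le 4/3$ and lower bounds on $\btheta$ along the relevant part of the trajectory. (4) \emph{Handle the endpoint behavior as $t\to 0$.} Since $\btheta(0)=0$ in $\mA(x,\theta)$ and $\bx(0)=0$, near $t=0$ the trajectory must "stick to" $\{y=\alpha\}$ with $\eta\to+\infty$ in self-similar variables (Figure \ref{fig:cartoon trajectories}); I would verify separately that no excursion to the left can occur in a neighborhood of $0$, using that the cost of dipping left and returning is prohibitive when $\theta$ is tiny (the $v_x^2/(4\theta)$ term blows up). (5) \emph{Conclude $\bx(t)\ge\alpha t^{3/2}$ for all $t$}, and deduce that the minimization \eqref{eq:minimization} may be carried out over the state-constrained class $\{\bx(t)\ge\alpha t^{3/2}\}$, on which $L_{\alpha,\mu}=L_{\alpha}$, giving $U_{\alpha,\mu}(x,\theta)=U_\alpha(x,\theta)$.

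The main obstacle I anticipate is step (3): making the competitor genuinely admissible (staying in $\mA(x,\theta)$, in particular the regularity/integrability so the action is well-defined) and getting the cost comparison to close \emph{with the explicit constant} $\mu\ge1/2$ rather than some larger threshold. The delicate point is that the original excursion may dip only slightly to the left, in which case both the penalty savings and the kinetic rerouting cost are small and of the same order, so one needs a careful first-variation / convexity argument — perhaps comparing against the \emph{optimal} interface-constrained sub-arc with matched endpoints $(\alpha a^{3/2},\btheta(a))$ and $(\alpha b^{3/2},\btheta(b))$ rather than a naive path, and exploiting convexity of the kinetic Lagrangian in $(v_x,v_\theta)$ together with the sign of $\alpha-4/3$. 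I would also need the one-sided regularity of minimizers (they are $C^1$ away from the interface, satisfying the Euler–Lagrange equations, by standard calculus-of-variations arguments adapted to the discontinuous Lagrangian via lower semicontinuity as in \Cref{lem:minimizer_existence}), which controls how the trajectory can approach and leave $\{y=\alpha\}$ and rules out pathological crossing behavior.
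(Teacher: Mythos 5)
Your overall architecture is the right one and matches the paper's: pass to self-similar variables so the interface becomes the stationary line $\left\{y=\alpha\right\}$, isolate a maximal excursion into $\left\{y<\alpha\right\}$, and beat it with a competitor that replaces the excursion by motion along the line (the paper's \Cref{lem:No downward U-turn}) or, when the excursion extends back to $t=0$, by the steady self-similar trajectory (the paper's \Cref{lem:No single crossing}). However, the step you yourself flag as the obstacle is precisely the content of the proposition, and your proposed mechanisms for closing it would not work as stated. First, the comparison in your step (3) is not a soft first-variation or convexity argument: the savings $\mu(b-a)$ must be weighed against the \emph{difference} of two kinetic costs, both of which can be small and of the same order, and the threshold $\mu\ge 1/2$ emerges only from an explicit computation. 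The paper exploits that on any interval of free motion the Euler--Lagrange (Hamiltonian) system is exactly solvable, see \eqref{eq:traj_optim}, so that the running cost of the excursion is a \emph{constant} $\theta_0A^2+B^2+\mu-1$; the cost of the projected competitor is then computed in closed form, and the difference is bounded below using the monotonicity of $\beeta$ (\Cref{lem:monotonicity}, needed to bound $B$ from above and hence $\beeta$ from below along the excursion), the contact condition $\dot\by(s_0)\ge 0$ giving $\theta_0A\ge\overline\alpha$, and an explicit optimization over $\theta_0$ and the excursion length. This yields $J_{\rm orig}-J_{\rm new}\ge \mu\bigl(1-(3/4)^{4/3}(1-e^{s_1})^{2/3}\bigr)>0$ exactly when $\overline\alpha^{4/3}\le 2\mu$, i.e.\ $\mu\ge 1/2$ at $\alpha=4/3$. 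None of this is recoverable from convexity alone.

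Second, your step (4) heuristic for the excursion abutting $t=0$ --- that the $v_x^2/(4\theta)$ term blows up because $\theta$ is tiny --- is not the right mechanism and does not close the argument: admissible trajectories with $\bx(t)<\alpha t^{3/2}$ near $t=0$ can have perfectly finite action (indeed the optimal trajectories for endpoints left of the interface do exactly this). The paper instead uses the decay conditions in $\bA(x,\theta)$ at $s=-\infty$ to pin down the free constants ($\theta_0=2B+A^2$, $\alpha=\theta_0A+A^3/3$), computes the constant running cost of such a single-crossing trajectory, and compares it with the steady trajectory $(\alpha,\theta_0)$; the resulting inequality reduces to minimizing $(4-3a)^{1/3}(1-a/4-1/a)$ over $a\in[1,4/3]$, whose minimum $-1/4$ is what makes $\mu\ge 1/2$ (in fact $4\mu>\overline\alpha^{4/3}$) sufficient. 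So the skeleton of your plan is sound, but the quantitative core --- the two explicit cost comparisons and the monotonicity lemma they rely on --- is missing, and without it the proposition (in particular the specific threshold $1/2$) is not proved.
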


For the purposes of the proof in the next section, we also mention a technical result that is established after a careful description of the minimizing trajectory associated with any endpoint $(\alpha,\theta)$.  We show (cf.~\Cref{lem:anomalous}) that \textcolor{bl}{the optimal trajectories are} such that, for $t \ll 1$,
\begin{equation}\label{eq:theta scaling sec 3}
\begin{cases}
\bx(t) = \alpha t^{3/2}\medskip
\\
\btheta(t)\sim \dfrac32 \alpha^{2/3}\; t |\log t|^{1/3} \end{cases}
\quad \mathrm{as}\; t\to 0. 
\end{equation}
We make two comments.  First, such an anomalous scaling is not obvious at first glance. In fact, it arises when the optimal trajectory comes into contact with the barrier $\left\{x = \alpha t^{3/2}\right\}$. Second, we do not believe such an elaborate result is required in the proof in the following section; however, as the result was readily available, we use it.

\subsection{Proof of the lower bound in \Cref{thm:propagation}}

 The proof of the lower bound in \Cref{thm:propagation} follows almost along the lines of the work in~\cite{BHR_acceleration}.  

\begin{proof}
\textbf{Step one: definition of some useful trajectories.} Fix $\e>0$. 
Using the definition of $\alpha^*$ \textcolor{bl}{and} \Cref{lem:well-defined-alpha}, there exists $r >0$ and $\bx, \btheta$, depending only on $\e$, such that $\bx(0) = 0$, $\btheta(0) = 0$, $\bx(1) = \alpha^* - \e/2$, and 
\begin{equation}\label{eq:lb_Hamiltonian}
	 \int_0^1 L_{\alpha^*-\frac\e2}(s,\bx(s), \btheta(s), \dot \bx(s), \dot\btheta(s)) ds \leq - r .
\end{equation}
One may worry about the behavior of the integral as $s\ll 1$, but we see that the peculiar behavior \eqref{eq:theta scaling sec 3} guarantees that $L_{\alpha^*-\frac\e2}(s,\bx(s), \btheta(s), \dot \bx(s), \dot\btheta(s))$ is integrable at $s = 0$. By a density argument, up to reducing the value of $r>0$, we may assume that $\bx, \btheta\in \mathcal C^2([0,1])$, keeping the behavior $\btheta(t) \geq  C t$ for some constant $C>0$ as $t\to 0$.
In addition, from Proposition \ref{lem:good_trajectories} we get that
\begin{equation}\label{eq:lemma3.3}
 \bx(s) \geq (\alpha^*-\e/2)s^{3/2} \quad \text{ for all $s\in [0,1]$}.
\end{equation}
For $T>0$, $x_0\in \R$, $\theta_0\geq 1$, and $t\in [0,T]$, define the scaled functions, 
\begin{equation}\label{eq:trajectories}
	\bX_{T,x_0}(t) = T^{3/2} \bx\left(\frac{t}{T}\right)+x_0
		\quad \text{and} \quad
	\bTheta_{T,\theta_0}(t) = T \btheta\left( \frac{t}{T}\right) +\theta_0.
\end{equation}
The parameters $x_0$ and $\theta_0$ are determined in the sequel.  For notational ease, we refer to $\bX_{T,x_0}$ and $\bTheta_{T,\theta_0}$ simply as $\bX$ and $\bTheta$ in the sequel. 
By changing variables in~\eqref{eq:lb_Hamiltonian} and using the definition of $L_{\alpha^*-\frac\e2}$, we notice the crucial fact,
\begin{equation}
\label{eq:consq of L bd}
\int_0^T \frac{|\dot{\bX}(t)|^2}{4\bTheta(t)}+\frac{|\dot{\bTheta}(t)|^2}{4}\, dt\leq T -r  T.%
\end{equation}
Further, we may assume without loss of generality that $\dot\bTheta(t) \geq 0$ for all $t \in[0,T]$.

\noindent\textbf{Step two: a subsolution in a Dirichlet ball along the above trajectories.} Let
\begin{equation*}
\delta = r/3. 
\end{equation*}
We now argue by contradiction.  Assume that~\eqref{eq:lower_bound} does not hold.  Then there exists $t_0$ such that, for all $t\geq t_0$,
\begin{equation}\label{eq:lb_contradiction}
	\rho(t,x) < \delta  \quad \text{for all}\quad x \geq (\alpha^* - \epsilon) t^{3/2}.
\end{equation}
We may assume, by simply shifting in time, that $t_0=0$ and that $f_0$ is positive everywhere.  Further, using~\eqref{eq:lb_contradiction}, we have,
\begin{equation}\label{eq:super_solution}
	f_t \geq \theta f_{xx} + f_{\theta\theta} + f\left (  (1-\rho) \1_{ \left\{x<(\alpha^* - \e)t^{3/2}\right\}} + 
	(1-\delta) \1_{\left\{ x \geq (\alpha^* - \e)t^{3/2}\right\}}\right ).
\end{equation}
Next we find a subsolution of~\eqref{eq:super_solution} in a Dirichlet ball that moves along the above trajectories. To this end, we  define 
$$
E_{x_0,\theta_0,R} := \left\{(x,\theta): |x-x_0|^2/\theta_0 + |\theta-\theta_0|^2 \leq R^2\right\}.
$$ 
We use the following lemma, which is very similar to \cite[Lemma 4.1]{BHR_acceleration} (see also \cite[Lemma 13]{BouinChanHendersonKim}). Its proof is postponed, but we use it now to conclude the proof of the theorem.  

\begin{lemma}
\label{lem:vsubsoln}
Let $\delta$, $\bX$, and $\bTheta$ be as above.  There exists  positive constants $C(\delta)$, $C(R,\delta)$, and $\omega(R)$ such that, for all  $R\geq C(\delta)$, $\theta_0\geq C(R,\delta)$, and $T\geq C(R,\delta)$, and for all $x_0\in \R$, there is a function $v$
satisfying
\begin{equation}\label{eq:v-subsol}
\begin{cases}
v_t \leq \theta v_{xx} + v_{\theta\theta}+(1-\delta)v, & (t,x,\theta) \in (0,T)\times E_{\bX(t),\bTheta(t),R},\medskip\\
v(t,x,\theta)=0, & (t,x,\theta) \in [0,T]\times \partial E_{\bX(t),\bTheta(t),R},\medskip\\
v(0,x,\theta) \leq 1, & (x,\theta) \in E_{x_0,\theta_0,R},
\end{cases}
\end{equation}
such that $v(T,x,\theta) \geq \omega(R) e^{\delta T}$ for all $(x,\theta) \in E_{\bX(T),\bTheta(T),R/2}$.
\end{lemma}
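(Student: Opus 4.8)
The plan is to build $v$ as a self-similar rescaling of the principal eigenfunction of the Laplacian on a fixed ball, composed with the moving-ellipse change of variables, following the ``time-dependent principal eigenvalue'' construction of~\cite[Lemma~4.1]{BHR_acceleration}. Concretely, I would first straighten the moving ellipse: introduce the change of variables $(x,\theta) \mapsto (y,\eta)$ with $y = (x - \bX(t))/\sqrt{\bTheta(t)}$ and $\eta = \theta - \bTheta(t)$, so that $E_{\bX(t),\bTheta(t),R}$ becomes the fixed ball $B_R = \{y^2 + \eta^2 \leq R^2\}$. Under this change of variables the operator $\theta \partial_{xx} + \partial_{\theta\theta}$ transforms, to leading order, into $\partial_{yy} + \partial_{\eta\eta}$ plus lower-order drift terms whose coefficients involve $\dot{\bX}$, $\dot{\bTheta}$, and $\bTheta$; crucially, because $\bTheta(t) \to \infty$ and $\bTheta(t) \geq Ct$ (and $\theta = \bTheta(t) + \eta \approx \bTheta(t)$ on the ball once $\theta_0 \geq C(R,\delta)$ is large), the factor $\theta/\bTheta(t) \to 1$ and the first-order error terms are controlled by $|\dot{\bX}|/\sqrt{\bTheta}$ and $|\dot{\bTheta}|$.

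Next I would take $v(t,x,\theta) = \kappa(t)\, \phi_R(y,\eta)$ where $\phi_R > 0$ is the principal Dirichlet eigenfunction of $-(\partial_{yy}+\partial_{\eta\eta})$ on $B_R$ with eigenvalue $\lambda_R = c/R^2$ (normalized so $\max \phi_R = 1$), and $\kappa$ is an amplitude to be chosen. Plugging in, the subsolution inequality $v_t \leq \theta v_{xx} + v_{\theta\theta} + (1-\delta)v$ reduces, after dividing by $v$, to a pointwise inequality of the form
\begin{equation*}
\frac{\dot\kappa}{\kappa} \leq -\lambda_R + (1-\delta) - (\text{error terms}),
\end{equation*}
where the error terms are bounded by $C/R^2 + C|\dot{\bX}|/(R\sqrt{\bTheta}) + C|\dot{\bTheta}|/R$ — here the gradient of $\phi_R$ contributes the $1/R$ factors. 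The key point is that~\eqref{eq:consq of L bd} gives an $L^2$-in-time bound on $|\dot{\bX}|/\sqrt{\bTheta}$ and $|\dot{\bTheta}|$, namely $\int_0^T (|\dot{\bX}|^2/(4\bTheta) + |\dot{\bTheta}|^2/4)\,dt \leq (1-r)T$, so by Cauchy–Schwarz the time-integral of these error terms is $\mathcal O(\sqrt{T})$, which is $o(T)$. Therefore choosing first $R \geq C(\delta)$ so that $\lambda_R = c/R^2 \leq \delta/3$ and the $1/R^2$ error is $\leq \delta/3$, one can set $\kappa(t) = \exp\left(\int_0^t [1 - \delta - \lambda_R - (\text{errors})(s)]\,ds\right)$, and then $v_t \leq \theta v_{xx}+v_{\theta\theta}+(1-\delta)v$ holds by construction, the Dirichlet condition on $\partial E$ holds since $\phi_R = 0$ there, and $v(0,\cdot) = \kappa(0)\phi_R \leq 1$. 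At the final time, $\kappa(T) = \exp\big((1-\delta)T - \lambda_R T - \mathcal O(\sqrt T)\big) \geq e^{(1-\delta)T} e^{-\delta T/3} e^{-\mathcal O(\sqrt T)} \geq e^{\delta T}$ for $T \geq C(R,\delta)$ (using $1 - \delta - \delta/3 \geq \delta$ since $\delta = r/3$ is small, hence $1 - 2\delta \geq \delta$), and on the half-ball $B_{R/2}$ we have $\phi_R \geq \omega(R) := \min_{B_{R/2}} \phi_R > 0$, giving $v(T,x,\theta) \geq \omega(R) e^{\delta T}$ as claimed.

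The main obstacle is controlling the first-order drift/error terms produced by the non-autonomous change of variables — in particular making sure the accumulated error is genuinely $o(T)$ rather than $\mathcal O(T)$, which is exactly where the ``energy'' bound~\eqref{eq:consq of L bd} with its strict deficit $1 - r < 1$ is essential: without it the error could eat the entire growth $e^{\delta T}$. A secondary technical point is ensuring $\phi_R$ is smooth enough and that the curvature terms from differentiating $1/\sqrt{\bTheta(t)}$ in time (producing $\dot\bTheta/\bTheta^{3/2}$ factors) are absorbed; these are handled because $\bTheta(t) \geq Ct$ keeps $\bTheta$ from vanishing and the same Cauchy–Schwarz argument applies. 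I would remark that this is essentially~\cite[Lemma~4.1]{BHR_acceleration} adapted to the ellipse geometry $E_{x_0,\theta_0,R}$, and defer the verification of these routine but lengthy estimates, since the structure is identical to the cited references~\cite{BHR_acceleration,BouinChanHendersonKim}.
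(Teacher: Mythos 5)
Your overall architecture (moving frame, principal Dirichlet eigenfunction, the energy bound \eqref{eq:consq of L bd} as the essential input) matches the paper's, but the sketch has a genuine gap in the one step that is not routine: the treatment of the first-order drift terms. The ansatz $v=\kappa(t)\phi_R(y,\eta)$ with a purely time-dependent amplitude cannot absorb the drifts $\dot{\bX}/\sqrt{\bTheta}\,\partial_y$ and $\dot{\bTheta}\,\partial_\eta$ produced by the change of variables. After ``dividing by $v$'' these contribute terms of the form $\dot{\bTheta}\,\partial_\eta\phi_R/\phi_R$, and $|\nabla\phi_R|/\phi_R$ blows up like $1/\dist(\cdot,\partial B_R)$ at the boundary where $\phi_R$ vanishes; so the claimed pointwise bound ``$\leq C|\dot{\bX}|/(R\sqrt{\bTheta})+C|\dot{\bTheta}|/R$'' is false, and no choice of $\kappa$ makes the subsolution inequality hold up to $\partial B_R$. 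Note also that these drifts do \emph{not} vanish as $T,\theta_0\to\infty$ (e.g.\ $\dot{\bTheta}\approx\dot{\btheta}(t/T)$ is of order one), unlike the coefficients $A$ and $D-1$ of \Cref{lem:aux to v subsoln}, which is why the paper separates the two: the small coefficients are handled by the perturbed eigenvalue problem of \cite{BHR_acceleration}, while the order-one drifts are removed by multiplying $w$ by the spatially dependent exponential tilt $\exp\bigl(-\tfrac12\bigl(y\dot{\bX}/\bTheta^{1/2}+\eta\dot{\bTheta}\bigr)\bigr)$, which completes the square and converts the drift into the zeroth-order cost $|\dot{\bX}|^2/(4\bTheta)+|\dot{\bTheta}|^2/4$, i.e.\ exactly the action in \eqref{eq:consq of L bd}. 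That tilt is the missing idea.

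Your time-accounting is also off, and in a way that obscures why \eqref{eq:consq of L bd} is needed. Cauchy--Schwarz on $[0,T]$ gives $\int_0^T|\dot{\bTheta}|\,dt\leq T^{1/2}\bigl(\int_0^T|\dot{\bTheta}|^2dt\bigr)^{1/2}=\mathcal O(T)$, not $\mathcal O(\sqrt T)$: an $L^2$ norm of size $\sqrt T$ yields an $L^1$ norm of size $T$. The accumulated cost is genuinely first order in $T$; the point of \eqref{eq:consq of L bd} is not that the errors are $o(T)$ but that the action costs at most $(1-r)$ per unit time while the reaction supplies $(1-\delta)$ per unit time, leaving a net exponential rate $(1-\delta)-(1-r)=r-\delta=2\delta$ (since $\delta=r/3$), of which $\delta$ survives after the remaining $\mathcal O(1)$-in-$T$ losses. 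You should rework the estimate with the exponential tilt and this linear-in-$T$ budget; as written, the ``routine but lengthy estimates'' you defer include the step that actually fails.
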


We aim to apply \Cref{lem:vsubsoln}.  To that end, choose $R>\max\left\{1,C(\delta)\right\}$ and then $\theta_0 > C(\delta,R)$.  Let $T\geq C(\delta,R)$ be arbitrary.

Next, we find $x_0$ that is independent of $T$ such that, for all $t\in [0,T]$,
\begin{equation}
\label{ball-good-side}
E_{\bX(t),\bTheta(t),R} \subset \left\{(t,x,\theta) |  (\alpha^{*}-\e)t^{3/2} \leq x \right\}.
\end{equation}
Let $(x, \theta)\in E_{\bX(t),\bTheta(t),R}$. Then,
$$
\bX(t) - R(\bTheta(t))^{1/2}\leq x.
$$
Hence, for \eqref{ball-good-side} to hold it is   enough to show that, for all $t\in [0,T]$,
\begin{equation}
\label{eq:implies ball good side}
 (\alpha^{*}-\e)t^{3/2} \leq \bX(t)-R(\bTheta(t))^{1/2}.
\end{equation}
From \eqref{eq:lemma3.3} and \eqref{eq:trajectories}, we see that 
\begin{equation}
\label{eq:Xatright}
(\alpha^*-\e/2)t^{3/2} + x_0 \leq \bX(t) \quad \text{ for all $t\in [0,T]$}.
\end{equation}
Since $\btheta$ is Lipschitz continuous, then there exists a constant $A$, independent of $t$ and $T$, such that $\bTheta(t)\leq At+\theta_0$. Thus, we can choose $x_0$ large enough, independently of $t$ and $T$, such that 
$$
 -\frac{\e}{2}t^{3/2}  +R(\bTheta(t))^{1/2} \leq x_0 \quad \text{ for all $t\in [0,T]$}.
$$
The combination of this and~\eqref{eq:Xatright} \textcolor{bl}{implies} (\ref{eq:implies ball good side}), and hence (\ref{ball-good-side}) holds.

\noindent\textbf{Step three. Obtaining a contradiction.} With the choice of $x_0$ such that \eqref{ball-good-side} holds, we then define
\[
	\beta = \frac12\left ( \min_{E_{x_0,\theta_0,R}} f(0,x,\theta)\right )>0,
\]
and the subsolution $v_\beta = \beta v$ given by Lemma \ref{lem:vsubsoln}.

According to \eqref{ball-good-side} and \eqref{eq:super_solution}, $f$ is a supersolution to the linear parabolic equation satisfied by $v_\beta$ in \eqref{eq:v-subsol}. In addition, 
\[
f(t,x,\theta)> v_\beta (t,x,\theta), \quad \text{for\;} (t,x,\theta) \in \left ( [0,T]\times \partial E_{\bX(t),\bTheta(t),R}\right ) \cup\left ( \left\{0\right\} \times E_{x_0,\theta_0,R}\right ).
\]
From the comparison principle we deduce that $f \geq v_\beta $ in $[0,T]\times  E_{\bX(t),\bTheta(t),R}$. In particular, 
\[
f(T,x,\theta) \geq 
	\beta \omega(R) e^{\delta   T} \quad \text{in }E_{\bX(T),\bTheta(T),R/2}\,.
\]
The previous line, together with the definitions of $\rho$ and $E_{\bX(t),\bTheta(t),R/2}$, yields,
\begin{align*}
\rho(T,\bX(T))&\geq \int_{\bTheta(T)-R/2}^{\bTheta(T)+R/2}\beta \omega(R) e^{\textcolor{bl}{\delta T}}\, d\theta = \textcolor{bl}{\beta}\omega(R) R e^{\delta T}.
\end{align*}
As the constant $\omega(R)$ depends only on $R$, we can enlarge the value of $T$ such that $\rho(T,\bX(T)) \geq \beta \omega(R) R e^{\delta T} \geq 2 \delta$. This is a contradiction, as the combination of \eqref{eq:lb_contradiction} and \eqref{eq:Xatright}, evaluated at $t = T$, \textcolor{bl}{implies} that  $\rho(T,\bX(T))<\delta$.
\end{proof}

Finally, we establish Lemma \ref{lem:vsubsoln}. The proof is very similar to those of \cite[Lemma 4.1]{BHR_acceleration} and \cite[Lemma 13]{BouinChanHendersonKim}; however, it does not immediately follow from either, so we provide a sketch. To this end, we need the following auxiliary lemma.
\begin{lemma}
\label{lem:aux to v subsoln}
Let $\delta>0$. Let 
\[
A(t,y,\eta)=\frac{y}{2}\frac{\dot{\bTheta}(t)}{\bTheta(t)} - \eta \frac{\dot{\bX}(t)}{(\bTheta(t))^{3/2}},  \   \    D(t,y,\eta)= 1+\frac{\eta}{\bTheta(t)}, \text{ and }
\mathcal{L}_t = A\partial_y+D\partial_{yy}+\partial_{\eta \eta}.
\]
There exists a constant $C'(\delta)$ such that, for all $R\geq C'(\delta)$, $\theta_0\geq C'(\delta)$, and $T\geq C'(\delta)$, then there is a function $w(t,y,\eta)$ satisfying
\begin{align}
\label{eq:w}
\partial_tw -\mathcal{L}_t w\leq \delta w  \quad&\text{ in } (0,T)\times B_R(0,0),\\
\label{eq:w on bdry}
w(t,y,\eta)=0 \quad&\text{ on }[0,T]\times \partial B_R(0,0),\\
\label{eq:w ub}
w(0,y,\eta)\leq 1 \quad&\text{ on }B_R(0,0),
\end{align}
and
\begin{equation}
\label{eq:w lb}
\min_{(y,\eta)\in B_{R/2}(0,0)}w(T,y,\eta)\geq \omega'(R),
\end{equation}
where $\omega'(R)$ depends only on $R$.
\end{lemma}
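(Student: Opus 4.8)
\textbf{Proof plan for Lemma \ref{lem:aux to v subsoln}.}

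The plan is to build $w$ explicitly as a multiple of the principal Dirichlet eigenfunction of a frozen-coefficient operator, following the ``time-dependent principal eigenvalue'' strategy of \cite[Lemma 4.1]{BHR_acceleration}. First I would observe that along the trajectory, by \eqref{eq:consq of L bd} (after changing variables), the coefficients $A$ and $D{-}1$ are small in an integrated sense: writing $\tilde A(t,y,\eta) = \tfrac{y}{2}\dot\bTheta/\bTheta - \eta\dot\bX/\bTheta^{3/2}$ on $B_R$, one has $|\tilde A| \leq C R (|\dot\bTheta|/\bTheta + |\dot\bX|/\bTheta^{3/2})$, and $\int_0^T (|\dot\bTheta|/\bTheta + |\dot\bX|/\bTheta^{3/2})\,dt$ is controlled by Cauchy–Schwarz against \eqref{eq:consq of L bd} together with the lower bounds $\bTheta(t) \geq \theta_0 + Ct$ (from Step one, $\btheta(t)\geq Ct$) and $\bTheta(t)\geq \theta_0$. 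Similarly $D - 1 = \eta/\bTheta \leq R/\theta_0$ is uniformly small once $\theta_0 \geq C'(\delta)$. Thus on the rescaled ball the operator $\mathcal L_t$ is a small perturbation of the flat Laplacian $\Delta_{y,\eta}$.

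Next I would set $w(t,y,\eta) = e^{\lambda_1(t)}\,\phi(y,\eta)$, where $\phi>0$ is the principal Dirichlet eigenfunction of $-\Delta$ on $B_R(0,0)$, normalized by $\max\phi = 1$, with eigenvalue $\Lambda_R = c/R^2$, and $\lambda_1(t)$ is chosen so that $\dot\lambda_1(t) = -\Lambda_R - (\text{error from }A,\,D-1) + \delta/2$, say. Plugging into \eqref{eq:w}, the left side is $(\dot\lambda_1 - \mathcal L_t \phi/\phi)\,w$; using $-\Delta\phi = \Lambda_R\phi$ and estimating the perturbation terms $A\partial_y\phi$ and $(D-1)\partial_{yy}\phi$ pointwise by $C_R \big(|\dot\bTheta|/\bTheta + |\dot\bX|/\bTheta^{3/2} + 1/\theta_0\big)\phi$ (here I use interior gradient/Hessian bounds on $\phi$, which are harmless since $\phi$ is smooth in the interior but vanishes on the boundary — one handles the boundary layer by the standard trick of comparing with $\phi$ itself, since $|\nabla\phi| \leq C_R\phi^{1-}$ fails but $A\partial_y\phi$ can be absorbed using that $\phi$ is a supersolution-type barrier; concretely, as in \cite{BHR_acceleration}, one replaces $B_R$ by $B_{R}$ and uses that first-order terms with small coefficient can be absorbed into $\Lambda_R$ by a logarithmic-type correction, or simply enlarges $R$ slightly). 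The boundary condition \eqref{eq:w on bdry} and initial bound \eqref{eq:w ub} hold by construction (choosing $\lambda_1(0) = 0$). Finally, for \eqref{eq:w lb}, since $\int_0^T(\text{error})\,dt \leq C_R \cdot (\text{something} \to 0)$ is in fact bounded by a constant depending only on $R$ (the $T$-dependence drops out because $\int_0^T|\dot\bX|^2/\bTheta \leq T$ combines with $\bTheta\gtrsim \theta_0 + t$ to give a $\log$ that is controlled once $\theta_0$ is large, or more crudely the error integral is $o(1)$ in $T$), one gets $\lambda_1(T) \geq -C(R) + \delta T/2 \geq -C(R)$, so $w(T,\cdot)\geq e^{-C(R)}\min_{B_{R/2}}\phi =: \omega'(R)$ on $B_{R/2}(0,0)$.

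The main obstacle I anticipate is making the absorption of the first-order term $A\partial_y\phi$ rigorous near $\partial B_R$, where $\nabla\phi$ does not vanish but $\phi$ does, so one cannot naively bound $A\partial_y\phi$ by $(\text{small})\,\phi$ pointwise. The resolution — already used in \cite[Lemma 4.1]{BHR_acceleration} — is to note the coefficient of the first-order term, while not pointwise small uniformly in time, has small time-integral, so one can either conjugate by an exponential weight $e^{a y}$ with $a$ small (turning the drift into a zeroth-order term of size $a^2 D + aA$, which is integrable in $t$) or directly cite the eigenvalue-perturbation estimate from \cite{BHR_acceleration}; I would take the weight route, choosing $a = a(R)$ small so that the conjugated eigenfunction $e^{-ay}\phi$ is still comparable to $\phi$ on $B_{R/2}$. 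After that, the rest is the bookkeeping of the two scaling reductions (first to $B_R$ via $y = (x-\bX)/\bTheta^{1/2}$, $\eta = \theta - \bTheta$, then the eigenvalue estimate), which is routine. Passing from $w$ back to $v$ via $v(t,x,\theta) = w\big(t, (x-\bX(t))/\bTheta(t)^{1/2}, \theta - \bTheta(t)\big)$ and checking \eqref{eq:v-subsol} is then a direct computation using that the rescaling turns $\theta\partial_{xx} + \partial_{\theta\theta}$ plus the time-derivative-of-the-moving-frame terms exactly into $\mathcal L_t$ up to the factor $D$, which is why $D$ appears in $\mathcal L_t$.
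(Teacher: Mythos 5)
Your overall shape (a positive sub-solution in the moving ball whose growth rate beats the principal Dirichlet eigenvalue of $-\Delta$ on $B_R$, made possible because $A$ and $D-1$ are small) is the right one, and it is essentially the strategy behind the result the paper cites. However, the paper's proof consists precisely of verifying that $\|A\|_{L^\infty((0,T)\times B_R)}+\|D-1\|_{L^\infty((0,T)\times B_R)}\to 0$ as $\theta_0, T\to\infty$ and then invoking \cite[Lemmas 5.1, 5.2]{BHR_acceleration}, and it is the \emph{uniform} (pointwise in $t$) smallness that is needed. Your proposal instead records the smallness of $A$ ``in an integrated sense,'' and the integrated bound you assert is moreover wrong: with $\dot\bX(t)\sim t^{1/2}$ and $\bTheta(t)\gtrsim \theta_0+t$, Cauchy--Schwarz against \eqref{eq:consq of L bd} gives $\int_0^T |\dot\bX|\,\bTheta^{-3/2}\,dt\lesssim \sqrt{T/\theta_0}$ (a direct computation gives $\sim\log(T/\theta_0)$), neither of which is ``bounded by a constant depending only on $R$.'' This particular error is repairable (the $\delta T/2$ slack absorbs any $o(T)$ error integral), but it signals that you are working with the wrong quantity: what one actually has, and needs, is the pointwise bound $|\dot\bX(t)|/\bTheta(t)^{3/2}\leq C/\theta_0 + o_T(1)$, proved exactly as in the paper via Young's inequality and $\btheta(s)\geq Cs$.

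The more serious gap is in your resolution of the obstacle you correctly identify, namely absorbing $A\partial_y\phi$ where $\phi$ vanishes on $\partial B_R$ but $\nabla\phi$ does not. Conjugating by $e^{ay}$ with a \emph{constant} small $a$ does not remove the drift: it replaces $A\partial_y$ by $(A+2aD)\partial_y$ plus a zeroth-order term, and the ratio $\partial_y(e^{ay}\phi)/(e^{ay}\phi)=a+\partial_y\phi/\phi$ still blows up at the boundary, so the first-order term still cannot be dominated pointwise by a small multiple of the candidate sub-solution. To actually cancel the drift you would need $a=-A/(2D)$, which is a function of $(t,y,\eta)$ (recall $A$ is \emph{linear} in $(y,\eta)$ with time-dependent coefficients), and then time-differentiating the resulting Gaussian-type weight introduces new terms you have not accounted for. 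Also note that the time-integrability of the drift coefficient is irrelevant here: \eqref{eq:w} is a pointwise differential inequality in $(t,y,\eta)$, and the offending term lives in a spatial boundary layer at every fixed time. A correct frozen-profile fix is to replace $\phi$ by $\phi^{1+\kappa}$ for small $\kappa>0$ (the extra term $-\kappa(1+\kappa)\phi^{\kappa-1}\bigl(D|\partial_y\phi|^2+|\partial_\eta\phi|^2\bigr)$ dominates the bad terms in the boundary layer by Hopf's lemma, while in the interior $\phi$ is bounded below); alternatively one follows \cite{BHR_acceleration} and takes $w$ to be the actual solution of the parabolic Dirichlet problem, obtaining \eqref{eq:w lb} from an $L^2$ energy estimate plus a Harnack inequality. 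As written, your argument does not close.
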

\begin{proof}
This is essentially a restatement of \cite[page 745]{BHR_acceleration}, which in turn uses \cite[Lemmas 5.1, 5.2]{BHR_acceleration}. What we denote $w$ here is denoted by $w_{T,H}$ in \cite{BHR_acceleration}. The only thing we need to verify is that the hypothesis of \cite[Lemma 5.1]{BHR_acceleration} holds in our situation. That is, we must verify
\[
\quad \lim_{|T| + |\theta_0| \rightarrow\infty}||A||_{L^\infty((0,T)\times B_R(0,0))}+||D-1||_{L^\infty((0,T)\times B_R(0,0))} = 0 \quad \text{ for all $R$}.
\]
We show that the second term  in $A$ converges to zero; the rest are handled similarly (in fact, more easily).  Using the definitions of  $\bX$ and $\bTheta$ \eqref{eq:trajectories}, we find,
\begin{equation}
\label{eq:term of A}
\frac{\dot{\bX}(t)}{(\bTheta(t))^{3/2}} = \frac{T^{1/2}\dot{\bx}\left(t/T\right)}{\left(T\btheta\left(t/T\right) + \theta_0\right)^{3/2}}.
\end{equation}
Next, according to the choice of the reference trajectory $(\bx,\btheta)$, there exists a constant $C$ such that 
\begin{equation}
\label{eq:Theta near 0}
\bx(t) = (\alpha^* - \epsilon/2) t^{3/2}\, , \quad\text{and}\quad  \btheta(t) \geq  C t \quad \text{ as $t\to 0$}.
\end{equation}
When $t/T$ is small, we use Young's inequality to see that $t^{1/3} \theta_0^{2/3} \leq t/3 + 2 \theta_0/3$ and, thus, find
\[
\frac{\dot{\bX}(t)}{(\bTheta(t))^{3/2}} \leq  C\frac{T^{1/2}\left(t/T\right)^{1/2}}{\left(T\left(t/T\right) + \theta_0\right)^{3/2}} = C\frac{t^{1/2}}{(t+\theta_0)^{3/2}} \leq \frac{C}{\theta_0} .
\]
Notice that this tends to zero as $\theta_0 \to \infty$.    
When $t/T$ is away from 0, $\textcolor{bl}{\btheta} (t/T)$ is uniformly strictly positive, and so \eqref{eq:term of A} converges to zero as $T\to \infty$. 
\end{proof}

\begin{proof}[Proof of Lemma \ref{lem:vsubsoln}]
Before beginning, we point out that, using \eqref{eq:Theta near 0}, as in the proof of Lemma \ref{lem:aux to v subsoln}, we find that there exists  a constant $\bar{C}(R)$ that depends only on $R$ such that
\begin{equation}
\label{eq:Cbar}
\frac{1}{2}\left\lvert\left\lvert y \frac{\dot{\bX}(t)}{(\bTheta(t))^{1/2}}  +\eta \dot{\bTheta}(t) \right\lvert\right\lvert_{L^\infty((0,T)\times B_R(0,0))}\leq \bar C(R).
\end{equation}
Let $w$ be as given by Lemma \ref{lem:aux to v subsoln}. Define, for $(y,\eta) \in B_R(0,0)$,
\begin{equation*}
\widetilde{v}(t,y,\eta) = w(t,y,\eta)\exp\left ( -\frac{1}{2}\left(y \frac{\dot{\bX}(t)}{(\bTheta(t))^{1/2}}  +\eta \dot{\bTheta}(t)\right) - \bar{C}(R) -g(t)\right ),
\end{equation*}
where
\[
g(t) =-t+2\delta t+ \int_0^t \frac{|\dot{\bX}|^2}{4\bTheta} +\frac{|\dot{\bTheta}|^2}{4} + R\left(\frac{|\ddot{\bX}|}{2\bTheta^{1/2}}+ \frac{|\dot{\bX}\dot{\bTheta}|}{4\bTheta^{3/2}} +\frac{|\dot{\bX}|^2}{4\bTheta^2}+\frac{\textcolor{bl}{|\ddot{\bTheta}|}}{2}\right)\, dt'.
\]
A direct computation, together with the fact that $w$ is a subsolution of (\ref{eq:w}), shows that $\widetilde{v}$ is a subsolution of 
\[
\widetilde{v}_t - \left(\frac{y}{2}\frac{\dot{\bTheta}}{\bTheta} + \frac{\dot{\bX}}{\bTheta^{1/2}}\right) \widetilde{v}_y - \dot{\bTheta} \widetilde{v}_\eta  \leq D \widetilde{v}_{yy} + \widetilde{v}_{\eta\eta} + (1-\delta)\widetilde{v}.
\]
In addition, according to (\ref{eq:w on bdry}) we have $\widetilde{v}\equiv 0$ on $\partial B_R(0,0)$. Also, by the definition of $\widetilde{v}$, the fact that $g(0) = 0$, \eqref{eq:w ub}, and~\eqref{eq:Cbar}, we have, for $(y,\eta)\in B_R(0,0)$,
\[
\widetilde{v}(0,y,\eta)\leq   \exp\left ( -\frac{1}{2}\left( y \frac{\dot{\bX}(0)}{\theta_0^{1/2}}  +\eta \dot{\bTheta}(0)\right) - \bar{C}(R) \right )\leq 1.
\]
Next we find a lower bound for $\widetilde{v}(T, y, \eta)$ on $ B_{R/2}(0,0)$, for which we first bound $g(T)$ from above. Using (\ref{eq:consq of L bd}), it follows that
\begin{equation*}
g(T)\leq -r T+2\delta T + R\int_0^T \frac{|\ddot{\bX}|}{2\bTheta^{1/2}}+ \frac{|\dot{\bX}\dot{\bTheta}|}{4\bTheta^{3/2}} +\frac{|\dot{\bX}|^2}{4\bTheta^2}+\frac{\textcolor{bl}{|\ddot{\bTheta}|}}{2}\, dt.
\end{equation*}
Applying again (\ref{eq:Theta near 0})  in the manner of the proof of Lemma \ref{lem:aux to v subsoln}, there is another constant $\bar{C}(R)$ (that we do not relabel) such that,
\[
g(T) \leq -r  T +2\delta  T +\bar{C}(R).
\]
Together with (\ref{eq:Cbar}), the definition of $\widetilde{v}$, and (\ref{eq:w lb}), we find, for $(y,\eta)\in B_{R/2}(0,0)$,
\[
\widetilde{v}(T, y, \eta)\geq w(T, y, \eta) \exp\left ( (r - 2\delta)T  - \bar C(R)  \right )  \geq \omega'(R)  \exp\left ( (r - 2\delta)T - \bar C(R)  \right ). 
\]
Finally, we recover that $v$ is the desired subsolution of  (\ref{eq:v-subsol}) by making the change of variables from $\widetilde{v}$ to $v$ in the moving frame; that is, we let
\begin{equation*}
v(t,x,\theta) = \widetilde{v}\left ( t, \dfrac{x - \bX(t)}{(\bTheta(t))^{1/2}}, \theta - \bTheta(t) \right )\, . 
\end{equation*}
We recover the last conclusion in \Cref{lem:vsubsoln} by letting $\omega(R) = \omega'(R) e^{-\bar C(R)}$, concluding the proof.
\end{proof}

\subsection{Proof of the upper bound in \Cref{thm:propagation}}

\begin{proof}
We wish to prove by contradiction that, for all $\epsilon>0$, 
\begin{equation*}
	\liminf_{t\to\infty} \left ( \frac{\underline{X}_{1/2}(t)}{t^{3/2}} \right ) \leq \alpha_* + \epsilon  .
\end{equation*}
Suppose on the contrary that there exists $\epsilon>0$ and $t_0$ such that, for all $t\geq t_0$ and all $x\leq (\alpha^* + \e) t^{3/2}$,
\begin{equation}\label{eq:contradiction_assumption}
	\rho(t,x) \geq 1/2.
\end{equation}
In this case, we see that, for all $t \geq t_0$,
\begin{equation}\label{eq:upper_bound_ssoln}
	f_t \leq \theta f_{xx} + f_{\theta\theta} + \left(1 - \frac{1}{2}\1_{ \left\{x < (\alpha^*+\e )t^{3/2}\right\}} \right)f.
\end{equation}
The work in~\cite[Section 3]{BHR_acceleration} implies that there exists a constant $C$, dependingly only on $C_0$ in~\eqref{eq:initial_data}, such that
\begin{equation*}
	f(t,x,\theta) \leq C \exp\left(t - \frac{\psi(x,\theta)}{4(t + 1)}\right).
\end{equation*}
Here $\psi$ is a positive function, defined piecewise in~\cite[Section 3]{BHR_acceleration}, whose exact form is unimportant, but which is positive when $\max\left\{x,\theta\right\} > 0$ and satisfies, for any $h>0$,
\begin{equation}\label{eq:psi_scaling}
	h^2 \psi(x h^{-3/2}, \theta h^{-1}) = \psi(x,\theta).
\end{equation}
We  use this particular scaling for two purposes.  First, up to shifting in time, we may assume, without loss of generality, that $t_0 = 0$ and that, for all $(x,\theta) \in \R \times (1,\infty)$,
\begin{equation}\label{eq:n_shift}
	f(t,x,\theta) \leq C(t_0) \exp\left(t - \frac{\psi(x,\theta)}{(t + C(t_0))} \right).
\end{equation}
Second, for any small parameter $h>0$, we define 
\begin{equation}\label{eq:Hopf_Cole}
f^h(t,x,\theta) = f\left (\dfrac t h, \dfrac x{h^{3/2}}, \dfrac \theta h\right )\,,\quad \text{and}\quad	u^h = h \log f^h.
\end{equation}
Then, $u^h$ satisfies both the bound
\begin{equation}
\label{eq:u_0^h}
	u^h(t,x,\theta) \leq  h \log C(t_0) - \frac{\psi(x,\theta)}{ C (t+ h C(t_0))},
\end{equation}
 and the equation
\begin{equation*}
	\begin{cases}
\displaystyle 		u^h_t - \theta |u^h_x|^2 - |u^h_\theta|^2 - h \theta u^h_{xx} - h u^h_{\theta\theta} \leq 1 - \frac{1}{2}\1_{\left\{x< (\alpha^*+\e)t^{3/2}\right\}} ,  &\text{in } (0,\infty)\times \R \times (h ,\infty),\smallskip\\
		u^h_\theta = 0, &\text{on } (0,\infty)\times\R\times\left\{h\right\}.
	\end{cases}
\end{equation*}
The differential inequality is due to~\eqref{eq:upper_bound_ssoln}. 
The bound on the initial data comes from~\eqref{eq:psi_scaling} and \eqref{eq:n_shift}.

We define the half-relaxed limit $u^* = \limsup_{h\rightarrow 0} u^h$.  We claim that, for any $\delta>0$, $u^*$  satisfies (in the viscosity sense)
\begin{equation}\label{eq:half_relaxed}
	\begin{cases}
	\displaystyle	u^*_t - \theta |u^*_x|^2 - |u^*_\theta|^2 - 1 + \frac{1}{2}\1_{\left\{x< (\alpha^*+\e/2)t^{3/2}\right\}}  \leq 0, \qquad &\text{in } (0,\infty)\times\R\times(0,\infty),\smallskip\\
\displaystyle		\min\left( -u^*_\theta, u^*_t - \theta |u^*_x|^2 - |u^*_\theta|^2 - 1 + \frac{1}{2}\1_{\left\{x<(\alpha^*+\e/2)t^{3/2}\right\}} \right) \leq 0, &\text{on } (0,\infty)\times\R\times\left\{0\right\},\smallskip\\
		u^*_0 \leq -\infty\1_{D_\delta^c}, &\text{on } \left\{0\right\}\times \R\times(0,\infty),
	\end{cases}
\end{equation}
where $D_\delta = \left\{(x,\theta) \in \R\times \R_+ : \max\left\{x,0\right\}^2 + \theta^2 \leq \delta^2\right\}$.
 We point out that we have reduced $\e$ to $\e/2$.  The first two inequalities follow from standard arguments in the theory of viscosity solutions, see, e.g., \cite[Section~3.2]{HendersonPerthameSouganidis} for a similar setting.  The third inequality follows directly from the upper bound~\eqref{eq:u_0^h} and the fact that $\psi(x,\theta)$ is positive for $\max\left\{x,\theta\right\}>0$.  The restriction to the outside of a ball of radius $\delta$ (for arbitrary $\delta>0$) might look unnecessary. However, in~\cite{CrandallLionsSouganidis}, which is applied in the sequel, only ``maximal functions'' with support on smooth, open sets are considered.

Using~\eqref{eq:half_relaxed} along with theory of maximal functions~\cite{CrandallLionsSouganidis} (see also~\cite{HendersonPerthameSouganidis} for a discussion of the boundary conditions and the degeneracy in the Hamiltonian near the boundary $\left\{\theta =0\right\}$, both of which are not considered in~\cite{CrandallLionsSouganidis}), along with the Lax-Oleinik formula, we see, for all $(x,\theta) \in \R\times \R_+$,
\[\begin{split}
	u^*(t,x,\theta)
		&\leq -\inf \left\{ \int_0^t \left[\frac{|\dot \bx(s)|^2}{4\btheta(s)} + \frac{|\dot\btheta(s)|^2}{4} - \left(1 -  \frac{1}{2}\1_{\left (-\infty,(\alpha^* + \e/2)s^{3/2}\right )}(\bx(s))\right) \right] ds \right.\\
			&\left.\qquad\qquad\qquad : \bx(\cdot), \btheta(\cdot) \in H^1, (\bx(0),\btheta(0)) \in D_\delta(0,0), (\bx(t),\btheta(t)) = (x, \theta)\vphantom{\int}\right\}.
\end{split}\]
Taking the limit $\delta \to 0$ and setting $t=1$, we find
\[\begin{split}
	u^*(1,x,\theta)
		&\leq -\inf \Bigg\{ \int_0^1 \Big[\frac{|\dot \bx(s)|^2}{4\btheta(s)} + \frac{|\dot\btheta(s)|^2}{4} - \Big(1 -  \frac{1}{2}\1_{\left (-\infty,(\alpha^* + \e/2)s^{3/2}\right )}(\bx(s))\Big) \Big] ds : (\bx, \btheta) \in \mA(x,\theta) \Bigg\}\\
		&= - U_{\alpha^* + \e/2,1/2}(x,\theta).
\end{split}\]
Fix any $x \geq \alpha^* + \e/2$.  Using \Cref{lem:good_trajectories}, the trajectory $(\bx,\btheta)$ satisfies $\bx(s)\geq (\alpha^* + \e/2) s^{3/2}$ for all $s \in [0,1]$.  It follows that $U_{\alpha^* + \e/2, 1/2}(x,\theta) = U_{\alpha^*+\e/2}(x,\theta)$, which implies,
\[
	u^*(1,x,\theta) \leq -  U_{\alpha^* + \e/2}(\alpha^* + \e/2, \theta).
\]
For notational ease, let $r = \min_{\theta'} U_{\alpha^* + \e/2}(\alpha^*+\e/2,\theta')$.  According to \Cref{lem:well-defined-alpha} and the definition of $\alpha^*$, we have $r>0$; thus, we find
\[
	u^*(1,x,\theta) \leq
		-r  < 0.
\]

We now use the negativity of $u^*$ to show that $f$ is small beyond $(\alpha^* + \e) t^{3/2}$ for large times, which provides a contradiction.  From the definition of $u^*$, it follows that there exists $h_0>0$ such that if $h \leq h_0$, then, for all $x \in (\alpha^*+2\e/3,2)$ and all $\theta \in (h,4)$,
\[
	f\left (\dfrac 1h,\dfrac x{h^{3/2}}, \dfrac \theta h\right )
		= \exp\left ( \dfrac{u^h(1,x,\theta)}h\right 
	)	\leq \exp\left(-\frac{r}{2h}\right).
\]
Hence, if $t \geq 1/h_0$, $x \in \left ((\alpha^* + 2\e/3)t^{3/2}, 2t^{3/2}\right )$ and $\theta \in (1, 4t)$, then
\[
	f(t,x, \theta)
		\leq \exp\left ( -\dfrac{r t}2\right ),
\]
which implies,
\begin{equation}\label{eq:rho1}
	\int_1^{4t} f(t,x,\theta) d\theta
		\leq  (4t-1) \exp\left ( -\dfrac{r t}2\right ) .
\end{equation}
On the other hand, by \cite[Equation (3.5)]{BHR_acceleration}, there exists a positive constant $C$, depending only on the initial data $f_0$ such that $f(t,x,\theta) \leq C e^{t - \theta^2/4t}$, for all $(t,x,\theta)$.  Hence, 
\begin{equation}\label{eq:rho2}
	\int_{4t}^\infty f(t,x,\theta) d\theta
		\leq \int_{4t}^\infty C \exp\left ( t- \frac{\theta^2}{4t}\right  )d\theta
		\leq C e^{-3t}.
\end{equation}
The combination of~\eqref{eq:rho1} and~\eqref{eq:rho2} implies
\[
	\limsup_{t\to\infty} \left (  \sup_{x \in ((\alpha^* + 2\e/3)t^{3/2}, 2t^{3/2})} \rho(t,x) \right ) = 0.
\]
To rule out the other part of the domain, we apply \cite[Theorem 1.2]{BHR_acceleration}, which implies,
\[
	\limsup_{t\to\infty} \left (  \sup_{x > (4/3)t^{3/2}} \rho(t,x)\right ) = 0.
\]
Combining these two estimates yields
\[
	\limsup_{t\to\infty} \left (  \sup_{x > (\alpha^* + 2\e/3)t^{3/2}} \rho(t,x) \right ) = 0.
\]
This contradicts~\eqref{eq:contradiction_assumption}, since the latter condition implies that
\[
	\liminf_{t\to\infty}\left (  \min_{x \leq (\alpha^* + \e)t^{3/2}} \rho(t,x) \right ) \geq 1/2.
\]
The proof is complete.
\end{proof}

\section{Basic properties of the minimizing problem $U_{\alpha,\mu}$}\label{sec:basic_trajectories}

In this section we prove some basic properties of the trajectories.  Namely, we give the proofs of \Cref{lem:minimizer_existence}, and \Cref{lem:well-defined-alpha}.  We also conclude with the reformulation of the minimization problem in the self-similar variables.

\subsection{The existence of a minimizing trajectory -- \Cref{lem:minimizer_existence}}
\label{sec:41}

The existence of minimizers is a delicate issue due to the discontinuity in the Lagrangian $L_{\alpha,\mu}$. From our qualitative analysis in the sequel, we show that optimal trajectories eventually stick to the line of discontinuity for periods of time. Therefore, the value of the Lagrangian on this line matters. As an illustration of the subtlety of this issue, notice that replacing $\1_{\left\{ x < \alpha t^{3/2}\right\}} $ by $\1_{\left\{ x \leq \alpha t^{3/2}\right\}}$ would break down the existence of minimizers. In the latter case, a minimizing sequence would approach the line without sticking to it (details not shown).

\begin{proof}
Take any minimizing sequence $(\bx_n, \btheta_n) \in \mA(x,\theta)$ such that
\begin{equation}\label{eq:c2}
	U_{\alpha,\mu}(x,\theta)
		= \lim_{n\to\infty} \int_0^1 L_{\alpha,\mu}(t,\bx_n(t),\btheta_n(t), \dot \bx_n(t), \dot\btheta_n(t))\, dt.
\end{equation}
It is clear that $\btheta_n(t)$ remains uniformly bounded above. Further, for any $\epsilon>0$, $\btheta_n(t)$ remains uniformly bounded away from $0$ for all $t \in [\epsilon,1]$.  These two facts are heuristically clear; for a proof see \cite[Appendix A]{HendersonPerthameSouganidis}.

As a result of the uniform upper bound on $\btheta_n$, we obtain a uniform $H^1$ bound on $(\bx_n,\btheta_n)$, implying that, up to extraction of a subsequence, $(\bx_{n}, \btheta_{n})  \rightharpoonup  (\bx,\btheta)$ for some trajectory $(\bx,\btheta) \in H^1$.  This convergence is strong in $\mathcal C^0$ due to the Sobolev embedding theorem.   In addition, because $\btheta_n$ is bounded away from zero and $\btheta_n\to \btheta$ in $\mathcal C^0$, $\btheta$ is bounded away from zero.  It thus follows that $\dot \bx_n /  \sqrt{\btheta_n} \rightharpoonup \dot \bx / \sqrt{\btheta}$.  

From above, we obtain two important facts that allow to conclude.  First, $(\bx, \btheta) \in \mA(x,\theta)$.  Second, using~\eqref{eq:c2}, Fatou's lemma, and the lower semi-continuity of $L_{\alpha,\mu}$, we see that 
\begin{equation*}
\begin{split}
	U_{\alpha,\mu}(x,\theta)
		&= \lim_{n\to\infty} \int_0^1 L_{\alpha,\mu}(t, \bx_n(t), \btheta_n(t), \dot \bx_n(t), \dot \btheta_n(t)) dt\\
		&\geq \int_0^1 \liminf_{n\to\infty} L_{\alpha,\mu}(t, \bx_n(t), \btheta_n(t), \dot \bx_n(t), \dot \btheta_n(t)) dt\\
		& \geq \int_0^1 L_{\alpha,\mu}(t, \bx(t), \btheta(t), \dot \bx(t), \dot \btheta(t)) dt
		\geq U_{\alpha,\mu}(x,\theta).
\end{split}
\end{equation*}
The last inequality follows from the definition of $U_{\alpha,\mu}$ and the fact that $(\bx,\btheta) \in \mA(x,\theta)$.  Hence, the inequalities must all be equalities above, implying that $(\bx,\btheta)$ is truly a minimizing trajectory, which finishes the proof.
\end{proof}

\subsection{Proof that $\alpha^*$ is well-defined -- \Cref{lem:well-defined-alpha}}

\begin{proof}
First we observe that the $U_\alpha(x,\theta)$ is increasing in $\alpha$ simply because $L_\alpha$ is increasing in $\alpha$.  To see the fact that it is strictly increasing in $x$ when $x\geq \alpha$, we fix $x\geq \alpha$,  $\theta>0$ and any $h>0$. Consider an admissible minimizing trajectory $(\bx_h(s),\btheta_h(s))$ such that $(\bx_h(0),\btheta_h(0)) = (0,0)$ and $(\bx_h(1),\btheta_h(1)) = (x+h,\theta)$.

Define $s_h = \sup\left\{s: \bx_h(s) = x\right\}$.  Notice that $s_h$ is well-defined due to the continuity of $\bx_h$ established above, along with the fact that $\bx_h(0) < x < \bx_h(1)$.  We also note that $\bx_h(s) \geq \alpha s^{3/2}$ for all $s\geq s_h$.

We construct a trajectory connecting the origin and $(x,\theta)$.  Let $\bx(s) = \int_0^{\min\left\{s,s_h\right\}} \dot \bx_h(s')ds'$.  It follows from the definition of $s_h$ that $\bx(1) = x$, $\bx_h(s) = \bx(s)$ for all $s \leq s_h$, and $\bx_h(s) = x \geq  \alpha s^{3/2}$ for all $s \in  [ s_h,1]$.  Further, it is clear that $(\bx,\btheta_h) \in \mA(x,\theta)$.  Hence,
\begin{equation}
\label{eq: Ualpha}
\begin{split}
	U_\alpha(x,\theta)
		&\leq \int_0^1 L_\alpha(s, \bx(s), \btheta_h(s), \dot \bx(s), \dot \btheta_h(s)) ds\\
		&= \int_0^{s_h} L_\alpha(s, \bx_h, \btheta_h(s), \dot \bx_h(s), \dot \btheta_h(s)) ds
			+ \int_{s_h}^1 \left[\frac{|\dot\btheta_h(s)|^2}{4} - 1 \right ] ds\\
		&= \int_0^1 L_\alpha(s, \bx_h, \btheta_h(s), \dot \bx_h(s), \dot \btheta_h(s)) ds
			- \int_{s_h}^1  \frac{|\dot \bx_h(s)|^2}{4\btheta_h(s)}  ds.
\end{split}
\end{equation}
Since $\bx_h(1) = x+h > x = \bx_h(s_h)$ and since $s_h < 1$, it follows that
\[
	\int_{s_h}^1 \frac{|\dot \bx_h(s)|^2}{4\btheta_h(s)} ds > 0.
\]
Using these two facts to bound the right-hand side of the last line in (\ref{eq: Ualpha}) from above yields
\[
	U_\alpha(x,\theta)
		< \int_0^1 L_\alpha(s, \bx_h(s), \btheta_h(s), \dot \bx_h(s), \dot \btheta_h(s)) ds
		= U_\alpha(x_0+h,\theta_0),
\]
finishing the proof that $U_\alpha$ is strictly increasing with respect to $x \geq \alpha$.

We now prove that $\min_\theta U_{4/3}(4/3,\theta)>0$. For this, we first recall the particular trajectories that were computed in \cite{BHR_acceleration}, in the case without growth saturation,  i.e., when $\alpha = 0$ (those computations were originally derived for~\cite{BCMetal}, though they are not explicitly written there, so we provide~\cite{BHR_acceleration} as a reference instead).    
It was shown that the minimum of $U_0(4/3,\cdot)$ is reached at $\theta =1$, \textcolor{bl}{with $U_0(4/3,1)=0$}.  Let $(\bx_0,\btheta_0)$ be the optimal trajectory associated with the endpoint $(4/3,1)$. Then, 
$\bx_0$ has the following simple expression: 
\[
\bx_0(t)= \frac{4}{3}\left(\frac{3-t}{2}\right) t^2.
\]
A crucial observation is that $\bx_0$ is always to the left of the barrier associated with $\alpha = 4/3$, i.e.,
\begin{equation}\label{eq:barrier 4/3}
\bx_0(t) < \frac43 t^{3/2} \quad \text{ for all } t\in (0,1).
\end{equation}
Indeed, 
\[
	\frac{4}{3}t^{3/2} - \bx_0(t)
		= \frac{4}{3}t^{3/2} - \frac{4}{3}\left(\frac{3-t}{2}\right) t^2
		= \frac{2}{3} t^{3/2} \left (t^{1/2} - 1\right )^2 \left (t^{1/2} + 2\right ) > 0.
\]
Next, let $(\bx,\btheta)$ be a minimizing trajectory associated with $\alpha = 4/3$, that is,
\[
	\min_\theta U_{4/3}(4/3,\theta)
		= \int_0^1 L_{4/3}(t,\bx(t),\btheta(t),\dot \bx(t),\dot\btheta(t))dt.
\]
There are two options. On the one hand, assume that $(\bx,\btheta) = (\bx_0,\btheta_0)$. 
Then, we deduce from \eqref{eq:barrier 4/3} that saturation is always at play, hence
\[
	\min_\theta U_{4/3}(4/3,\theta)
		= \int_0^1\left[ \frac{|\dot \bx(t)|^2}{4\btheta(t)} + \frac{|\dot \btheta(t)|^2}{4} \right] dt > 0.
\]
On the other hand, assume  that $(\bx,\btheta)\neq (\bx_0, \btheta_0)$.  Then
\[	\min_\theta U_{4/3}(4/3,\theta)
		\geq \int_0^1\left[ \frac{|\dot \bx(t)|^2}{4\btheta(t)} + \frac{|\dot \btheta(t)|^2}{4} -1 \right] dt
		 >   \int_0^1 \left [ \frac{|\dot{\bx_0}(t)|^2}{4\btheta_0(t)}+\frac{|\dot{\btheta_0}(t)|^2}{4}- 1\right ]\, dt 
		 = 0.
\]
Here, the strict inequality follows from the uniqueness of the minimizing trajectory $(\bx_0,\btheta_0)$ for the associated minimizing problem.  This concludes the proof of the positivity of $\min_\theta U_{4/3}(4/3,\theta)$.

The last step consists in proving that $\min_\theta U_{5/4}(5/4, \theta)< 0$. 
To this end, we define a particular trajectory $(\bx,\btheta)\in \mathcal{A}(5/4, 1)$ by,
\[
\bx(t) = \frac{5}{4}t^{3/2}, \   \   
\btheta(t) = 
\begin{cases}
\dfrac{3}{2}t & \text{ for } 0\leq t<\dfrac{1}{3},\medskip\\
\dfrac{3}{4}t+\dfrac{1}{4}& \text{ for } \dfrac{1}{3}<t\leq 1.
\end{cases}
\]
We establish 
\begin{equation}
\label{eq:L5/4}
\int_0^1L_{5/4}(t,\bx(t), \btheta(t), \dot{\bx}(t), \dot{\btheta}(t))\, dt < 0,
\end{equation}
which allows us to conclude.

By construction, we have $\1_{\left (-\infty, (5/4) t^{3/2}\right )}(\bx(t))=0$ for all $t$, and $\dot{\bx}(t)=(15/8)t^{1/2}$, and 
\[
\dot{\btheta}(t) = 
\begin{cases}
\dfrac{3}{2} & \text{ for } 0\leq t<\dfrac{1}{3},\medskip\\
\dfrac{3}{4}& \text{ for } \dfrac{1}{3}<t\leq 1.
\end{cases}
\]
Using this in the definition of $L_{5/4}$ yields,
\begin{align*}
L_{5/4}(t,\bx(t), \btheta(t), \dot{ \bx}(t), \dot{\btheta}(t)) &= 
\begin{cases}
\dfrac{5^2\cdot 3+3^2\cdot 2^3}{2^7} -1 & \text{ for } 0\leq t<\dfrac{1}{3},\medskip\\
\displaystyle \left(\frac{15}{8}\right)^2\frac{t}{3t+1} + \frac{3^2}{2^6}-1  & \text{ for } \dfrac{1}{3}<t\leq 1.
\end{cases}
\end{align*}
Integrating and then rearranging gives,
\begin{align*}
\int_0^1L_{5/4}(t,\bx(t), \btheta(t), \dot{ \bx}(t), \dot{\btheta}(t))\, dt &=\frac{1}{3}\left(\frac{5^2\cdot 3+3^2\cdot 2^3}{2^7}\right) + \frac{2}{3}\frac{3^2}{2^6} + \left(\frac{15}{8}\right)^2\int_{1/3}^{1}\frac{t}{3t+1}\, dt-1
\\& = \frac{61}{ 2^7}+\frac{5^2}{8^2}(2-\ln 2)-1 = \frac{25}{64}\left ( \dfrac{33}{50} - \ln 2 \right )(\approx -.01)<0
\end{align*}
Hence  \eqref{eq:L5/4}   holds.
This concludes the proof of the lemma.
\end{proof}

\subsection{Reformulation of the minimization problem in the self-similar variables}\label{sec:self-similar}

In \eqref{eq:trajectories} and \eqref{eq:Hopf_Cole} we use the scaling properties of our problem. Here, we go a step further, as we reformulate the  minimization problem \eqref{eq:minimization}  in self-similar coordinates. 
We transform each trajectory $(\bx(t),\btheta(t))$ for $t\in (0,1)$ into the new $(\by(s),\beeta(s))$, $s\in (-\infty, 0)$ as follows
\begin{equation*}
\begin{cases}
\bx( t ) &=  t^{3/2}\by(\log t),\medskip\\
\btheta(t) &= t \beeta(\log t) .
\end{cases}
\end{equation*}
Note that the endpoint is not changed: $(\by(0),\beeta(0)) = (x,\theta)$. 
The minimization problem \eqref{eq:minimization} is equivalent to the following one:
\begin{equation}\label{eq:min self sim}
	U_{\alpha,\mu}(x,\theta) = \inf \left\{\int_{-\infty}^0 \bL_{\alpha,\mu}(\by(s), \beeta(s), \dot \by(s), \dot \beeta(s)) e^{s}\, ds : (\by(\cdot),\beeta(\cdot)) \in \bA(x,\theta)\right\},
\end{equation} 
where the autonomous Lagrangian $\bL_{\alpha,\mu}$ is given by
\begin{equation}
	\bL_{\alpha,\mu}(y,\eta,v_y,v_\eta) = \frac{1}{4\eta}\left(v_y + \frac32 y\right)^2 + \frac{1}{4}\left(v_\eta + \eta\right)^2 - 1 +  \mu \1_{\left\{y<\alpha\right\}}\, , 
\label{eq:Lagrangian ss} 
\end{equation}
and the set of admissible trajectories is given by
\begin{multline}	
\label{eq:self-similar_admissible}
\bA(x,\theta) =
		\left\{ (\by,\beeta): \R_- \to \R\times\R_+: \bL_{\alpha,\mu}(\by(s), \beeta(s), \dot \by(s), \dot \beeta(s)) e^{s} \text{ is integrable, and } \vphantom{\lim_{s\to-\infty}}\right.
\\
\left.	 	 \lim_{s\to-\infty} e^{3s/2} \by(s) = 0\,, \lim_{s\to-\infty} e^s \beeta(s) = 0\,, (\by(0),\beeta(0)) = (x,\theta)\right\}.
\end{multline}

In view of the discontinuity in the Lagrangian along the line $\left\{y = \alpha\right\}$, we expect interesting dynamics as $\by(s)$ approaches $\alpha$. We prove in the next section that   the line $\left\{y = \alpha\right\}$ acts as a barrier for the optimal trajectories that end in the area $\left\{y\geq \alpha\right\}$, provided that $\mu$ is not too small  and $\alpha$ is not too large, as stated in \Cref{lem:good_trajectories}.

Due to the natural scaling of the problem, it is often convenient notationally to let
\begin{equation}\label{eq:alpha_bar}
	\overline\alpha = \frac{3\alpha}{4}.
\end{equation}

\section{Qualitative properties of trajectories -- \Cref{lem:good_trajectories}} \label{subsec:proof of good trajectories}

The next result is a reformulation of \Cref{lem:good_trajectories} using the self-similar coordinates introduced in \Cref{sec:self-similar}. 
\begin{lemma}\label{lem:barrier}
Suppose that $2 \mu\geq \overline\alpha^{4/3}$. Let $(x,\theta) \in \R\times\R_+^*$ be an endpoint such that $x\geq \alpha$. Then any optimal trajectory $(\by,\beeta) \in \bA(x,\theta)$ of \eqref{eq:min self sim} satisfies $\by(s)\geq \alpha$ for all $s\in (-\infty,0]$.
\end{lemma}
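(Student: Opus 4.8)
The plan is to argue by contradiction: suppose an optimal trajectory $(\by,\beeta)$ with endpoint $(x,\theta)$, $x \geq \alpha$, dips strictly below the barrier, i.e., $\by(s_0) < \alpha$ for some $s_0 < 0$. Since $\by(0) = x \geq \alpha$ and $\by$ is continuous (by the $H^1$ regularity established in the proof of \Cref{lem:minimizer_existence}), there is a maximal interval $(\underline s, \overline s) \ni s_0$ on which $\by < \alpha$, with $\by(\overline s) = \alpha$ (and either $\by(\underline s) = \alpha$ or $\underline s = -\infty$ with the decay condition in \eqref{eq:self-similar_admissible}). The idea is to replace the portion of the trajectory on $(\underline s, \overline s)$ by a \emph{competitor} that stays on the side $\{y \geq \alpha\}$ — indeed, one that sticks to the line $\{y = \alpha\}$ — and show that the action strictly decreases, contradicting optimality. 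The competitor keeps $\beeta$ essentially unchanged (or re-optimizes it) but forces $\by \equiv \alpha$ on the relevant time interval, so it differs from the original trajectory precisely in that it no longer pays the penalty $\mu \1_{\{y < \alpha\}}$, while the kinetic cost $\frac{1}{4\eta}(v_y + \frac32 y)^2$ changes in a controlled way.

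The key computation is to compare the two kinetic contributions. On $(\underline s, \overline s)$ the original trajectory pays $\int \big[\frac{1}{4\beeta}(\dot\by + \tfrac32\by)^2 + \frac14(\dot\beeta+\beeta)^2 - 1 + \mu\big] e^s\, ds$, whereas the competitor (with $\by \equiv \alpha$, hence $\dot\by \equiv 0$) pays $\int \big[\frac{1}{4\beeta}(\tfrac32\alpha)^2 + \frac14(\dot\beeta+\beeta)^2 - 1\big] e^s\, ds$; the $\beeta$-terms and the $-1$ cancel. So optimality would force
\[
\int_{\underline s}^{\overline s} \frac{1}{4\beeta(s)}\Big[\Big(\dot\by(s) + \tfrac32\by(s)\Big)^2 - \Big(\tfrac32\alpha\Big)^2\Big] e^s\, ds \;\leq\; -\mu \int_{\underline s}^{\overline s} e^s\, ds,
\]
i.e. the original must \emph{save} at least $\mu\int e^s$ in kinetic energy to justify the penalty it incurs. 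The heart of the matter is to show this is impossible once $2\mu \geq \overline\alpha^{4/3}$. I would expand $(\dot\by + \tfrac32\by)^2 - (\tfrac32\alpha)^2 = (\dot\by + \tfrac32\by - \tfrac32\alpha)(\dot\by + \tfrac32\by + \tfrac32\alpha)$ and integrate by parts to handle the cross term $\tfrac32\dot\by$, using $\by(\overline s) = \alpha$ and the boundary behavior at $\underline s$; the scaling constant $\overline\alpha = 3\alpha/4$ is precisely what makes the threshold clean. One also needs the lower bound on $\beeta$: on $(\underline s,\overline s)$ one should control how small $\beeta$ can get — a very small $\beeta$ makes kinetic energy expensive, which only helps, but one must ensure the competitor's action is still finite, which follows since $\beeta$ is bounded away from $0$ on compact time intervals (again from \cite[Appendix A]{HendersonPerthameSouganidis}, invoked in \Cref{lem:minimizer_existence}) and the anomalous scaling \eqref{eq:theta scaling sec 3} governs the $s \to -\infty$ tail.

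The main obstacle I anticipate is the case $\underline s = -\infty$, where the excursion below $\alpha$ reaches all the way back to the origin: there one cannot simply ``glue'' at a finite left endpoint, and must instead use the self-similar decay constraints $e^{3s/2}\by(s) \to 0$ and $e^s\beeta(s) \to 0$ together with the integrability of $\bL\, e^s$ to make the integration-by-parts boundary terms vanish and to verify the competitor still lies in $\bA(x,\theta)$. A secondary subtlety is that the competitor with $\by \equiv \alpha$ might not itself be admissible if it forces a discontinuity in $\dot\beeta$ or violates regularity; this is handled by a standard mollification/density argument (as used elsewhere in the paper, e.g. around \eqref{eq:lb_Hamiltonian}), accepting an arbitrarily small loss, which is absorbed because the inequality we derive is strict. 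Once the contradiction is obtained on every maximal excursion, we conclude $\by(s) \geq \alpha$ for all $s \leq 0$, which is exactly the statement; the final sentence of \Cref{lem:good_trajectories} (that $U_{\alpha,\mu} = U_\alpha$ on $\{x \geq \alpha\}$) then follows immediately, since a trajectory staying in $\{y \geq \alpha\}$ never sees the penalty term, so its action is independent of $\mu$.
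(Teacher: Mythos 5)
Your setup is sound and, for the case of a finite excursion (what the paper calls a ``C-turn''), coincides with the paper's own strategy: project the $y$-component onto the line $\{y=\alpha\}$, keep $\beeta$, and reduce the comparison to showing
\[
\int_{\underline s}^{\overline s}\frac{1}{4\beeta(s)}\Bigl[\bigl(\dot\by+\tfrac32\by\bigr)^2-\bigl(\tfrac32\alpha\bigr)^2\Bigr]e^s\,ds \;>\; -\mu\int_{\underline s}^{\overline s}e^s\,ds,
\]
which is exactly the paper's identity \eqref{eq:Jmu - J0}. The problem is that you stop precisely where the proof actually lives. The step you defer --- ``expand the difference of squares and integrate by parts to handle the cross term'' --- does not deliver the threshold $2\mu\ge\overline\alpha^{4/3}$. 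In the paper that threshold emerges from a chain of quantitative inputs your sketch never supplies: (i) the explicit solution \eqref{eq:traj_optim} of the Hamiltonian system on the excursion, which turns the integrand into $A^2e^s\beeta(s)-\overline\alpha^2/\beeta(s)$; (ii) the monotonicity of $\beeta$ along optimal trajectories (\Cref{lem:monotonicity}, itself a nontrivial distributional argument), used to bound $B$ from above and hence $\beeta(s)$ from below; (iii) the constraint $\dot\by\ge 0$ at the right contact time, giving $\theta_0 A\ge\overline\alpha$; and (iv) an explicit minimization over $\theta_0$, attained at $\theta_0^3=4\overline\alpha^2(1-e^{s_1})/3$, which is the sole source of the exponent $4/3$. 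Without (iv) there is no mechanism in your argument that could produce $\overline\alpha^{4/3}$ at all. Relatedly, your heuristic that ``a very small $\beeta$ makes kinetic energy expensive, which only helps'' is incorrect: the competitor confined to the line pays $\overline\alpha^2/\beeta$ per unit weighted time, which also blows up as $\beeta\to0$, so small $\beeta$ hurts the competitor as well; the genuine worst case is an intermediate $\theta_0$ identified by the optimization above.

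A second, smaller issue: in the case $\underline s=-\infty$ the paper does not use your competitor (project $\by$, keep $\beeta$) but compares with the fully steady trajectory $(\alpha,\theta_0)$, replacing $\beeta$ as well; the decay conditions at $s=-\infty$ then pin down $A$ and $B$ via \eqref{eq:condition at oo lem 3.2}, and the estimate proceeds through the change of variables $a=\theta_0A/\overline\alpha$, $b=A/\overline\alpha^{1/3}$ with $3a/4+b^3/4=1$ and a monotonicity argument in $a$. Your competitor could plausibly be made to work there too, but again only after the explicit computation and an optimization over $\theta_0$, neither of which you carry out. As written, the proposal is a correct reduction plus a plan for the decisive estimate, not a proof of it.
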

That is, if $\by$ ends beyond the line $\left\{y = \alpha\right\}$, then it never crosses the line.  It is clear that this is a consequence of the following two lemmas.

\begin{lemma}[No single crossing]\label{lem:No single crossing}
With the same assumptions as in Lemma \ref{lem:barrier}, consider a  trajectory which crosses the line $\left\{y=\alpha\right\}$ only once,  that is,  there exists $s_0$ such that for all $s \in \textcolor{bl}{[s_0,0)}$, $\by(s) \geq \alpha$ and for all $s< s_0$, $\by(s) < \alpha$. Then it cannot be an optimal one.  
\end{lemma}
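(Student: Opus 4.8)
The plan is to argue by contradiction: suppose that a trajectory $(\by,\beeta)$ crossing $\{y=\alpha\}$ exactly once at time $s_0<0$ (with $\by(s)<\alpha$ for $s<s_0$ and $\by(s)\geq\alpha$ for $s\in[s_0,0)$) is optimal, and derive a contradiction by constructing a cheaper competitor. The natural candidate competitor is the one that ``sticks to the barrier'' on $(-\infty,s_0]$ in the place of the part of $\by$ that dips to the left of $\alpha$: i.e., on $(-\infty, s_0]$ replace $\by$ by the constant $\alpha$ (so $v_y = \dot\by = 0$ there, and $\mathds{1}_{\{y<\alpha\}}=0$ by the strict inequality in the indicator), and keep $\beeta$ either unchanged or suitably modified so that the $\eta$-part of the action does not increase and the admissibility conditions $\lim_{s\to-\infty}e^{3s/2}\by(s)=0$, $\lim_{s\to-\infty}e^s\beeta(s)=0$ still hold. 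Since on $(-\infty,s_0]$ the original trajectory has $\by(s)<\alpha$, it pays the penalty $\mu$, whereas the competitor pays nothing there; one must check that the geometric cost saved by this penalty-avoidance is not outweighed by the cost of forcing $\by\equiv\alpha$ (which kills the $\tfrac{1}{4\eta}(v_y+\tfrac32 y)^2$ flexibility and replaces it by the fixed value $\tfrac{1}{4\eta}\cdot\tfrac94\alpha^2$). This is exactly where the hypothesis $2\mu\geq\overline\alpha^{4/3}$ enters: the worst-case extra cost of the ``stuck'' arc, optimized over how long one stays stuck and over the $\beeta$ profile, should come out to be at most $\mu$ per unit of (exponentially weighted) time, so that the penalty-avoidance always wins.

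Concretely, I would proceed as follows. \textbf{Step 1.} Using the strong Markov/dynamic-programming structure of \eqref{eq:min self sim}, localize the comparison to the half-line $(-\infty,s_0]$: it suffices to show that the ``sub-action'' of the original arc on $(-\infty,s_0]$ with free initial data (subject to the decay conditions) and terminal condition $(\by(s_0),\beeta(s_0))=(\alpha,\beeta(s_0))$ is strictly larger than that of a competing arc that stays on $\{y\ge\alpha\}$ (in fact on $\{y=\alpha\}$). \textbf{Step 2.} Lower-bound the original sub-action: on $(-\infty,s_0]$ it equals $\int_{-\infty}^{s_0}\big[\tfrac{1}{4\beeta}(\dot\by+\tfrac32\by)^2+\tfrac14(\dot\beeta+\beeta)^2-1+\mu\big]e^s\,ds$, and one may drop the nonnegative $(\dot\by+\tfrac32\by)^2$ term only with care — better, keep it, since it will be needed. \textbf{Step 3.} Upper-bound the competitor's sub-action: with $\by\equiv\alpha$ on $(-\infty,s_0]$ the cost is $\int_{-\infty}^{s_0}\big[\tfrac{1}{4\beeta}\cdot\tfrac94\alpha^2+\tfrac14(\dot\beeta+\beeta)^2-1\big]e^s\,ds$, and one then optimizes over $\beeta\geq$ (something) with $\beeta(s_0)$ matched and $e^s\beeta(s)\to0$; the optimal $\beeta$ here is governed by an Euler–Lagrange ODE whose solution is explicit (this is presumably the anomalous $|\log t|^{1/3}$ scaling of \eqref{eq:theta scaling sec 3}). \textbf{Step 4.} Take the difference and show it is strictly positive under $2\mu\geq\overline\alpha^{4/3}$, using the scaling $\overline\alpha=3\alpha/4$ to recognize the critical threshold; here one invokes the strict inequality coming from the fact that the original arc must actually leave $\{y=\alpha\}$ (so $\dot\by\not\equiv0$, giving a strictly positive defect).

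\textbf{Main obstacle.} The hard part is Step 3–4: correctly identifying the optimal ``stuck'' profile for $\beeta$ on the half-line $(-\infty,s_0]$ under the exponential weight $e^s\,ds$ and the degenerate boundary behavior as $s\to-\infty$, and then verifying that the resulting cost is exactly balanced against $\mu$ at the threshold $2\mu=\overline\alpha^{4/3}$. This requires solving (and matching endpoint data for) the Euler–Lagrange equation $\ddot\beeta + \dot\beeta - \tfrac{1}{\beeta^2}\cdot\tfrac94\alpha^2 \cdot(\text{const}) = \cdots$, which is where the Airy function will ultimately appear; pinning down the precise constant so that the inequality is sharp — rather than merely true up to a multiplicative slack — is the delicate point. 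A secondary technical nuisance is handling the admissibility constraints (the two limits as $s\to-\infty$) for the modified $\beeta$, and ensuring the new trajectory stays in $\R\times\R_+$ with $\beeta$ bounded away from $0$ on compact time intervals; these are routine given the a priori bounds recalled in the proof of \Cref{lem:minimizer_existence}, but must be stated.
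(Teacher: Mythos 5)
Your overall strategy --- contradict optimality by exhibiting a cheaper competitor that sticks to the line $\{y=\alpha\}$ on $(-\infty,s_0]$ --- is the same as the paper's, and your Step 1 (the reduction via dynamic programming to comparing two trajectories in $\bA(\alpha,\beeta(s_0))$ after a time shift) is correct. But there is a genuine gap in Steps 2--4: you never extract the quantitative information about the original arc that makes the comparison close, and without it the difference of actions has no definite sign. After the reduction, the original arc is a free-motion solution of the Hamiltonian system \eqref{eq:hamiltonian syst2} on all of $(-\infty,s_0)$, hence is given explicitly by \eqref{eq:traj_optim}; the decay conditions in \eqref{eq:self-similar_admissible} force $\theta_0=2B+A^2$ and $\alpha=\theta_0A+\tfrac13A^3$, and the single-crossing hypothesis forces $\dot\by(s_0)\ge 0$, i.e.\ $\theta_0A\ge\overline\alpha$. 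These three relations are the real content of the lemma: they give the constant running cost $\theta_0A^2+B^2+\mu-1=\tfrac12\theta_0A^2+\tfrac14\theta_0^2+\tfrac14A^4+\mu-1$ for the original arc, so the inequality to be proved reduces to the algebraic statement $\tfrac12\theta_0A^2+\tfrac14A^4-\overline\alpha^2/\theta_0\ge-\mu$, verified via the substitution $a=\theta_0A/\overline\alpha\in[1,4/3]$, $b=A/\overline\alpha^{1/3}$ with $\tfrac34 a+\tfrac14 b^3=1$ and a monotonicity argument in $a$. A bound that merely ``keeps the nonnegative terms,'' as in your Step 2, cannot work: keeping $\beeta$ unchanged and dropping the $y$-kinetic term leaves a pointwise difference of Lagrangians equal to $\mu-\overline\alpha^2/\beeta(s)$, whose sign depends on $\theta_0$, which is not under your control.

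You also mislocate the difficulty. Your Step 3 proposes to optimize the competitor's $\beeta$-profile on the line, which drags in the Euler--Lagrange dynamics \eqref{eq:ODE line} and ultimately the Airy function; none of that is needed here. The paper compares against the crude \emph{steady} competitor $(\tilde\by,\tilde\beeta)\equiv(\alpha,\theta_0)$ --- explicitly noting it is by no means globally optimal --- whose cost is simply $\overline\alpha^2/\theta_0+\theta_0^2/4-1$, and this already suffices. (Relatedly, this lemma only uses the weaker condition $\overline\alpha^{4/3}<4\mu$; the full strength $2\mu\ge\overline\alpha^{4/3}$ is reserved for the C-turn lemma.) The Airy analysis you anticipate belongs to the later explicit characterization of $\alpha^*$, not to this non-crossing argument.
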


\begin{figure}
\begin{center}
\includegraphics[width = 0.5\linewidth]{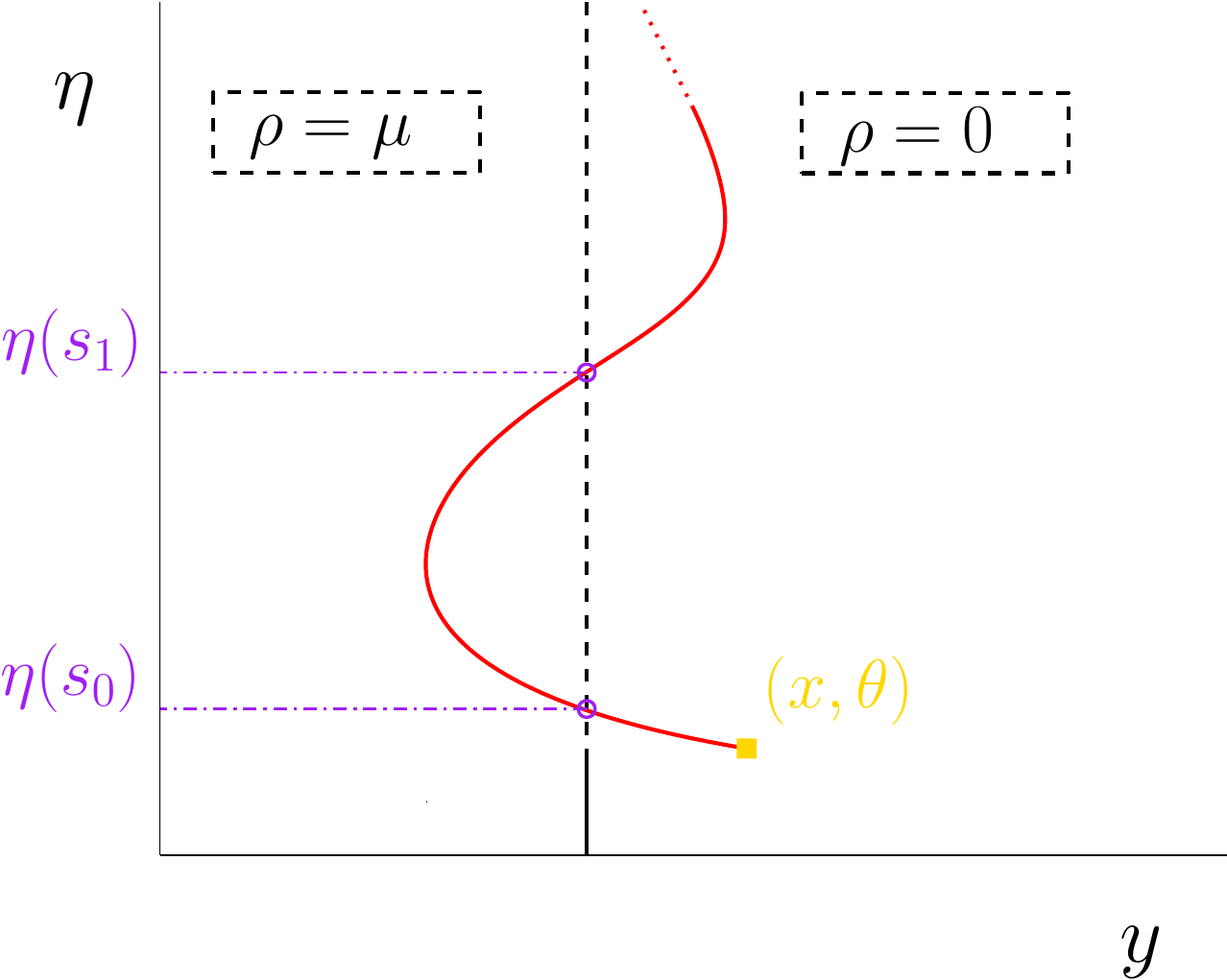} 
\caption{Sketch of a  C-turn as the trajectory crosses the line twice. From \Cref{lem:No single crossing}, we see that this trajectory cannot be optimal.}
\label{fig:downward uturn}
\end{center}
\end{figure}

\begin{lemma}[No C-turn]\label{lem:No downward U-turn}
With the same assumptions as in Lemma \ref{lem:barrier}, consider a trajectory $(\by,\beeta) \in \bA(x,\theta)$, which crosses the line $\left\{y=\alpha\right\}$ at least twice (see \Cref{fig:downward uturn}), i.e.~there exists $s_1<s_0\leq 0$ such that $\by(s_0) = \by(s_1) = \alpha$ and $\by(s) < \alpha$ for all $s \in (s_1,s_0)$. 
Then it cannot be an optimal one.
\end{lemma}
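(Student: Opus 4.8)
The statement to prove is Lemma~\ref{lem:No downward U-turn}, which together with Lemma~\ref{lem:No single crossing} yields Lemma~\ref{lem:barrier}, hence Proposition~\ref{lem:good_trajectories}. I will argue by contradiction: suppose $(\by,\beeta)$ is optimal and performs a C-turn, so there are times $s_1 < s_0 \le 0$ with $\by(s_0)=\by(s_1)=\alpha$ and $\by(s) < \alpha$ on $(s_1,s_0)$. The idea is to replace the ``excursion'' of the curve into the region $\{y<\alpha\}$ on the interval $[s_1,s_0]$ by a cheaper path, thereby contradicting optimality. The cheapest candidate replacement is simply to stay on the barrier: set $\tilde\by(s) \equiv \alpha$ for $s\in[s_1,s_0]$, and keep $\beeta$ essentially unchanged (or adjust it smoothly; since $\by$ and $\beeta$ decouple in the kinetic part of $\bL_{\alpha,\mu}$ up to the weight $1/(4\eta)$, one can keep the original $\beeta$ on $[s_1,s_0]$ and splice). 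One must check this spliced trajectory is admissible, i.e.\ lies in $\bA(x,\theta)$ — the endpoint and the decay conditions at $-\infty$ are untouched since we only modify a compact subinterval, and continuity at $s_1,s_0$ holds because $\by(s_1)=\by(s_0)=\alpha=\tilde\by$.

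The core computation is to compare the two actions over $[s_1,s_0]$. Along the original excursion, $\1_{\{\by(s)<\alpha\}} = 1$ for $s\in(s_1,s_0)$, so
\[
	\int_{s_1}^{s_0}\bL_{\alpha,\mu}(\by,\beeta,\dot\by,\dot\beeta)e^s\,ds
		= \int_{s_1}^{s_0}\left[\frac{1}{4\beeta}\left(\dot\by+\tfrac32\by\right)^2 + \frac14\left(\dot\beeta+\beeta\right)^2 - 1 + \mu\right]e^s\,ds.
\]
Along the replacement with $\tilde\by\equiv\alpha$ (so $\dot{\tilde\by}=0$ and $\1_{\{\tilde\by<\alpha\}}=0$ since the indicator is strict), the $\by$-contribution to the kinetic term becomes $\frac{1}{4\beeta}\cdot\frac94\alpha^2$, and the $\mu$ term disappears. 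Hence the cost difference is
\[
	\int_{s_1}^{s_0}\left[\frac{1}{4\beeta}\left(\dot\by+\tfrac32\by\right)^2 - \frac{9\alpha^2}{16\beeta} + \mu\right]e^s\,ds .
\]
I need this to be strictly positive. The subtlety is that $\by$ can be slightly below $\alpha$, so $\frac94\by^2$ need not dominate $\frac94\alpha^2$; but $\by<\alpha$ only on the open interval and the term $\mu$ is a genuine gain. The clean way is to use that $\frac{1}{4\beeta}(\dot\by+\tfrac32\by)^2 \ge 0$, and to bound the potential loss $\frac{9\alpha^2}{16\beeta}$ against $\mu$ using a lower bound on $\beeta$ over the excursion: this is exactly where the hypothesis $2\mu \ge \overline\alpha^{4/3} = (3\alpha/4)^{4/3}$ enters, via the anomalous scaling $\beeta(s)\sim \tfrac32\alpha^{2/3}|s|^{1/3}$ forced when the trajectory hugs the barrier (cf.\ the discussion around \eqref{eq:theta scaling sec 3} and \Cref{lem:anomalous}). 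More precisely, an optimal trajectory pinned near $\{y=\alpha\}$ cannot have $\beeta$ too large there without paying excessively in the $\frac14(\dot\beeta+\beeta)^2$ term, so one gets a bound $\beeta \le C\alpha^{2/3}$ on the relevant part of the excursion, and then $\frac{9\alpha^2}{16\beeta} \le$ const$\cdot \alpha^{4/3} \le 2\mu$ up to the constant being absorbed. Alternatively — and this is likely the argument actually used — one invokes Lemma~\ref{lem:No single crossing}: the portion of the C-turn from its lowest point back up to $s_0$ is itself a single-crossing-type arc, and gluing in the barrier segment strictly decreases the action, reducing the C-turn case to the already-excluded single-crossing case.

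The main obstacle is handling the possibility that $\beeta$ is \emph{large} on the excursion — in that case the naive loss term $\frac{9\alpha^2}{16\beeta}$ is small and the gain $\mu e^s$ easily wins, so the dangerous regime is $\beeta$ small, i.e.\ near the left endpoint in time, $s\to-\infty$. There the weight $e^s$ is also small, and one must be careful that the integrals converge and that the strict inequality is not lost in the limit; the decay hypotheses in \eqref{eq:self-similar_admissible} and the integrability of the Lagrangian are what make this rigorous. A secondary technical point is regularity: after splicing, $\tilde\by$ has corners at $s_1,s_0$, but since we are only comparing action values (not asserting the new curve is optimal) an $H^1_{\mathrm{loc}}$ competitor suffices, and admissibility in $\bA(x,\theta)$ only requires integrability plus the endpoint/decay conditions. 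I would also remark that the strictness of the indicator $\1_{\{y<\alpha\}}$ (as opposed to $\1_{\{y\le\alpha\}}$) is essential here, exactly as flagged in the discussion preceding the proof of \Cref{lem:minimizer_existence}: it is what lets the barrier segment $\tilde\by\equiv\alpha$ avoid paying the $\mu$-penalty.
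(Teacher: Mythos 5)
Your comparison trajectory is exactly the paper's: project the excursion onto the line $\left\{y=\alpha\right\}$, keep $\beeta$ unchanged, and the cost difference over $(s_1,s_0)$ is indeed $\int_{s_1}^{s_0}\big[\tfrac{1}{4\beeta}(\dot\by+\tfrac32\by)^2-\tfrac{\overline\alpha^2}{\beeta}+\mu\big]e^s\,ds$. The gap is in showing this is positive, which is where essentially all of the work in this lemma lies. Your ``clean way'' --- discard the nonnegative kinetic term and beat $\overline\alpha^2/\beeta$ with $\mu$ via a lower bound on $\beeta$ --- cannot work: on the excursion $\beeta$ is only bounded below by $\theta_0=\beeta(s_0)$, which is essentially an arbitrary endpoint trait value with no lower bound in terms of $\alpha$, and as $\theta_0\to0$ the loss $\overline\alpha^2/\theta_0$ defeats any fixed $\mu$. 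The anomalous scaling $\beeta(s)\sim\tfrac32\alpha^{2/3}|s|^{1/3}$ you invoke describes the sticking portion as $s\to-\infty$, not a compact excursion $(s_1,s_0)$ with $s_0\le0$ finite; moreover it is proved much later (\Cref{lem:anomalous}) as a consequence of the trajectory classification that itself rests on the present lemma, so invoking it here would be circular. (Note also that to bound $\overline\alpha^2/\beeta$ from above you would need a \emph{lower} bound $\beeta\gtrsim\alpha^{2/3}$, whereas you assert an upper bound.) Your fallback --- reduce to \Cref{lem:No single crossing} applied to part of the arc --- also fails: that lemma's proof crucially uses that the trajectory lies below the line on all of $(-\infty,s_0)$, so that the decay conditions at $s=-\infty$ in \eqref{eq:self-similar_admissible} determine the constants $A,B$ in \eqref{eq:traj_optim}; a finite-time excursion carries no such information.

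What actually closes the argument in the paper: since $\by(s_0)=\alpha$ is a minimum of $\by$, one has $\dot\by(s_0)\ge0$, hence $\theta_0A\ge\overline\alpha$ for the constant $A$ in the explicit free-motion solution; the monotonicity of $\beeta$ (\Cref{lem:monotonicity}) evaluated at $s_1$ yields the refined lower bound $\beeta(s)\ge\theta_0+A^2(e^{s_1}-e^s)(1-e^{-s})$; one then \emph{retains} the kinetic term $A^2\beeta e^s$ rather than discarding it, and minimizes the resulting expression over $\theta_0$, obtaining $I\ge-\mu(3/4)^{4/3}>-\mu$ precisely under the hypothesis $\overline\alpha^{4/3}\le2\mu$. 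The quartic-in-$A$ contribution coming from the refined bound on $\beeta$ is what rescues the small-$\theta_0$ regime; without it the estimate genuinely fails, so this step cannot be waved away.
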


The proof of \Cref{lem:No downward U-turn} uses the following result that deals with the monotonicity of $\beeta$ for any optimal trajectory:

\begin{lemma}[Monotonicity of $\beeta$]\label{lem:monotonicity}
If $(\by,\beeta)$ is an optimal trajectory for \eqref{eq:min self sim}, then $\beeta$ is nonincreasing over $(-\infty,0)$. 
\end{lemma}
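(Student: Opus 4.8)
The plan is to reduce the two–variable minimization~\eqref{eq:min self sim} to a one–dimensional problem in the $\eta$--variable with the $\by$--component frozen, and then to read off the monotonicity of $\beeta$ from the Euler–Lagrange equation of that reduced problem together with the asymptotics at $s=-\infty$. First I would freeze $\by$: writing $k(s):=\bigl(\dot\by(s)+\tfrac32\by(s)\bigr)^2\ge 0$, the self–similar action of $(\by,\beeta)$ equals
\[
\int_{-\infty}^0\Bigl[\frac{k(s)}{4\eta}+\frac14(\dot\eta+\eta)^2\Bigr]e^{s}\,ds\Big|_{\eta=\beeta}
 +\int_{-\infty}^0\bigl(\mu\1_{\{\by(s)<\alpha\}}-1\bigr)e^{s}\,ds ,
\]
and since the last term does not depend on $\eta$, optimality of $(\by,\beeta)$ forces $\beeta$ to minimize $J[\eta]:=\int_{-\infty}^0[\,k/(4\eta)+\tfrac14(\dot\eta+\eta)^2\,]e^{s}\,ds$ over the admissible class of positive curves $\eta$ with $\eta(0)=\theta$, $e^{s}\eta(s)\to0$ as $s\to-\infty$, and $J[\eta]<\infty$. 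Along an optimal trajectory one has $\beeta(s)>0$ for every $s\le0$, hence $\beeta$ is locally bounded below away from $0$ on $(-\infty,0]$ (this is essentially the content of \cite[Appendix~A]{HendersonPerthameSouganidis}); in particular $k/\beeta^{2}\in L^1_{\mathrm{loc}}$.

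Next I would derive the Euler–Lagrange equation for $\beeta$. Comparing $\beeta$ with the admissible competitors $\beeta\pm t\phi$ for $\phi\in C^\infty_c((-\infty,0))$ and $t>0$ small — these are admissible precisely because $\phi$ is supported in the open half–line where $\beeta$ stays away from $0$ and $\phi(0)=0$ — one gets $\tfrac{d}{dt}\big|_{t=0}J[\beeta+t\phi]=0$, i.e.\ $\int_{-\infty}^0\bigl[-\tfrac{k}{4\beeta^2}\phi+\tfrac12(\dot\beeta+\beeta)(\dot\phi+\phi)\bigr]e^{s}\,ds=0$ for all such $\phi$. Integrating by parts in the distributional sense (legitimate since $(\dot\beeta+\beeta)e^{s}\in L^1_{\mathrm{loc}}$) and simplifying yields the pointwise identity
\[
\ddot\beeta+\dot\beeta=-\frac{k}{2\beeta^{2}}\le 0\qquad\text{on }(-\infty,0),
\]
which, because its right–hand side lies in $L^1_{\mathrm{loc}}$, holds a.e.\ and shows $\dot\beeta\in W^{1,1}_{\mathrm{loc}}$. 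Equivalently $\tfrac{d}{ds}\bigl(e^{s}\dot\beeta(s)\bigr)=e^{s}(\ddot\beeta+\dot\beeta)\le0$, so the locally absolutely continuous function $s\mapsto e^{s}\dot\beeta(s)$ is nonincreasing on $(-\infty,0)$, and $\ell^\ast:=\lim_{s\to-\infty}e^{s}\dot\beeta(s)=\sup_{s<0}e^{s}\dot\beeta(s)$ exists in $(-\infty,+\infty]$.

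Then I would fix the sign of $\ell^\ast$. If $\ell^\ast>0$ (including $+\infty$), then $\dot\beeta(s)>\tfrac{\ell^\ast}{2}e^{-s}$ for $s$ below some $s_1$, and integrating gives $\beeta(s)<\beeta(s_1)-\tfrac{\ell^\ast}{2}(e^{-s}-e^{-s_1})\to-\infty$, contradicting $\beeta>0$. If $\ell^\ast<0$, then $e^{s}\dot\beeta(s)\le\ell^\ast$ for all $s$, so $\dot\beeta(s)\le\ell^\ast e^{-s}$ and integrating gives $\beeta(s)\ge\beeta(s_1)+|\ell^\ast|(e^{-s}-e^{-s_1})$, hence $e^{s}\beeta(s)\to|\ell^\ast|>0$, contradicting the admissibility condition $e^{s}\beeta(s)\to0$. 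Therefore $\ell^\ast=0$, and since $e^{s}\dot\beeta$ is nonincreasing with supremum $0$, we conclude $e^{s}\dot\beeta(s)\le0$, i.e.\ $\dot\beeta(s)\le0$, for a.e.\ $s<0$; thus $\beeta$ is nonincreasing, which is the assertion.

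The step I expect to be the main obstacle is the rigorous derivation of the Euler–Lagrange equation in the presence of the merely $L^1_{\mathrm{loc}}$ weight $k$ and of the endpoint and asymptotic constraints — this is what forces the use of compactly supported variations in the open half–line and of the local positivity of $\beeta$ — together with the asymptotic bookkeeping at $s\to-\infty$, where both $\beeta>0$ and $e^{s}\beeta(s)\to0$ are genuinely needed to rule out $\ell^\ast\ne0$. A more hands–on alternative, which I would keep in reserve, is to replace $\beeta$ by its running maximum $\hat\beeta(s):=\sup_{[s,0]}\beeta$ and to check directly, using $\hat\beeta\ge\beeta$, $\hat\beeta(0)=\beeta(0)=\theta$, and $e^{s}\hat\beeta(s),\,e^{s}\beeta(s)\to0$, that $J[\hat\beeta]\le J[\beeta]$ with equality only when $\beeta$ is already nonincreasing; but establishing the strict inequality cleanly — without invoking uniqueness of minimizers, which is only proved later — is more delicate than the Euler–Lagrange route above.
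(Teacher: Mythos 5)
Your proof is correct. The opening move---perturbing only the $\eta$-component of the optimal trajectory and extracting $\ddot\beeta+\dot\beeta\le 0$, i.e.\ $\tfrac{d}{ds}\bigl(e^{s}\dot\beeta\bigr)\le 0$---is the same as in the paper, but the two arguments diverge afterwards. The paper uses only the one-sided perturbation $\beeta+\epsilon e^{-s}\phi$ with $\phi\ge 0$ (admissible with no lower bound on $\beeta$), keeps the inequality purely distributional (the authors explicitly refuse to claim any regularity of $\dot\beeta$), and concludes with a mollified test-function construction whose final contradiction uses only $\beeta\ge 0$. You instead take two-sided variations---legitimate because $\beeta$ is locally bounded away from zero on compact subsets of $(-\infty,0]$, which you correctly import from \cite{HendersonPerthameSouganidis}---and thereby obtain the full Euler--Lagrange equality $\ddot\beeta+\dot\beeta=-k/(2\beeta^{2})$ with right-hand side in $L^1_{\rm loc}$; this upgrades $\dot\beeta$ to $W^{1,1}_{\rm loc}$ and makes $e^{s}\dot\beeta$ continuous and genuinely nonincreasing, which is what licenses your pointwise endgame on $\ell^{*}=\lim_{s\to-\infty}e^{s}\dot\beeta(s)$: you exclude $\ell^{*}>0$ via positivity of $\beeta$ and $\ell^{*}<0$ via the admissibility condition $e^{s}\beeta(s)\to 0$ (a condition the paper's proof never invokes). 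Both routes are sound: yours buys extra regularity and a more transparent conclusion at the price of needing the local lower bound on $\beeta$ and the decay condition at $-\infty$, while the paper's is leaner in hypotheses but requires the more delicate test-function argument. One cosmetic point: when $\ell^{*}=+\infty$ the bound $\dot\beeta>\tfrac{\ell^{*}}{2}e^{-s}$ should be replaced by, say, $\dot\beeta>e^{-s}$ for $s$ sufficiently negative; the argument is otherwise unaffected.
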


The proof of Lemma \ref{lem:monotonicity} is a direct consequence of the Hamiltonian dynamics associated with \eqref{eq:min self sim}. We review it briefly in the next section. The other two statements require additional conditions on $\alpha$ and $\mu$, as in Lemma \ref{lem:barrier}. They are proved in Section \ref{sec:barrier}.

\subsection{A brief overview of Hamiltonian dynamics}
\label{sec:hamiltonian}

In this section we provide some elements of the computation of the optimal trajectories that we use in the article. To this end,  it is instructive to briefly recall the basics of calculus of variations in a smooth setting. 
Let $L(X,V)$ be some smooth  Lagrangian function. Consider, for some admissible set of trajectories $\mA$ with endpoints at $x\in \R^d$, the following problem:
\begin{equation}\label{eq:abstract min}
	U(x)= \inf_{X \in \mA}\ \int_{-\infty}^0 L(X(s),\dot X(s)) e^s\, ds\, .
\end{equation}
When $L$ is smooth, one can write the Euler-Lagrange equation,
\begin{equation*}
\dfrac{d}{ds}\left (  D_V L(X(s),\dot X(s)) e^s \right ) =  D_X L(X(s),\dot X(s)) e^s\,,
\end{equation*}
for an optimal trajectory $X$. As usual, the Hamiltonian $H(X,P)$ and the Lagrangian $L(X,V)$ are related by the following convex duality:
\begin{equation*}
	H(X,P) = \sup_V ( V\cdot P - L(X,V))
	\quad \text{ and } \quad
	L(X,V) = \sup_P ( V\cdot P - H(X,P)).
\end{equation*}
Hence, the action variable,  defined as $P(s) =  D_V L(X(s),\dot X(s))$,  satisfies the following Hamiltonian system, together with the trajectory $X(s)$,
\begin{equation}\label{eq:hamiltonian syst}
\left\{\begin{array}{rl}
\dot X(s)  =&  D_P H(X(s),P(s))\\
\dot P(s) + P(s) =& D_X L(X(s),\dot X(s)) = - D_X H(X(s),P(s)).
\end{array}\right.
\end{equation}
Then, the evolution of the Hamiltonian function $H(X(s),P(s))$ along the characteristic lines, when there is enough regularity, is:
\[
	\dfrac{d}{ds}\left (  H(X(s),P(s))\right )
		= D_X H \cdot \dot X + D_P H \cdot \dot P
		= -(\dot P + P)\cdot D_P H + D_P H \cdot \dot P
		= -P \cdot D_P H.
\]
From our choice of $P$ along with the representation formula for $L$, we see that $H(X,P) + L(X, D_P H(X,P)) = P \cdot D_P H(X,P)$, so that the above becomes $\dot H + H + L = 0$, or, equivalently,
\begin{equation}\label{eq:evolution H}
\dfrac{d}{ds}\left ( H e^s \right ) + L e^s = 0\, .
\end{equation}
We deduce from \eqref{eq:evolution H} and \eqref{eq:abstract min} that $U(X(0)) = - H(X(0),P(0))$.

Finally, we point out a nice relationship between $D_X U$ and $P$:  
\begin{equation}
\label{eq:P nabla U}
P(0) = D_X U (X(0)). 
\end{equation}
Indeed, if we perturb the optimal trajectory $X$ by a constant velocity $\epsilon V$ on the last portion of the time interval $(-\epsilon,0)$, we find by the minimization property \eqref{eq:abstract min}:
\begin{align*}
U(x + \epsilon^2 v) - U(x) &\leq \int_{-\epsilon}^0\left ( L(X(s) + (s+\epsilon)\epsilon V, \dot X(s) + \epsilon V) - L(X(s) , \dot X(s)  ) \right ) e^s ds\\
& \leq \epsilon  \int_{-\epsilon}^0\left ( D_V L(X(s),\dot X(s))\cdot V + \mathcal O(\epsilon)\right ) e^s\, ds.
\end{align*}
Dividing both sides by $\epsilon^2$, and letting $\epsilon\to 0$, we find that $D_XU(x) \cdot V \leq D_V L(X(0),\dot X(0))\cdot V$, for any $V$. Hence, we have $D_XU(x) = D_V L(X(0),\dot X(0))$, which is equivalent to \eqref{eq:P nabla U} by definition.
\bigskip

In our setting, the Hamiltonian associated with \eqref{eq:minimization}, is
\begin{equation}\label{eq:hamiltonian} 
\bH_{\alpha,\mu}(y,\eta,p,q) =-\frac32 y p - \eta q +  \eta |p|^2 +  |q|^2 + 1 - \mu \1_{\left\{y< \alpha\right\}}\, .
\end{equation}  
This follows from~\eqref{eq:Lagrangian ss}, where we solve for the Lagrangian.  Thus, the Hamiltonian system \eqref{eq:hamiltonian syst}  is, for the portion of the trajectories on either of the half-spaces $\left\{y<\alpha\right\}$ and $\left\{y>\alpha\right\}$,
\begin{equation}\label{eq:hamiltonian syst2}
\left\{\begin{array}{ll}
\displaystyle	\dot \by = - \frac{3}{2} \by + 2 \beeta \bp,
		\qquad &
		\displaystyle \dot \bp = \frac{1}{2} \bp,\medskip\\
\displaystyle	\dot \beeta = - \beeta + 2\bq,
	 	&
	 	\displaystyle \dot \bq = -|\bp|^2.
\end{array}\right.
\end{equation}
Here we use the fact that $\1_{\left\{y<\alpha\right\}}$ is constant on each half space. The connection between the two half-spaces must be handled with care, see below for details. The general solution of \eqref{eq:hamiltonian syst2} on any interval of {\em free motion}, {\em i.e.} avoiding the line $\left\{y=\alpha\right\}$, for trajectories ending at $(x,\theta)$ at $s=0$, is, for some constants $A$ and $B$,
\begin{equation}\label{eq:traj_optim}
\begin{cases}
\bp(s) = A e^{\frac12 s}, \medskip\\
\bq(s) = B + A^2 ( 1 - e^{s} ), \medskip\\
\beeta(s) = \theta e^{-s} +  2 B  (1 - e^{-s}) + A ^2 \left ( 2 -  e^{s} - e^{-s}\right ), \medskip\\
\by(s) = x  e^{-\frac32s} + 2  \theta A \left  ( e^{-\frac12 s } - e^{-\frac32s}\right )  +   2  B  A  \left ( e^{\frac12 s } + e^{-\frac32s} - 2 e^{-\frac12s} \right ) \\
\displaystyle  \qquad \quad  +  \frac23 A^3 \left( e^{-\frac32s} - 3e^{-\frac12s} + 3e^{\frac12s} - e^{\frac32s}  \right ).
\end{cases}
\end{equation}
Due to \eqref{eq:Lagrangian ss} and \eqref{eq:hamiltonian syst2}, the running cost on each half-space $\left\{y<\alpha\right\}$ and $\left\{y>\alpha\right\}$ is then given by:
\begin{equation*}
\bL_{\alpha,\mu}(\by(s),\beeta(s),\dot \by(s),\dot \beeta(s)) = \beeta(s) |\bp(s)|^2 + |\bq(s)|^2  - 1 + \mu \1_{(-\infty,\alpha)}(\by(s))\, .
\end{equation*}
An immediate computation yields that this quantity is constant  on each half-space $\left\{y<\alpha\right\}$ and $\left\{y>\alpha\right\}$. In particular, on some interval $(s_0,0)$ such that $\by(s)$ stays on the same side of the line, the running cost is
\begin{equation}\label{eq:running cost}
\bL_{\alpha,\mu}(\by(s),\beeta(s),\dot \by(s),\dot \beeta(s)) = \theta_0 A^2 + B^2  - 1 + \mu \1_{(-\infty,\alpha)}(\by(s))\, .
\end{equation}

We now investigate the portions of $(\by(s),\beeta(s))$ when $\by(s) = \alpha$ for an open interval of time $s\in(s_1,s_0)$.  It is convenient to extract the dynamics from the Lagrangian function \eqref{eq:Lagrangian ss} when the trajectory has been confined to the line. When confined to this line, the Lagrangian is
\begin{equation}\label{eq:running cost line}
\bL_{\left\{y = \alpha\right\}}(\eta,v_\eta) = \frac{\overline \alpha^2}{\eta} + \frac{1}{4}\left(v_\eta + \eta\right)^2  - 1\, ,  
\end{equation}
which is obtained from \eqref{eq:Lagrangian ss} by setting $v_y = 0$, and $y = \alpha$, and $\mu  \1_{(-\infty,\alpha)}(y)  = 0$.   Recall that, as given by \eqref{eq:alpha_bar}, $\overline\alpha = 3\alpha/4$. 
The corresponding Hamiltonian function is obtained through the Legendre transform with respect to the partial velocity variable $v_\eta$:
\begin{equation*}
\bH_{\left\{y = \alpha\right\}}(\eta,q) = - \frac{\overline \alpha^2}{\eta} - \eta q + |q|^2  + 1\, .  
\end{equation*}
The corresponding Hamiltonian dynamics are computed exactly as above:
\begin{equation}\label{eq:ODE line}
\dot\beeta = -\beeta + 2 \bq\,,
	\quad \text{and} \quad
\dot \bq = -  \dfrac{\overline\alpha^2}{\beeta^2}\,. \end{equation}
Moreover, similarly to above, we also obtain
\begin{equation}
\label{evolution-Halpha}
\dfrac{d}{ds}\left ( \bH_{\left\{y = \alpha\right\}} e^s \right ) + \bL_{\left\{y = \alpha\right\}} e^s = 0\, .
\end{equation}

\subsection{Better stay on the right side -- Lemma \ref{lem:barrier}}
\label{sec:barrier}

We now establish that any trajectory that ends to the right of the line $\left\{y=\alpha\right\}$ must always be to the right of this line.  
Our approach, in each lemma, is a careful analysis of the minimizing trajectories, which we can write down semi-explicitly thanks to the computations performed in \Cref{sec:hamiltonian}.  In each case, we show that, were the undesired behavior to occur, we may construct a related trajectory with a lower cost, contradicting the fact that the offending trajectory was a minimizer.

We first prove the monotonicity of optimal trajectories in $\eta$.  This is an important step in establishing \Cref{lem:No downward U-turn}.

\begin{proof}[Proof of Lemma \ref{lem:monotonicity}]
Let $(\by,\beeta)\in\bA(x,\theta)$ be the optimal trajectory.    
We begin by obtaining a differential inequality for the second derivative $\ddot \beeta$ in the distributional sense.  We note that we have not established the continuity of $\dot \beeta$ or any regularity of $\ddot \beeta$, so we are forced to work with this distributional inequality.

Fix any $\epsilon>0$ and any $0 \leq \phi \in C_c^\infty(\R_-^*)$. Notice that $(\by,\beeta + \epsilon e^{-s}\phi) \in \bA(x,\theta)$.  Thus, we have,
\[
	\int_{-\infty}^0 \bL_{\alpha,\mu}(\by,\beeta,\dot \by,\dot \beeta)e^s ds
		= U_{\alpha,\mu}(x,\theta)
		\leq \int_{-\infty}^0 \bL_{\alpha,\mu}(\by,\beeta + \e e^{-s}\phi, \dot \by, \dot \beeta +     \e (e^{-s} \dot\phi -e^{-s} \phi)) e^s ds.
\]
Writing out the expressions and re-arranging the terms, we see,
\[
	0
		\leq \int_{-\infty}^0\left(\frac{ \left (\dot \by(s)+\frac 3 2 \by(s)\right )^2}{4(\beeta(s) + \e e^{-s}\phi(s))} - \frac{(\dot \by(s) + \frac{3}{2}\by(s))^2}{4\beeta(s)} + \frac{\e}{2}e^{-s}\dot\phi(s) (\dot\beeta(s) + \beeta(s)) \right) e^s ds + O(\e^2).
\]
Expanding the first term  and  dividing by $\e$ yields,
\[
	O(\e)
		\leq \int_{-\infty}^0\left(\frac{- \phi(s)  \left (\dot \by(s)+\frac 3 2 \by(s)\right )^2}{2\beeta(s)(\beeta(s) + \e  e^{-s}\phi(s))} + \dot\phi(s)(\dot\beeta(s) + \beeta(s)) \right) ds.
\]
Applying the monotone convergence theorem, we get, 
\[
	\int_{-\infty}^0\eta\left (\ddot \phi - \dot \phi\right ) \leq \int_{-\infty}^0\frac{- \phi(s)  \left (\dot \by(s)+ \frac 3 2 \by(s)\right )^2}{2\beeta(s)^2} ds  \leq 0 .
\]
Since this is true for all $\phi$, it follows that $\ddot \beeta + \dot \beeta \leq  0$ in the sense of distributions, from which it follows that $\frac d{ds} (e^s \dot \beeta) \leq  0$ holds in the sense of distributions.

We now conclude the proof by choosing an appropriate test function.  If $\beeta$ is not non-increasing, then there exists a $0 \leq \psi \in C_c^\infty(\R_-^*)$ such that $\int \dot \beeta \psi  e^s ds = \gamma > 0$ and $\int \psi ds =1$.  Fix any $s' < \inf\supp(\psi)$ and  $\e  > 0$ such that $\e <  - \textcolor{bl}{s'}$.  Let $\phi_{\e}$ be a standard mollifier with $ \supp \phi_{ \e} \subset (-\e,+\e)$.  Then, define the   smooth test function
\[
	\chi_{\e}(s) = \left(\int_{-\infty}^{s} \phi_{ \e }(s'-\bar s)d\bar s\right)+\left(\int_{s}^0 \psi(\bar s)d\bar s\right) -1.
\]
Note that from our choice of $\textcolor{bl}{s'}$ and $\e$, the above test function is positive and compactly supported in $\R_-^*$.
From our choice of $\phi_{\e}$ and $\psi$ along with the differential inequality established above,
\[
	-\int_{-\infty}^0 e^{s} \dot \beeta(s) \phi_{\e}(s'-s) ds + \gamma
		= -\int_{-\infty}^0 e^{s} \dot \beeta(s) \left(\phi_{\e}(s'-s) - \psi(s)\right) ds
		= -\int_{-\infty}^0 e^{s} \dot \beeta(s) \dot \chi_{\e}(s) ds
		\leq 0.
\]
Multiplying both sides by $e^{-s'}$ and integrating over $(s_1,s_0)$ for any $s_0 < \inf \supp(\psi)$, we find
\[
	-\int_{-\infty}^0 e^s \dot\beeta(s) \int_{s_1}^{s_0}  e^{-s'} \phi_{\e}(s'-s) ds' ds
		= -\int_{s_1}^{s_0} e^{-s'}\int_{-\infty}^0 e^{s} \dot \beeta(s) \phi_{\e}(s'-s) ds ds'
		\leq \gamma( e^{-s_0} - e^{-s_1}).
\]
We may take $\e\to0$ in the interior integral on the left hand side to obtain
\[
	\beeta(s_1) - \beeta(s_0)
		= -\int_{s_1}^{s_0} \textcolor{bl}{ \dot\beeta(s) }ds
		\leq \gamma( e^{-s_0} - e^{-s_1}).
\]
Hence $\beeta(s_1) \to -\infty$ as $s_1 \to -\infty$, which contradicts the fact that $\beeta \geq 0$, by definition.  This concludes the proof.
\end{proof}

\begin{proof}[Proof of Lemma \ref{lem:No single crossing}]
We argue by contradiction. Suppose that a trajectory crossing the line $\left\{y = \alpha\right\}$ only once is optimal.  
Let $s_0$ be the time such that $\by(s) < \alpha$ for all $s < s_0$ and $\by(s) \geq \alpha$ for all $s\in(s_0,0)$, and let denote $\theta_0 = \beeta(s_0)$.   By the dynamic programming principle, the trajectory is also optimal on the interval $(-\infty,s_0)$. By a translation of time $s - s_0$, we can assume without loss of generality that $(\by(0),\beeta(0)) = (\alpha,\theta_0)$ and that $(\by,\beeta)$ is a minimizing trajectory in $\bA(\alpha,\theta_0)$.  By assumption, we note that $\by(s) < \alpha$ for all $s < 0$.  Then, it is a global solution of the system \eqref{eq:hamiltonian syst2} with $x = \alpha$. 

From~\eqref{eq:traj_optim} we have
\begin{equation*}
\begin{cases}
\beeta(s) = \theta_0 e^{-s} +  2 B  (1 - e^{-s}) + A ^2 \left ( 2 -  e^{s} - e^{-s}\right ), \quad\text{ and}\medskip\\
\by(s) = \alpha  e^{-\frac32s} + 2  \theta_0 A \left  ( e^{-\frac12 s } - e^{-\frac32s}\right )  +   2  B  A  \left ( e^{\frac12 s } + e^{-\frac32s} - 2 e^{-\frac12s} \right )\medskip \\
\displaystyle	\qquad\quad   +  \frac23 A^3 \left( e^{-\frac32s} - 3e^{-\frac12s} + 3e^{\frac12s} - e^{\frac32s}  \right )
\end{cases}
\end{equation*}
for some $A, B \in \R$.
On the one hand, multiplying the first and second equality in the previous line by, respectively,  $e^{s}$ and $e^{\frac{3s}{2}}$, and then taking the limit $s\rightarrow -\infty$ and using 
the conditions  in~\eqref{eq:self-similar_admissible} implies the following equations:
\begin{align}\label{eq:condition at oo lem 3.2}
\begin{cases}
\theta_0  = 2B + A^2\,, \quad \text{and}\medskip\\
\alpha  = 2\theta_0 A - 2BA - \dfrac23A^3,
\end{cases}
&\quad \text{ which is equivalent to }\quad
\begin{cases}
\theta_0  = 2B + A^2\,, \quad \text{and}\medskip\\
\alpha = \theta_0 A+ \dfrac13A^3.
\end{cases}
\end{align}
Since $A\mapsto \theta_0 A+ \frac13A^3$ is increasing, $A$ is uniquely determined.  
On the other hand, computing $\dot{\by}(0)$ and using the condition $\dot{\by} (0)\geq 0$ implies
\begin{equation}\label{eq:xdot>0}
\theta_0 A\geq \overline\alpha,
\end{equation}
where we recall that $\overline \alpha = 3\alpha/4$. 
Finally, from \eqref{eq:running cost}, the global cost of the trajectory equals the (constant) running cost:
\begin{align*}
	\bL_{\alpha,\mu}(\by,\beeta,\dot \by,\dot\beeta)
	 &= \theta_0 A^2 +  B^2 + \mu - 1
	 = \theta_0 A^2 + \frac{(\theta_0-A^2)^2}{4} +  \mu - 1 \\
	 &= \dfrac{\theta_0 A^2}{2} + \dfrac{\theta_0^2}{4} + \dfrac{A^4}{4}+ \mu- 1.
\end{align*}

This global cost can be compared with the cost of the steady trajectory located at the same endpoint.  Indeed, let $(\tilde \by(s),\tilde \beeta(s))= (\alpha,\theta_0)$ for all $s\in (-\infty,0)$.  It is clear that $(\tilde \by, \tilde \beeta) \in \bA(\alpha,\theta_0)$. From (\ref{eq:running cost line}), the associated cost is 
\begin{equation*}
\bL_{\alpha,\mu}\left (\tilde \by, \tilde \beeta, \dot{\tilde \by}, \dot{\tilde \beeta}\right ) = \dfrac{\overline \alpha^2}{\theta_0} + \dfrac{\theta_0^2}{4} - 1\, .
\end{equation*}
This trajectory is by no means globally optimal; however, it has a lower cost than the trajectory $(\by(s),\beeta(s))$ under the assumptions of Lemma \ref{lem:barrier}. Indeed, we wish to show that $\bL_{\alpha,\mu}(\by,\beeta,\dot \by,\dot\beeta) > \bL_{\alpha,\mu}(\tilde \by, \tilde \beeta, \dot{\tilde \by}, \dot{\tilde \beeta})$, which contradicts the fact that $(\by,\beeta)$ is a minimizing trajectory.  This is equivalent to showing that
\begin{equation}\label{eq:Jdelta J0}
	\frac{\theta_0 A^2}{2} + \frac{A^4}{4} - \frac{\overline\alpha^2}{\theta_0} \geq - \mu.
\end{equation}
According to (\ref{eq:condition at oo lem 3.2}) and~\eqref{eq:xdot>0}, we have the following constraints on the values of $A$:
\begin{equation}\label{eq:constraints}
\dfrac{\theta_0 A}{\overline\alpha}\geq  1
	\quad\text{ and }\quad
\dfrac{3\theta_0 A}{4\overline \alpha}+ \dfrac{A^3}{4\overline \alpha} = 1.
\end{equation}
This suggests that we use the new variables $a$ and $b$ such that
\begin{equation*}
a = \frac{\theta_0 A}{\overline \alpha}, \quad b = \frac{A}{\overline\alpha^{1/3}}\, , \quad \dfrac{3a}{4} + \frac{b^3}{4} = 1\, .
\end{equation*}
According to the definition of $a$, and by \eqref{eq:xdot>0} and \eqref{eq:constraints}, we have
\[
a\in \left[1,\frac{4}{3}\right]\,  \quad \text{and}\quad b\in [0,1]\, .
\]
With the definitions of $a$ and $b$, the inequality \eqref{eq:Jdelta J0} is equivalent to
\begin{equation}\label{eq:cost_condition}
	\frac{\overline \alpha^{4/3}}{\mu} \left (\dfrac{a b}{2} + \dfrac{b^4}{4} - \dfrac{b }{a}\right )\geq - 1.
\end{equation}
We now prove~\eqref{eq:cost_condition}, which finishes the proof. 
Since $b^3 = 4 - 3a$, then
\begin{equation}\label{eq:c1}
	\left (\dfrac{a b}{2} + \dfrac{b^4}{4} - \dfrac{b }{a}\right )
		=  b \left(1 - \frac{a}{4} - \frac{1}{a}\right)
		= (4 - 3a)^{1/3}\left(1 - \frac{a}{4} - \frac{1}{a}\right).
\end{equation}
The right hand side of this expression is increasing for $a\in (1,\frac43)$: its derivative with respect to $a$ is
\begin{equation*}
\dfrac{\left (4 - 3 a\right )^{-2/3}}{a^2}\left ( 4 - 2 a - 2 a^2 + a^3 \right )
	= \dfrac{\left (4 - 3 a\right )^{-2/3}}{a^2} (2-a)(2-a^2)
	> 0.
\end{equation*}
Thus, we may bound the right hand side of~\eqref{eq:c1} by its value at $a=1$, which implies that
\[
	\left (\dfrac{a b}{2} + \dfrac{b^4}{4} - \dfrac{b }{a}\right)
		\geq \left(4 - 3 a\right)^{1/3} \left( 1 - \frac{a}{4} - \frac{1}{a}\right){\bigg|}_{a=1}
		= -\frac{1}{4}.
\]
Hence, we obtain
\[
	\frac{\overline \alpha^{4/3}}{\mu}\left (\dfrac{a b}{2} + \dfrac{b^4}{4} - \dfrac{b }{a}\right)
		\geq -\frac{\overline \alpha^{4/3}}{4\mu}
		> -1,
\]
where we used the condition $\overline\alpha^{4/3} < 4 \mu$ in the last inequality.  Hence, we have established~\eqref{eq:Jdelta J0}, contradicting the fact that $(\by,\beeta)$ is a minimizing trajectory.  This concludes the proof.
\end{proof}

Note that we have used the weaker condition $\overline\alpha^{4/3} < 4 \mu$ instead of $\overline\alpha^{4/3} \leq 2 \mu$. In fact the next proof   requires a more stringent condition on the parameters.

\begin{proof}[Proof of Lemma \ref{lem:No downward U-turn}]
To proceed with the non-optimality of the  C-turn, we make the following reduction.  As above, the dynamic programming principle implies that we may suppose, without loss of generality, that $s_0 = 0$ and $s_1 < 0$ (see \Cref{fig:downward uturn}).   

Since the trajectory does not cross the line $\left\{y=\alpha\right\}$ during the time interval $(s_1,0)$, the optimal trajectory $(\by,\beeta)$ is given by~\eqref{eq:traj_optim}, with $x=\alpha$, for some constants $A,B\in\R$. We point out that, by \Cref{lem:monotonicity},
\begin{equation}
\label{eq:theta0 etas1}
\theta_0=\beeta(0)\leq \beeta(s_1).
\end{equation}
Further, since $\by(0) = \alpha$ and $\by(s) < \alpha$ for $s \in (s_1,0)$, it follows that $\dot \by(0) \geq 0$.  Hence~\eqref{eq:xdot>0} is valid.     There seems to be no natural way to compare the trajectory with a steady trajectory as in the proof of Lemma \ref{lem:No single crossing}.  Alternatively, we compare the trajectory $(\by(s),\beeta(s))$ to the trajectory $(\tilde \by(s), \beeta(s))$, where we define
\[
\tilde{\by}(s)=\begin{cases}
\by(s)& \text{ for }s<s_1, \\
\alpha& \text{ for } s_1\leq s<0.
\end{cases}
\]  
In short, $(\tilde \by,\beeta)$ is obtained by projecting the portion between $s_1$ and 0, the C-turn, onto the line.  It is clear that $(\tilde \by, \beeta) \in \bA(\alpha,\theta_0)$.

To show that $(\tilde \by, \beeta)$ has a lower cost than $(\by,\beeta)$, it is enough to compare the partial costs on the interval $(s_1,0)$. The  cost for $(\by,\beeta)$ is, via~\eqref{eq:running cost},
\begin{equation*}
	J_{\rm orig}
		:= \int^0_{s_1} \bL_{\alpha,\mu}(\by,\beeta,\dot \by,\dot\beeta) ds
		= \int_{s_1}^0\left ( \theta_0 A^2 +  B^2 + \mu - 1\right ) e^s ds.
\end{equation*}
The cost of $(\tilde \by, \beeta)$ on $(s_1,0)$ is, 
\begin{align*}
J_{\rm new}
	&:= \int_{s_1}^0 \bL_{\alpha,\mu}(\tilde \by, \beeta, \dot{\tilde \by}, \dot{\beeta}) ds
	= \int_{s_1}^0\left ( \dfrac{\overline \alpha^2}{\beeta(s)} +  \frac{1}{4}\left | \dot\beeta(s)+\beeta(s) \right |^2 - 1\right ) e^s\, ds \\
& = \int_{s_1}^0\left ( \dfrac{\overline\alpha^2}{\beeta(s)} + \left |  B + A^2\left (1 - e^s\right ) \right |^2 - 1\right ) e^s\, ds ,
\end{align*}
where we have obtained the second equality by using the expression for $\beeta$ in (\ref{eq:traj_optim}) and computing $\dot\beeta$. We now consider the difference $J_{\rm orig} - J_{\rm new}$.  The above formulas imply
\begin{equation}\label{eq:Jmu - J0}
\begin{split}
J_{\rm orig} - J_{\rm new} & = \int_{s_1}^0 \left[ A^2 e^s \bigg ( \theta_0 e^{-s} - 2 B \left ( e^{-s} - 1 \right ) - A^2 \left ( e^{-s} - 2  + e^s\right )\bigg ) - \dfrac{\overline\alpha^2}{\beeta(s)} + \mu\right ] e^s\, ds\\
 & = \int_{s_1}^0 \left[ A^2 e^s  \beeta(s) - \dfrac{\overline\alpha^2}{\beeta(s)} \right ] e^s\, ds + \mu\left (1 - e^{s_1}\right ),
\end{split}
\end{equation}
where to obtain the last equality we have used the expression for $\beeta$ in~\eqref{eq:traj_optim}. Since the integrand is increasing with respect to $\beeta$, it is fruitful to bound $\beeta(s)$ from below. In view of \eqref{eq:traj_optim}, this amounts to bounding $B$ from above. In parallel with the proof of Lemma \ref{lem:No single crossing}, we shall use the information at $s = s_1$ in order to gain an estimate for $B$.  Evaluating at $s_1$ the expression for $\beeta$ in~\eqref{eq:traj_optim}, and then using \eqref{eq:theta0 etas1}, yields
\begin{align*}
2B\left (e^{-s_1} - 1\right )
	 = \theta_0 e^{-s_1} - \eta(s_1) +  A ^2 \left ( 2 -  e^{s_1} - e^{-s_1}\right )
	 \leq \theta_0 \left (e^{-s_1} - 1\right ) +  A ^2 \left ( 2 -  e^{s_1} - e^{-s_1}\right ).
\end{align*}
Notice  $(2- e^{s_1}-e^{-s_1}) = (e^{-s_1}-1)(e^{s_1}-1)$. Using this, along with the bound above, we see, for all $s \in (s_1,0)$,
\begin{equation}\label{eq:bound eta}
\begin{split}
\beeta(s)
	&= \theta_0 e^{-s} + 2 B(1-e^{-s}) + A^2 (1- e^s)(1-e^{-s})\\
	& \geq  \theta_0 e^{-s} +  \left ( \theta_0 + A ^2\left ( e^{s_1} - 1\right )  \right )  (1 - e^{-s}) + A ^2 (1- e^s)(1-e^{-s})\\
	& = \theta_0 + A^2\left (  \left ( e^{s_1}-1\right ) \left (1- e^{-s} \right ) +  (1- e^s)(1-e^{-s})\right )
	= \theta_0 + A^2   \left (e^{s_1} - e^s \right ) \left (1 -  e^{-s} \right ) .
\end{split}
\end{equation}
Let
\[
	I := \dfrac{1}{1 - e^{s_1}} \int_{s_1}^0 \left[ A^2 \left ( \theta_0 e^s + A^2   \left ( e^{s} - e^{s_1} \right ) \left ( 1 -  e^{s} \right ) \right )  - \dfrac{\overline\alpha^2}{\theta_0  } \right ] e^s\, ds\, .
\]
In view of~\eqref{eq:bound eta}, along with~\eqref{eq:Jmu - J0}, we find
\[
	J_{\rm orig} - J_{\rm new} \geq I + \mu,
\]
where we have used the bound~\eqref{eq:bound eta} for the first occurrence of $\beeta(s)$ in \eqref{eq:Jmu - J0}, but the less precise estimate $\beeta(s)\geq \theta_0$ for the second occurrence.  Thus, in order to control the sign of $J_{\rm orig} - J_{\rm new}$, it is sufficient to show $I > -\mu$.

We now establish the lower bound on $I$. An explicit computation yields
\begin{equation*}
I =  \dfrac{\theta_0 A^2}2 (1+e^{s_1})  + \dfrac{A^4}{6} \left (1 - e^{s_1}\right )^2 - \dfrac{\overline\alpha^2}{\theta_0}\,.
\end{equation*}
Recall that, due to~\eqref{eq:xdot>0}, $\theta_0 A \geq \overline\alpha$. Hence,
\[
	I \geq \frac{\overline\alpha^2}{2\theta_0} (1+e^{s_1}) + \frac{\overline\alpha^4}{6 \theta_0^4}(1 - e^{s_1})^2 - \frac{\overline\alpha^2}{\theta_0}
		= -\frac{\overline\alpha^2}{2\theta_0}(1-e^{s_1}) + \frac{\overline\alpha^4}{6 \theta_0^4}(1 - e^{s_1})^2.
\]
The quantity on the right hand side is minimized (with respect to $\theta_0$) when $\theta_0^3 = 4\overline\alpha^2(1-e^{s_1})/3$.  Thus, we have
\begin{equation*}
	I
		\geq -\frac{3^{4/3} (1-e^{s_1})^{2/3}\overline\alpha^{4/3}}{2^{11/3}}.
\end{equation*}
Recall that $\overline\alpha^{4/3}\leq  2\mu$.  Also, notice that $1-e^{s_1}\leq 1$.  Hence,
\[
	I 
	\geq -\mu \frac{2 \cdot 3^{4/3} (1-e^{s_1})^{2/3}}{2^{11/3}}
	\geq -\mu \left(\frac{3}{4}\right)^{4/3}
	> - \mu.
\]
In view of the definition of $I$, this implies that $J_{\rm orig} - J_{\rm new} > 0$.  Thus $(\by,\beeta)$ cannot be a minimizer, since the cost of $(\tilde \by, \beeta)$ is strictly smaller.  This concludes the proof.
\end{proof}

\section{The explicit characterization of $\alpha^*$ -- \Cref{thm:alpha^*}}\label{sec:alpha^*}

En route to proving  \Cref{thm:alpha^*}, the exact shape of the function $U_\alpha$ must be deciphered, at least when restricted to endpoints $(\alpha,\theta)$. This involves a careful handling of the connection between the portions of the trajectory which moves freely in $(\alpha,\infty)\times \R_+^*$, and the portions that stick to the line $\left\{y = \alpha\right\}$.

In order to begin the discussion, we first establish the uniqueness of optimal trajectories.  The proof also establishes the convexity of $U_\alpha(x,\theta)$ on the domain $[\alpha,\infty)\times\R_+^*$.  This is the content of the following lemma.

\begin{lemma}
\label{cor:convexity}
If $\alpha \in [0,4/3]$, then $U_\alpha(x,\theta)$ is strictly convex on the domain $[\alpha,\infty)\times \R_+^*$. Moreover, for all $(x,\theta)$ in $[\alpha,\infty)\times \R_+^*$ there is a unique optimal trajectory. 
\end{lemma}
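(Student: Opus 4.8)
The plan is to first use \Cref{lem:good_trajectories} to rewrite $U_\alpha$ on $[\alpha,\infty)\times\R_+^*$ as the value function of a \emph{state-constrained} variational problem whose Lagrangian is jointly convex, and then run the classical convex-combination argument. Since $U_\alpha = U_{\alpha,1}$ and $\mu = 1 \geq 1/2$, \Cref{lem:good_trajectories} says that any minimizing trajectory with endpoint $(x,\theta)$, $x\geq\alpha$, stays in $\{\bx(t)\geq\alpha t^{3/2}\}$; the indicator $\1_{\{x<\alpha t^{3/2}\}}$ then vanishes identically along it, and it also vanishes along \emph{any} competitor obeying $\bx(t)\geq\alpha t^{3/2}$, so
\[
	U_\alpha(x,\theta) = \min\left\{ \int_0^1 \left(\frac{|\dot\bx|^2}{4\btheta} + \frac{|\dot\btheta|^2}{4} - 1\right) dt : (\bx,\btheta) \in \mA(x,\theta),\ \bx(t)\geq\alpha t^{3/2}\ \forall t\in[0,1]\right\},
\]
the minimum being attained (by a trajectory furnished by \Cref{lem:minimizer_existence} and \Cref{lem:good_trajectories}). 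Write $\tilde L(\theta,v_x,v_\theta) = \frac{v_x^2}{4\theta} + \frac{v_\theta^2}{4} - 1$ for the Lagrangian of this reduced problem.

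Next I would prove convexity. Fix $(x_0,\theta_0),(x_1,\theta_1)\in[\alpha,\infty)\times\R_+^*$ and $\lambda\in(0,1)$, let $(\bx^i,\btheta^i)$ be the associated optimal (hence constrained) trajectories, recalling from the proof of \Cref{lem:minimizer_existence} that $\btheta^i>0$ on $(0,1]$, and set $\bx^\lambda=(1-\lambda)\bx^0+\lambda\bx^1$, $\btheta^\lambda=(1-\lambda)\btheta^0+\lambda\btheta^1$. Then $(\bx^\lambda,\btheta^\lambda)\in\mA\big((1-\lambda)(x_0,\theta_0)+\lambda(x_1,\theta_1)\big)$: it starts at the origin, ends at the convex combination of the endpoints, keeps $\btheta^\lambda>0$ on $(0,1]$, and satisfies the state constraint since it is linear in $\bx$, giving $\bx^\lambda(t)\geq\alpha t^{3/2}$. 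The map $(\theta,v_x,v_\theta)\mapsto\tilde L$ is jointly convex on $\{\theta>0\}\times\R^2$ — the only nontrivial term $v_x^2/(4\theta)$ is the perspective of $v_x\mapsto v_x^2/4$ — so, using $\dot\bx^\lambda=(1-\lambda)\dot\bx^0+\lambda\dot\bx^1$ and likewise for $\btheta$, a pointwise convexity inequality for $\tilde L$ followed by integration yields
\[
	\int_0^1 \tilde L\big(\btheta^\lambda,\dot\bx^\lambda,\dot\btheta^\lambda\big)\,dt
		\leq (1-\lambda)\int_0^1 \tilde L(\btheta^0,\dot\bx^0,\dot\btheta^0)\,dt + \lambda\int_0^1 \tilde L(\btheta^1,\dot\bx^1,\dot\btheta^1)\,dt .
\]
The left side bounds $U_\alpha$ at the convex combination of the endpoints from above, and the right side equals $(1-\lambda)U_\alpha(x_0,\theta_0)+\lambda U_\alpha(x_1,\theta_1)$; hence $U_\alpha$ is convex.

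The delicate step is strictness, since $v_x^2/(4\theta)$ is only convex (its Hessian is degenerate), not strictly so. Suppose the displayed chain is an equality for some distinct endpoints. Then the pointwise convexity inequality for $\tilde L$ is an equality for a.e.\ $t$; from the \emph{strictly} convex term $v_\theta^2/4$ this forces $\dot\btheta^0(t)=\dot\btheta^1(t)$ for a.e.\ $t$, hence $\btheta^0\equiv\btheta^1=:\btheta$ (both vanish at $t=0$). With this common profile, the $v_x$-part of the equality reads $\big((1-\lambda)\dot\bx^0+\lambda\dot\bx^1\big)^2=(1-\lambda)(\dot\bx^0)^2+\lambda(\dot\bx^1)^2$ a.e.\ on $(0,1]$, where $\btheta>0$, which forces $\dot\bx^0=\dot\bx^1$ a.e.\ and so $\bx^0\equiv\bx^1$; but then the endpoints coincide, a contradiction. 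Thus $U_\alpha$ is strictly convex on $[\alpha,\infty)\times\R_+^*$. The very same computation gives uniqueness: if $(\bx^0,\btheta^0)$ and $(\bx^1,\btheta^1)$ are both optimal for a single endpoint $(x,\theta)$, their midpoint trajectory is admissible with that endpoint and has cost $\leq U_\alpha(x,\theta)$, hence equality holds in the convexity chain with $\lambda=1/2$, and the argument above yields $(\bx^0,\btheta^0)=(\bx^1,\btheta^1)$.

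The main obstacle is exactly this strictness/uniqueness step, because the natural Lagrangian lacks joint strict convexity; the resolution is to peel off the $v_\theta$-term first to pin down $\btheta$, and only then exploit strict convexity in $v_x$ at fixed $\btheta$. The reduction via \Cref{lem:good_trajectories} is equally essential — without eliminating the discontinuous indicator, the Lagrangian would not even be convex in $x$.
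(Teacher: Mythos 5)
Your proof is correct and follows essentially the same route as the paper: both use \Cref{lem:good_trajectories} to ensure the convex combination of optimal trajectories stays on the right of the barrier (so the indicator vanishes), then exploit the joint convexity of $v_x^2/(4\theta)$ together with the strict convexity of the $v_\theta^2/4$ term to first pin down $\btheta$ and then $\bx$. Your treatment of the strictness step is if anything slightly more explicit than the paper's, which asserts it "follows from a similar argument."
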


Knowing that optimal trajectories are unique allows us to completely characterize them. This characterization relies on good properties of an auxiliary function $\bQ:\R_+^*\to \R$, (see \Cref{sec:trajectories} for a precise definition).  Here, we rely only on the useful properties that
\begin{equation}\label{eq:thetadiam}
\frac{\bQ(\theta)}{\theta} \text{ is strictly increasing}
	\quad \text{ and }\quad
	\ \exists\Thetastar \text{ such that } \bQ(\Thetastar) = \dfrac{\Thetastar}4
\end{equation}
that separates those trajectories that make an excursion to the right versus those that ``stick'' to the line $\left\{y=\alpha\right\}$ (see \Cref{prop:trajectories}.(iii) for a precise statement of the latter property).  From $\bQ$ and $\Thetastar$, we also define the entire family:
\[
	\bQ_\alpha(\theta) = \overline\alpha^{2/3} \bQ\left (\dfrac{\theta}{ \overline\alpha^{2/3}}\right )
		\quad \text{ and } \quad
	\Thetastar_\alpha =  \overline\alpha^{2/3} \Thetastar,
\]
where recall from (\ref{eq:alpha_bar}) that $\bar{\alpha}=3\alpha/4$. In particular, we have $\bQ\equiv \bQ_{4/3}$.

The next proposition gathers useful properties of the optimal trajectories. Useful notation is illustrated in Figure \ref{fig:notation contact} for the reader's sake.

\begin{figure}
\begin{center}
\includegraphics[width = 0.5\linewidth]{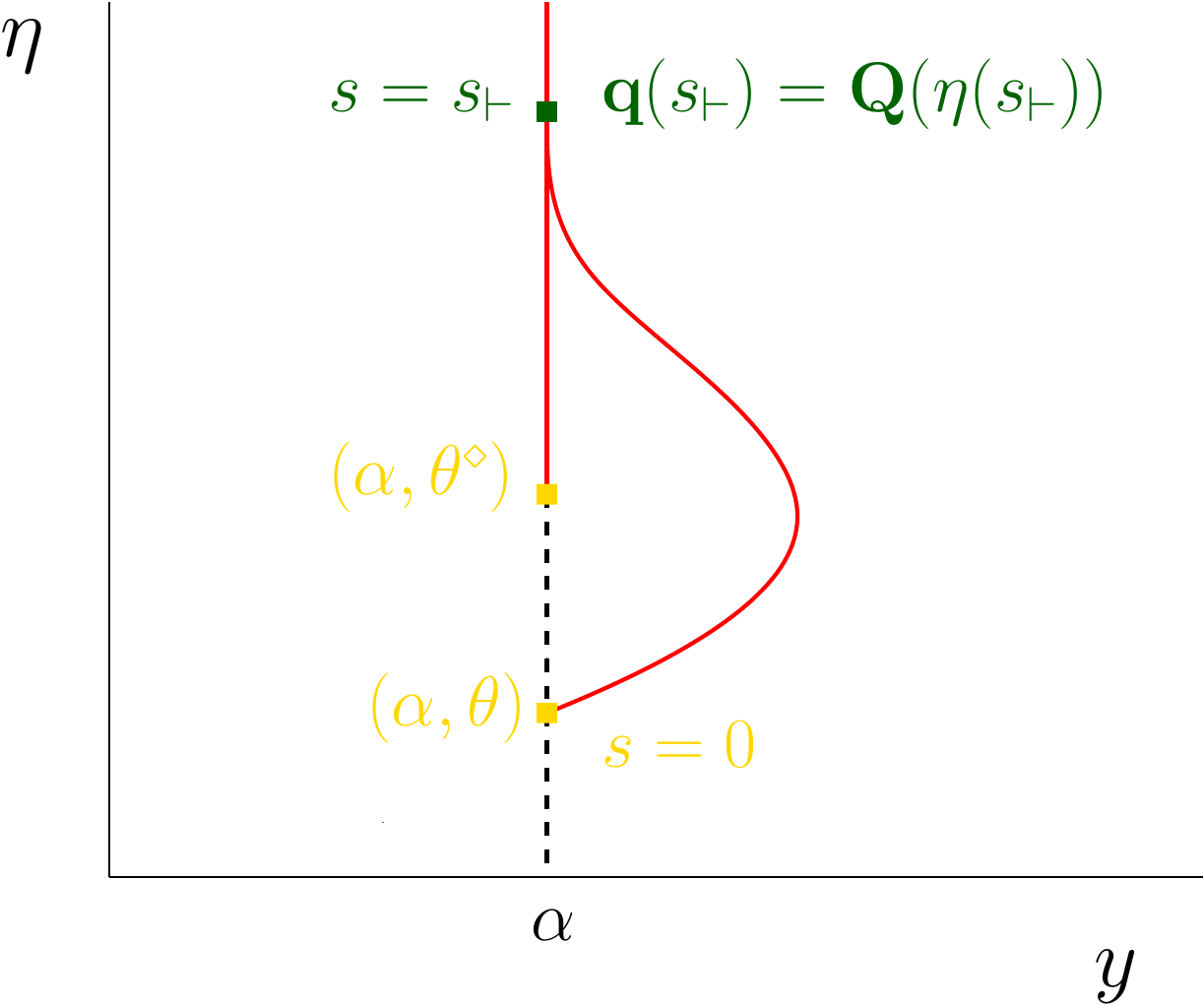} 
\caption{Illustration of the qualitative behavior of optimal trajectories  outlined by \Cref{prop:trajectories}.   Let $(\alpha, \theta)$ be some endpoint on the line $\left\{y = \alpha\right\}$. There exists a contact time $\shh\leq 0$ such that the trajectory sticks to the line if $s\leq \shh$. Moreover, $\shh = 0$ if and only if $\theta \geq \Thetastar$, where $\Thetastar$ is some threshold value on the $\eta$ coordinate. Finally, at the time $\shh$, and beyond $s<\shh$, there is a nonlinear relationship between $\bq(s)$ and $\beeta(s)$, solutions of \eqref{eq:ODE line}, involving the function $\bQ$ which only depends on the value of $\alpha$.}
\label{fig:notation contact}
\end{center}
\end{figure}

\begin{proposition}\label{prop:trajectories}
Let $\theta> 0$ and let $(\by,\beeta)$ be the optimal trajectory in $\bA(\alpha,\theta)$.  Then $(\by,\beeta)$ satisfies the following conditions:
\begin{enumerate}[(i)]
	\item \label{item:sh}There exists   $\shh = \shh(\theta)\leq 0$ such that $\by(s) = \alpha$ if and only if $s \leq \shh$ and, further, $\theta\mapsto \beeta\left (\shh(\theta)\right )$ is  a continuous function;
	\item \label{item:Q}  For all $\theta>0$, we have $\beeta(\shh(\theta)) \geq \Thetastar$;
	\item  \label{item:sh=0}$\shh = 0$ if and only if $\theta \geq \Thetastar$;
	\item \label{item:continuous}  For $s \in (\shh,0)$, $(\by,\beeta,\bp,\bq)$ solves~\eqref{eq:hamiltonian syst2} and    is such that~\eqref{eq:evolution H} holds with the Hamiltonian given by~\eqref{eq:hamiltonian}. 
	For $s \in (-\infty,\shh)$, $(\beeta, \bq)$ solves~\eqref{eq:ODE line} and is such that~\eqref{evolution-Halpha} holds. 
	In the interval $(-\infty,\shh)$, we may continue $\bp$ as $\bp=\overline{\alpha}/ \beeta$ in order to be consistent with $(\by,\dot{\by})=(\alpha,0)$ in~\eqref{eq:hamiltonian syst2}. With this convention, $(\by,\beeta,\bp,\bq)$  is continuous in $(-\infty,0]$ and we have $U_\alpha(\alpha,\theta) = - \bH_\alpha(\alpha,\theta,\bp(0),\bq(0))$;
	\item \label{item:after sh} For all $s\leq  \shh$, we have $\bq(s) = \bQ_\alpha(\beeta(s))$;
	\item \label{item:A,B}If $s \in (\shh,0]$, then $(\by,\beeta)$ 
	solves~\eqref{eq:traj_optim} with 
	\begin{equation*}
	A =  \dfrac{\overline\alpha}{\beeta(\shh)} e^{-\frac 1 2\shh}\quad \text{and}\quad   B = \bQ_\alpha(\beeta(\shh)) - A^2\left (1-e^{\shh}\right )\, .
	\end{equation*}
\end{enumerate}
\end{proposition}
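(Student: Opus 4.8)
\textbf{Proof strategy for Proposition \ref{prop:trajectories}.}

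The plan is to exploit the uniqueness of optimal trajectories from \Cref{cor:convexity} together with the semi-explicit Hamiltonian dynamics of \Cref{sec:hamiltonian}. From \Cref{lem:barrier}, we already know that an optimal trajectory with endpoint $(\alpha,\theta)$ stays in $\{y\ge\alpha\}$, so it decomposes into (at most countably many) maximal intervals of \emph{free motion} in $\{y>\alpha\}$ governed by \eqref{eq:hamiltonian syst2}, interleaved with intervals where it is \emph{confined} to $\{y=\alpha\}$ and governed by \eqref{eq:ODE line}. The first step is to rule out pathological interleaving: I would argue that an optimal trajectory cannot leave and re-enter the line, i.e.\ once it detaches from $\{y=\alpha\}$ it never returns. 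This is essentially the same ``no second excursion'' principle behind \Cref{lem:No single crossing}--\Cref{lem:No downward U-turn}: if the trajectory were on the line, then free in $\{y>\alpha\}$, then back on the line, one compares with the competitor that stays on the line during the middle excursion; the cost comparison is favorable because $\overline\alpha^2/\eta + \tfrac14(v_\eta+\eta)^2-1 \le \eta|p|^2+|q|^2-1$ whenever $y=\alpha$ is feasible, using $2\mu\ge\overline\alpha^{4/3}$. Combined with \Cref{lem:monotonicity} (monotonicity of $\beeta$), this forces the structure: there is a single contact time $\shh\in(-\infty,0]$ with $\by(s)=\alpha\iff s\le\shh$, which is item \eqref{item:sh}; continuity of $\theta\mapsto\beeta(\shh(\theta))$ then follows from continuous dependence of optimal trajectories on the endpoint (itself a consequence of uniqueness plus lower semicontinuity, as in \Cref{lem:minimizer_existence}).

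Next I would analyze the confined dynamics \eqref{eq:ODE line} on $(-\infty,\shh)$ to pin down the relation $\bq = \bQ_\alpha(\beeta)$ of item \eqref{item:after sh}. The system \eqref{eq:ODE line} is autonomous in $(\beeta,\bq)$, and the admissibility condition $e^s\beeta(s)\to 0$ as $s\to-\infty$ selects a one-dimensional stable-type manifold through the origin in phase space; writing $\bq$ as a function $\bQ_\alpha$ of $\beeta$ along this selected orbit, the scaling $\bQ_\alpha(\theta)=\overline\alpha^{2/3}\bQ(\theta/\overline\alpha^{2/3})$ follows by inspecting how \eqref{eq:ODE line} transforms under $\eta\mapsto\overline\alpha^{2/3}\eta$, $q\mapsto\overline\alpha^{2/3}q$. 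Here $\bQ$ is precisely $\bQ_{4/3}$, consistent with $\overline{4/3}=1$. The properties \eqref{eq:thetadiam} — that $\bQ(\theta)/\theta$ is strictly increasing and has a unique fixed point $\Thetastar$ with $\bQ(\Thetastar)=\Thetastar/4$ — are properties of this ODE orbit that I would defer to \Cref{sec:trajectories}, using them here as a black box. The value $\Thetastar$ is exactly the stationary point of \eqref{eq:ODE line}: $\dot\beeta=0$ forces $\bq=\beeta/2$, and $\dot\bq=0$ forces $\bq^2\cdot(\text{something})$... more precisely the steady state is $\beeta=\Thetastar_\alpha$, $\bq=\bQ_\alpha(\Thetastar_\alpha)=\Thetastar_\alpha/4$. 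Then items \eqref{item:Q}--\eqref{item:sh=0} are extracted as follows: since $\beeta$ is nonincreasing (Lemma \ref{lem:monotonicity}) and along the confined orbit $\beeta$ flows toward $0$ as $s\to-\infty$ and away from it forward in time, the value $\beeta(\shh)$ at detachment must lie above the stationary level $\Thetastar$ (item \eqref{item:Q}); and detachment happens immediately, $\shh=0$, precisely when the endpoint trait $\theta$ is already $\ge\Thetastar$, which is \eqref{item:sh=0}. The ``only if'' direction here needs the transversality/matching condition at $\shh$, described below.

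For items \eqref{item:continuous} and \eqref{item:A,B} I would carry out the matching analysis at the junction time $\shh$. On $(\shh,0)$ the quadruple $(\by,\beeta,\bp,\bq)$ solves \eqref{eq:hamiltonian syst2} hence has the form \eqref{eq:traj_optim} for constants $A,B$; the condition $\by(\shh)=\alpha$ together with $\dot\by(\shh)=0$ (the trajectory detaches tangentially — this is the natural matching condition, and I would justify it by a first-variation argument analogous to \eqref{eq:P nabla U}, perturbing the detachment time) gives, from the second line of \eqref{eq:hamiltonian syst2}, $-\tfrac32\alpha + 2\beeta(\shh)\bp(\shh)=0$, i.e.\ $\bp(\shh)=\overline\alpha/\beeta(\shh)$; feeding this into $\bp(s)=Ae^{s/2}$ yields $A=\dfrac{\overline\alpha}{\beeta(\shh)}e^{-\shh/2}$, and matching $\bq$ via $\bq(s)=B+A^2(1-e^s)$ with the confined value $\bq(\shh)=\bQ_\alpha(\beeta(\shh))$ gives $B=\bQ_\alpha(\beeta(\shh))-A^2(1-e^{\shh})$, which is \eqref{item:A,B}. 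The convention $\bp=\overline\alpha/\beeta$ on $(-\infty,\shh)$ in \eqref{item:continuous} is then consistent with $(\by,\dot\by)=(\alpha,0)$ in the first equation of \eqref{eq:hamiltonian syst2}, and continuity of $(\by,\beeta,\bp,\bq)$ across $\shh$ is immediate from the two formulas agreeing at $\shh$. Finally, the identity $U_\alpha(\alpha,\theta)=-\bH_\alpha(\alpha,\theta,\bp(0),\bq(0))$ follows from the general principle $U(X(0))=-H(X(0),P(0))$ established in \Cref{sec:hamiltonian} around \eqref{eq:evolution H}, applied to this (piecewise-smooth but globally continuous) characteristic. \textbf{The main obstacle} I anticipate is the rigorous justification of the tangential detachment condition $\dot\by(\shh)=0$ and, relatedly, excluding infinitely many alternating free/confined pieces accumulating at $\shh$ — both require care because the Lagrangian is only piecewise smooth and we have no a priori $C^1$ regularity of the optimal trajectory; the clean way is to use uniqueness plus the explicit form \eqref{eq:traj_optim} to show any such trajectory would violate the minimality already established in \Cref{lem:No single crossing}--\Cref{lem:No downward U-turn}.
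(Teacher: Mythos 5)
Your overall skeleton (a single contact time, confined dynamics on the line selected by the condition at $s=-\infty$, matching of $(\bp,\bq)$ at $\shh$) is the right one, and your treatment of items (iv)--(vi) matches the paper's. But two of your key mechanisms would fail. First, to rule out a free excursion into $\left\{y>\alpha\right\}$ between two contact times (needed for item (i)), you compare with the competitor projected onto the line and assert a pointwise cost inequality, invoking $2\mu\ge\overline\alpha^{4/3}$. The parameter $\mu$ is irrelevant here, since both the excursion and the projected competitor live in $\left\{y\ge\alpha\right\}$ where the saturation term vanishes; and the inequality is false on the descending part of the excursion, where $\dot\by<0$ can make $\left|\dot\by+\tfrac32\by\right|<\tfrac32\alpha$ even though $\by>\alpha$. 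In fact, at both contact times one has $\bp=\overline\alpha/\beeta$, hence $\beeta\bp^2=\overline\alpha^2/\beeta$ exactly, so the integrated comparison has no a priori sign. The paper instead uses second-order information at the re-contact time $s_0<0$: with $\bp(s_0)=\overline\alpha/\beeta(s_0)$, $\bp(s_1)=\overline\alpha/\beeta(s_1)$ and the explicit form \eqref{eq:traj_optim}, it computes $\limsup_{s\nearrow s_0}\ddot\by(s)=-\overline\alpha\bigl(\tfrac{1-\tau_1}{1+\tau_1}\bigr)^2<0$, contradicting that $\by$ has a local minimum at $s_0$ (Lemma~\ref{lem:one D turn}).

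Second, your identification of $\Thetastar$ is incorrect: a rest point of \eqref{eq:ODE line} would require $\bq=\beeta/2$, whereas $\bQ(\Thetastar)=\Thetastar/4$; since $\bQ(\theta)/\theta$ is increasing with limit $1/2$, one has $\bQ(\theta)<\theta/2$ for every $\theta$, so the confined orbit has no stationary point and $\beeta$ decreases strictly along it (with $\beeta\to+\infty$, not $0$, as $s\to-\infty$, cf.\ Lemma~\ref{lem:anomalous}). The correct mechanism for items (ii)--(iii) is an obstacle-type first variation: testing optimality against $\by+\bepsilon$ with $\bepsilon\ge0$ (the only admissible sign of perturbation on the line) yields $4\bq\ge\beeta$ wherever the trajectory is confined, i.e.\ $4\bQ_\alpha(\beeta)\ge\beeta$, hence $\beeta\ge\Thetastar$ by monotonicity of $\bQ(\theta)/\theta$ (Lemma~\ref{lem:exit the line}). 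Moreover, the implication $\theta\ge\Thetastar\Rightarrow\shh=0$ is not addressed in your sketch and requires a separate argument (the paper's Lemma~\ref{lem:never leave}, a continuity/fixed-point argument on $\theta\mapsto\beeta(\shh(\theta))$ combined with the algebraic relation \eqref{eq:preliminary_W}). Finally, your acknowledged main obstacle, $\dot\by(\shh)=0$, is resolved in the paper not by perturbing the detachment time but by proving $\by\in\mathcal C^{1,\frac12}_{\rm loc}$ through a penalization $\chi_\e$ of the constraint and uniform $L^2_{\rm loc}$ bounds on $\ddot\by^\e$ (Lemmas~\ref{lem:Lipschitz} and~\ref{lem:matching p lem}); without some such regularity statement the matching conditions in (iv) and (vi) remain unjustified.
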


We postpone the proof of this important list of results to \Cref{sec:trajectories}. However, we can make a few comments about some quantitative statements there. Firstly, we find that the optimal trajectory sticks precisely to the line $\left\{y = \alpha\right\}$ for some interval $(-\infty, \shh]$, with $\shh\leq 0$ (in fact, $\shh = 0$ if and only if $\theta \geq \Thetastar$). We refer to $\shh$ as the {\em contact time}. Secondly, and quite importantly, $\bq$ and $\beeta$ are linked by the relationship~\eqref{eq:ODE line} when $s\leq \shh$.  It turns out that the constraint at $s = -\infty$, $\beeta(s) = o(e^{-s})$, selects one branch of the family of solutions, and we can identify and describe this explicitly using the identity $\bq = \bQ_\alpha(\beeta)$.

\begin{figure}
\begin{center}
\includegraphics[width = 0.5\linewidth]{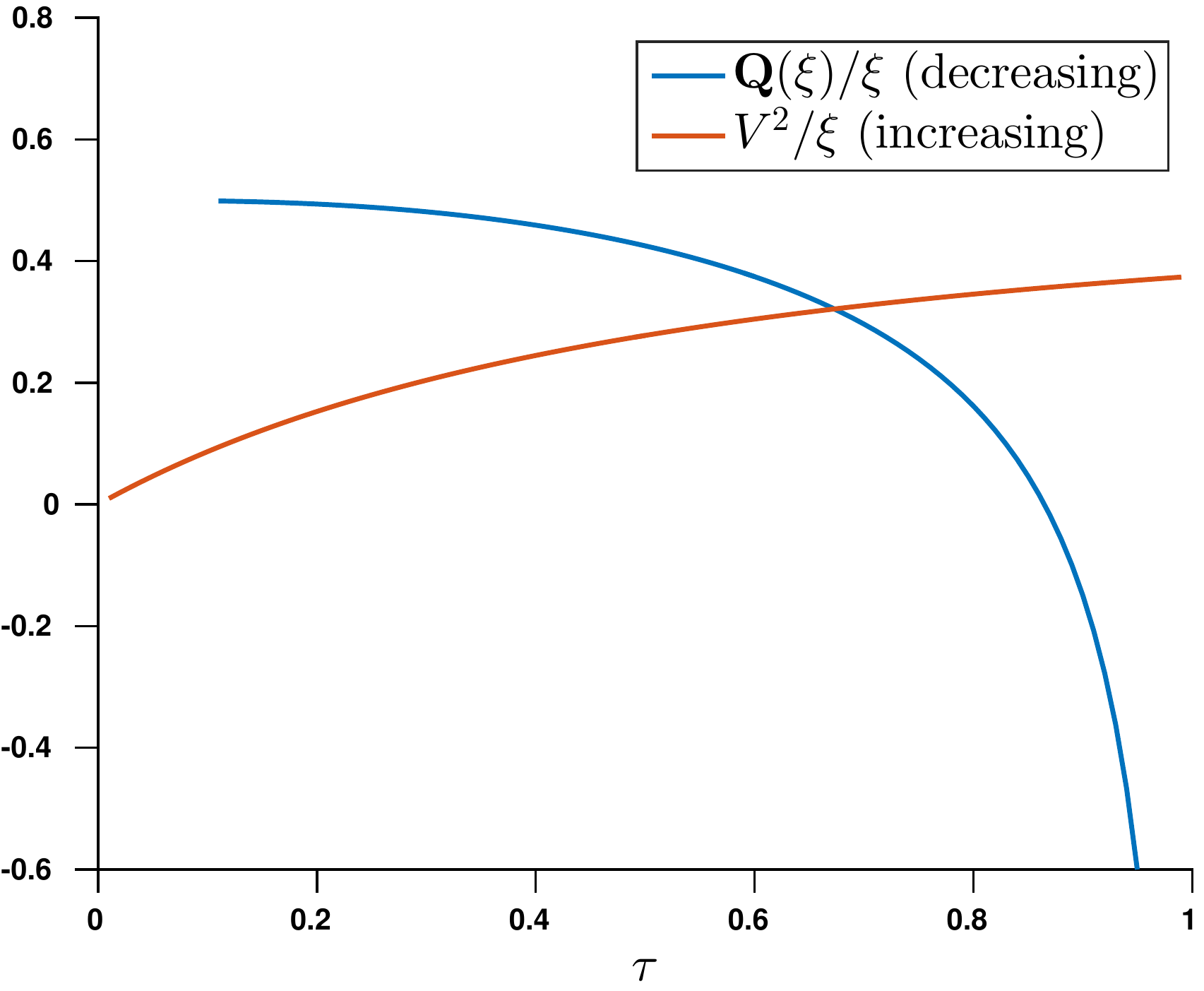} 
\caption{The curves $\tau\mapsto \dfrac{\bQ(\xi(\tau))}{\xi(\tau)}$ (in blue), and $\tau\mapsto\dfrac{V(\tau)^2}{\xi(\tau)}$ (in red), have a unique intersection time $\tau_0$ due to opposite monotonicity.}
\label{fig:unique_time}
\end{center}
\end{figure}

To be able to identify the contact time with an analytical equation, we also need the following technical lemma on real functions, which is going to be used with the change of unknown $\tau =  \exp(s/2) = t^{1/2}\in (0,1)$. 
\begin{lemma}
\label{lem:VxiQR}
Let $\mR$ be defined by \eqref{eq:riccati}, and  $V$ be defined by \eqref{eq:V}. Define the function $\xi$ by
\begin{equation}
\label{eq:xi}
	\xi(\tau) = \frac{(1-\tau^2)^\frac12}{\tau V(\tau)}.
\end{equation}
 Then
\begin{equation}
\label{eq:VandQ}
V(\tau)^2 = \bQ(\xi(\tau))
\end{equation}
if and only if $V(\tau)^4-\xi^{-1}(\tau) > \Xi_0$ and
\begin{equation}
\label{eq:VandR}
V(\tau)^2=\mR\left (V(\tau)^4-\xi^{-1}(\tau)\right ).
\end{equation}
Moreover, there is at most one $\tau_0\in (0,1)$ such that (\ref{eq:VandQ}) holds.
\end{lemma}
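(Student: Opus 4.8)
The plan is to reduce everything to the explicit description of $\bQ$ coming from the analysis of the line-trajectories in \Cref{sec:trajectories}. Recall (with $\overline\alpha=1$, since $\bQ\equiv\bQ_{4/3}$) that along the portion of an optimal trajectory confined to $\left\{y=\alpha\right\}$ one has $\bp=1/\beeta$, $\bq=\bQ(\beeta)$, and $(\beeta,\bq)$ solves~\eqref{eq:ODE line} (see \Cref{prop:trajectories}.(iv)--(v)). Along such a trajectory set $\Xi:=\bq^2-1/\beeta$. A direct computation from~\eqref{eq:ODE line} gives $\dot\Xi=(-2\bq+\dot\beeta)/\beeta^2=-1/\beeta$, hence $d\bq/d\Xi=\dot\bq/\dot\Xi=(-1/\beeta^2)/(-1/\beeta)=1/\beeta=\bq^2-\Xi$; that is, $\bq$, viewed as a function of $\Xi$, solves the Airy Riccati equation $\mR'=\mR^2-\xi$. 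The admissibility constraint $e^s\beeta(s)\to0$ as $s\to-\infty$ (so $\beeta\to+\infty$, $\bq\sim\beeta/2$, $\Xi\to+\infty$ and $\bq\sim\Xi^{1/2}$) selects the branch $\bq=\mR(\Xi)$, and then $\mR'(\Xi)=\mR(\Xi)^2-\Xi=1/\beeta$, so $\beeta=1/\mR'(\Xi)$. Using that $\mR'$ is a strictly decreasing bijection from $(\Xi_0,\infty)$ onto $(0,\infty)$ (see \Cref{subsubsec:Airy}) --- which is exactly what makes $\bQ$ a single-valued function on $\R_+^*$ --- and eliminating the parameter $\Xi$, one arrives at the identity
\begin{equation*}
	\bQ(\eta)=\mR\!\left(\bQ(\eta)^2-\tfrac1\eta\right),\qquad \bQ(\eta)^2-\tfrac1\eta\in(\Xi_0,\infty),\qquad\text{for all }\eta>0,
\end{equation*}
the membership being automatic because $\bQ(\eta)^2-1/\eta$ equals the Airy parameter $\Xi\in(\Xi_0,\infty)$.

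Granting this identity, the equivalence of~\eqref{eq:VandQ} and~\eqref{eq:VandR} is a one-line substitution. If~\eqref{eq:VandQ} holds, apply the identity at $\eta=\xi(\tau)$: $V(\tau)^2=\bQ(\xi(\tau))=\mR\bigl(\bQ(\xi(\tau))^2-\xi(\tau)^{-1}\bigr)=\mR\bigl(V(\tau)^4-\xi(\tau)^{-1}\bigr)$, and the argument $V(\tau)^4-\xi(\tau)^{-1}=\bQ(\xi(\tau))^2-\xi(\tau)^{-1}$ lies in $(\Xi_0,\infty)$; this is~\eqref{eq:VandR} together with the required domain condition. Conversely, assume $\Xi:=V(\tau)^4-\xi(\tau)^{-1}>\Xi_0$ and $V(\tau)^2=\mR(\Xi)$. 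Then $\mR'(\Xi)=\mR(\Xi)^2-\Xi=V(\tau)^4-\bigl(V(\tau)^4-\xi(\tau)^{-1}\bigr)=\xi(\tau)^{-1}$, so the pair $\bigl(\xi(\tau),V(\tau)^2\bigr)$ equals $\bigl(1/\mR'(\Xi),\mR(\Xi)\bigr)$, which is a point of the graph of $\bQ$; since $\bQ$ is single-valued this forces $\bQ(\xi(\tau))=V(\tau)^2$, i.e.~\eqref{eq:VandQ}.

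For the uniqueness claim I would divide~\eqref{eq:VandQ} by $\xi(\tau)>0$ and compare the two curves $\tau\mapsto\bQ(\xi(\tau))/\xi(\tau)$ and $\tau\mapsto V(\tau)^2/\xi(\tau)$ on $(0,1)$, as pictured in \Cref{fig:unique_time}. From~\eqref{eq:V}--\eqref{eq:xi} one computes $\xi(\tau)^3=2(1-\tau)(1+\tau)^3/\bigl(\tau^3(2+\tau)\bigr)$, and logarithmic differentiation gives $\frac{d}{d\tau}\log\xi(\tau)^3=-\tfrac{1}{1-\tau}-\tfrac{3}{\tau(1+\tau)}-\tfrac{1}{2+\tau}<0$, so $\xi$ is strictly decreasing on $(0,1)$; since $\eta\mapsto\bQ(\eta)/\eta$ is strictly increasing by~\eqref{eq:thetadiam}, the curve $\tau\mapsto\bQ(\xi(\tau))/\xi(\tau)$ is strictly decreasing. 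On the other hand, a short algebraic simplification yields the closed form
\begin{equation*}
	\frac{V(\tau)^2}{\xi(\tau)}=\frac{\tau(2+\tau)}{2(1+\tau)^2}=\frac12\left(1-\frac{1}{(1+\tau)^2}\right),
\end{equation*}
which is strictly increasing on $(0,1)$. Two strictly monotone functions of opposite sense meet at most once, so there is at most one $\tau_0\in(0,1)$ satisfying~\eqref{eq:VandQ}.

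The genuinely substantive content is imported: the fact that the $s\to-\infty$ selection principle picks out the branch $\mR=-\Ai'/\Ai$ rather than a generic combination of $\Ai$ and $\Bi$, and that $\mR'$ is a monotone bijection of $(\Xi_0,\infty)$ onto $(0,\infty)$ --- these (from \Cref{sec:trajectories} and \Cref{subsubsec:Airy}) are what legitimize the identity $\bQ(\eta)=\mR(\bQ(\eta)^2-1/\eta)$ with $\bQ$ well-defined. Once that is in hand, the work left inside this lemma --- the substitution giving the equivalence and the elementary monotonicity computations giving uniqueness --- is routine.
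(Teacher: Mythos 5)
Your proof is correct and follows essentially the same route as the paper: cite the identity $\bQ(\eta)=\mR(\bQ(\eta)^2-1/\eta)$ with its domain constraint (Lemma~\ref{lem:Qtheta}.(i)), substitute $\eta=\xi(\tau)$ for the equivalence, then establish uniqueness by dividing by $\xi(\tau)$ and observing that $\bQ(\xi(\tau))/\xi(\tau)$ is strictly decreasing (since $\xi$ is decreasing and $\bQ(\eta)/\eta$ is increasing) while $V(\tau)^2/\xi(\tau)=\tfrac12\bigl(1-(1+\tau)^{-2}\bigr)$ is strictly increasing. A small stylistic remark: for the converse implication you could shorten by directly checking that $q:=V(\tau)^2$ satisfies the defining equation $q=\mR(q^2-\xi(\tau)^{-1})$ with $q^2-\xi(\tau)^{-1}=\Xi>\Xi_0$, so $q=\bQ(\xi(\tau))$ by the uniqueness in Lemma~\ref{lem:Qtheta}.(i), avoiding the detour through the parametric graph $\bigl(1/\mR'(\Xi),\mR(\Xi)\bigr)$.
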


The uniqueness of $\tau_0$ 
 is proved by a monotonicity argument: dividing each side of the equation by $\xi(\tau)$, we find that the left-hand side $V(\tau)^2/ \xi(\tau)$ and the right-hand side $\bQ(\xi(\tau))/\xi(\tau)$ have opposite monotonicity, as illustrated in Figure \ref{fig:unique_time}. The difficulty arises in showing the monotonicity of $\theta\mapsto \bQ(\theta)/\theta$.

We prove \Cref{cor:convexity}, \Cref{prop:trajectories} and \Cref{lem:VxiQR} in \Cref{sec:trajectories}.  We now show how to conclude \Cref{thm:alpha^*} from these  three results.

\subsection{The homogeneity of $U_\alpha$ -- \Cref{thm:alpha^*}.(i)}

We begin by establishing the homogeneity of $U_\alpha$.  While this neither relies on \Cref{cor:convexity}, \Cref{prop:trajectories}, nor \Cref{lem:VxiQR}, it is used to establish \Cref{thm:alpha^*}.(ii).

\begin{proof}[Proof of \Cref{thm:alpha^*}.(i)]
Fix any $\alpha_0, \alpha_1 \in (0,4/3]$. Let $\theta>0$. Let  $(\bx_1,\btheta_1) \in \mA(\alpha_1,\theta)$ be an optimal trajectory.  Then
\[
	U_{\alpha_1}(\alpha_1,\theta)
		= \int_0^1 L_{\alpha_1}(t,\bx_1,\btheta,\dot \bx_1, \dot{\btheta}_1) dt.
\]

Let $\beta = \alpha_0 / \alpha_1$.  Define a new trajectory $( \bx_0,  \btheta_0) = (\beta \bx_1, \beta^{2/3} \btheta_1)$.  Then $( \bx_0,  \btheta_0) \in \mA( \alpha_0, \beta^{2/3}\theta)$.   By definition, it follows that
\[\begin{split}
	U_{ \alpha_0}( \alpha_0, \beta^{2/3} \theta)
		&\leq \int_0^1 L_{ \alpha_0}\left (t,  \bx_0,  \btheta_0, \dot{ \bx}_0,\dot{ \btheta}_0\right )dt\\
		&= \int_0^1 \left( \beta^{4/3}\left(\frac{|\dot{\bx_1}|^2}{4\btheta_1} + \frac{|\dot{ \btheta}_1|^2}{4}\right) - 1 + \1_{\left (-\infty,\alpha_0 t^{3/2}\right )}(\beta \bx_1(t)) \right) dt.
\end{split}\]
By \Cref{lem:good_trajectories}, we know that, for all $t\in[0,1]$, $\bx_1(t) \geq \alpha_1 t^{3/2}$ and, hence, $\beta \bx_1(t) \geq \alpha_0 t^{3/2}$.  Using this, we see that
\[\begin{split}
	U_{ \alpha_0}( \alpha_0, \beta^{2/3} \theta)
		&\leq \int_0^1 \left( \beta^{4/3}\left(\frac{|\dot{\bx_1}|^2}{4\btheta_1} + \frac{|\dot{ \btheta}_1|^2}{4}\right) - 1\right) dt\\
		&= \beta^{4/3}\int_0^1  \left(\frac{|\dot{\bx_1}|^2}{4\btheta_1} + \frac{|\dot{ \btheta}_1|^2}{4} - 1 \right) dt - 1 + \beta^{4/3}
		= \beta^{4/3} U_{\alpha_1}(\alpha_1,\theta) - 1 + \beta^{4/3}.
\end{split}\]
By symmetry, we have
\[
	U_{\alpha_1}(\alpha_1,\theta)
		\leq \beta^{-4/3} U_{\alpha_0}(\alpha_0, \beta^{2/3}\theta) - 1 + \beta^{-4/3}.
\]
Using both inequalities together, we find
\[
	U_{ \alpha_0}( \alpha_0, \beta^{2/3} \theta) = \beta^{4/3}U_{\alpha_1}(\alpha_1, \theta) - 1 + \beta^{4/3}.
\]
Taking the minimum with respect to $\theta\in \R_+^*$ on both sides, we obtain:
\[
	\min_\theta U_{ \alpha_0}( \alpha_0,  \theta) = \left ( \dfrac{\alpha_0}{\alpha_1}\right )^{4/3}\min_\theta U_{\alpha_1}(\alpha_1, \theta) - 1 + \left ( \dfrac{\alpha_0}{\alpha_1}\right )^{4/3}.
\]
This concludes the proof.
\end{proof}

\begin{remark}[Scaling of optimal trajectories]\label{rem:scaling}
The argument above, together with the uniqueness of trajectories (Lemma \eqref{cor:convexity}), clearly shows that the optimal trajectory $(\bx_1, \btheta_1)$ ending at $(\alpha_1,\theta_1)$, with parameter $\alpha_1$, is bound to the optimal trajectory $( \bx_0,  \btheta_0) $ ending at $(\alpha_0,\beta^{2/3}\theta_1)$, with parameter $\alpha_0$, as follows: 
\begin{equation}\label{eq:link optimal traj}
 \dfrac{\btheta_0(s)}{\alpha_0^{2/3}} = \dfrac{\btheta_1(s)}{\alpha_1^{2/3}}\, .
\end{equation}
\end{remark}

\subsection{The analytical value of $\alpha^*$ -- \Cref{thm:alpha^*}.(ii)}
\label{subsec:alpha-anal}

In this subsection, we show how to get an algebraic equation for  $\alpha^*$. In order to compute this value, due to  \Cref{thm:alpha^*}.(i), it is enough to fix $\alpha = 4/3$, and to compute $\min_\theta U_{4/3} (4/3,\theta)$. 
We first show that such a minimum is attained at a unique point $\theta_{\min}$. We then identify the optimal trajectory  ending at $(4/3,\theta_{\min})$. The identification of this trajectory relies on the computation of the contact time $\shh(\theta_{\min})$.
Once the optimal trajectory is characterized, we can compute the value of $U_{4/3} (4/3,\theta_{\min})$, hence $U_{\alpha} (\alpha,\theta_{\min})$ by homogeneity. Figure \ref{fig:Ualpha} represents the function $U_{\alpha} (\alpha,\cdot)$ at $\alpha = \alpha^*$, for the sake of illustration.

\begin{figure}
\begin{center}
\includegraphics[width = 0.5\linewidth]{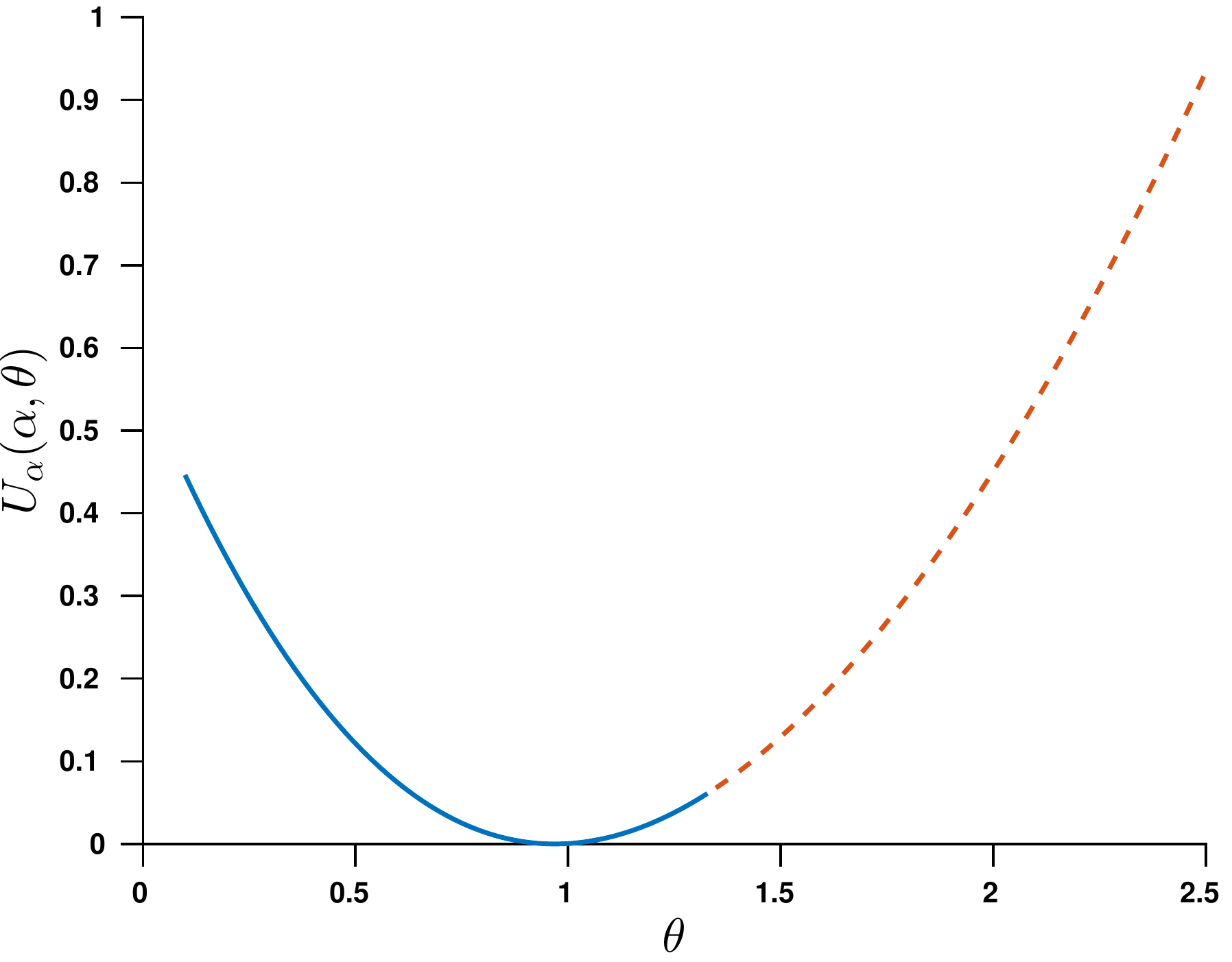} 
\caption{The function $\theta\mapsto U_{\alpha}(\alpha,\theta)$ for the critical value $\alpha = \alpha^*$. The portion of the curve where $\shh(\theta) < 0$ is in plain line, whereas the portion where $\shh(\theta) = 0$ is in dashed line, that is $\theta \geq \Thetastar$.}
\label{fig:Ualpha}
\end{center}
\end{figure}

\begin{proof}[Proof of \Cref{thm:alpha^*}.(ii)]
\noindent{\bf \# Existence  and uniqueness of a minimum in $\theta$: }
To ensure the existence of an interior minimum point of $\theta\mapsto U_{4/3}(4/3, \theta)$, we seek an interior critical point in $\theta$.  The strict convexity of $U_{4/3}$ in $\theta$ given by \Cref{cor:convexity} implies that any critical point in $\theta$ is the unique minimum of $U_{4/3}$.    To find such a critical point, we seek a $\theta_0$ such that the optimal trajectory $(\by, \beeta)$ with endpoint at $(4/3,\theta_0)$ satisfies
\[
	\shh(\theta_0)<0 \quad \text{and}\quad \bq(0)=0\,.
\]
Indeed, if we have such a trajectory, we find, 
\[
\partial_\theta U_{4/3}(4/3,\theta_0) = \partial_\theta U_{4/3} (\by(0) ,\beeta(0)) = \bq(0)=0,
\]
where the  second equality follows by \eqref{eq:P nabla U}. This is precisely the characterization of a critical point of $U_{4/3}(4/3,\cdot)$, implying that $\theta_0$ is indeed the unique minimum of $U_{4/3}(4/3, \cdot)$.

We now prove that there is indeed such a point $\theta_0$ in the interval $(0,\Thetastar)$. Note first that  by Proposition \ref{prop:trajectories}.(\ref{item:sh=0}), $\theta<\Thetastar$ implies that  $\shh(\theta)<0$. Moreover, by Proposition \ref{prop:trajectories}.(\ref{item:A,B}),  we obtain that $\bq(s)= B+A^2(1-e^s)$ holds for $s\in (\shh, 0]$.
Thus, $\bq(0)=0$ is equivalent to  $B=0$, which, by  Proposition \ref{prop:trajectories}.(\ref{item:A,B}) is equivalent to, 
\begin{equation}\label{eq:c42}
	\bQ(\beeta(\shh)) = \frac{e^{-\shh} - 1}{\beeta(\shh)^2}   ,
\end{equation}
(recall $\overline \alpha =1$).

We use the intermediate value theorem to find a  $\theta_0\in (0, \Thetastar)$ satisfying \eqref{eq:c42}.  For $\theta = \Thetastar$, we have $\shh(\Thetastar) = 0$, and $\beeta(\shh(\Thetastar)) = \beeta(0) = \Thetastar$, by \Cref{prop:trajectories}.(\ref{item:sh=0}).  Hence, the left hand side of~\eqref{eq:c42} is $\bQ(\Thetastar) = \Thetastar/4>0$ (we have used \eqref{eq:thetadiam}, the definition of $\Thetastar$), whereas the right hand side is zero.  Next, we show that the left hand side is smaller than the right hand side as $\theta \to 0$.  To this end, we use the combination of \eqref{eq:traj_optim} (at $s=\shh$) and  \Cref{prop:trajectories}.(\ref{item:A,B}) to get,
\begin{align*}
	\beeta(\shh)  & = \theta e^{-\shh} + 2 B\left (1 - e^{-\shh}\right ) + A^2 \left ( 2 - e^{\shh} - e^{-\shh} \right )\\
		 &= \theta + 2 \left ( \bQ(\beeta(\shh)) - \frac{e^{-\shh} - 1}{\beeta(\shh)^2}   \right ) \left (1 - e^{-\shh} \right ) - \dfrac{1}{\beeta(\shh)}\left ( e^{-\shh} - 1 \right )^2\, .
\end{align*}
Therefore, we have:
\begin{equation*}
2\left (\frac{e^{-\shh} - 1}{\beeta(\shh)^2} - \bQ(\beeta(\shh))\right )   = - \dfrac{\theta}{e^{-\shh} - 1} + \dfrac{\beeta(\shh)}{e^{-\shh} - 1} + \dfrac{e^{-\shh} - 1}{\beeta(\shh)} \geq - \dfrac{\theta}{e^{-\shh} - 1} + 2\, .
\end{equation*}
To conclude, it is enough to show that the right hand side in the latter expression has a positive limit as $\theta \to 0$. This is clear as $\liminf \shh (\theta) >0$ as $\theta \to 0$ by  \Cref{prop:trajectories}.(\ref{item:Q}), and the fact that the optimal trajectory $\beeta$ does not become singular as $\theta\to 0$. 
In view of the discussion above, an interior minimum occurs at some $\theta_{\min} \in (0,\Thetastar)$ since we can solve \eqref{eq:c42}.

\medskip

\noindent{\bf \#  Identification of the contact time $\shh(\theta_{\min})$:}
Letting $(\by,\beeta)$ be the optimal trajectory ending at $(\alpha,\theta_{\min})$, we have an explicit expression for $(\by,\beeta)$ in terms of $A$ and $B$. In addition, we know $B=0$   from the discussion preceding (\ref{eq:c42}).  
For notational ease, let $\htheta = \beeta(\shh)$, and  $\tauhh = e^{\shh/2}\in (0,1)$. (The latter is the square root of the original time). We shall show that $\tauhh$ is exactly the $\tau_0$ defined by (\ref{eq:V_condition}).

Recall that $\by(\shh) = \alpha = 4/3$ by definition.
Then, by \Cref{prop:trajectories}.(\ref{item:A,B}), $(\by,\beeta)$ are given by~\eqref{eq:traj_optim} with $A = (\htheta\tauhh)^{-1}$ and $B = 0$. The fourth line (multiplied by $e^{\frac32s}$) and the third line (multiplied by $e^{s}$) of ~\eqref{eq:traj_optim} yield,
\begin{equation}\label{eq:c43}
\begin{cases}
	&\dfrac{4\tauhh^3}{3}
		=\displaystyle \by(\shh)\tauhh^3
		= \frac{4}{3} - 2 \theta_{\min} A\left (1 - \tauhh^2\right ) + \frac{2}{3} A^3\left (1 - \tauhh^2\right )^3 \quad \text{ and}\medskip\\
	&\dfrac{\tauhh}{A}
		\displaystyle=\htheta\tauhh^2
		= \beeta(\shh) \tauhh^2
		= \theta_{\min} - A^2\left (1 - \tauhh^2\right )^2,
\end{cases}
\end{equation}
which, upon rearranging, imply,
\begin{equation}\label{eq:c45}
	A = \left(\frac{(1-\tauhh)^2(2+\tauhh)}{2(1-\tauhh^2)^3} \right)^\frac13 .
\end{equation}
We point out that, according to the definitions of the functions $V$ and $\xi$ given by (\ref{eq:V}) and (\ref{eq:xi}), we have
\begin{equation}
\label{eq:AandV}
A= \frac{V(\tauhh)}{(1-\tauhh^2)^{1/2}} \   \text{ and }\    \frac{1}{A\tauhh}= \frac{(1-\tauhh^2)^{\frac12}}{\tauhh V(\tauhh)}= \xi(\tauhh). 
\end{equation}
In addition, the expression for $\bq$ in (\ref{eq:traj_optim}),  \Cref{prop:trajectories}.(\ref{item:continuous}), which guarantees the continuity of $\bq$, and \Cref{prop:trajectories}.(vi) yields
\[
	A^2(1-\tauhh^2)
		= \bq(\shh)
		= \bQ(\htheta).
\]
Recalling $A =(\htheta\tauhh)^{-1}$, the previous line can be reformulated as:
\begin{equation*}
A^2(1-\tauhh^2)=\bQ\left ((A\tauhh)^{-1}\right ).
\end{equation*}
According to (\ref{eq:AandV}), the previous line is equivalent to,
\[
V(\tauhh)^2 = \bQ(\xi(\tauhh)).
\]
Applying Lemma \ref{lem:VxiQR}, we deduce that $\tauhh\in (0,1)$ is the unique $\tau_0$ such that (\ref{eq:VandR}), and hence (\ref{eq:V_condition}), hold.

\medskip

\noindent{\bf \# Computing $\alpha^*$ in terms of $\tauhh$: }
With the knowledge of $\theta_{\min}$ in hand, via $\shh(\theta_{\min})$, we now compute $\alpha^*$ using \Cref{thm:alpha^*}.(i), which means that we need only to compute $U_{4/3}(4/3,\theta_{\min})$.  Here we use the discussion in \Cref{sec:hamiltonian} and  \Cref{prop:trajectories}.(\ref{item:continuous}).  Again, denote by $(\by,\beeta)$ the optimal trajectory in $\bA(\alpha,\theta_{\min})$ associated with the costate variables  $(\bp,\bq)$.  It follows from~\eqref{eq:hamiltonian}  and \Cref{prop:trajectories}.(\ref{item:continuous}) that
\begin{equation*}
	U_{4/3}(4/3,\theta_{\min})
	 	= - \bH_{\alpha}(\by(0), \beeta(0), \bp(0),\bq(0))
	 	= - \bH_{\alpha}(4/3, \theta_{\min}, A, 0)
	 	= 2 A - \theta_{\min}A^2 - 1.
\end{equation*}
Rearranging the second line in (\ref{eq:c43}) yields, $\theta_{\min}
		= {\tauhh}/{A} + A^2 (1-\tauhh^2)^2$. Using this on the right-hand side of the previous line we find,
		\[
		U_{4/3}(4/3,\theta_{\min}) = 2A-A^2\left(\frac{\tauhh}{A} + A^2 (1-\tauhh^2)^2\right) -1  = A(2-\tauhh)-A^4(1-\tauhh^2)^2 -1 .
		\]
Next, using the expression for $A$ given by (\ref{eq:c45}), we obtain,
\begin{equation*}
\begin{split}
U_{4/3}(4/3,\theta_{\min})+1  &= (2-\tauhh)\left(\frac{(1-\tauhh)^2(2+\tauhh)}{2(1-\tauhh^2)^3} \right)^\frac13 
- \left(\frac{\left((1-\tauhh)^2(2+\tauhh)\right)^\frac13 }{2^\frac13 (1-\tauhh^2)} \right)^4(1-\tauhh^2)^2\\
&= (2-\tauhh)\left(\frac{(1-\tauhh)^2(2+\tauhh)}{2(1-\tauhh^2)^3} \right)^\frac13
- \frac{\left((1-\tauhh)^2(2+\tauhh)\right)^\frac43 }{2^\frac43 (1-\tauhh^2)^2} .
\end{split}
\end{equation*}
Hence, using \Cref{thm:alpha^*}.(i), we see,
\[
	\alpha^*
		= \frac{4}{3} \left((2-\tauhh) \left(\frac{(1-\tauhh)^2(2+\tauhh)}{2(1-\tauhh^2)^3}\right)^\frac13
			 -  \frac{\left((1-\tauhh)^2(2+\tauhh)\right)^\frac43 }{2^\frac43 (1-\tauhh^2)^2}  \right)^{-\frac34},
\]
which, upon simplifying, is equivalent to (\ref{eq:alpha^* TH}).
\end{proof}

\section{Characterizing the optimal trajectories}\label{sec:trajectories}

\Cref{prop:trajectories} is proved piecemeal throughout the sequel.  We do not make note immediately when any portion is proved.  Instead, we compile the proof in the last \Cref{sec:proposition_compilation}

\subsection{Uniqueness of minimizing trajectories -- \Cref{cor:convexity}}

We switch back to the original variables for the proof of this Lemma.

\begin{proof}[Proof of \Cref{cor:convexity}]
First, we show that $U_\alpha$ is convex.  Define the function $F(v,\theta) = v^2/(4\theta)$.  It is jointly convex in $(v,\theta)$, as can be seen readily from its Hessian
\begin{equation*}\renewcommand{\arraystretch}{1.2} 
D^2 F(v,\theta) = \begin{pmatrix}
\frac{1}{2\theta}  & -\frac{v}{2\theta^2}\\
-\frac{v}{2\theta^2}  & \frac{v^2}{2\theta^3}
\end{pmatrix}.\renewcommand{\arraystretch}{1} 
\end{equation*}

Fix $(x_0,\theta_0), (x_1,\theta_1) \in [\alpha,\infty)\times(0,\infty)$, and two optimal trajectories $(\bx_0, \btheta_0) \in \mA(x_0,\theta_0)$ and $(\bx_1,   \btheta_1) \in \mA(x_1, \theta_1)$ respectively. Let $\lambda \in (0,1)$.  According to Lemma \ref{lem:good_trajectories}, we have 
\begin{equation}
\label{eq:apply lemma}
\bx_0(t)\geq \alpha t^{3/2},\quad  \text{and}\quad \bx_1(t)\geq \alpha t^{3/2},
\end{equation}
 and hence $(1-\lambda)\bx_0(t) + \lambda \bx_1(t)\geq \alpha t^{3/2}$ for all $t\in (0,1)$. It is clear that $(1-\lambda)\bx_0 + \lambda \bx_1 \in \mA((1-\lambda) x_0 + \lambda x_1,\theta)$.  Thus, recalling the definitions of $U_\alpha$ and $L_\alpha$, we find
\begin{equation}\label{eq:convexity_computation}
\begin{split}
	U_{\alpha}&((1-\lambda)x_0 + \lambda x_1, (1-\lambda) \theta_0 + \lambda \theta_1)\\
		&\leq \int_0^1 L_{\alpha}(t, (1-\lambda)\bx_0 + \lambda \bx_1, (1-\lambda)\btheta_0 + \lambda \btheta_1,(1-\lambda)\dot \bx_0 + \lambda \dot{\bx}_1, (1-\lambda)\dot\btheta_0 + \lambda \dot{\btheta}_1)dt\\
		&\leq \int_0^1\left((1-\lambda)F(\dot \bx_0, \btheta_0) + \lambda F(\dot{\bx}_1, \btheta_1) + (1-\lambda)\frac{|\dot\btheta_0|^2}{4} + \lambda \frac{|\dot{\btheta}_1|^2}{4}\right) dt - 1\\
		&= (1-\lambda)\int_0^1\left(F(\dot \bx_0, \btheta_0) + \frac{|\dot\btheta_0|^2}{4} - 1\right) dt
			+ \lambda \int_0^1\left(F(\dot{\bx}_1, \btheta_1) + \frac{|\dot{\btheta}_1|^2}{4}-1\right) dt\\
		&= (1-\lambda)\int_0^1 L_\alpha (t, \bx_0, \btheta_0, \dot \bx_0, \dot\btheta_0) dt 
			+ \lambda \int_0^1 L_\alpha(\bx_1, \btheta_1, \dot{\bx}_1, \dot\btheta_1)dt\\
		&= (1-\lambda) U_\alpha(x_0, \theta_0) + \lambda U_\alpha (x_1, \theta_1).
\end{split}
\end{equation}
In the second-to-last line, we have again used (\ref{eq:apply lemma}) and the definitions of $L_\alpha$ and $U_\alpha$. In the last line, we used that $\bx_0$ and $\bx_1$ are minimizing.  Thus, $U_\alpha$ is convex.

Now, suppose that $(\bx_0, \btheta_0)$ and $(\bx_1,   \btheta_1)$ have the same endpoint $(x,\theta)$. Then, the series of inequalities in \eqref{eq:convexity_computation} are all equalities because they coincide on each side. By the strict convexity of quadratic functions, it must be that $\dot \btheta_0 = \dot{\btheta}_1$, and thus $\btheta_0 = \btheta_1$.  Since $F$ is strictly convex in the $v$ variable, we also have $\dot \bx_0 = \dot{\bx}_1$.  We conclude that $(\bx_0,\btheta_0) = (\bx_1, \btheta_1)$.  Hence, optimal trajectories are unique.

The strict convexity of $U_\alpha$ follows immediately from a similar argument.  As such, we omit it.
\end{proof}

\subsection{Trajectories have at most one interval of free motion}

We refer to each portion of the trajectory not intersecting the line $\left\{y = \alpha\right\}$ as ``free motion.''
A preliminary observation is that free motion for all time is not permitted.

\begin{lemma}\label{lem:hits_line}
	Suppose  $\theta\in  \R_+^*$ and  $(\by,\beeta) \in \bA(\alpha,\theta)$ is the optimal trajectory for $U_\alpha(\alpha, \theta)$.  Then there exists a negative time $s_0 \in \R_-^*$ such that $\by(s_0) = \alpha$.
\end{lemma}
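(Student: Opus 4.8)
The plan is to argue by contradiction, combining the barrier property already established (\Cref{lem:barrier}, the self-similar form of \Cref{lem:good_trajectories}) with the explicit formula \eqref{eq:traj_optim} for arcs of free motion.

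Suppose $\by(s)\neq\alpha$ for every $s<0$. Since the endpoint $(\alpha,\theta)$ has first coordinate $x=\alpha\geq\alpha$, and $\mu=1\geq\tfrac12$ while $\alpha\leq\tfrac43$, \Cref{lem:barrier} gives $\by(s)\geq\alpha$ for all $s\in(-\infty,0]$. Together with the contradiction hypothesis this forces $\by(s)>\alpha$ for all $s<0$, while $\by(0)=\alpha$; in particular the optimal trajectory never touches the line $\{y=\alpha\}$ on $(-\infty,0)$, i.e.\ it is in free motion on the whole of $(-\infty,0)$. By the standard calculus of variations --- using that an optimal trajectory keeps $\beeta$ positive, as in the proof of \Cref{lem:minimizer_existence} --- $(\by,\beeta)$ then solves the Hamiltonian system \eqref{eq:hamiltonian syst2} on $(-\infty,0)$ with $x=\alpha$, hence is given by \eqref{eq:traj_optim} for some $A,B\in\R$.

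I would then impose the two decay conditions at $s=-\infty$ contained in membership in $\bA(\alpha,\theta)$, see \eqref{eq:self-similar_admissible}. Multiplying the formula for $\beeta$ in \eqref{eq:traj_optim} by $e^s$ and letting $s\to-\infty$ yields $\theta=2B+A^2$; multiplying the formula for $\by$ by $e^{3s/2}$ and letting $s\to-\infty$ forces the coefficient of $e^{-3s/2}$ in $\by(s)$ to vanish, namely $\alpha=2\theta A-2BA-\tfrac23A^3$, equivalently $\alpha=\theta A+\tfrac13A^3$. The crucial observation is that, once $\theta=2B+A^2$ is substituted, the coefficient of $e^{-s/2}$ in $\by$ also vanishes, since it equals $2\theta A-4BA-2A^3=2A(\theta-2B-A^2)=0$. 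Thus both exponentials that blow up as $s\to-\infty$ are absent and
\[
	\by(s)=\bigl(2BA+2A^3\bigr)e^{s/2}-\tfrac23A^3\,e^{3s/2},
\]
so $\by(s)\to0$ as $s\to-\infty$. Since $\alpha>0$, this contradicts $\by(s)>\alpha$ for all $s<0$, which proves the lemma. (In the degenerate case $\alpha=0$ one uses instead that $\alpha=A(\theta+\tfrac13A^2)$ with $\theta>0$ forces $A=0$, so $\by\equiv\alpha$ on $(-\infty,0]$ and free motion cannot occur.)

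The only steps I expect to require care are bookkeeping ones: checking that a free-motion arc meeting all the admissibility constraints is genuinely pinned down by \eqref{eq:traj_optim} --- which needs $\beeta$ bounded away from $0$ on compact subsets of $(-\infty,0)$, available from the existence argument --- and expanding $\by(s)$ precisely enough to witness the double cancellation of the growing exponentials $e^{-3s/2}$ and $e^{-s/2}$. Once that cancellation is observed, the contradiction is immediate and the rest is routine.
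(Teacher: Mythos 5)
Your proposal is correct and follows essentially the same route as the paper: assume free motion on all of $(-\infty,0)$, invoke \eqref{eq:traj_optim}, use the two decay conditions in \eqref{eq:self-similar_admissible} to kill the coefficients of $e^{-3s/2}$ and (via $\theta=2B+A^2$) of $e^{-s/2}$, and conclude $\by(s)\to 0$, contradicting $\by>\alpha$. Your extra steps — ruling out $\by<\alpha$ via \Cref{lem:barrier} and treating $\alpha=0$ separately — are harmless refinements of details the paper leaves implicit.
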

\begin{proof}
We proceed by contradiction.  
Suppose that  $\by(s) > \alpha$ for all $s<0$.  By~\eqref{eq:traj_optim}, we have, as $s\to-\infty$,
\begin{equation}\label{eq:sec72}
\begin{split}
		&\by(s) = e^{-3s/2}\left(\alpha - 2\theta A + 2 B A + \frac{2}{3} A^3 \right) + e^{-s/2}\left ( 2\theta A -4 BA - 2 A^3 \right ) + O\left (e^{s/2}\right )
		 \quad \text{ and }\quad\\
	&\beeta(s) = e^{-s}\left(\theta - 2B - A^2\right) + O(1).
\end{split}
\end{equation}
The growth conditions in the definition (\ref{eq:self-similar_admissible}) of $\bA(\alpha,\theta)$ imply,
\[
	\alpha = 2\theta A - 2 B A - \frac{2}{3} A^3,
		\quad\text{and}\quad
	\theta = 2B + A^2.
\]
Returning to~\eqref{eq:sec72}, these conditions imply the strong asymptotic behavior $\by(s) = O(e^{s/2}) \to 0$, as $s\to -\infty$.  This obviously violates the hypothesis that $\by(s) > \alpha$ for all $s$.
\end{proof}

We now investigate the dynamics of a trajectory as it comes into contact with the line.  If $s_0<0$ is a contact time, we expect that $\dot \by(s_0) = 0$, since $\by(s_0) = \alpha$ is a local minimum.  To obtain this, we need to establish sufficient regularity of the optimal trajectories.  From~\eqref{eq:hamiltonian syst2}, this allows us to define $\bp(s_0) = \overline{\alpha}/\beeta(s_0)$ in a continuous way.  Regularity is the purpose of the next statement.

\begin{lemma}[Continuity of $\dot \by$]\label{lem:matching p lem}
Let the assumptions of \Cref{lem:hits_line} hold. Then $\by \in \mathcal C^{1,\frac12}_{\rm loc}(-\infty,0)$.   In particular, if $s_0 \in (-\infty,0)$ is such that $\by(s_0) = \alpha$, then $\dot \by(s_0) = 0$ and $\bp(s_0) = \overline\alpha / \beeta(s_0)$. In addition, $\bq$ is a continuous function.
\end{lemma}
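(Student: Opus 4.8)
The plan is to establish the regularity of $\by$ as a consequence of the Euler--Lagrange (Hamiltonian) structure already set up in \Cref{sec:hamiltonian}, combined with the local boundedness away from $\{\eta=0\}$ and $\{\eta=\infty\}$ that was noted in the proof of \Cref{lem:minimizer_existence}. First I would fix a compact subinterval $[a,b]\subset(-\infty,0)$; on such an interval the optimal trajectory satisfies $0<c\le\beeta(s)\le C$ and $\by$ is Lipschitz, so $\dot\by/\sqrt{\beeta}\in L^2$. The key point is that the momentum $\bp(s)=D_{v_y}\bL_{\alpha,\mu}=\tfrac{1}{2\beeta}(\dot\by+\tfrac32\by)$ and $\bq(s)=D_{v_\eta}\bL=\tfrac12(\dot\beeta+\beeta)$ are the natural objects, and they satisfy the Hamiltonian ODEs \eqref{eq:hamiltonian syst2} on any open interval of \emph{free motion} (where $\by\ne\alpha$). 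On such intervals $\bp$ solves $\dot\bp=\tfrac12\bp$, hence $\bp$ is smooth there; moreover, since $\1_{\{y<\alpha\}}$ is merely a running \emph{constant} on each side and contributes nothing to the Euler--Lagrange equations in the interior of a free-motion interval, the only possible loss of regularity is at contact times with the line $\{y=\alpha\}$.

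The heart of the argument is to show $\bp$ (equivalently $\dot\by$, given the $L^\infty$ bounds on $\beeta,\by$) extends continuously across a contact time $s_0$ with $\by(s_0)=\alpha$. I would argue as follows. By the discussion in \Cref{sec:hamiltonian}, on a free-motion interval just to the right of a contact time, $\bp(s)=A e^{s/2}$ and $\bq(s)=B+A^2(1-e^{s})$ for constants determined by matching. The uniqueness of the optimal trajectory (\Cref{cor:convexity}) together with the dynamic programming principle forces these constants to be selected consistently, and in particular, the key structural fact is that $\beeta$ is continuous (it is, being the uniform limit of the minimizing sequence, or simply $H^1\hookrightarrow C^0$) and $\beeta$ is bounded away from $0$ locally. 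To get the $C^{1,1/2}_{\mathrm{loc}}$ bound, I would write, on each maximal free-motion interval, $\dot\by = -\tfrac32\by + 2\beeta\bp$ with $\bp=Ae^{s/2}$; since $\by$ is continuous and $\beeta$ is continuous, $\dot\by$ is continuous \emph{within} each free-motion interval and the possible jump is concentrated at the (relatively closed) contact set $\{\by=\alpha\}$. On that set, at any contact time $s_0$ which is a left or right endpoint of a free-motion interval, $\by$ attains a local minimum value $\alpha$ (since $\by\ge\alpha$ near endpoints where the trajectory is to the right, or $\by\le\alpha$ to the left), so $\dot\by(s_0^\pm)=0$ from the one-sided differentiability that the smooth formula \eqref{eq:traj_optim} provides on each side; hence $\dot\by$ is continuous at $s_0$ with value $0$. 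For interior points of the contact set (an interval where $\by\equiv\alpha$), clearly $\dot\by\equiv 0$. Patching these facts gives $\by\in C^1_{\mathrm{loc}}(-\infty,0)$, and then $\dot\by$ inherits the $C^{1/2}$ modulus from the $e^{s/2}$-type terms and the $H^1$ regularity of $\beeta$ (Hölder-$1/2$ from Sobolev embedding in one dimension). Finally, $\bp(s_0)=\tfrac{1}{2\beeta(s_0)}(\dot\by(s_0)+\tfrac32\by(s_0)) = \tfrac{1}{2\beeta(s_0)}\cdot\tfrac32\alpha = \overline\alpha/\beeta(s_0)$ by definition of $\overline\alpha=3\alpha/4$, which is the claimed matching condition; and $\bq=\tfrac12(\dot\beeta+\beeta)$ is continuous because $\dot\beeta$ is continuous, which follows from the same kind of argument applied to the $\beeta$-equation (or from \Cref{lem:monotonicity} plus the Hamiltonian dynamics \eqref{eq:ODE line}, \eqref{eq:hamiltonian syst2} giving explicit $C^1$ formulas on each subinterval that agree at the junctions).

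The main obstacle I anticipate is carefully justifying that the optimal trajectory genuinely solves the Euler--Lagrange system on free-motion intervals and that the momenta match \emph{across} the contact set with no jump — in other words, ruling out a ``corner'' at a contact time where $\dot\by$ jumps between two nonzero values. The resolution is precisely the observation that a contact time that is an isolated point of $\{\by=\alpha\}$ (a genuine tangency from one side) is a local extremum of $\by$, forcing the one-sided derivatives to vanish; and if $\by<\alpha$ or $\by>\alpha$ on both sides of $s_0$ this contradicts $\by(s_0)=\alpha$ being a contact value unless $\dot\by(s_0)=0$. One must also handle the degenerate possibility of an accumulation of contact times, but since on each free-motion interval $\by$ is given by the explicit analytic formula \eqref{eq:traj_optim}, which is real-analytic, the set $\{\by=\alpha\}$ within the closure of a free-motion interval is either finite or the whole interval; combined with the $H^1$ control this rules out pathological oscillation. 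A secondary technical point is extracting the $C^{1/2}$ modulus uniformly on compact subsets, but this follows from the explicit exponential form of $\bp$ and the Sobolev embedding $H^1(a,b)\hookrightarrow C^{0,1/2}([a,b])$ applied to $\beeta$, so it is routine.
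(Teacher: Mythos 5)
Your argument takes a genuinely different route from the paper's, but it has a gap at exactly the step you flag as the ``main obstacle.'' At a contact time $s_0$ that is a left endpoint of a free-motion interval, the fact that $\by$ has a local minimum value $\alpha$ at $s_0$ (which holds since the trajectory satisfies $\by\ge\alpha$ everywhere by \Cref{lem:barrier}) gives only the one-sided inequality $\dot\by(s_0^+)\ge 0$ from the analytic formula \eqref{eq:traj_optim}; it does \emph{not} force $\dot\by(s_0^+)=0$. What you must rule out is exactly a corner with $0=\dot\by(s_0^-)<\dot\by(s_0^+)$, and your proposed resolution -- ``a local extremum forces the one-sided derivatives to vanish'' -- is false as stated (consider $s\mapsto |s|$ at $0$). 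Closing that gap requires a quantitative regularity or Weierstrass--Erdmann-type argument, which is precisely what the paper supplies by a different means: working with the minimizers $\by^\e$ of the penalized Lagrangian $\bL_\alpha^\e$ from \Cref{lem:Lipschitz}, differentiating the first equation in \eqref{eq:smoothed_H_system}, multiplying by $\ddot\by^\e/\beeta^\e$ against a test function, and integrating by parts twice on the $\chi_\e$-term to exploit the uniform integrability of $\chi_\e(\by^\e)$; this yields a uniform $L^2_{\rm loc}$ bound on $\ddot\by^\e$, hence $\ddot\by\in L^2_{\rm loc}$ after passing to the limit and $\dot\by\in\mathcal C^{1/2}_{\rm loc}$ by Sobolev embedding, with no case analysis at the contact set.

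A secondary issue is circularity in your appeal to the structure of the contact set. Patching smooth pieces requires knowing that $\{\by=\alpha\}$ is locally a finite union of intervals, and while real-analyticity on each free-motion interval controls the zero set \emph{within} that interval's closure, it does not by itself exclude an accumulation of disjoint free-motion intervals. The clean description of the contact set as a single ray $(-\infty,\shh]$ is the content of \Cref{lem:one D turn}, which in the paper is proved \emph{after} and \emph{using} the present lemma. The penalization approach avoids this chicken-and-egg problem because the $L^2$ estimate on $\ddot\by$ is agnostic to the structure of $\{\by=\alpha\}$. Your proof of the continuity of $\bq$ via $\bq=\tfrac12(\dot\beeta+\beeta)$ is reasonable in spirit, but it similarly inherits the need for a prior regularity estimate on $\dot\beeta$, which in the paper again comes from the penalized system (boundedness of $\bp^\e$, hence of $\dot\bq^\e=-|\bp^\e|^2$, in $L^\infty_{\rm loc}$).
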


The proof relies on a preliminary Lipschitz bound on $\beeta$.  We state and prove this now, and then continue with the proof of \Cref{lem:matching p lem}.

\begin{lemma}[Lipschitz bounds on $\beeta$]\label{lem:Lipschitz}
Under the assumptions of \Cref{lem:hits_line}, $\beeta \in  W^{1,\infty}_{\rm loc}(-\infty,0)$. In addition, $\bq$ is locally bounded.
\end{lemma}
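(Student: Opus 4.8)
The plan is to work locally on a compact subinterval $[s_-,s_+]\subset(-\infty,0)$ and exploit the structure of the minimization problem \eqref{eq:min self sim} together with the fact, recalled in \Cref{sec:41}, that $\beeta$ is bounded above and bounded away from zero on $[s_-,s_+]$ (say $0<\underline\eta\le\beeta\le\overline\eta$ there). The first step is to reduce to a one-dimensional variational problem for $\beeta$ alone. Given the optimal pair $(\by,\beeta)$, fix the function $\by$ and the value $\theta_0=\beeta(s_+)$; since $\beeta$ is free to vary (the endpoint at $s=0$ is reached via the full trajectory, and at $s=s_-$ we may use the value of the optimal $\beeta$ there as a constraint), the restriction $\beeta|_{[s_-,s_+]}$ must minimize
\[
	\int_{s_-}^{s_+}\left[\frac{1}{4\beeta(s)}\Big(\dot\by(s)+\tfrac32\by(s)\Big)^2+\frac14\big(\dot\beeta(s)+\beeta(s)\big)^2\right]e^s\,ds
\]
among $H^1$ functions with the two prescribed endpoint values, because any competitor would, through the full trajectory, contradict the optimality of $(\by,\beeta)$. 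Here the coefficient $g(s):=\big(\dot\by(s)+\tfrac32\by(s)\big)^2$ is merely an $L^1$ function a priori, but it is nonnegative, and on $[s_-,s_+]$ the weight $e^s$ is bounded above and below.

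The second step is to derive the Lipschitz bound from this one-dimensional minimization. One clean route is a truncation/comparison argument: suppose $\dot\beeta$ is not essentially bounded on some $[a,b]\subset(s_-,s_+)$; replace $\beeta$ on a small subinterval by the affine interpolation between its endpoint values. Since $\beeta$ is continuous and bounded in $[\underline\eta,\overline\eta]$, the affine replacement stays in a fixed compact set, so the term $\tfrac1{4\beeta}g\,e^s$ changes by at most $C\|g\|_{L^1}\cdot(\text{length})$, which is small; meanwhile, by convexity of $v\mapsto v^2$, the term $\tfrac14(\dot\beeta+\beeta)^2 e^s$ strictly decreases under affine interpolation unless $\dot\beeta$ was already constant there, and the decrease is quantitatively controlled from below by the oscillation of $\dot\beeta$. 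Optimizing the comparison interval shows that a large local oscillation of $\dot\beeta$ forces a net decrease in the action, contradicting minimality. This yields $\beeta\in W^{1,\infty}_{\rm loc}$. Alternatively — and perhaps more robustly — one writes the Euler–Lagrange equation for $\beeta$ in integrated (weak) form: testing as in the proof of \Cref{lem:monotonicity} gives that $\tfrac{d}{ds}\big(e^s(\dot\beeta+\beeta)\big)=\tfrac{1}{4\beeta^2}g\,e^s$ in the sense of distributions, so $e^s(\dot\beeta+\beeta)$ equals a constant plus $\int \tfrac{1}{4\beeta^2}g\,e^s$, which is a bounded-variation (hence bounded) function on $[s_-,s_+]$ because $g\in L^1$ and $\beeta$ is bounded below; dividing by $e^s$ and subtracting $\beeta$ (bounded) gives $\dot\beeta\in L^\infty_{\rm loc}$.

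The final, easy step handles $\bq$: by the Hamiltonian relation in \eqref{eq:hamiltonian syst2}, $\bq=\tfrac12(\dot\beeta+\beeta)$ wherever the trajectory moves freely, and on the line $\{y=\alpha\}$ one has $\bq=\tfrac12(\dot\beeta+\beeta)$ as well from \eqref{eq:ODE line}; in either regime $\bq$ is expressed through $\dot\beeta$ and $\beeta$, both now locally bounded, so $\bq$ is locally bounded.

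The main obstacle is the low regularity of $\by$: a priori $\big(\dot\by+\tfrac32\by\big)^2$ is only integrable, not bounded, so the coefficient of the singular term $1/\beeta$ in the Lagrangian need not be bounded, and one cannot naively invoke classical regularity theory for the Euler–Lagrange equation. The argument must therefore use only that this coefficient is nonnegative and $L^1$, together with the lower bound $\beeta\ge\underline\eta>0$ (which controls $1/\beeta$ and $1/\beeta^2$); this is exactly what the integrated Euler–Lagrange identity accommodates. Care is also needed near $s\to 0$ and $s\to-\infty$, but the statement is only local in $(-\infty,0)$, so it suffices to work on compact subintervals where all the relevant bounds on $\beeta$ from \Cref{sec:41} are available.
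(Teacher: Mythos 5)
Your proposal is correct in spirit but takes a genuinely different route from the paper. You derive a weak Euler--Lagrange equation for the $\beeta$-component directly, by testing optimality against two-sided, compactly supported perturbations $\beeta \mapsto \beeta + \epsilon e^{-s}\phi$ on compacta of $(-\infty,0)$ where $\beeta$ is bounded above and below (as recalled in \Cref{sec:41}); the correct identity is $\tfrac{d}{ds}\bigl(e^s\dot\beeta\bigr) = -\tfrac{1}{2\beeta^2}\,g\,e^s$ with $g := (\dot\by + \tfrac32\by)^2 \in L^1_{\rm loc}$ (the constant and the combination under the derivative in your displayed formula are slightly off, but the conclusion is unchanged), from which $e^s\dot\beeta$ is absolutely continuous on compacta, hence $\dot\beeta$ and $\bq = \tfrac12(\dot\beeta + \beeta)$ are locally bounded. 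The paper instead regularizes the Lagrangian by adding the smooth penalty $\chi_\e(y) = \e^{-1}(y-\alpha)_-^2$, works with the resulting smooth Hamiltonian system~\eqref{eq:smoothed_H_system} for the $\e$-minimizer, establishes uniform $W^{2,1}_{\rm loc}$ bounds on $e^s\beeta^\e$, and passes to the limit $\e\to 0$ via Sobolev embedding. Your route is more direct for this single lemma, but the smoothing device is reused and essential in the very next result, \Cref{lem:matching p lem} (continuity of $\dot\by$), where the term $\chi_\e'(\by^\e)$ must be manipulated at length; so the authors' choice pays off on the next page as well. Your truncation/comparison alternative is in the right Tonelli-partial-regularity spirit but would need more care to quantify (the coefficient $g$ is only $L^1$, and the dependence on $\eta$ through the $e^s$-weighted term is not uniform), so the integrated Euler--Lagrange argument is the cleaner of your two suggestions and suffices on its own.
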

\begin{proof}
We begin by smoothing the Lagrangian, in order to use classical theory.  For any $\e\in(0,1)$, let
\begin{equation*}
\chi_\e(y) = \dfrac1\e (y-\alpha)_-^2 \, .
\end{equation*}
It is non-negative, convex, twice differentiable, and it takes value 0 if $y \geq \alpha$. 
Then define
\[
	\bL_\alpha^\e(y,\eta,v_y,v_\eta) = \frac{1}{4\eta}\left(v_y + \frac{3}{2} y\right)^2 + \frac{1}{4} \left(v_\eta + \eta\right)^2  - 1+ \chi_\e.
\]
The Lagrangian $\bL_\alpha^\e$ approximates $\bL_{\alpha,+\infty}$ with the state constraint condition that trajectories must lie on the set $\left\{y\geq \alpha\right\}$. By standard arguments,  any sequence of minimizing trajectories $(\by^\e,\beeta^\e)$ associated to $\bL_\alpha^\e$ with endpoint $(\alpha,\theta)$  converges to the minimizing trajectory $(\by,\beeta)$ associated to $\bL_{\alpha}$.

Since $\bL_\alpha^\e$ is smooth, we use~\eqref{eq:hamiltonian syst2} to write the Hamiltonian system for $(\by^\e, \beeta^\e, \bp^\e, \bq^\e)$:
\begin{equation}\label{eq:smoothed_H_system}
\left\{\begin{array}{ll}
\displaystyle	\dot \by^\e = - \frac{3}{2} \by^\e + 2 \beeta^\e \bp^\e,
		\qquad &
		\displaystyle \dot \bp^\e = \frac{1}{2} \bp^\e + \chi_\e'(\by^\e),\medskip\\
\displaystyle	\dot \beeta^\e = - \beeta^\e + 2\bq^\e,
	 	&
	 	\displaystyle \dot \bq^\e = -|\bp^\e|^2.
\end{array}\right.
\end{equation}
Since $\chi_\e$ is regular, each of the quantities above is well-defined. Further, we obtain
\begin{equation*}
	\ddot \beeta^\e + \dot \beeta^\e = 2\dot \bq^\e.
\end{equation*}
First we show that $\bq^\e \in W^{1,1}_{\rm loc}(\R_-^*)$, with bounds in this space independent of $\e$. In the sequel, by saying that a sequence $f_\e$ is ``bounded uniformly in $X_{\rm loc}$,'' we mean that for every compact set $K\subset (-\infty,0]$, there is a constant $C_K$, depending only on $K$, such that $\|f_\e\|_{X(K)} \leq C_K$ for all $\e\in(0,1)$.%

From~\eqref{eq:smoothed_H_system}, we get
\begin{equation*}
	\bp^\e = \frac{1}{2\beeta^\e}\left ( \dot \by^\e + \frac{3}{2} \by^\e\right ).
\end{equation*}
From the formula of $\bL_\alpha^\e$ and the fact that the trajectory is minimizing, it follows that $e^{s/2} \sqrt{\beeta^\e}\bp^\e$ is bounded in $L^2$ uniformly in $\e$.   An argument similar to the one in the proof of \Cref{lem:monotonicity} shows that $\dot\beeta^\e \leq 0$ and, hence, $\beeta^\e(s) \geq \theta$ for all $s$.  In fact, this is easier to prove since~\eqref{eq:smoothed_H_system} is a smooth Hamiltonian system.  From the above, we conclude that $\bp^\e$ is uniformly bounded in $L^2_{\rm loc}$.  This, in turn, implies that $\dot\bq^\e$ is uniformly bounded in $L^1_{\rm loc}$.  

The formula of $\bL_\alpha^\e$  allows us to deduce that $\partial_s(e^s \beeta^\e)= 2e^s\bq^\e(s)$ is bounded uniformly in $L^2_{\rm loc}$, and, thus, in $L^1_{\rm loc}$ as well.  First, we conclude that $\bq^\e$ belongs to  $W^{1,1}_{\rm loc}$ as claimed above.  Hence it is locally uniformly bounded, independent of $\e$, and so is $\bq$ after taking the limit $\e\to 0$.  Next, it follows from differentiating~\eqref{eq:smoothed_H_system} and using the bound on $\dot \bq^\e$ that $\partial_s(e^s \dot \beeta^\e)$ is bounded uniformly in $L^1_{\rm loc}$.  Lastly, we observe that $e^s \beeta^\e$ is bounded uniformly in $L^\infty_{\rm loc}$ as a consequence:
\[
	|e^s \beeta^\e(s)| = \Big|\theta + \int_0^s \partial_s( e^{s'} \beeta^\e(s')) ds'\Big|
		\leq \theta + \| \partial_s(e^{\cdot} \beeta^\e)\|_{L^1([s,0])}.
\]
Combining all three bounds, we see that $e^s \beeta^\e$ is uniformly bounded in $W^{2,1}_{\rm loc}$.

The Sobolev embedding theorem then implies that $e^s \beeta^\e(s)$ is uniformly bounded in $W^{1,\infty}_{\rm loc}$.  From this, it follows that $\beeta^\e$ and $\dot \beeta^\e + \beeta^\e$ are bounded uniformly in $L^\infty_{\rm loc}$.  Taking a linear combination of these two locally bounded functions, we find that $\dot \beeta^\e$ is uniformly bounded in $L^\infty_{\rm loc}$.  Passing to the limit $\e\to0$ yields the local Lipschitz bound on $\beeta$, and the proof is completed.
\end{proof}

With the local $L^\infty$ bound on $\dot \beeta$ from \Cref{lem:Lipschitz}, we are ready to tackle of the continuity of $\dot \by$.

\begin{proof}[Proof of \Cref{lem:matching p lem}]
We follow the same lines as in the previous proof. However, we differentiate the first equation in \eqref{eq:smoothed_H_system} so as to get:
\begin{align*}
\ddot \by^\e &= - \frac{3}{2}  \dot\by^\e + 2 \dot\beeta^\e \bp^\e + 2 \beeta^\e \dot\bp^\e\\
&= - \frac{3}{2} \left ( - \frac{3}{2} \by^\e + 2 \beeta^\e \bp^\e\right )  + 2 \left ( - \beeta^\e + 2\bq^\e \right ) \bp^\e + 2 \beeta^\e  \left ( \frac{1}{2} \bp^\e + \chi_\e'(\by^\e) \right ).
\end{align*}
All terms on the right hand side are bounded except the last one. To handle it, we multiply by $\ddot \by^\e/ \beeta^\e$ on both sides to get
\begin{equation*}
 \dfrac{|\ddot \by^\e|^2}{\beeta^\e} =   \bff^\e \dfrac{\ddot \by^\e}{\beeta^\e} + 2 \chi_\e'(\by^\e) \ddot \by^\e\,, 
 \end{equation*} 
 where $\bff^\e$ is uniformly bounded in $L^2_{\rm loc}(\R_-^*)$. As noted above, $\beeta^\e$ is non-increasing. 
Therefore, dividing by $\beeta^\e$ on a compact sub-interval of $\R_-^*$ is not an issue. To conclude, let multiply by a given test function $\phi \in C_c^\infty(\R_-)$, and integrate by parts:
\begin{align*}
 \int_{-\infty}^0 \dfrac{|\ddot \by^\e|^2}{\beeta^\e} \phi \, ds &= \int_{-\infty}^0 \bff^\e \dfrac{\ddot \by^\e}{\beeta^\e} \phi \, ds + 2 \int_{-\infty}^0 \chi_\e'(\by^\e)  \ddot \by^\e \phi \, ds  \\
 & \leq \left ( \int_{-\infty}^0 (\bff^\e)^2 \beeta^\e  \phi \, ds \right )^{1/2} \left ( \int_{-\infty}^0 \dfrac{|\ddot \by^\e|^2}{\beeta^\e} \phi \, ds \right )^{1/2} - 2 \int_{-\infty}^0  \dfrac{d}{ds} \left ( \chi_\e'(\by^\e)  \phi  \right ) \dot \by^\e \, ds\\
& \leq \left ( \int_{-\infty}^0 (\bff^\e)^2 \beeta^\e  \phi \, ds \right )^{1/2} \left ( \int_{-\infty}^0 \dfrac{|\ddot \by^\e|^2}{\beeta^\e} \phi \, ds \right )^{1/2}\\
& \quad  - 2 \int_{-\infty}^0 \chi_\e''(\by^\e) |\dot \by^e|^2    \phi    \, ds - 2 \int_{-\infty}^0 \dfrac{d}{ds}\left ( \chi_\e(\by^\e)\right ) \dot \phi   \, ds \\
& \leq \left ( \int_{-\infty}^0 (\bff^\e)^2 \beeta^\e  \phi \, ds \right )^{1/2} \left ( \int_{-\infty}^0 \dfrac{|\ddot \by^\e|^2}{\beeta^\e} \phi \, ds \right )^{1/2} + 2 \int_{-\infty}^0    \chi_\e(\by^\e)  \ddot\phi   \, ds\, .
\end{align*} 
We conclude by noticing that $\int_{-\infty}^0    \chi_\e(\by^\e)  \ddot\phi   \, ds$ is bounded uniformly in $\e$  as it appears  in integral form in the variational problem associated with $\bL_\alpha^\e$. As a result, $\ddot \by^\e$ is in $L^2_{\rm loc}$ independent of $\e$. Passing to the limit, we get that $\ddot \by$ is also  in $L^2_{\rm loc}$. As such, $\dot\by$ belongs to $\mathcal C^{\frac12}$, locally over $\R_-^*$. 

Now, if $s_0<0$ is such that $\by(s_0) = \alpha$, then $\dot \by(s_0) = 0$ since $s_0$ is the location of a minimum of $\by$.  On the other hand, since~\eqref{eq:hamiltonian syst2} is satisfied whenever $\by(s) > \alpha$, then we find,
\[
	\lim_{s\to s_0,\, \by(s) > \alpha} \bp(s) = \frac{\overline\alpha}{\beeta(s_0)}.
\]
This concludes the proof of the continuity of $\by$ and $\dot\by$.

The continuity of $\bq$ is a consequence of this.  Indeed, the local boundedness of $\by^\e$ and $\dot \by^\e$,~\eqref{eq:smoothed_H_system}, \Cref{lem:Lipschitz}, and the fact that $\beeta^\e$ is non-increasing imply that $\bp^\e$ is uniformly bounded in $L^\infty_{\rm loc}$.  Since $\dot\bq^\e = -|\bp^\e|^2$, then $\dot \bq^\e$ is uniformly bounded in $L^\infty_{\rm loc}$ as well.  The (Lipschitz) continuity of $\bq$ follows after taking the limit $\e\to0$.
\end{proof}

\begin{figure}
\begin{center}
\includegraphics[width = 0.5\linewidth]{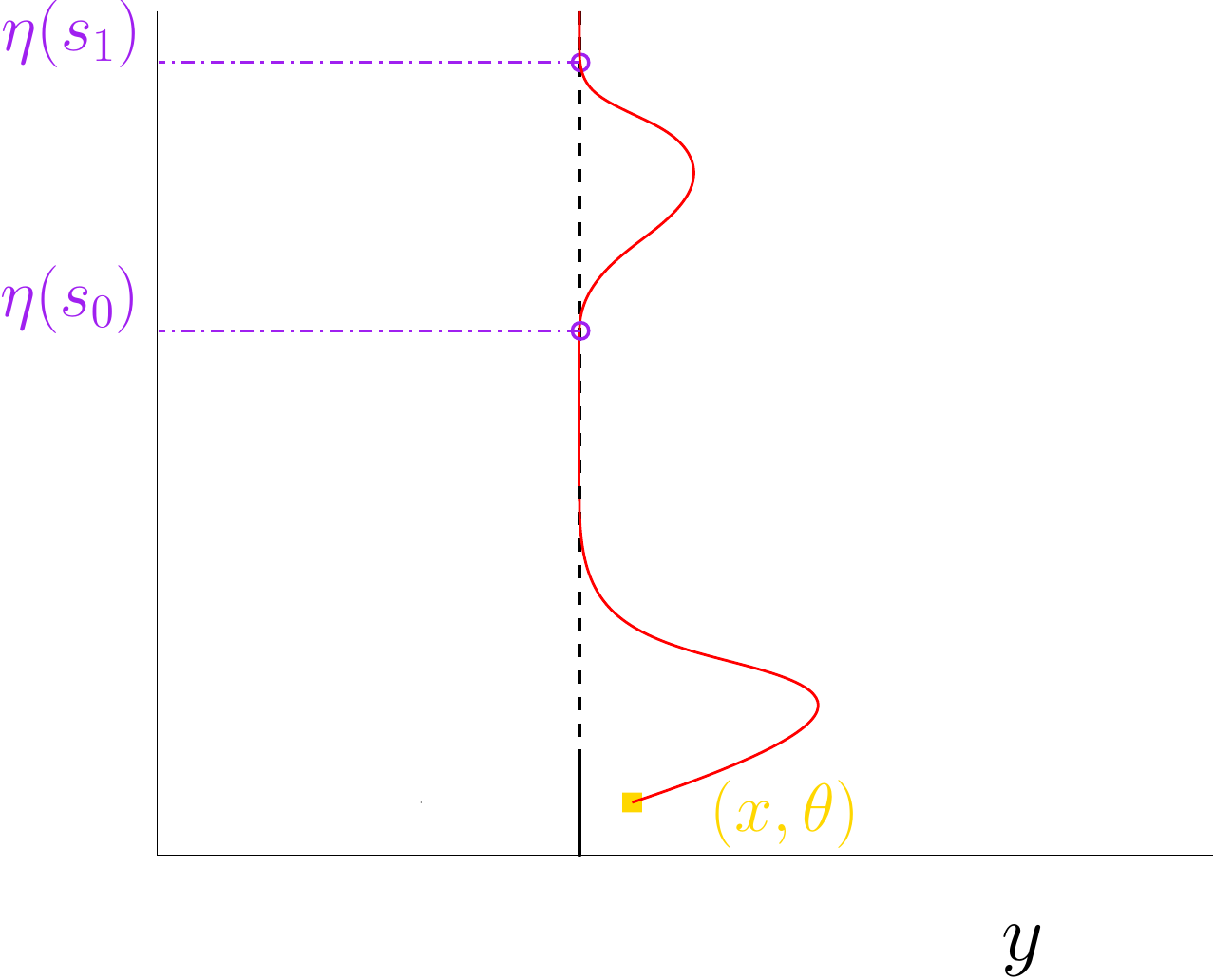} 
\caption{Sketch of a right D-turn between two negative times.  This trajectory cannot be optimal.}
\label{fig:right uturn}
\end{center}
\end{figure}

We now show that situations as in Figure~\ref{fig:right uturn} cannot occur.  This is the last step in proving the preliminary heuristic statement that optimal trajectories must look like those in Figure~\ref{fig:cartoon trajectories}: they stick to the line $\left\{y = \alpha\right\}$ until a critical time $s_0 \in (-\infty,0]$, when they possibly detach to make an excursion in $\left\{y > \alpha\right\}$ (if $s_0<0$) until reaching the endpoint $(x,\theta)$. 

\begin{lemma}[The only D-turn occurs at $s=0$]
\label{lem:one D turn}
Assume that the conditions of \Cref{lem:hits_line} hold.  Let $\theta\in \R_+^*$ and  $(\by,\beeta) \in \bA(\alpha,\theta)$ be an admissible trajectory such that $\by(s) > \alpha$ for all $s \in (s_1,s_0)$ with $s_1 < s_0 < 0$ and $\by(s_0) = \by(s_1) = \alpha$. Then  $(\by,\beeta)$ cannot be an optimal trajectory.  
\end{lemma}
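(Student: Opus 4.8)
The plan is to argue by contradiction and to mimic the comparison scheme of \Cref{lem:No single crossing} and \Cref{lem:No downward U-turn}: assuming the displayed trajectory is optimal, I will exhibit a strictly cheaper admissible competitor obtained by ``flattening'' the excursion that occurs on $[s_1,s_0]$. First I would record the structure on that interval. Since $\by>\alpha$ throughout the open interval $(s_1,s_0)$, the quadruple $(\by,\beeta,\bp,\bq)$ solves the free Hamiltonian system \eqref{eq:hamiltonian syst2} there; using its autonomous character, $(\by,\beeta)$ is then given by \eqref{eq:traj_optim} after the time shift $s\mapsto s-s_0$, with endpoint $(\alpha,\theta_0)$, $\theta_0:=\beeta(s_0)$, and $\bp(s)=Ae^{(s-s_0)/2}$ for some constant $A$. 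By \Cref{lem:matching p lem} the derivative $\dot\by$ is continuous, and since $\by$ equals $\alpha$ at both ends of $[s_1,s_0]$ while being strictly larger inside, we get $\dot\by(s_1)\ge 0\ge\dot\by(s_0)$; rewriting these through $\dot\by+\tfrac32\by=2\beeta\bp$ gives $\bp(s_1)\beeta(s_1)\ge\overline\alpha\ge\bp(s_0)\beeta(s_0)=A\theta_0$. Moreover, by \Cref{lem:monotonicity}, $\beeta$ is nonincreasing, so $\beeta(s)\ge\theta_0$ for $s\le s_0$, and inserting into the explicit formula for $\beeta$ the upper bound on $B=\bq(s_0)$ that the condition $\beeta(s_1)\ge\theta_0$ forces (exactly as in the derivation of \eqref{eq:bound eta}) yields the sharper lower bound $\beeta(s)\ge\theta_0+A^2\bigl(e^{s_0-s}-1\bigr)\bigl(e^{s-s_0}-e^{s_1-s_0}\bigr)$ for $s\in(s_1,s_0)$.

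Then I would introduce the competitor $(\tilde\by,\beeta)$ agreeing with $(\by,\beeta)$ outside $(s_1,s_0)$ and satisfying $\tilde\by\equiv\alpha$, $\dot{\tilde\by}\equiv 0$ on $[s_1,s_0]$. It is admissible, $(\tilde\by,\beeta)\in\bA(\alpha,\theta)$: only the $\by$-component is modified, only on a compact interval on which $\beeta$ is Lipschitz and bounded below by $\theta_0>0$ (\Cref{lem:Lipschitz}), and the endpoint and the growth conditions at $-\infty$ are untouched. The costs agree off $(s_1,s_0)$; on $[s_1,s_0]$ the indicator term vanishes for both trajectories (the excursion lies in $\{y>\alpha\}$, the line is $\{y=\alpha\}$) and the $\eta$-kinetic terms coincide, so, using the line Lagrangian \eqref{eq:running cost line},
\[
J_{\rm new}-J_{\rm orig}=\int_{s_1}^{s_0}\Bigl(\frac{\overline\alpha^2}{\beeta(s)}-\beeta(s)\,\bp(s)^2\Bigr)e^s\,ds .
\]
It then suffices to show this is strictly negative. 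I would bound $\beeta\bp^2$ from below using the sharper estimate above together with $\bp(s)^2=A^2e^{s-s_0}$, bound $\overline\alpha^2/\beeta$ from below using $\beeta\ge\theta_0$, compute the two resulting elementary integrals in closed form in $A$, $\theta_0$ and $\lambda:=e^{s_1-s_0}\in(0,1)$, divide out the common positive factor $e^{s_0}(1-\lambda)$, and reduce to a single algebraic inequality involving $A\theta_0$, $\lambda$ and the normalization $\overline\alpha\le 1$ (equivalently $\alpha\le 4/3$). Once this inequality is verified one has $J_{\rm new}<J_{\rm orig}$, contradicting the optimality of $(\by,\beeta)$.

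The hard part will be that last algebraic inequality. Unlike the C-turn of \Cref{lem:No downward U-turn}, there is no jump size $\mu$ available to absorb an adverse kinetic difference, so the whole gain must come from the kinetic terms; in particular the one-sided sign conditions $\dot\by(s_1)\ge0\ge\dot\by(s_0)$ alone appear insufficient, and one is led to also use the exact boundary relation $\by(s_1)=\alpha$, which via \eqref{eq:traj_optim} pins down an extra identity linking $A$, $B$, $\theta_0$ and $\lambda$, and very likely the sharper lower bound for $\beeta$ in the second integral as well. It is conceivable that the naive competitor must be refined --- e.g.\ flattening onto the line only on a suitable sub-interval of $[s_1,s_0]$, or letting the $\beeta$-component follow the line dynamics \eqref{eq:ODE line} there --- in order for the comparison to close on the whole admissible parameter range; this is where the bulk of the work lies.
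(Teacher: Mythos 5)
Your setup is sound as far as it goes (the reduction to the free system \eqref{eq:traj_optim} on the excursion, the cost identity $J_{\rm new}-J_{\rm orig}=\int_{s_1}^{s_0}\bigl(\overline\alpha^2/\beeta-\beeta\bp^2\bigr)e^s\,ds$ for the flattened competitor, and the admissibility of that competitor are all correct), but the proof has a genuine gap: the decisive inequality $J_{\rm new}<J_{\rm orig}$ is never established. You yourself flag that the one-sided conditions $\dot\by(s_1)\ge 0\ge\dot\by(s_0)$ ``appear insufficient'' and that the competitor may need to be refined; as written, the argument does not close, and it is not clear that it can close along this route. Note in particular that, unlike the C-turn of \Cref{lem:No downward U-turn}, there is no $\mu(1-e^{s_1})$ term to absorb an adverse kinetic difference: writing $g=\beeta\bp=\tfrac12(\dot\by+\tfrac32\by)$, the integrand is $-(g^2-\overline\alpha^2)/\beeta$, and $g-\overline\alpha=\tfrac12\dot\by+\tfrac34(\by-\alpha)$ changes sign on the descending half of the excursion, so pointwise positivity fails and the needed cancellation is delicate. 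You are also underusing the regularity you cite: since $s_1<s_0<0$ are both interior contact points and $\by\in\mathcal C^{1,1/2}_{\rm loc}$ with $\by\ge\alpha$, \Cref{lem:matching p lem} gives the \emph{equalities} $\dot\by(s_1)=\dot\by(s_0)=0$, hence $A\theta_0=\overline\alpha$ and $Ae^{(s_1-s_0)/2}\beeta(s_1)=\overline\alpha$ exactly, not merely the inequalities you record.

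The paper avoids the global comparison entirely and argues locally. After normalizing $s_0=0$, the two matching equalities together with $\by(s_1)=\alpha$ and the explicit form \eqref{eq:traj_optim} pin down $A$ and $B$ and yield the identity \eqref{eq:cubic}; one then computes from \eqref{eq:hamiltonian syst2} that
\[
\limsup_{s\nearrow 0}\ddot\by(s)=(4B-\theta_0)A=-\overline\alpha\left(\frac{1-\tau_1}{1+\tau_1}\right)^2<0,
\]
which contradicts the fact that $s=0$ is a local minimum of $\by$ with $\dot\by(0)=0$ (so that $\limsup_{s\nearrow 0}\ddot\by(s)\ge 0$). If you want to salvage your comparison strategy, you would at minimum need to feed these exact endpoint relations into your closed-form integrals and verify the resulting algebraic inequality over the whole admissible range of $(A,\theta_0,\lambda)$; the paper's local convexity argument is shorter and sidesteps that verification altogether.
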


\begin{proof}
We argue by contradiction.  
Since $\by(s) > \alpha$ for all $s\in(s_1,s_0)$, it follows that $(\by,\beeta)$ satisfies~\eqref{eq:traj_optim}; however, it remains to determine the matching conditions for $\bp$.  The fact that $s_1, s_0<0$ enables us to use \Cref{lem:matching p lem} to get that $\dot \by(s_0) = \dot \by(s_1) = 0$ and
\begin{equation}\label{eq:matching_condition}
	\lim_{s \nearrow s_0} \bp(s) = \frac{\overline \alpha}{\beeta(s_0)}
		\quad \text{and}\quad
	\lim_{s \searrow s_1} \bp(s) = \frac{\overline \alpha}{\beeta(s_1)}.
\end{equation}
Let us introduce $\theta_0 = \beeta(s_0)$ . Up to a translation in time, we may assume that $s_0 = 0$ and accordingly, $\dot \by(0) = 0$ and $\bp(0) = \overline\alpha/\theta_0$.

We shall obtain a contradiction by considering the local convexity of the the trajectory $\by$ as it comes in contact with the line.  During free motion, $\by$ is smooth.  Since $\by(0)= \alpha$ is a local minimum and $\dot \by(0) = 0$, it must be that
\begin{equation}\label{eq:convex}
	\limsup_{s\nearrow 0} \ddot \by(s) \geq 0.
\end{equation}
We note that we may not conclude that $\ddot \by(0) \geq 0$ since we have not established the global $\mathcal C^2$ regularity of $\by$.  The weaker claim~\eqref{eq:convex} does not require this extra smoothness and is sufficient for our purposes.  We now use~\eqref{eq:hamiltonian syst2} on the free portion, $s \in (s_1,0)$, to collect some identities that we use to contradict~\eqref{eq:convex}.

We introduce $\tau_1 = e^{s_1/2}$, the square root of the contact time in the original variables.  Then, using~\eqref{eq:traj_optim} along with~\eqref{eq:matching_condition}, we see that
\begin{equation}\label{eq:A}
	A
		= \bp(0)
		= \dfrac{\overline\alpha}{\theta_0}
	\quad \text{and} \quad
	A\tau_1
		= A e^{s_1/2}
		= \bp(s_1)
		= \dfrac{\overline\alpha}{\beeta(s_1)}.
\end{equation}
In particular, we have $\theta_0 = \beeta(s_1)\tau_1$.  Looking at the $(\by,\beeta)$ component of the trajectory~\eqref{eq:traj_optim} at $s=s_1$, we get
\[\begin{cases}
&\displaystyle 	\theta_0 \tau_1 = \tau_1^2 \beeta(s_1) = \theta_0 - 2 B(1-\tau_1^2) - A^2  (1-\tau_1^2)^2\,,
		\quad \text{and}\quad\medskip \\
&	\displaystyle \frac{4\overline\alpha}{3}\tau_1^{3} = \tau_1^{3}\by(s_1) = \frac{4\overline\alpha}{3} - 2\theta_0 A (1 - \tau_1^2) + 2BA (1-\tau_1^2)^2 + \frac{2}{3} A^3 (1-\tau_1^2)^3.
\end{cases}\]
Solving for $B$ and using the identity $A = \overline \alpha / \theta_0$ yields
\[
	\frac{4\overline \alpha}{3} \tau_1^3 = \frac{4\overline\alpha}{3} - 2 \overline\alpha (1- \tau_1^2) + \overline\alpha (1 - \tau_1)(1 - \tau_1^2) - \frac{1}{3}\left(\frac{\overline\alpha}{\theta_0}\right)^3 (1-\tau_1^2)^3.
\]
Collecting all terms that are linear in $\overline \alpha$, the previous line becomes
\begin{equation}\label{eq:cubic}
	\left( \frac{\overline\alpha}{\theta_0}\right)^3 (1-\tau_1^2)^3 = \overline \alpha ( 1- \tau_1)^3.
\end{equation}

We now compute $\limsup_{s\nearrow 0} \ddot \by(s)$ explicitly, using the identities above and using the trajectories given by~\eqref{eq:hamiltonian syst2}.  Indeed, using~\eqref{eq:hamiltonian syst2} along with the fact that $\dot \by(0) = 0$, we have
\begin{align*}
\limsup_{s\nearrow 0}\ddot \by(s) &= \limsup_{s\nearrow 0}\left(- \frac32 \dot \by(s) + 2(-\beeta(s) + 2 \bq(s))\bp(s) + \beeta(s)\bp(s)\right) \\
	&= 0 + 2(-\theta_0 + 2B)A + \theta_0 A
	= (4B - \theta_0) A.
\end{align*}
From above we have  $A = \overline\alpha /\theta_0$ and $B =(\theta_0 (1-\tau_1) - A^2 (1-\tau_1^2)^2)/(2(1-\tau_1^2))$.  We use these identities, along with~\eqref{eq:cubic}, to find
\begin{align*}
\limsup_{s\nearrow 0} \ddot \by(s) 
	&= (4B - \theta_0)A
	= \left(4\left(\theta_0 \frac{1-\tau_1}{2(1-\tau_1^2)} - \frac12 A^2 (1-\tau_1^2)\right) - \theta_0\right) \left(\frac{\overline\alpha}{\theta_0}\right)\\
	 &=\left(\theta_0 \frac{2}{1+\tau_1} - 2 \left(\frac{\overline\alpha}{\theta_0}\right)^2 (1-\tau_1^2) - \theta_0\frac{1+\tau_1}{1+\tau_1}\right) \left(\frac{\overline\alpha}{\theta_0}\right)\\
	 &=\left(\theta_0 \frac{1-\tau_1}{1+\tau_1} - 2 \left(\frac{\overline\alpha}{\theta_0}\right)^2 (1-\tau_1^2)\right) \left(\frac{\overline\alpha}{\theta_0}\right)
	 =\left(\theta_0 \frac{1-\tau_1}{1+\tau_1} - 2 \theta_0 \frac{(1-\tau_1)^3}{(1-\tau_1^2)^2}\right) \left(\frac{\overline\alpha}{\theta_0}\right)\\
	 &=\left( \frac{1-\tau_1}{1+\tau_1} -  \frac{2(1-\tau_1)}{(1+\tau_1)^2}\right) \overline\alpha
	 =\left( \frac{1-\tau_1^2}{(1+\tau_1)^2} -  \frac{2(1-\tau_1)}{(1+\tau_1)^2}\right) \overline\alpha
	 = -\left( \frac{1 - \tau_1}{1+\tau_1}\right)^2 \overline\alpha  .
\end{align*}
This contradicts~\eqref{eq:convex} as $\tau_1 < 1$.  This closes the proof of \Cref{lem:one D turn}. 
\end{proof}

\subsection{The Airy function and related ones} \label{subsubsec:Airy}
The goal of this subsection is to construct the function $\bQ$ involved in \Cref{prop:trajectories}  that plays a key role in establishing \Cref{thm:alpha^*}.  \Cref{fig:bQ} provides an illustration of $\bQ$.

For that purpose, we need to collect some facts about the Airy function $\Ai$, and introduce several auxiliary functions that are useful to prove monotonicity properties of $\bQ$. 
First, we recall that $\Ai$ satisfies
\begin{equation}
\label{eq:Ai}
\Ai''(\xi)=\xi\Ai(\xi).
\end{equation}
We know the precise asymptotics of $\Ai$ as $\xi \to \infty$.  Indeed, from~\cite[Equations 10.4.59, 10.4.61]{AbramowitzStegun},
\begin{equation*}
\begin{cases}
\Ai(\xi)
	=  \dfrac{1}{2\pi^{1/2}\xi^{1/4}} \exp\left (-\dfrac23 \xi^{3/2}\right )
		\left ( 1
			- \dfrac{15}{216} \left ( \dfrac23 \xi^{3/2} \right )^{-1}
			+ o_{\xi \to \infty}\left ( \xi^{-3/2} \right )  \right )\medskip\\
\Ai'(\xi) = 
	-\dfrac{\xi^{1/4}}{2\pi^{1/2}} \exp\left (-\dfrac23 \xi^{3/2}\right )
		\left ( 1 
			+ \dfrac{21}{216} \left ( \dfrac23 \xi^{3/2} \right )^{-1} 
			+ o_{\xi \to \infty}\left ( \xi^{-3/2} \right ) \right ),
\end{cases}
\end{equation*}
and, for $\mR$ defined by (\ref{eq:riccati}),
\begin{equation}\label{eq:R asymptotics}
\mR(\xi) = \xi^{1/2} +  \dfrac14 \xi^{-1} - \frac{5}{32}\xi^{-5/2} + \frac{15}{64 }\xi^{-4} + o_{\xi\to\infty}(\xi^{-4}) . 
\end{equation}
Recall that $\Xi_0$ is the largest zero of $\Ai$.  The asymptotics of $\mR$ near $\Xi_0$ are also known.  In particular,
\begin{equation*}
	\lim_{\xi \searrow \Xi_0} \mR(\xi) = -\infty
		\quad \text{ and } \quad
	\lim_{\xi \searrow \Xi_0} \mR'(\xi) = -\infty.
\end{equation*}

\subsubsection{The auxiliary function $\mE = \mR'$}
Next we introduce one more function.   For $\xi\in (\Xi_0, \infty)$, let
\[
\mE(\xi) = \mR(\xi)^2 - \xi.
\]
By the definition of $\mR$ in (\ref{eq:riccati}) and by (\ref{eq:Ai}), we have
\begin{equation}
\label{eq:R' and E}
\mR'(\xi) = -\left(\frac{\Ai(\xi)\Ai''(\xi) - (\Ai'(\xi))^2}{\Ai(\xi)^2}\right) = \mR^2(\xi)-\xi= \mE(\xi).
\end{equation}
We summarize further facts in the next lemma.
\begin{lemma}
\label{lem:E}
We have,
\begin{equation}\label{eq:c23}
	\lim_{\xi \searrow \Xi_0} \mE(\xi) =+\infty
		\quad \text{ and } \quad
	\mE(\xi) = \frac12 \xi^{-1/2} + \mathcal O(\xi^{-2})\,,\; \text{as $\xi \to +\infty$}.
\end{equation}
For $\xi>\Xi_0$, we have,
\begin{align}
\label{eq:E'}
&\mE'(\xi) = 2\mR(\xi) \mE(\xi) - 1,\\
&\mE''(\xi) = 2 \mE(\xi)^2 + 2 \mR(\xi) \mE'(\xi). \label{eq:E''}
\end{align}
Finally, for all $\xi\in (\Xi_0, \infty)$, we have $\mE'(\xi)<0$ and $\mE(\xi)>0$.
\end{lemma}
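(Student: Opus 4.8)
I would prove the two differential identities by direct differentiation, then read off the asymptotics from the Airy expansions already recorded in the excerpt, and finally obtain the two sign statements ($\mE>0$ and $\mE'<0$) by the \emph{same} mechanism: exhibit an explicit ``integrating factor'' (a power of $\Ai$) that turns each sign question into a monotonicity statement for a function that vanishes at $+\infty$.

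\textbf{Identities and asymptotics.} From $\mE=\mR^2-\xi$ and the already-established identity $\mR'=\mE$ (see \eqref{eq:R' and E}), differentiating gives $\mE'=2\mR\mR'-1=2\mR\mE-1$, which is \eqref{eq:E'}; differentiating once more and using $\mR'=\mE$ again yields $\mE''=2\mR'\mE+2\mR\mE'=2\mE^2+2\mR\mE'$, which is \eqref{eq:E''}. For the behavior near $\Xi_0$, since $\mR(\xi)\to-\infty$ while $\xi$ stays bounded, $\mE(\xi)=\mR(\xi)^2-\xi\to+\infty$. For $\xi\to+\infty$, I would simply square the expansion \eqref{eq:R asymptotics}: the leading $\xi$ cancels against the $-\xi$ in $\mE$, the next contribution is $2\cdot\xi^{1/2}\cdot\tfrac14\xi^{-1}=\tfrac12\xi^{-1/2}$, and every remaining cross term is $\mathcal O(\xi^{-2})$, giving $\mE(\xi)=\tfrac12\xi^{-1/2}+\mathcal O(\xi^{-2})$; in particular $\mE(\xi)\to0$, and combining this with $\mR(\xi)\sim\xi^{1/2}$ in \eqref{eq:E'} gives $\mE'(\xi)\to0$ as well.

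\textbf{The sign statements.} Multiplying $\mE=(\Ai'/\Ai)^2-\xi$ by $\Ai^2$ gives $\Ai(\xi)^2\mE(\xi)=\Ai'(\xi)^2-\xi\Ai(\xi)^2$; differentiating and using $\Ai''=\xi\Ai$ (see \eqref{eq:Ai}) produces the cancellation $\frac{d}{d\xi}\big(\Ai^2\mE\big)=2\Ai'\Ai''-\Ai^2-2\xi\Ai\Ai'=-\Ai^2<0$ on $(\Xi_0,\infty)$. From the Airy asymptotics quoted in \Cref{subsubsec:Airy}, both $\Ai'(\xi)^2$ and $\xi\Ai(\xi)^2$ tend to $0$ as $\xi\to+\infty$, so $\Ai^2\mE\to0$; a strictly decreasing function with limit $0$ at $+\infty$ is everywhere positive, and since $\Ai(\xi)^2>0$ on $(\Xi_0,\infty)$ (as $\Xi_0$ is the largest zero), we conclude $\mE(\xi)>0$. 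For $\mE'$ I would repeat the trick: $\frac{d}{d\xi}\big(\Ai^2\mE'\big)=2\Ai\Ai'\mE'+\Ai^2\mE''$, and substituting \eqref{eq:E''} together with $\mR=-\Ai'/\Ai$ makes the $\Ai\Ai'\mE'$ terms cancel, leaving $\frac{d}{d\xi}\big(\Ai^2\mE'\big)=2\Ai^2\mE^2>0$ (using $\mE>0$). Since $\mE'(\xi)\to0$ and $\Ai(\xi)^2\to0$, we have $\Ai^2\mE'\to0$; a strictly increasing function with limit $0$ at $+\infty$ is everywhere negative, hence $\mE'(\xi)<0$ on $(\Xi_0,\infty)$.

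\textbf{Main obstacle.} The identities and the asymptotics are routine. The only genuinely delicate point is recognizing the two cancellations $\frac{d}{d\xi}(\Ai^2\mE)=-\Ai^2$ and $\frac{d}{d\xi}(\Ai^2\mE')=2\Ai^2\mE^2$, which convert both sign statements into monotonicity of functions that vanish at $+\infty$; once that is seen, everything else is bookkeeping with the Airy asymptotics already available in the excerpt. One must also take minor care that these are statements on the open interval $(\Xi_0,\infty)$, where $\mR$ (hence $\mE$) is smooth and $\Ai>0$.
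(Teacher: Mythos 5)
Your proof is correct, and for the two sign statements it takes a genuinely different route from the paper. The identities \eqref{eq:E'}--\eqref{eq:E''} and the asymptotics are obtained exactly as in the paper (differentiate $\mE=\mR^2-\xi$ using $\mR'=\mE$, and square \eqref{eq:R asymptotics}); the divergence at $\Xi_0$, which the paper states without comment, you justify correctly from $\mR\to-\infty$. Where you diverge is in proving $\mE>0$ and $\mE'<0$: the paper argues by contradiction entirely at the level of the Riccati-type ODEs, observing from \eqref{eq:E''} and \eqref{eq:E'} that any critical point of $\mE$ would be a strict local minimum, which together with the decay $\mE\sim\tfrac12\xi^{-1/2}$ at $+\infty$ would force an intermediate strict local maximum --- a contradiction; it then gets $\mE>0$ as a consequence of $\mE'<0$ plus the limit $0$ at infinity. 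You instead return to the Airy function itself and exploit the exact cancellations $\frac{d}{d\xi}\bigl(\Ai'^2-\xi\Ai^2\bigr)=-\Ai^2$ (a classical Wronskian-type identity) and $\frac{d}{d\xi}\bigl(\Ai^2\mE'\bigr)=2\Ai^2\mE^2$, reducing both signs to monotonicity of quantities vanishing at $+\infty$; note your logical order is reversed relative to the paper's ($\mE>0$ first, then $\mE'<0$, the latter using the former to make $2\Ai^2\mE^2$ strictly positive). Both arguments are complete; yours is constructive and avoids the contradiction, at the mild cost of re-introducing $\Ai$ and checking that $\Ai'^2$, $\xi\Ai^2$, $\Ai^2$ and $\mE'$ all tend to $0$ at $+\infty$ (the last of which you correctly extract from \eqref{eq:E'} and the expansion of $\mE$, where a cancellation of leading terms occurs), while the paper's stays self-contained in the $(\mR,\mE)$ system.
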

\begin{proof}

The behavior at $\infty$ claimed in~\eqref{eq:c23} follows from the asymptotics in~\eqref{eq:R asymptotics}. Equation (\ref{eq:E'}) follows from the definition of $\mE$ and (\ref{eq:R' and E}). Finally, (\ref{eq:E''}) is obtained by differentiating (\ref{eq:E'}) and again using (\ref{eq:R' and E}).

Next, we prove that $\mE'(\xi)<0$ for all $\xi$. For the sake of contradiction, suppose that there is a critical point, $\xi_0$, of $\mE$.  
Then, by (\ref{eq:E''}), we have 
\[
\mE''(\xi_0)
		= 2\mE(\xi_0)^2.
\]
In addition, \eqref{eq:E'} informs us that $\mE(\xi_0)\neq 0$. Therefore, $\xi_0$ is a strict local minimum.

The limiting behavior of $\mE$ at $\infty$ implies that there is also a strict local maximum $\xi_M\in (\xi_0,\infty)$.  On the other hand, the argument that showed that $\xi_0$ is a strict local minimum applies to $\xi_M$ as well.  We conclude that $\xi_M$ is both a strict local maximum and a strict local minimum, which is a contradiction. Hence, there is no critical point, and we have $\mE' < 0$, and $\mE>0$.
\end{proof}

\subsubsection{The auxiliary function $\mF$}

\begin{lemma}\label{lem:mF>0}
For $\xi>\Xi_0$, define $\mF$ via 
\begin{equation}\label{eq:mF}
\mF(\xi) = 
 \mE(\xi)^2 + 2 \mR(\xi)^2 \mE(\xi) - \mR(\xi)\, . 
\end{equation}
We have $\mF(\xi)>0$ for all $\xi$. 
\end{lemma}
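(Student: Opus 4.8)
\textbf{Proof proposal for Lemma \ref{lem:mF>0}.}

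The plan is to mimic the strategy used for $\mE$ in \Cref{lem:E}: argue by contradiction via the structure of critical points, after first establishing the sign of $\mF$ at the two ends of the interval $(\Xi_0,\infty)$. First I would record the boundary behavior. As $\xi\searrow\Xi_0$, we have $\mR(\xi)\to-\infty$ and $\mE(\xi)\to+\infty$ (by \Cref{lem:E}), and in this regime $\mE$ and $\mR^2$ are both large and positive while $\mR$ is large and negative; each of the three terms $\mE^2$, $2\mR^2\mE$, $-\mR$ is then positive, so $\mF(\xi)\to+\infty$. As $\xi\to+\infty$, using $\mR(\xi)=\xi^{1/2}+\tfrac14\xi^{-1}+o(\xi^{-1})$ and $\mE(\xi)=\tfrac12\xi^{-1/2}+\mathcal O(\xi^{-2})$ from \eqref{eq:R asymptotics} and \eqref{eq:c23}, the dominant contributions are $2\mR^2\mE\sim 2\xi\cdot\tfrac12\xi^{-1/2}=\xi^{1/2}$ and $-\mR\sim-\xi^{1/2}$, which cancel at leading order; so I would carry the expansion to the next order to confirm $\mF(\xi)>0$ for large $\xi$ (I expect $\mF(\xi)$ to be of order $\xi^{-1}$ times a positive constant, but the exact order is a routine computation I would not grind through here).

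Next, the core of the argument: suppose $\mF$ vanishes somewhere, i.e.\ attains a nonpositive value. Since $\mF>0$ near both ends, $\mF$ must attain an interior minimum $\xi_0\in(\Xi_0,\infty)$ with $\mF(\xi_0)\le 0$ and $\mF'(\xi_0)=0$. I would then compute $\mF'$ and $\mF''$ in terms of $\mR$, $\mE$, $\mE'$, using the identities $\mR'=\mE$ \eqref{eq:R' and E}, $\mE'=2\mR\mE-1$ \eqref{eq:E'}, and $\mE''=2\mE^2+2\mR\mE'$ \eqref{eq:E''}. Explicitly,
\[
\mF'=2\mE\mE'+4\mR\mE\cdot\mE+2\mR^2\mE'-\mE = 2\mE\mE'+4\mR\mE^2+2\mR^2\mE'-\mE.
\]
Substituting $\mE'=2\mR\mE-1$ should let me express $\mF'$ purely in terms of $\mR$ and $\mE$, and then at the critical point $\xi_0$ the relation $\mF'(\xi_0)=0$ becomes an algebraic constraint linking $\mR(\xi_0)$ and $\mE(\xi_0)$. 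The goal is to substitute this constraint into $\mF''(\xi_0)$ and show $\mF''(\xi_0)>0$ whenever $\mF(\xi_0)\le 0$ — i.e.\ every critical point at which $\mF$ is nonpositive is necessarily a strict local minimum. As in the proof of \Cref{lem:E}, this rules out the existence of an interior maximum below the level $0$, and since $\mF\to+\infty$ at both ends, a nonpositive interior minimum forces a nonpositive interior maximum in between, a contradiction. Hence $\mF>0$ everywhere.

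The main obstacle I anticipate is the algebra in the last step: verifying that the sign condition $\mF''(\xi_0)>0$ really does follow from $\mF'(\xi_0)=0$ together with $\mF(\xi_0)\le 0$, using only the three quadratic-type identities among $\mR,\mE,\mE'$ and the known signs $\mE>0$, $\mE'<0$, plus $\mR>0$ for $\xi$ not too close to $\Xi_0$ (note $\mR$ can be negative near $\Xi_0$, which may require splitting into the region $\mR\le 0$, where positivity of $\mF$ is essentially immediate since $\mE^2+2\mR^2\mE-\mR\ge -\mR\ge 0$ with equality impossible, and the region $\mR>0$, where the critical-point analysis is needed). If the clean "every nonpositive critical point is a strict minimum" statement turns out to be false as stated, the fallback is to show directly that $\mF$ is monotone, or to combine $\mF$ with an auxiliary multiple of $\mE$ or $\mR$ to get a quantity whose critical-point structure is more transparent — but I expect the contradiction-via-convexity argument to go through, paralleling \Cref{lem:E}.
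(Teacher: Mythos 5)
Your boundary analysis is fine (and your observation that the region $\mR\le 0$ is immediate, since there all three terms of $\mF$ are nonnegative with $\mE^2>0$, is a genuine simplification the paper does not use), but the core contradiction mechanism you propose does not close. You claim that ``every critical point at which $\mF$ is nonpositive is a strict local minimum'' rules out $\mF$ dipping below zero because ``a nonpositive interior minimum forces a nonpositive interior maximum in between.'' That last implication is false: if $\{\mF<0\}$ is a single interval, $\mF$ has exactly one critical point there (its minimum), and its maximum over the closure is attained at the endpoints where $\mF=0$; no nonpositive interior maximum is forced, so no contradiction arises. This is genuinely different from the situation in \Cref{lem:E}, where the second critical point is forced by the decay of $\mE$ at infinity, not by a sign condition. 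The fix, which is what the paper does, is to work at a \emph{zero} of $\mF$ rather than at a critical point: since $\mF>0$ near both ends of $(\Xi_0,\infty)$, if $\mF$ vanishes anywhere it has a largest zero $\xi_0$, at which necessarily $\mF'(\xi_0)\ge 0$. One then computes $\mF'=8\mR\mE^2+4\mR^3\mE-2\mR^2-3\mE$ and substitutes the constraint $\mF(\xi_0)=0$, i.e.\ $2\mR^2\mE=\mR-\mE^2$, to get $\mF'(\xi_0)=3\mE(2\mR\mE-1)=3\mE\,\mE'<0$, a contradiction. Only first derivatives are needed; no claim about $\mF''$ has to be verified.

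Two smaller points. First, the large-$\xi$ positivity is less routine than ``carry the expansion to the next order'': the cancellation persists through \emph{two} orders (the $\xi^{1/2}$ and the $\xi^{-1}$ terms both vanish), and the first surviving term is $\tfrac{3}{16}\xi^{-5/2}$, not a multiple of $\xi^{-1}$; reaching it requires the expansion of $\mR$ through order $\xi^{-5/2}$ as in \eqref{eq:R asymptotics}. Second, even setting aside the logical issue above, your plan hinges on the unverified algebraic claim that $\mF'(\xi_0)=0$ together with $\mF(\xi_0)\le 0$ forces $\mF''(\xi_0)>0$; you flag this yourself as a risk, and it is indeed the step I would not rely on. The zero-of-$\mF$ argument bypasses it entirely.
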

\begin{proof}
 To begin, we notice that $\mF(\xi) \to \infty$ as $\xi \searrow \Xi_0$. Next, we understand the behavior of $\mF(\xi)$ as $\xi\to\infty$.  Using~\eqref{eq:R asymptotics} and the definition of $\mE$, we have, as $\xi\to \infty$,
 \[
	\mE(\xi) = \frac{1}{2}\xi^{-1/2} - \frac{1}{4}\xi^{-2}+\frac{25}{64} \xi^{-7/2} + o_{\xi\rightarrow\infty}(\xi^{-7/2}), \quad \text{and}\quad \mathcal{E}(\xi)^2 = \frac{1}{4}\xi^{-1} - \frac{1}{4}\xi^{-5/2} + o_{\xi\rightarrow\infty}(\xi^{-5/2}).
 \]
 Using the relationship $\mR^2(\xi)=\mE(\xi)+\xi$ in the second term of (\ref{eq:mF}) gives  $\mF(\xi) = 3\mE(\xi)^2 + 2 \xi \mE(\xi) - \mR(\xi)$. Then a straightforward computation yields,
 \[\begin{split}
 	\mF(\xi)
		&= \frac{3}{16}\xi^{-5/2}+o_{\xi\rightarrow\infty}(\xi^{-5/2}).
\end{split}\]
Hence, $\mF(\xi)$ is positive for all sufficiently large $\xi$.

We argue by contradiction to prove that $\mF(\xi)>0$ everywhere. 
Suppose that $\mF$ hits zero at $\xi_1>\Xi_0$.  Since $\mF(\xi) >0$ when $\xi \searrow \Xi_0$ or $\xi \gg 1$, then there exists $\xi_0 \geq \xi_1$ such that $\mF(\xi_0) = 0$ and $\mF'(\xi_0) \geq 0$. 
Evaluating~\eqref{eq:mF} at $\xi_0$ yields that
\begin{equation}\label{eq:c31}
\mE(\xi_0)^2 = \mR(\xi_0) - 2\mR(\xi_0)^2\mE(\xi_0)\end{equation}.
The derivative of $\mF$ (\ref{eq:mF}) can be calculated unsing the relations \eqref{eq:R' and E} and (\ref{eq:E'}):
\begin{align*}
\mF'(\xi) & = 2 \mE(\xi)\left ( 2 \mR(\xi) \mE(\xi) - 1 \right ) + 4\mR(\xi) \mE(\xi)^2 + 2\mR(\xi)^2 \left ( 2 \mR(\xi) \mE(\xi) - 1 \right )  - \mE(\xi)  \\
& = 8\mR(\xi) \mE(\xi)^2 + 4 \mR(\xi)^3\mE(\xi) - 2\mR(\xi)^2 - 3\mE(\xi).
\end{align*}
Evaluating at $\xi_0$, we can simplify further using \eqref{eq:c31}:
\begin{align*}
\mF'(\xi_0)
& = 8\mR(\xi_0) \mE(\xi_0)^2 + 2 \mR(\xi_0) (\mR(\xi_0) - \mE(\xi_0)^2)  - 2\mR(\xi_0)^2 - 3\mE(\xi_0)\\
& = 3\mE(\xi_0)   ( 2 \mR(\xi_0) \mE(\xi_0)    - 1) = 3\mE(\xi_0)\mE'(\xi_0) < 0 .
\end{align*}
This is a contradiction.
\end{proof}

\subsubsection{Definition and properties of $\bQ$}
\label{Subsec:Q}
We are now ready to introduce $\bQ$.
\begin{figure}
\begin{center}
\includegraphics[width = 0.493\linewidth]{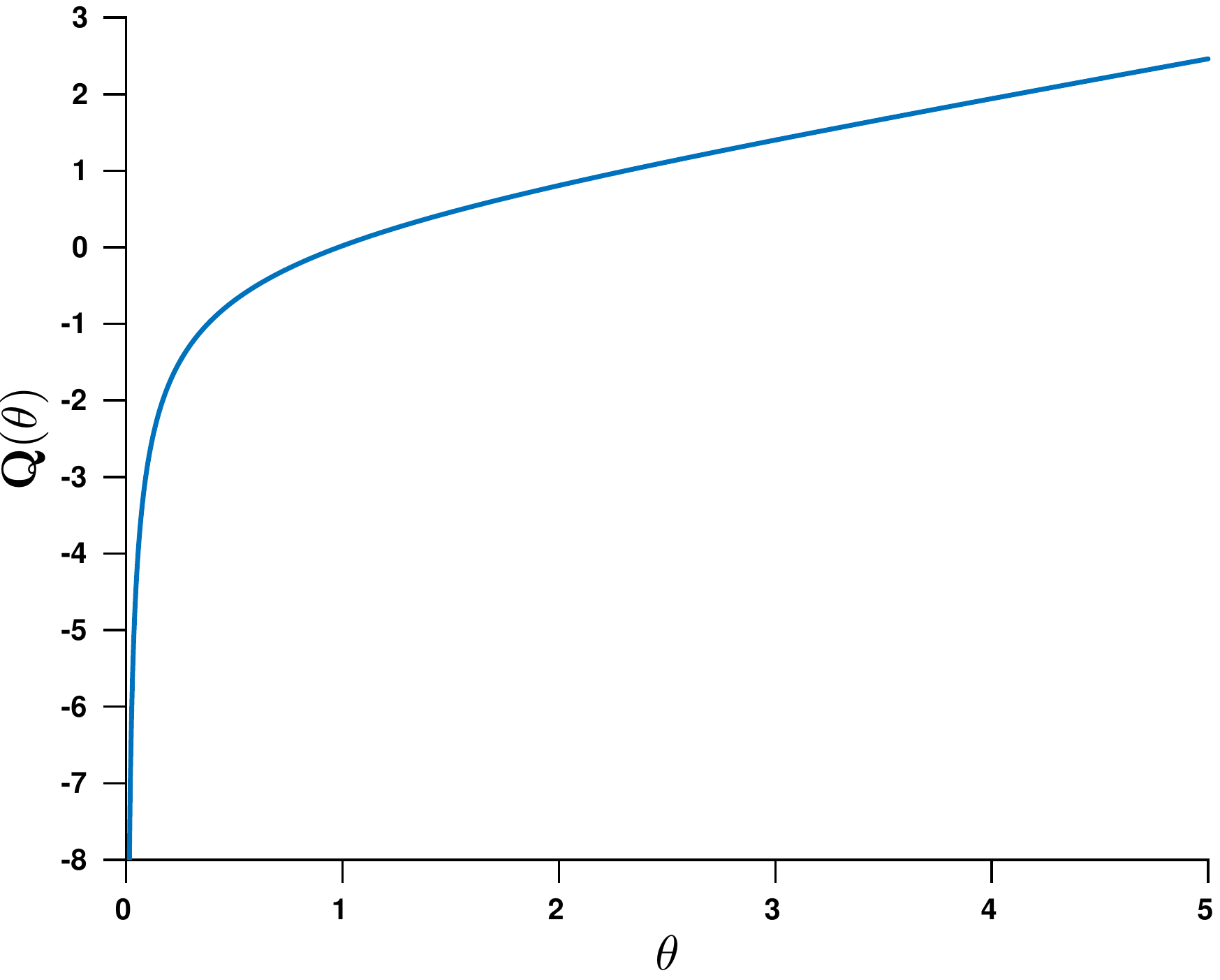} \
\includegraphics[width = 0.493\linewidth]{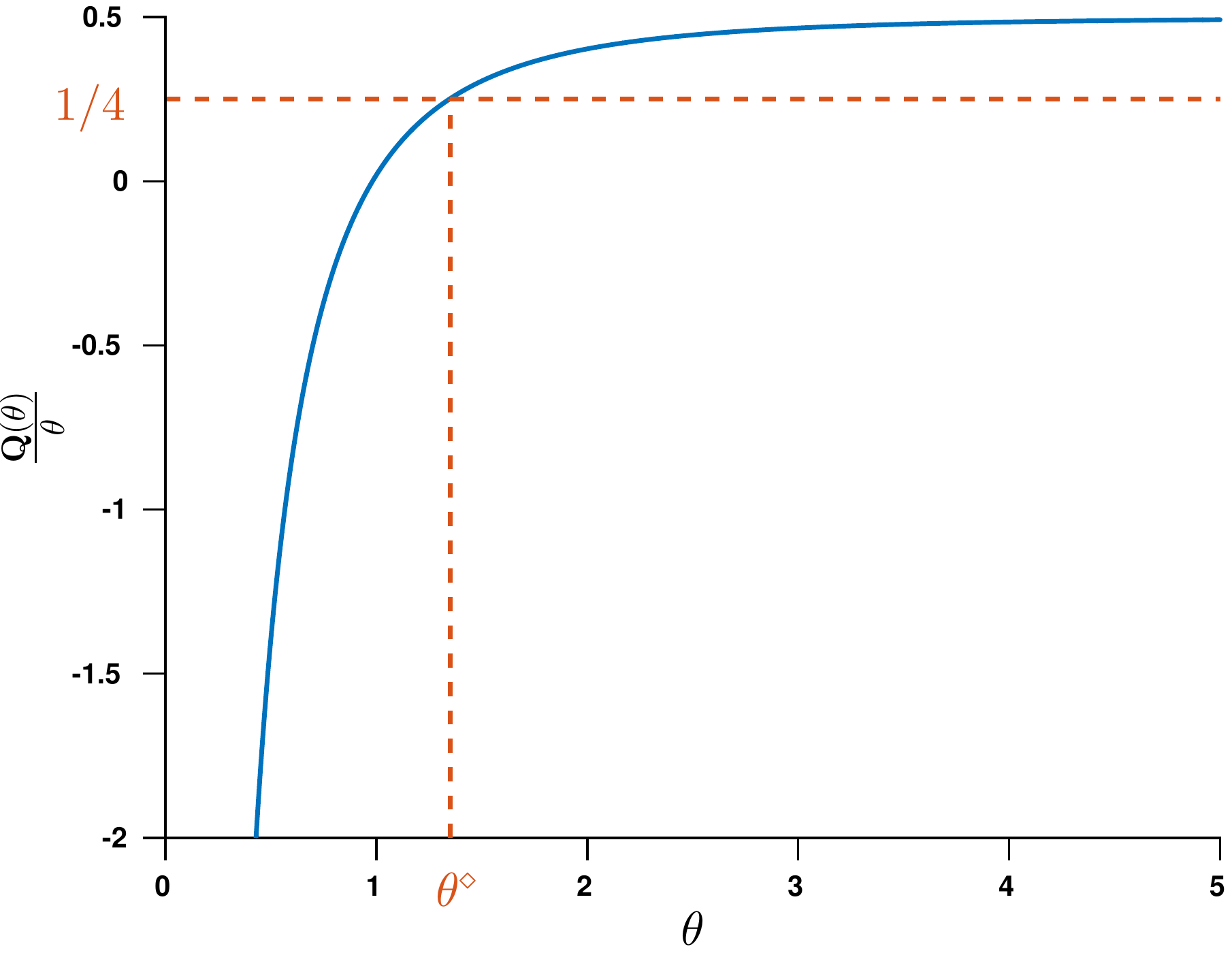}
\caption{Sketch of $\bQ$.  Notice that both $\bQ$ and $\bQ(\theta)/\theta$ are increasing in $\theta$.}
\label{fig:bQ}
\end{center}
\end{figure}

\begin{lemma}\label{lem:Qtheta}
\begin{enumerate}[(i)]
\item For each $\theta>0$, there is a unique solution of $q = \mR\left ( q^2 - \theta^{-1}\right )$ such that $q^2 - \theta^{-1}>\Xi_0$. 
We define the function $\bQ(\theta)$ as the root of this equation. Alternatively speaking, $\bQ(\theta)$ is defined via the following implicit relationship,
\begin{equation}
\label{eq:bQimp}
\bQ(\theta) = \mR\left (\bQ(\theta)^2- \theta^{-1}\right ).
\end{equation}
\item \label{item:bQinc} The function $\theta\mapsto \bQ(\theta)/\theta$ is strictly increasing, continuously differentiable, and it converges to $1/2$ as $\theta\to +\infty$. 
\end{enumerate}
\end{lemma}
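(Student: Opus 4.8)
\medskip

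The plan is to treat the two parts in turn, using the implicit function theorem and the auxiliary functions $\mE$ and $\mF$ established in Lemmas \ref{lem:E} and \ref{lem:mF>0}. For part (i), fix $\theta>0$ and consider the function $g_\theta(q) = q - \mR(q^2 - \theta^{-1})$ on the domain where $q^2 - \theta^{-1} > \Xi_0$, i.e. $q > (\Xi_0 + \theta^{-1})^{1/2}$ (if $\Xi_0 + \theta^{-1}>0$) or all $q$ with $q^2 > \Xi_0 + \theta^{-1}$. First I would record that $\mR$ is smooth on $(\Xi_0,\infty)$ with $\mR(\xi)\to -\infty$ as $\xi \searrow \Xi_0$ and $\mR(\xi)\sim \xi^{1/2}$ as $\xi\to\infty$, both recalled in Subsection \ref{subsubsec:Airy}. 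Consequently, as $q$ decreases to the left endpoint of the admissible interval, $q^2 - \theta^{-1} \searrow \Xi_0$, so $\mR(q^2-\theta^{-1}) \to -\infty$ and hence $g_\theta(q) \to +\infty$; as $q \to +\infty$, $\mR(q^2-\theta^{-1}) \sim q$, and a more careful expansion using \eqref{eq:R asymptotics} shows $g_\theta(q) = q - (q^2-\theta^{-1})^{1/2} - \tfrac14(q^2-\theta^{-1})^{-1} + \cdots \to 0^+$ from above — more simply, $g_\theta$ is eventually positive there too, so the existence of a zero is not yet guaranteed by sign change alone. Instead, I would argue monotonicity directly: $g_\theta'(q) = 1 - 2q\,\mR'(q^2-\theta^{-1}) = 1 - 2q\,\mE(q^2-\theta^{-1})$ by \eqref{eq:R' and E}. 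Since $\mE>0$ everywhere by Lemma \ref{lem:E}, for $q \le 0$ we have $g_\theta'(q) \ge 1 > 0$; and one checks $g_\theta<0$ is impossible for $q\le 0$ once the asymptotics near the endpoint are used — so in fact the relevant root has $q>0$. To pin the root down I would show $g_\theta$ changes sign. At $q = (\Xi_0 + \theta^{-1})^{1/2}$ (endpoint) $g_\theta \to +\infty$; I then need a point where $g_\theta \le 0$. Taking $q$ such that $q^2 - \theta^{-1}$ is moderately large, $g_\theta(q) = q - \mR(q^2-\theta^{-1})$, and since $\mR(\xi) = \xi^{1/2} + \tfrac14\xi^{-1}+\cdots > \xi^{1/2}$, we get $g_\theta(q) < q - (q^2-\theta^{-1})^{1/2} < 0$ precisely when $q < (q^2 - \theta^{-1})^{1/2}$, i.e. never — so sign change fails and I must instead prove directly that the unique critical-point structure of $g_\theta$ forces exactly one zero. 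The cleaner route: $g_\theta$ is strictly convex or has a single interior minimum. From $g_\theta'(q) = 1 - 2q\mE(q^2-\theta^{-1})$, at the left endpoint $g_\theta' \to -\infty$ (since $\mE\to+\infty$ there, by \eqref{eq:c23}) — wait, $\mE(\xi)\to+\infty$ as $\xi\searrow\Xi_0$, so $g_\theta'(q)\to -\infty$; as $q\to\infty$, $2q\mE(q^2-\theta^{-1}) \sim 2q\cdot\tfrac12 (q^2)^{-1/2} = 1$, so $g_\theta'(q)\to 0$. Hence $g_\theta$ is decreasing near the endpoint, $g_\theta(\text{endpoint}) = +\infty$, and I claim $g_\theta$ is eventually increasing or levels off, with $g_\theta(\infty) = 0^+$; if $g_\theta$ stays positive the "root" would be at $+\infty$, which is not allowed. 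So the content of part (i) is really that $g_\theta$ dips below zero. I would establish $\inf_q g_\theta(q) < 0$ by exhibiting one explicit $q_0$ with $g_\theta(q_0)<0$; a natural candidate is $q_0$ solving $q_0^2 - \theta^{-1} = $ (the point where $\mR(\xi) = \xi^{1/2}+\tfrac14\xi^{-1}$ exceeds $\xi^{1/2}$ by more than the gap), or equivalently solving the fixed-point equation approximately and perturbing. Cleanest: substitute $q = \mR(\xi)$ and $\theta^{-1} = \mR(\xi)^2 - \xi = \mE(\xi)$ by Lemma \ref{lem:E}; then the relation $q = \mR(q^2-\theta^{-1})$ is automatically satisfied, and $\theta = \mE(\xi)^{-1}$, with $\theta^{-1} = \mE(\xi) > 0$, so $\theta$ ranges over $(0,\infty)$ bijectively as $\xi$ ranges over $(\Xi_0,\infty)$ because $\mE$ is strictly decreasing (Lemma \ref{lem:E}) from $+\infty$ to $0$. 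This parametrization is the key device: it simultaneously gives existence, uniqueness, and smoothness.

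\medskip

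Concretely, for part (i) I would set up the map $\Phi:(\Xi_0,\infty)\to (0,\infty)$, $\Phi(\xi) = 1/\mE(\xi)$. By Lemma \ref{lem:E}, $\mE$ is smooth, strictly positive, strictly decreasing (since $\mE'<0$), with $\mE(\xi)\to+\infty$ as $\xi\searrow\Xi_0$ and $\mE(\xi)\to 0$ as $\xi\to\infty$; hence $\Phi$ is a smooth increasing bijection from $(\Xi_0,\infty)$ onto $(0,\infty)$. Define $\bQ(\theta) = \mR(\Phi^{-1}(\theta))$. Then with $\xi = \Phi^{-1}(\theta)$ we have $\theta^{-1} = \mE(\xi) = \mR(\xi)^2 - \xi$ (Lemma \ref{lem:E}), hence $\bQ(\theta)^2 - \theta^{-1} = \mR(\xi)^2 - (\mR(\xi)^2 - \xi) = \xi > \Xi_0$, and $\mR(\bQ(\theta)^2 - \theta^{-1}) = \mR(\xi) = \bQ(\theta)$, which is \eqref{eq:bQimp}. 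For uniqueness, suppose $q$ solves $q = \mR(q^2-\theta^{-1})$ with $\xi' := q^2 - \theta^{-1} > \Xi_0$; then $q = \mR(\xi')$ so $q^2 = \mR(\xi')^2$, giving $\theta^{-1} = \mR(\xi')^2 - \xi' = \mE(\xi')$, so $\xi' = \Phi^{-1}(\theta) = \xi$ and $q = \mR(\xi) = \bQ(\theta)$. Continuous differentiability of $\bQ$ is then immediate as a composition of smooth functions. This handles part (i) cleanly and avoids any messy sign-chasing with $g_\theta$.

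\medskip

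For part (ii), I would compute $\bQ(\theta)/\theta$ through the same parametrization. With $\theta = 1/\mE(\xi)$ and $\bQ(\theta) = \mR(\xi)$ we get $\bQ(\theta)/\theta = \mR(\xi)\mE(\xi)$. So the claim reduces to: the map $\xi \mapsto \mR(\xi)\mE(\xi)$ is \emph{strictly decreasing} on $(\Xi_0,\infty)$ (because $\theta$ is a strictly decreasing function of $\xi$, so "$\bQ(\theta)/\theta$ increasing in $\theta$" becomes "$\mR\mE$ decreasing in $\xi$"), continuously differentiable (clear), and converges to $1/2$ as $\xi\searrow\Xi_0$ — wait, need to match limits: $\theta\to+\infty$ corresponds to $\mE(\xi)\to 0$, i.e. $\xi\to+\infty$; there $\mR(\xi)\mE(\xi) \sim \xi^{1/2}\cdot\tfrac12\xi^{-1/2} = \tfrac12$ using \eqref{eq:R asymptotics} and \eqref{eq:c23}, giving the stated limit $1/2$. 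The monotonicity is the crux. Differentiating, $\frac{d}{d\xi}(\mR\mE) = \mR'\mE + \mR\mE' = \mE^2 + \mR(2\mR\mE - 1) = \mE^2 + 2\mR^2\mE - \mR = \mF(\xi)$, precisely the function in \eqref{eq:mF}, where I used $\mR' = \mE$ \eqref{eq:R' and E} and $\mE' = 2\mR\mE - 1$ \eqref{eq:E'}. By Lemma \ref{lem:mF>0}, $\mF > 0$, so $\mR\mE$ is strictly \emph{increasing} in $\xi$ — but since $\theta = 1/\mE$ is strictly decreasing in $\xi$, "increasing in $\xi$" translates to "decreasing in $\theta$"... which would be the wrong direction. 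I need to recheck the orientation: as $\theta$ increases, $\mE(\xi) = 1/\theta$ decreases, so (since $\mE$ is decreasing) $\xi$ increases; and $\mR\mE$ increases with $\xi$; hence $\bQ(\theta)/\theta = \mR(\xi)\mE(\xi)$ increases with $\theta$. Good — the two sign reversals cancel, consistent with the asymptotic $\bQ(\theta)/\theta \nearrow 1/2$. So the argument closes: $\frac{d}{d\theta}\big(\bQ(\theta)/\theta\big) = \mF(\xi)\cdot\frac{d\xi}{d\theta} = \mF(\xi)\cdot\frac{1}{\Phi'(\xi)} = \mF(\xi)\cdot\frac{\mE(\xi)^2}{-\mE'(\xi)} > 0$ since $\mF>0$, $\mE'<0$, giving strict monotonicity, and the limit is read off from \eqref{eq:R asymptotics}.

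\medskip

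\textbf{Main obstacle.} All the real work has been pushed into Lemmas \ref{lem:E} and \ref{lem:mF>0} (the positivity of $\mE$, $\mE'<0$, and especially $\mF>0$), which are already proved in the excerpt. Given those, the proof of Lemma \ref{lem:Qtheta} is essentially bookkeeping with the substitution $\theta = 1/\mE(\xi)$. The one point requiring genuine care is getting the \emph{orientation} right — confirming that $\Phi = 1/\mE$ is increasing (because $\mE$ is decreasing) so that the two monotonicity reversals compose correctly, and that the endpoint $\xi\to\infty$ (not $\xi\searrow\Xi_0$) is the one corresponding to $\theta\to\infty$, which is what produces the limit $1/2$ rather than something divergent. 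I would double-check these against the asymptotics \eqref{eq:R asymptotics} and \eqref{eq:c23} explicitly to avoid a sign error.
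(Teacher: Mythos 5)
Your proposal is correct, and the core device --- parametrizing everything by $\xi\in(\Xi_0,\infty)$ via the bijection $\theta^{-1}=\mE(\xi)$ --- is exactly the paper's device for part (i): the paper picks the unique $\xi_0$ with $\mE(\xi_0)=\theta^{-1}$, sets $q_0=\pm(\xi_0+\theta^{-1})^{1/2}=\mR(\xi_0)$, and runs the same uniqueness argument you give. (Your opening paragraph's excursion through $g_\theta(q)=q-\mR(q^2-\theta^{-1})$ is a dead end, as you yourself conclude, but it is harmless since the final argument does not use it.) For part (ii) you take a genuinely cleaner route than the paper. The paper differentiates the implicit relation \eqref{eq:bQimp} in $\theta$, solves for $\theta\bQ'(\theta)$, and after substituting $\bQ(\theta)=\mR(\xi_0)$, $\theta^{-1}=\mE(\xi_0)$, $\mR'=\mE$ shows that $\theta\bQ'(\theta)-\bQ(\theta)>0$ is equivalent to $\mF(\xi_0)>0$ (using $1-2\mR\mE=-\mE'>0$ to keep the denominator's sign under control). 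You instead observe the identity $\tfrac{d}{d\xi}(\mR\mE)=\mR'\mE+\mR\mE'=\mE^2+2\mR^2\mE-\mR=\mF$, so that $\bQ(\theta)/\theta=\mR(\xi)\mE(\xi)$ is increasing in $\xi$, and compose with the increasing change of variable $\theta=1/\mE(\xi)$; the two sign reversals you worry about do cancel, and your chain-rule bookkeeping $\tfrac{d}{d\theta}(\bQ/\theta)=\mF\cdot\mE^2/(-\mE')>0$ is right. Both arguments reduce to the same key input, \Cref{lem:mF>0}, but yours makes the role of $\mF$ transparent as the $\xi$-derivative of $\bQ(\theta)/\theta$. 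Your limit computation is also simpler: since $\theta\to\infty$ forces $\xi\to\infty$ directly through the parametrization, $\mR(\xi)\mE(\xi)\to\tfrac12$ follows from \eqref{eq:R asymptotics} and \eqref{eq:c23} in one line, whereas the paper must first establish $\bQ(\theta)>0$ for large $\theta$ (via the largest zero $\Xi_1$ of $\Ai'$) and that $\bQ(\theta)\to\infty$ before expanding.
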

\begin{proof}
We begin with the proof of (i).  First, from Lemma \ref{lem:E}, $\mE(\xi) = \mR(\xi)^2 - \xi$ is a bijective function from $(\Xi_0,\infty)$ to $(0,\infty)$.  Therefore, for each $\theta$, there 
is a unique $\xi_0 \in (\Xi_0, \infty)$ such that $\mR(\xi_0)^2 - \xi_0 = \theta^{-1}$. 

If $\mR(\xi_0) \geq 0$, we let $q_0 = \left ( \xi_0 + \theta^{-1}\right ) ^{1/2}$.  
It is clear that $q_0$ is a root of $q = \mR\left ( q^2 - \theta^{-1}\right )$.   In order to see that $q_0$ is unique, suppose that $q_1 = \mR( q_1^2 - \theta^{-1})$ and $q_1^2 - \theta^{-1}> \Xi_0$.  Then, letting $\xi_1 = q_1^2 - \theta^{-1}$, we find $\mR(\xi_1)^2 - \xi_1 = \theta^{-1}$.  The fact that $\mR(\xi)^2 - \xi$ is bijective implies that $\xi_1 = \xi_0$.  It then follows that $q_1 = \mR(\xi_1) = \mR(\xi_0) = q_0$.  If, on the other hand, $\mR(\xi_0) < 0$, then the proof is similar, after choosing $q_0 = - \sqrt{\xi_0 + \theta^{-1}}$.

We now present the proof of (\ref{item:bQinc}). 
The fact that $\bQ$ is differentiable is a simple result of the implicit function theorem and Lemma \ref{lem:E}, in which we prove that $2\mR \mR' = 2\mR \mE < 1$ everywhere.

We now establish that $\theta \mapsto \bQ(\theta)/\theta$ is strictly increasing.  
It is equivalent to the positivity of $\theta\bQ'(\theta) - \bQ(\theta)$. Letting $\xi_0 = \bQ(\theta)^2 - \theta^{-1}$ and differentiating (\ref{eq:bQimp}) yields,
\begin{equation*}
\bQ'(\theta) = \mR'(\xi_0)\left ( 2 \bQ(\theta) \bQ'(\theta) + \dfrac{1}{\theta^2} \right ),
\end{equation*}
which, upon rearranging becomes,
\begin{equation*}
	\theta \bQ'(\theta) = \dfrac{\mR'(\xi_0)}{\theta\left (1 - 2\bQ(\theta)\mR'(\xi_0)\right )}.
\end{equation*}
Now, recall the following relationships previously established:
\begin{equation*}
\bQ(\theta) = \mR(\xi_0)\, ,\quad \dfrac{1}{\theta} =  \bQ(\theta)^2 - \xi_0 = \mR(\xi_0)^2 - \xi_0 = \mE(\xi_0)\, , \quad \text{and}\quad \mR'(\xi_0) = \mE(\xi_0)\, .
\end{equation*}
Using these, we find,
\begin{equation*}
	\theta\bQ'(\theta) - \bQ(\theta) > 0
		\quad \text{ if and only if } \quad
		\dfrac{\mE(\xi_0)^2}{1 - 2 \mR(\xi_0) \mE(\xi_0)}    - \mR(\xi_0)> 0\, .
\end{equation*}
Recall, from Lemma \ref{lem:E} and (\ref{eq:E'}), that $0 > \mE'(\xi) = 2 \mR(\xi) \mE(\xi) - 1 $ holds for all $\xi\in(\Xi_0,\infty)$. Thus, together with the definition of $\mF$ in (\ref{eq:mF}) we find the equivalence,
\begin{equation*}
	\theta\bQ'(\theta) - \bQ(\theta) > 0
		\quad \text{ if and only if } \quad
		\mF(\xi_0) > 0.
\end{equation*}
The result follows from  Lemma \ref{lem:mF>0}.

Finally, we need to check that $\bQ(\theta)/\theta$ converges to $1/2$ as $\theta\to +\infty$.  
 First, we establish, 
 \begin{equation}
 \label{eq:bQ>0}
 \bQ(\theta) > 0 \text{ for $\theta$ large enough.}
 \end{equation}
We recall that $\Ai'$ has a largest zero, which we denote $\Xi_1$ ($\Xi_1\approx -1.02$), such that $\Xi_0< \Xi_1 <0$. We have $\mR(\xi)>0$ for $\xi>\Xi_1$. Therefore,  for $\theta>|\Xi_1|^{-1}$, we have $q^2-\theta^{-1}> \Xi_1$, and hence $\mR(q^2-\theta^{-1})>0$  for all $q$. Recalling the definition of $\bQ$ concludes the proof of (\ref{eq:bQ>0}). 

Since $\bQ(\theta)/\theta$ is increasing, it follows that $\bQ(\theta)$ tends to infinity with $\theta$.  We are then justified in using the asymptotic expansion \eqref{eq:R asymptotics} in  \eqref{eq:bQimp}:
\[
	\bQ(\theta)
		= \left(\bQ(\theta)^2 - \frac{1}{ \theta}\right)^{1/2} + \frac{1}{4}\left(\bQ(\theta)^2 - \frac{1}{ \theta}\right)^{-1} + o_{\theta\to\infty} \left(\bQ(\theta)^2 - \frac{1} {\theta}\right)^{-1}.
\]
Dividing by $\theta$ and expanding the first  and second terms on the right hand side, we see that
\begin{equation*}
	\frac{\bQ(\theta)}{\theta}
		= \frac{\bQ(\theta)}{\theta}\left(1 - \frac{1}{2\theta\bQ(\theta)^2} + O_{\theta\to\infty}\left(\frac{1}{\theta^2\bQ(\theta)^4}\right)\right)\\
		  +  \frac{1}{4}\frac{1}{\theta\bQ(\theta)^2}  + o_{\theta\to\infty} \left(\frac{1}{\theta\bQ(\theta)^2}\right).
\end{equation*}
Since $\bQ(\theta)\to +\infty$ as $\theta\to +\infty$, it follows that
\[
	0 = -\frac{1}{2\theta^2\bQ(\theta)} + \frac{1}{4\theta \bQ(\theta)^2} + o_{\theta\to\infty}\left( \frac{1}{\theta \bQ(\theta)^2}\right)\,,
\]
from which   we obtain, after multiplying this by $2\theta \bQ(\theta)^2 $,
\[
	\frac{\bQ(\theta)}{\theta}
		= \frac{1}{2} + o_{\theta\to\infty}(1),
\]
which concludes the proof.
\end{proof}

We next prove Lemma \ref{lem:VxiQR}, which crucially relies on the monotonicity of $\bQ(\theta)/\theta$.

\begin{proof}[Proof of Lemma \ref{lem:VxiQR}]
We recall that, according to Lemma \ref{lem:Qtheta}.(ii), $\bQ(\theta) = \mR(\bQ(\theta)^2- \theta^{-1})$. The equivalence of (\ref{eq:VandQ}) and (\ref{eq:VandR}) follows directly from this after taking $\theta=\xi(\tau)$ as long as  $V(\tau)^4 - \xi^{-1}(\tau) > \Xi_0$. 

Next, elementary calculations yield
\[
	\xi(\tau) 
		= \left(1 + \frac{1}{\tau}\right)\dfrac{(1-\tau)^\frac13}{(1+\tau/2)^\frac13},
\]
and, hence, $\xi$ is strictly decreasing in $\tau$.

We now claim that $V(\tau)^2/\xi(\tau)$ is strictly increasing in $\tau$. Indeed, a short computation implies
\[
	\dfrac{V(\tau)^2}{\xi(\tau)}
		= \dfrac{\tau (1+\tau/2)}{(1+\tau)^2}
		= \frac12 \left(1 - \frac{1}{(1+\tau)^2}\right).
\]  
This, combined with the fact that $\xi$ is strictly decreasing, and Lemma \ref{lem:Qtheta}.(\ref{item:bQinc}) implies that $\bQ(\xi(\tau))/\xi(\tau) - V(\tau)^2/\xi(\tau)$ is strictly decreasing in $\tau$.  This implies that there is at most one $\tau$ such that (\ref{eq:VandQ}) holds. See \Cref{fig:unique_time}.
\end{proof}

\subsection{The dynamics on the line}\label{sec:dynamics_on_line}

With Lemma \ref{lem:one D turn} at hand, we know that trajectories make at most one excursion to the right of the line $\left\{y = \alpha\right\}$.  In the sequel, we show that this excursion occurs if and only if the endpoint $(\alpha,\theta)$ is such that  $\theta \in (0,\Thetastar)$, where we refer to   the threshold $\Thetastar$ given in \eqref{eq:thetadiam}, which,  according to Lemma \ref{lem:Qtheta}, is uniquely defined.  One key step to understanding this is the dynamics on the line.  It should be noted, however, that the results in this section are used for more than this one consequence.

In order to prepare the computation of the trajectory off the line (Section \ref{sec:analytic}), two constants of integration are needed: $A$ and $B$, see {\em e.g.}\ \eqref{eq:traj_optim}. In the sequel, we gather enough additional equations at the junction with the line in order to resolve the problem. The cornerstone is the relation between $\bq(\shh)$ and $\beeta(\shh)$, which is established in \Cref{lem:Airy} below. This enables us to bring  the condition at $s=-\infty$ in the definition of $\bA(\alpha,\theta)$ in  \eqref{eq:self-similar_admissible} down to a condition at the contact time $s = \shh$. Note that the latter is well defined by Lemma \ref{lem:hits_line} and Lemma \ref{lem:one D turn}:

\begin{definition}[Contact time]
Suppose $\theta>0$, and  let $(\by,\beeta)\in\bA(\alpha,\theta)$ be the optimal trajectory. There exists a unique $\shh = \shh(\theta)\leq 0$ such that  $\by(s) = \alpha$ if and only if $s \leq \shh$. 
\end{definition}

\begin{lemma}\label{lem:Airy}
Suppose $\theta>0$, and  let $(\by,\beeta)\in\bA(\alpha,\theta)$ be the optimal trajectory. Let $\shh \in\R_-$ be the contact time. Then $\bq$, defined by~\eqref{eq:ODE line}, satisfies, for $s< \shh$,
\begin{equation}\label{eq:relation airy}
	 \bq(s) = \bQ_\alpha \left (\beeta(s)\right ). 
\end{equation}
\end{lemma}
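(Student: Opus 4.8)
The plan is to prove that, for $s<\shh$, the pair $(\beeta,\bq)$ --- which by the contact–time definition and \Cref{lem:matching p lem} solves the line system~\eqref{eq:ODE line} with $(\by,\dot\by)=(\alpha,0)$ --- lies on the graph of $\bQ_\alpha$, by showing that after the natural rescaling $\bq$, viewed as a function of an auxiliary variable $\xi$, solves the very Riccati equation obeyed by $\mR=-\Ai'/\Ai$, with the correct behaviour at infinity. First I would reduce to $\overline\alpha=1$: the change of unknowns $(\beeta,\bq)\mapsto(\widetilde\beeta,\widetilde\bq):=\overline\alpha^{-2/3}(\beeta,\bq)$ transforms~\eqref{eq:ODE line} into the system $\dot{\widetilde\beeta}=-\widetilde\beeta+2\widetilde\bq$, $\dot{\widetilde\bq}=-\widetilde\beeta^{-2}$, preserves the conditions at $s=-\infty$ in~\eqref{eq:self-similar_admissible}, and turns the claimed identity $\bq=\bQ_\alpha(\beeta)$ into $\widetilde\bq=\bQ(\widetilde\beeta)$ by the very definition $\bQ_\alpha(\theta)=\overline\alpha^{2/3}\bQ(\theta/\overline\alpha^{2/3})$. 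So from now on I assume $\overline\alpha=1$ and must prove $\bq(s)=\bQ(\beeta(s))$ for $s<\shh$; recall that $\bq$ is continuous (\Cref{lem:matching p lem}), that $\beeta$ is smooth on $(-\infty,\shh)$ and non-increasing (\Cref{lem:monotonicity}), and that $e^s\beeta(s)\to0$ as $s\to-\infty$.

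Second, I would establish the key growth facts: $\beeta(s)\to+\infty$, $\bq(s)\to+\infty$, hence $\xi(s):=\bq(s)^2-\beeta(s)^{-1}\to+\infty$, as $s\to-\infty$. If $\beeta$ were bounded above, then, being also bounded below by $\beeta(\shh)>0$, it would force $\dot\bq=-\beeta^{-2}$ bounded away from $0$, so $\bq(s)\to+\infty$ as $s\to-\infty$, and then $\dot\beeta=2\bq-\beeta\to+\infty$, contradicting $\dot\beeta\le0$; thus $\beeta$ is unbounded and, being monotone, $\beeta(s)\to+\infty$. Integrating $\tfrac{d}{ds}(e^s\beeta)=2e^s\bq$ and using $e^s\beeta\to0$ gives $e^s\beeta(s)=2\int_{-\infty}^s e^\sigma\bq(\sigma)\,d\sigma$; were $\bq$ bounded, the right-hand side would be $O(e^s)$, forcing $\beeta$ bounded, a contradiction. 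Hence $\bq$ is unbounded and, being monotone, $\bq(s)\to+\infty$, so that $\xi(s)=\bq(s)^2-\beeta(s)^{-1}\to+\infty$ as well.

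Third, I would exploit the Riccati structure. Since $\dot\xi=2\bq\dot\bq+\beeta^{-2}\dot\beeta=-\beeta^{-1}<0$, the map $s\mapsto\xi(s)$ is a decreasing diffeomorphism of $(-\infty,\shh]$ onto $[\xi(\shh),+\infty)$, so $\bq$ may be regarded as a smooth function of $\xi$ on $(\xi(\shh),+\infty)$, and
\[
\frac{d\bq}{d\xi}=\frac{\dot\bq}{\dot\xi}=\frac{-\beeta^{-2}}{-\beeta^{-1}}=\beeta^{-1}=\bq^2-\xi,
\]
which is precisely the equation satisfied by $\mR$ (cf.~\eqref{eq:R' and E}, $\mR'=\mR^2-\xi$). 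Moreover $0<\beeta^{-1}=\bq^2-\xi$ yields $\bq>\sqrt\xi$ for large $\xi$ and $\bq-\sqrt\xi=\beeta^{-1}/(\bq+\sqrt\xi)\to0$ as $\xi\to+\infty$, while $\mR(\xi)-\sqrt\xi\to0$ by~\eqref{eq:R asymptotics}. A uniqueness argument now identifies the two: if $y_1,y_2$ both solve $y'=y^2-\xi$ near $+\infty$ with $y_i-\sqrt\xi\to0$, then $(y_1-y_2)'=(y_1+y_2)(y_1-y_2)$ with $y_1+y_2\sim2\sqrt\xi$, so $\tfrac{d}{d\xi}\log|y_1-y_2|\ge\sqrt\xi$ for $\xi$ large, whence $|y_1-y_2|$ grows at least like $e^{\frac23\xi^{3/2}}$ unless $y_1\equiv y_2$; therefore $\bq(\xi)=\mR(\xi)$ for large $\xi$, and then, both being solutions of the same first-order ODE, throughout the common domain. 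Since $\mR(\xi)\to-\infty$ as $\xi\searrow\Xi_0$ while $\bq$ stays finite on compact $s$-intervals, this forces $\xi(s)>\Xi_0$ for all $s\le\shh$. Hence, for $s<\shh$, $\bq(s)=\mR\big(\bq(s)^2-\beeta(s)^{-1}\big)$ with argument in $(\Xi_0,\infty)$, which by the defining property of $\bQ$ (\Cref{lem:Qtheta}.(i), i.e.~\eqref{eq:bQimp}) gives $\bq(s)=\bQ(\beeta(s))$; undoing the rescaling yields $\bq(s)=\bQ_\alpha(\beeta(s))$.

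The main obstacle is the analysis at $s=-\infty$: establishing, using only the monotonicity of $\beeta$ and $e^s\beeta(s)\to0$, that $\beeta$, $\bq$ and $\xi$ all diverge to $+\infty$, together with the bookkeeping ensuring $\xi$ never reaches the singular value $\Xi_0$. Once these are secured, the identification of $\bq$ with $\mR\circ\xi$ via the Riccati equation is essentially immediate.
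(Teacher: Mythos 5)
Your proof is correct and follows essentially the same route as the paper's: first show $\beeta,\bq\to+\infty$ as $s\to-\infty$ using the monotonicity of $\beeta$ and the admissibility condition $e^s\beeta(s)\to0$, then identify $\bq$ with $\mR$ through the Riccati structure $dq/d\xi=q^2-\xi$ and the asymptotic matching $q-\sqrt{\xi}\to0$ at $\xi=+\infty$. The differences are cosmetic: you change the independent variable to $\xi$ and phrase the identification as a uniqueness statement for the Riccati equation, whereas the paper keeps the variable $s$ and integrates the same linearized equation $\dot\phi=-\phi(\mR(\bxi)+\bq)/\beeta$ for the difference $\phi=\mR(\bxi)-\bq$; you also explicitly verify that $\bxi(s)>\Xi_0$ on all of $(-\infty,\shh]$, a point the paper leaves implicit.
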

\begin{proof}
In this proof, we assume that $\overline{\alpha} =1$ without loss of generality. The appropriate relationship \eqref{eq:relation airy} can be recovered afterwards from the scaling $\beeta(s) = \overline{\alpha}^{2/3} \beeta_{4/3}(s)$, see \Cref{rem:scaling}. 

As above, we may also assume without loss of generality that $\shh = 0$ up to a time shift of the trajectory. First, we recall that $(\by,\beeta) \in \bA(\alpha,\theta)$ implies
\begin{equation}\label{eq:growth condition}
	\lim_{s\to-\infty}  e^s \beeta(s) = 0.
\end{equation}
Second, due to~\Cref{lem:monotonicity}, we recall that  $\beeta$ is non-increasing.  In view of~\eqref{eq:ODE line}, this implies  $\beeta \geq 2\bq$.

The first conclusion we make from these two facts is that $\bq$ and $\beeta$ both tend to infinity.  Indeed, first suppose that $\beeta$ is bounded.  Since $\beeta$ is monotonic, then there exists $\beeta_\infty$  such that $\beeta(s) \in (\theta, \eta_\infty)$ for all  $s<0$.  It follows that $\bq(s)$ remains bounded from above as well.  From~\eqref{eq:ODE line}, we find
\[
	\bq(s)
		= \bq(0) + \int_s^0  \frac{1}{\beeta(s')^2}ds'
		\geq \bq(0) + \int_s^0 \frac{1}{\eta_\infty^2} ds'.	
\]
After taking $s\to-\infty$, we see that $\bq(s) \to \infty$, which  contradicts the boundedness of   $\beeta$. 

Similarly, if $\bq$ is bounded, there exists $q_\infty$ such that $\bq(s) \leq q_\infty$ for $s<0$.  Since $\beeta$ tends to infinity, choose $S>0$ large enough that $\beeta(-S) > 2q_\infty$.  Then, using this as well as~\eqref{eq:ODE line}, we obtain, for $s<0$,
\begin{align*}
\beeta(s-S)e^{s-S} &= \beeta(-S)e^{-S}-\int_{s-S}^{-S} \dfrac d{ds'}\left (\beeta(s')e^{s'}\right )\, ds'= \beeta(-S)e^{-S}-\int_{s-S}^{-S} 2q(s') e^{s'} \, ds'\\
&\geq \beeta(-S)e^{-S} -2q_\infty \int_{s-S}^{-S}  e^{s'} \, ds'= \beeta(-S)e^{-S} -2q_\infty\left(e^{-S}-e^{s-S}\right).
\end{align*}
By taking the asymptotic limit, we find: 
\[
\liminf_{s\rightarrow -\infty} e^{s-S} \beeta(s-S) \geq \left ( \beeta(-S) -2q_\infty\right )e^{-S} >0,
\]
where the second inequality follows from our choice of $S$. However, this is impossible due to \eqref{eq:growth condition}. Thus, $\bq$ cannot be uniformly bounded.

In addition, the first equation in~\eqref{eq:ODE line}  implies that $\bq$ is monotonic. We conclude that the following limits hold true, 
\begin{equation}\label{eq:condition at oo}
	\lim_{s\to-\infty} \beeta(s)
		= \lim_{s\to-\infty} \bq(s)
		= + \infty.
\end{equation}
Now, consider the combination of $\beeta$ and $\bq$ given by
\begin{equation}\label{eq:def h}
\bxi(s) =  \bq(s)^2  - \frac{1}{\beeta(s)}\, .
\end{equation}
It follows from \eqref{eq:ODE line} that $\dot \bxi = -1/\beeta$.  Let $\phi(s) = \mR(\bxi(s)) -  \bq(s)$.  First notice that, using~\eqref{eq:def h}, along with~\eqref{eq:R asymptotics}, it follows that $\phi(s) \to 0$ as $s\to-\infty$. 
Second, we find,
\[
	\dfrac{d}{ds} \mR(\bxi(s))
		= \mR'(\bxi) \dot \bxi
		= -  \left( \mR(\bxi)^2 - \bxi \right) \frac{1}{\beeta}		 
		=   -   \frac{ \mR(\bxi)^2 }{\beeta} + \frac{\bq^2}{ \beeta} - \frac{1}{\beeta^2} ,
\]
where we have used~\eqref{eq:R' and E} to obtain the second equality. Thus, we obtain,
\[
	\dot \phi
		= -  \left ( \frac{\mR(\bxi)^2 -  \bq^2}{\beeta}\right )
		= -  \phi \left (  \frac{  \mR(\bxi) + \bq}{\beeta}\right ).
\]
For any $s<0$, integrating this from $s$ to $0$ yields the identity
\begin{equation}\label{eq:c21}
	\phi(s) = \phi(0) \exp\left (\int_s^0 \left (  \frac{ \mR(\bxi(s')) + \bq(s')}{\beeta(s')}\right ) ds'\right ).
\end{equation}
The definition of $\bxi$ in (\ref{eq:def h}) and~\eqref{eq:condition at oo} imply that $\lim_{s\rightarrow-\infty} \bxi(s)= + \infty$. This, together with the asymptotics for $\mR$ in (\ref{eq:R asymptotics}), and (\ref{eq:condition at oo}), imply that $\overline\alpha^{2/3}\mR(\bxi(s')) + \bq(s')$ is  positive for $s'<S$ negative enough (it even tends to infinity). We deduce that 
\begin{equation}\label{eq:c22}
	\liminf_{s\to-\infty} \exp\left (\int_s^0 \left (  \frac{  \mR(\bxi(s')) + \bq(s')}{\beeta(s')}\right ) ds'\right  ) \geq   \exp\left (\int_S^0 \left (  \frac{  \mR(\bxi(s')) + \bq(s')}{\beeta(s')}\right ) ds'\right ) >0 .
\end{equation}
The fact that $\phi(s) \to 0$ as $s\to -\infty$, along with~\eqref{eq:c21} and~\eqref{eq:c22}, imply $\phi(0) = 0$.  Using this information, with~\eqref{eq:c21} again, shows that $\phi(s)= 0$ for all $s<0$.  Thus, according to the definition of $\phi$, we have,
\[
	\bq(s) =  \mR(\bxi(s)) =  \mR\left( \bq(s)^2  - \frac{1}{\beeta(s)}\right),
\]
which is equivalent to $ \bq(s) =   \bQ\left (\beeta(s) \right )$.
This concludes the proof.
\end{proof}

We conclude this section with 
a relatively precise description of the behavior as $s\to -\infty$ (or, equivalently, $t\to 0$).

\begin{lemma}[Anomalous behavior as $s\to -\infty$]
\label{lem:anomalous}
The following asymptotics hold for optimal trajectories:
\begin{equation*}
\begin{cases}
&\by(s) = \alpha \medskip\\
&\beeta(s)\sim \dfrac32 \alpha^{2/3}|s|^{1/3} 
\end{cases}\qquad \mathrm{as}\; s\to -\infty.
\end{equation*} 
\end{lemma}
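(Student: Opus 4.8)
\textbf{Setup.} By \Cref{lem:one D turn}, \Cref{lem:hits_line} and the definition of the contact time, the optimal trajectory $(\by,\beeta)\in\bA(\alpha,\theta)$ sticks to the line $\{y=\alpha\}$ on $(-\infty,\shh]$ for the contact time $\shh\leq 0$; in particular $\by(s)=\alpha$ for all $s$ sufficiently negative, which gives the first line. It remains to prove the asymptotics $\beeta(s)\sim\tfrac32\alpha^{2/3}|s|^{1/3}$ as $s\to-\infty$. On $(-\infty,\shh)$ the pair $(\beeta,\bq)$ solves the line dynamics \eqref{eq:ODE line}, and by \Cref{lem:Airy} we have the algebraic constraint $\bq(s)=\bQ_\alpha(\beeta(s))$ for $s<\shh$. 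Since $\bQ_\alpha(\theta)=\overline\alpha^{2/3}\bQ(\theta/\overline\alpha^{2/3})$ and $\beeta(s)=\overline\alpha^{2/3}\beeta_{4/3}(s)$ by \Cref{rem:scaling}, it suffices to treat the case $\overline\alpha=1$ (equivalently $\alpha=4/3$), in which $\tfrac32\alpha^{2/3}=2=\tfrac32\cdot(4/3)^{2/3}$, and then rescale. So I would normalize $\overline\alpha=1$ and prove $\beeta(s)\sim 2|s|^{1/3}$.

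\textbf{Main step: an ODE for $\beeta$ alone.} From \eqref{eq:ODE line}, $\dot\beeta=-\beeta+2\bq$, and substituting $\bq=\bQ(\beeta)$ gives the autonomous first-order ODE
\[
	\dot\beeta(s) = -\beeta(s) + 2\,\bQ(\beeta(s)), \qquad s<\shh.
\]
From \Cref{lem:Airy} we already know $\beeta(s)\to+\infty$ as $s\to-\infty$, and from \Cref{lem:Qtheta}.(\ref{item:bQinc}) we have $\bQ(\theta)/\theta\to 1/2$ as $\theta\to\infty$, so the leading balance $-\beeta+2\bQ(\beeta)$ is $o(\beeta)$ — the drift nearly cancels, which is exactly why an anomalous power law $|s|^{1/3}$ (rather than exponential) emerges. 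To get the precise rate I would sharpen the expansion of $\bQ$: from the implicit relation \eqref{eq:bQimp} $\bQ(\theta)=\mR(\bQ(\theta)^2-\theta^{-1})$ together with the asymptotics $\mR(\xi)=\xi^{1/2}+\tfrac14\xi^{-1}+O(\xi^{-5/2})$ from \eqref{eq:R asymptotics}, a bootstrap (plug $\bQ(\theta)\approx \tfrac{\theta}{2}$ into the right side and iterate, exactly as in the end of the proof of \Cref{lem:Qtheta}) yields
\[
	\bQ(\theta) = \frac{\theta}{2} + \frac{1}{\theta} + o_{\theta\to\infty}\!\left(\frac{1}{\theta}\right),
	\qquad\text{hence}\qquad
	-\beeta + 2\bQ(\beeta) = \frac{2}{\beeta} + o_{\beeta\to\infty}\!\left(\frac{1}{\beeta}\right).
\]
Therefore $\dot\beeta = \tfrac{2}{\beeta}(1+o(1))$, i.e. $\tfrac{d}{ds}(\beeta^2) = 4(1+o(1))$. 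Integrating from $s$ (very negative) down to a fixed reference time and using $\beeta\to\infty$, we get $\beeta(s)^2 = 4|s|(1+o(1))$, i.e. $\beeta(s)\sim 2|s|^{1/2}$\,— wait, this must be reconciled: $\tfrac{d}{ds}(\beeta^2)=4$ gives $\beeta^2\sim -4s=4|s|$, so $\beeta\sim 2|s|^{1/2}$, not $|s|^{1/3}$. Let me recheck the exponent balance: we actually need $\dot\beeta\sim c\,\beeta^{-2}$ to produce a $|s|^{1/3}$ law ($\tfrac{d}{ds}\beeta^3 = 3c$). So the correct refinement must be $-\beeta+2\bQ(\beeta)=O(\beeta^{-2})$ with vanishing $\beeta^{-1}$ coefficient; I would recheck the $\xi^{-1}$ term in $\mR$ against the $\theta^{-1}$ correction in $\bQ$ — the cancellation of the $\beeta^{-1}$ term is the crux. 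Concretely: writing $\bQ(\beeta)=\tfrac{\beeta}{2}+\tfrac{c_1}{\beeta}+\tfrac{c_2}{\beeta^{?}}+\cdots$, the self-consistency in \eqref{eq:bQimp} with $\mR(\xi)=\xi^{1/2}+\tfrac14\xi^{-1}+\cdots$ and $\xi=\bQ^2-\beeta^{-1}=\tfrac{\beeta^2}{4}+(c_1-1)+\cdots$ must be expanded carefully; the outcome should be that the $\beeta^{-1}$ term in $-\beeta+2\bQ(\beeta)$ vanishes and the next term is of order $\beeta^{-2}$ with a coefficient whose value pins down the constant $\tfrac32\alpha^{2/3}$ (i.e. $2$ when $\overline\alpha=1$).

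\textbf{Conclusion and error control.} Once the refined expansion $-\beeta+2\bQ(\beeta)=\tfrac{\kappa}{\beeta^{2}}(1+o(1))$ is in hand with the right $\kappa$ (chosen so that $\tfrac{d}{ds}\beeta^3 = 3\kappa(1+o(1))$ yields $\beeta^3\sim 3\kappa|s|$ with $3\kappa = (3/2)^3 \alpha^2$, consistent with $\beeta\sim\tfrac32\alpha^{2/3}|s|^{1/3}$), I would make the argument rigorous by a standard comparison/sandwich: for any $\varepsilon>0$ there is $S_\varepsilon$ such that for $s<S_\varepsilon$, $(\tfrac32\alpha^{2/3}-\varepsilon)\beeta^{-2}\cdot\text{(const)}\le \dot\beeta\le(\tfrac32\alpha^{2/3}+\varepsilon)\beeta^{-2}\cdot\text{(const)}$; integrating these differential inequalities and letting $s\to-\infty$ (then $\varepsilon\to0$) gives the claimed equivalent. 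Finally, I restore general $\overline\alpha$ via $\beeta(s)=\overline\alpha^{2/3}\beeta_{4/3}(s)$ and $\tfrac32\cdot(4/3)^{2/3}\cdot\overline\alpha^{2/3}=\tfrac32\alpha^{2/3}$ using $\overline\alpha=3\alpha/4$, which finishes the proof.

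The main obstacle is \emph{precisely} establishing the cancellation at order $\beeta^{-1}$ in $-\beeta+2\bQ(\beeta)$ and extracting the exact coefficient of the $\beeta^{-2}$ term from the implicit Airy relation \eqref{eq:bQimp} and the asymptotics \eqref{eq:R asymptotics}; everything else (the autonomous ODE, the comparison argument, the scaling reduction) is routine. One must be careful that the $O(\xi^{-5/2})$, etc., error terms in $\mR$ are more than enough precision, so the computation, while delicate, is finite.
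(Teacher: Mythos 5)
Your setup is right and matches the paper's: reduce to $\overline\alpha=1$ by scaling, note that $\by\equiv\alpha$ for $s\le\shh$, and on that interval combine the line dynamics \eqref{eq:ODE line} with the first integral $\bq=\bQ_\alpha(\beeta)$ from \Cref{lem:Airy}. You also correctly diagnose that the whole content of the lemma is a cancellation: the drift $-\beeta+2\bQ(\beeta)$ must be $O(\beeta^{-2})$, not merely $o(\beeta)$ or $O(\beeta^{-1})$, for the $|s|^{1/3}$ law to emerge. But you do not establish this cancellation, and this is precisely the step that constitutes the proof. Your first-pass expansion $\bQ(\theta)=\tfrac{\theta}{2}+\tfrac{1}{\theta}+o(\theta^{-1})$ is wrong (the correct one is $\bQ(\theta)=\tfrac{\theta}{2}-\theta^{-2}+o(\theta^{-2})$, so that $\dot\beeta=-\beeta+2\bQ(\beeta)\sim-2\beeta^{-2}$, consistent with $\beeta$ non-increasing; note your displayed differential inequalities also carry the wrong sign). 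You flag the inconsistency yourself, but then reverse-engineer the constant $\kappa$ from the desired answer rather than deriving it. Moreover, the bootstrap you propose -- plugging the ansatz into \eqref{eq:bQimp} and using \eqref{eq:R asymptotics} -- is more treacherous than you suggest: writing $\bQ=\tfrac{\theta}{2}+c$ with $c=O(\theta^{-2})$, the un-squared relation $\bQ=\mR(\bQ^2-\theta^{-1})$ is satisfied identically up to $O(\theta^{-5})$ for \emph{any} such $c$, so the coefficient of $\theta^{-2}$ is not visible at the order you work to; one must either track $\mR$ through its $\xi^{-5/2}$ term or pass to the equivalent squared form $\mE(\bQ^2-\theta^{-1})=\theta^{-1}$ and use the expansion $\mE(\xi)=\tfrac12\xi^{-1/2}-\tfrac14\xi^{-2}+\dots$, which does pin down $c=-\theta^{-2}$.

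The paper sidesteps this delicate cancellation entirely by working with $\bq$ and $\bxi=\bq^2-\beeta^{-1}$ instead of $\beeta$. Since $\bq=\mR(\bxi)$ forces $\beeta^{-1}=\mR(\bxi)^2-\bxi=\mE(\bxi)$, the line dynamics give
$\dot\bq=-\beeta^{-2}=-\mE(\bxi)^2\sim-\tfrac14\bxi^{-1}\sim-\tfrac{1}{4\bq^2}$,
which uses only the \emph{leading} term of $\mE$ and no cancellation; integrating gives $\bq\sim(3/4)^{1/3}|s|^{1/3}$, and then $\beeta=1/\mE(\bxi)\sim2\bxi^{1/2}\sim2\bq$ yields the stated equivalent. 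I would encourage you to either adopt this route or actually carry out the higher-order expansion of $\bQ$; as written, the proposal identifies the obstacle but leaves it unresolved.
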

Note that~\eqref{eq:theta scaling sec 3}, the anomalous scaling in the original variables, follows from \Cref{lem:anomalous}.

\begin{proof}
We may   assume $\overline{\alpha} = 1$ up to a scaling argument \eqref{eq:link optimal traj}, as in the previous proof. 

The first item is obvious by definition of $\shh>-\infty$. The second one can be deduced from a combination of Lemma \ref{lem:Airy}, \eqref{eq:def h}, and~\eqref{eq:ODE line}. 
Indeed, we have the following asymptotics:
\[
\dot \bq = - \dfrac1{\beeta^2} = -\left ( \bq^2 - \bxi \right )^2 = - \left ( \mR(\bxi)^2 - \bxi \right )^2 
 = - \left ( \dfrac12  \bxi^{-1/2} + o_{s\to-\infty}\left ( \bxi^{-1/2} \right )  \right )^2 
\sim -  \frac{1}{4\bq^2}\, .
\]
Hence, we see that
\begin{equation*}
	\bq(s) \sim \left (\frac34 \right )^{1/3}|s|^{1/3} \quad \mathrm{as}\; s\to -\infty\, .
\end{equation*}
Similarly, we deduce that
\begin{equation*}
\beeta(s)\sim 2 \left (\frac34 \right )^{1/3}|s|^{1/3} \quad \mathrm{as}\; s\to -\infty\, .
\end{equation*}
\end{proof}

\subsection{The dynamics off the line}

\label{sec:analytic}
In this subsection, we fix $\theta >0$ and the associated optimal trajectory $(\by,\beeta)\in \bA(\alpha,\theta)$. Recall~\eqref{eq:thetadiam}, the definition of the threshold value $\Thetastar$ such that $\Thetastar = 4\bQ_\alpha(\Thetastar)$.

The first step is to show that the contact time is non zero ($\shh<0$) if $\theta < \Thetastar$. Alternatively speaking, for endpoints below the threshold, the trajectory makes a free motion excursion in $\left\{y>\alpha\right\}$. 

\begin{lemma}\label{lem:exit the line}
There cannot exist $s_1< s_0\leq 0$ such that $\by(s) = \alpha$ and $\beeta(s) < \Thetastar$ for all $s \in (s_1,s_0)$.
\end{lemma}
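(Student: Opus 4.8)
The statement asserts that an optimal trajectory cannot remain on the line $\{y=\alpha\}$ over an interval $(s_1,s_0)$ while having $\beeta$ strictly below the threshold $\Thetastar$. The natural strategy is a contradiction argument together with an explicit comparison on the line, analogous in spirit to the proofs of Lemmas \ref{lem:No single crossing} and \ref{lem:No downward U-turn}. Suppose such $s_1 < s_0 \le 0$ exist. On the interval $(s_1,s_0)$ the trajectory is confined to the line, so $(\beeta,\bq)$ solves the line dynamics \eqref{eq:ODE line} and the running cost is $\bL_{\{y=\alpha\}}$ given by \eqref{eq:running cost line}. The key point is that with $\beeta < \Thetastar$, the term $\overline\alpha^2/\beeta$ in \eqref{eq:running cost line} is large — larger than what one would pay by detaching from the line and making a free-motion excursion into $\{y>\alpha\}$, where the indicator term $\mu\1_{\{y<\alpha\}}$ is absent and the cost $v_y^2/(4\eta)$ can be made small by choosing $\dot\by$ appropriately.

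Concretely, I would first invoke the structural results already available: by Lemma \ref{lem:monotonicity} (monotonicity of $\beeta$) and Lemma \ref{lem:Airy}, on the portion of the trajectory glued to the line at $s \le \shh$ we have $\bq = \bQ_\alpha(\beeta)$, and $\Thetastar$ is characterized by $\Thetastar = 4\bQ_\alpha(\Thetastar)$ via \eqref{eq:thetadiam}. Using the strict monotonicity of $\bQ(\theta)/\theta$ (Lemma \ref{lem:Qtheta}.(ii)), $\beeta < \Thetastar$ forces $\bQ_\alpha(\beeta)/\beeta < 1/4$, i.e. $\bq < \beeta/4$ strictly on this interval; equivalently $\dot\beeta = -\beeta + 2\bq < -\beeta/2 < 0$, so $\beeta$ is strictly decaying there. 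Then I would construct a competitor trajectory that leaves the line at time $s_1$ (or at $s_0$), makes a short excursion to the right — a piece of \eqref{eq:traj_optim} with the matching conditions $\dot\by = 0$, $\bp = \overline\alpha/\beeta$ dictated by Lemma \ref{lem:matching p lem} — and reconnects, and show that its cost on $(s_1,s_0)$ is strictly smaller. The cleanest form is probably: perturb the confined piece by allowing a small positive excursion $\by(s) = \alpha + \epsilon \psi(s)$ with $\psi \ge 0$, $\psi(s_1) = \psi(s_0) = 0$, and compute the first variation of $\int \bL_{\alpha,\mu} e^s\,ds$ along this family. Because the Lagrangian $\bL_{\alpha,\mu}$ has the discontinuous term $+\mu\1_{\{y<\alpha\}}$, moving to $y > \alpha$ removes the $+\mu$ penalty, while the smooth part $\frac{1}{4\eta}(v_y + \tfrac32 y)^2$ changes only at second order in $\epsilon$ when $v_y$ is chosen to vanish appropriately. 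That gives a strictly negative first variation (order $\epsilon$), contradicting optimality.

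Alternatively, and perhaps more robustly given the discontinuity, I would take the sharp comparison: replace the confined piece on $(s_1,s_0)$ by the unique free-motion solution of \eqref{eq:hamiltonian syst2} through the endpoints $(\alpha,\beeta(s_1))$ at $s_1$ and $(\alpha,\beeta(s_0))$ at $s_0$ with the matching $\dot\by(s_1) = \dot\by(s_0) = 0$, $\bp = \overline\alpha/\beeta$ at both ends — this is exactly the scenario of Lemma \ref{lem:one D turn} but now I want the \emph{reverse} inequality, showing the free excursion is \emph{cheaper}. In fact the computation in the proof of Lemma \ref{lem:one D turn} shows $\limsup_{s\nearrow s_0}\ddot\by(s) = -\big(\tfrac{1-\tau_1}{1+\tau_1}\big)^2\overline\alpha < 0$ for such a configuration, i.e. the free trajectory would be \emph{concave} at the reattachment, hence dipping below $\alpha$ — which is impossible for an admissible trajectory if we insist $\by \ge \alpha$, but is precisely the deformation that lowers the cost when we allow $\by < \alpha$ (paying $\mu$) versus staying on the line (paying $\overline\alpha^2/\beeta$). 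So the cleanest route: build the competitor $\tilde\by$ that dips slightly below $\alpha$ exactly as the free dynamics dictate, pick up the $\mu$ penalty on that tiny region, and show the saved $\overline\alpha^2/\beeta$ outweighs it because $\beeta < \Thetastar = 4\bQ_\alpha(\Thetastar)$ makes $\overline\alpha^2/\beeta$ exceed the relevant threshold. This quantitative inequality is where the condition $2\mu \ge \overline\alpha^{4/3}$ (equivalently $\mu \ge 1/2$ when $\alpha \le 4/3$) reenters.

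\textbf{Main obstacle.} The delicate point is making the cost comparison \emph{quantitative and rigorous near the junctions}, since we only have $\by \in \mathcal C^{1,1/2}_{\rm loc}$ and $\beeta \in W^{1,\infty}_{\rm loc}$ — not $\mathcal C^2$ — so integrating by parts to extract the first variation must be done carefully (as in the distributional arguments of Lemmas \ref{lem:monotonicity} and \ref{lem:matching p lem}), and the discontinuity of $\bL_{\alpha,\mu}$ across $\{y=\alpha\}$ means the naive Euler–Lagrange equation is invalid on any interval touching the line. I expect the proof to proceed by: (1) reducing via the dynamic programming principle to $s_0 = 0$; (2) using Lemma \ref{lem:Airy} and $\beeta < \Thetastar$ to get the strict inequality $\bq < \beeta/4$, hence $\overline\alpha^2/\beeta > \overline\alpha^2\bq/(\beeta^2)\cdot 4 > \dots$ encoding the excess cost of the line; (3) constructing the explicit competitor (a short arc of \eqref{eq:traj_optim}) and bounding $J_{\rm line} - J_{\rm competitor} > 0$ using the same substitution tricks ($a,b$ variables or $\tau$ variables) that appear in the earlier lemmas; and (4) invoking $2\mu \ge \overline\alpha^{4/3}$ in the final inequality. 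The bookkeeping of the endpoint matching conditions — ensuring the competitor lies in $\bA(\alpha,\theta)$ and agrees with $(\by,\beeta)$ outside $(s_1,s_0)$ — is routine but must be stated explicitly.
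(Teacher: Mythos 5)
Your overall shape is right (argue by contradiction, perturb the confined piece to the right, and play the relation $\bq=\bQ_\alpha(\beeta)$ against the definition of $\Thetastar$ via the monotonicity of $\bQ(\theta)/\theta$), and this is indeed how the paper proceeds. But the mechanism you give for why the rightward perturbation lowers the cost is wrong on two counts. First, there is no $\mu$ penalty to remove: the indicator in $\bL_{\alpha,\mu}$ is $\1_{\{y<\alpha\}}$, which already vanishes \emph{on} the line $\{y=\alpha\}$ (this is stated explicitly when \eqref{eq:running cost line} is derived), so moving to $y>\alpha$ saves nothing from the discontinuous term. Second, the smooth part does \emph{not} change only at second order in $\epsilon$: writing $\by=\alpha+\epsilon\psi$ with $\psi\geq 0$, the term $\frac{1}{4\eta}\bigl(v_y+\tfrac32 y\bigr)^2$ has the nonzero cross factor $v_y+\tfrac32 y=\tfrac32\alpha=2\overline\alpha$ on the line, so its first variation is genuinely of order $\epsilon$, namely $\int \tfrac{\overline\alpha}{\beeta}\bigl(\dot\psi+\tfrac32\psi\bigr)e^s\,ds$. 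This first-order term is exactly the point of the proof: optimality forces it to be nonnegative for every $\psi\geq 0$ (a one-sided condition, since only rightward bumps stay in the favorable region), and integrating by parts and using $\dot\beeta+\beeta=2\bq$ turns this into $4\bq(s)\geq\beeta(s)$ on $(s_1,s_0)$. Combined with $\bq=\bQ_\alpha(\beeta)$ from \Cref{lem:Airy}, this contradicts $\beeta<\Thetastar$ by \eqref{eq:thetadiam} and \Cref{lem:Qtheta}. You derived the inequality $4\bq<\beeta$ from the Airy relation, but you never produce the opposing inequality $4\bq\geq\beeta$; instead you conclude only that $\beeta$ decays, which is not a contradiction.

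Your fallback route (a competitor dipping below $\alpha$, compared as in \Cref{lem:one D turn}, with the condition $2\mu\geq\overline\alpha^{4/3}$ reentering) points in the wrong direction: the lemma makes no use of the size of $\mu$, and a downward excursion is neither needed nor easy to control. The correct argument never touches the region $\{y<\alpha\}$ at all. As for your regularity worry, it is not an obstacle here: the perturbation acts only on $\by$ (with $\beeta$ unchanged), the test function $\bepsilon$ is smooth and compactly supported in $(s_1,s_0)$, and the integration by parts lands on $\tfrac{d}{ds}\bigl(e^s/\beeta\bigr)$, which is handled by the Lipschitz bound on $\beeta$ from \Cref{lem:Lipschitz}.
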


\begin{proof}
We argue by contradiction. Suppose there exist such times $s_1 < s_0$.  Then, we test the optimality $(\by,\beeta)$ against a perturbation $(\by+\bepsilon,\beeta)$ compactly supported in $(s_1,s_0)$, and such that $\bepsilon\geq 0$ in order to preserve the condition $\by + \bepsilon\geq \alpha$. Then, by the optimality of the trajectory with respect to the Lagrangian \eqref{eq:Lagrangian ss},
\begin{equation*}
	0 \leq \int_{-\infty}^0   \dfrac1{\beeta(s)}\left (\dot\bepsilon(s) + \frac32 \bepsilon(s)\right)e^s\, ds.
\end{equation*}
Integration by parts yields
\begin{equation*}
	0\leq \int_{-\infty}^0  \left ( \frac{\dot\beeta(s)}{\beeta(s)^2}  + \frac{1}{2\beeta(s)}  \right )  \bepsilon(s) e^s\, ds\, .
\end{equation*}
Since $\bepsilon$ is compactly supported in $(s_0,s_1)$ and since $(\by,\beeta)$ satisfies~\eqref{eq:hamiltonian syst2} on $(s_1,s_0)$, it follows that $\dot\beeta + \beeta = 2\bq$.  Hence,
\[
	0 \leq \int_{-\infty}^0 \left (   4 \bq(s) -   \beeta(s)   \right )  \frac{\bepsilon(s)}{2\beeta(s)^2} e^s\, ds\, .
\]
By the arbitrariness of $\bepsilon\geq 0$, it follows that $4\bq(s) \geq \beeta(s)$ for all $s\in (s_0,s_1)$.  However, using \Cref{lem:Airy}, this implies that $4\bQ_\alpha(\beeta(s))\geq \beeta(s)$, which cannot hold if $\beeta(s)<\Thetastar$ by the definition of $\Thetastar$ in~\eqref{eq:thetadiam} and the monotonicity established in \Cref{lem:Qtheta}. 
\end{proof}

We set some notation.  Given $\theta\geq 0$, let $\htheta(\theta) = \beeta(\shh(\theta))$, where $(\by,\beeta)$ is the optimal trajectory associated to $(\alpha, \theta)$. We had used this notation already in the proof of  \Cref{thm:alpha^*}.(ii).

We continue with a characterization of $\htheta$ at the contact time. We remark that the map $\theta \mapsto \htheta$, defined on the line $\left\{y = \alpha\right\}$, connects the two values $\beeta(0)$ and $\beeta(\shh)$ at the two extremities of the free excursion. 

\begin{lemma}\label{lem:hitting_monotonicity}
The maps $\theta\mapsto \shh(\theta)$ and $\theta\mapsto \htheta =  \beeta(\shh(\theta))$ are continuous. 
\end{lemma}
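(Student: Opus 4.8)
The case $\theta\geq\Thetastar$ is immediate: by \Cref{prop:trajectories}.(\ref{item:sh=0}) one has $\shh(\theta)=0$ and $\htheta(\theta)=\beeta(0)=\theta$, so both maps are affine (hence continuous) on $[\Thetastar,\infty)$; the statement is moreover vacuous when $\alpha=0$ (then $\Thetastar_\alpha=0$), so assume $\alpha>0$. It thus remains to prove continuity at a point $\theta_0\in(0,\Thetastar]$ along a sequence $\theta_n\to\theta_0$, and, discarding finitely many terms, we may assume $\theta_n\in(0,\Thetastar)$. For such $\theta_n$ one has $\shh_n:=\shh(\theta_n)\in(-\infty,0)$ and, by \Cref{lem:hits_line}, \Cref{lem:one D turn} and \Cref{lem:exit the line}, the optimal trajectory $(\by^n,\beeta^n,\bp^n,\bq^n)$ sticks to the line on $(-\infty,\shh_n]$, lies strictly to its right on $(\shh_n,0)$, returns to it at the endpoint $s=0$ (where $\by^n(0)=\alpha$), and solves the free system~\eqref{eq:hamiltonian syst2} on $(\shh_n,0)$, with $(\by^n,\beeta^n)$ given there by~\eqref{eq:traj_optim} with $x=\alpha$, $\theta=\theta_n$ and the constants $A_n=\bp^n(0)$, $B_n=\bq^n(0)$. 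The same structure holds for the limiting endpoint, with contact time $\shh_0:=\shh(\theta_0)\in(-\infty,0]$ and optimal trajectory $(\by^0,\beeta^0,\bp^0,\bq^0)$.

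\emph{Step 1: convergence of the trajectories.} Since $U_\alpha(\alpha,\cdot)$ is convex on $\R_+^*$ (\Cref{cor:convexity}) it is continuous, so $U_\alpha(\alpha,\theta_n)\to U_\alpha(\alpha,\theta_0)$. Arguing exactly as in the proof of \Cref{lem:minimizer_existence} (the endpoints $(\alpha,\theta_n)$ being bounded and bounded away from $\{\eta=0\}$), the family $(\by^n,\beeta^n)$ is bounded in $H^1_{\mathrm{loc}}(-\infty,0]$ and bounded away from $\{\eta=0\}$ on compact sets; hence, up to a subsequence, it converges weakly in $H^1_{\mathrm{loc}}$ and strongly in $\mathcal C^0_{\mathrm{loc}}$ to an admissible trajectory which, by lower semicontinuity of the action together with the convergence of the values, is minimizing for $(\alpha,\theta_0)$; by uniqueness of minimizers (\Cref{cor:convexity}) this limit equals $(\by^0,\beeta^0)$, and since every subsequence admits such a sub-subsequence, the whole sequence converges: $(\by^n,\beeta^n)\to(\by^0,\beeta^0)$ locally uniformly. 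Moreover, $(A_n,B_n)=(\bp^n(0),\bq^n(0))$ is a (sub)gradient of the convex, locally Lipschitz function $U_\alpha$ at $(\alpha,\theta_n)$ by~\eqref{eq:P nabla U}, hence $(A_n,B_n)$ is a bounded sequence.

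\emph{Step 2: convergence of the contact times and of $\htheta$.} We claim $\shh_n\to\shh_0$. If some subsequence $\shh_{n_k}\to\sigma$ with $\sigma>\shh_0$, then for each $s<\sigma$ we have $\by^{n_k}(s)=\alpha$ for $k$ large, whence $\by^0(s)=\alpha$; so $\by^0\equiv\alpha$ on $(\shh_0,\sigma)$, contradicting that $\by^0(s)>\alpha$ there. Hence $\limsup_n\shh_n\leq\shh_0$. Conversely, suppose some subsequence $\shh_{n_k}\to\sigma<\shh_0$ (possibly $\sigma=-\infty$); passing to a further subsequence, $A_{n_k}\to A_\infty$, $B_{n_k}\to B_\infty$. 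Fix $\sigma'$ with $\sigma<\sigma'<\shh_0$. For $k$ large $\shh_{n_k}<\sigma'$, so on $[\sigma',0]$ the trajectory solves~\eqref{eq:hamiltonian syst2} with data $(\alpha,\theta_{n_k},A_{n_k},B_{n_k})$ at $s=0$; by continuous dependence, $\by^{n_k}\to\tilde\by$ uniformly on $[\sigma',0]$, where $\tilde\by$ is the first component of the solution of~\eqref{eq:hamiltonian syst2} with data $(\alpha,\theta_0,A_\infty,B_\infty)$. But also $\by^{n_k}\to\by^0$, so $\by^0=\tilde\by$ on $[\sigma',0]$, hence $\by^0$ is of the form~\eqref{eq:traj_optim} there; as $\by^0\equiv\alpha$ on $[\sigma',\shh_0]\subset[\sigma',0]$ and a nonzero constant is not a linear combination of $e^{\pm s/2}$ and $e^{\pm 3s/2}$, this is impossible. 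Therefore $\liminf_n\shh_n\geq\shh_0$, so $\shh_n\to\shh_0$; the same argument covers $\theta_0=\Thetastar$ (with $\shh_0=0$). Finally all $\shh_n,\shh_0$ lie in a compact interval $[-S,0]$ for $n$ large, and $\beeta^n\to\beeta^0$ uniformly there, so
\[
|\htheta_n-\htheta_0|\ \leq\ \|\beeta^n-\beeta^0\|_{L^\infty[-S,0]}+|\beeta^0(\shh_n)-\beeta^0(\shh_0)|\ \longrightarrow\ 0,
\]
using the continuity of $\beeta^0$; hence $\theta\mapsto\htheta=\beeta(\shh(\theta))$ is continuous as well.

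\emph{Main obstacle.} The genuinely delicate point is the compactness in Step~1: one must check that the locally uniform limit of the minimizers $(\by^n,\beeta^n)$ is \emph{admissible} in the sense of~\eqref{eq:self-similar_admissible} — so that it is a legitimate competitor in the definition of $U_\alpha(\alpha,\theta_0)$ — and that the $H^1_{\mathrm{loc}}$ and non-degeneracy bounds hold \emph{uniformly} along the sequence of endpoints (rather than for a single fixed one). This is where one reuses and slightly extends the arguments behind \Cref{lem:minimizer_existence} and \cite[Appendix~A]{HendersonPerthameSouganidis}. Everything else is routine continuous dependence of solutions of~\eqref{eq:hamiltonian syst2} on their data, together with the elementary observation about the shape of free solutions.
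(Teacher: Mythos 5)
Your argument is essentially the paper's: compactness of the minimizers via the locally uniform $H^1_{\rm loc}$ bounds, lower semi-continuity plus continuity of $U_\alpha$ plus uniqueness of minimizers to identify the limit trajectory, a two-sided argument for $\shh_n\to\shh_0$ in which the lower bound is obtained from the rigidity of the explicit free-motion formula~\eqref{eq:traj_optim}, and then the triangle inequality for $\htheta$. Your version of the rigidity step (linear independence of $1,e^{\pm s/2},e^{\pm 3s/2}$, so a nonzero constant cannot be of the form~\eqref{eq:traj_optim} on an interval) is in fact cleaner than the paper's detour through ``$A=B=0$'', and your justification of the boundedness of $(A_n,B_n)$ via~\eqref{eq:P nabla U} and convexity is more explicit than the paper's ``up to another extraction''.

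There is, however, one genuine logical defect: in the reduction you invoke \Cref{prop:trajectories}.(iii) in the direction ``$\theta\geq\Thetastar\Rightarrow\shh(\theta)=0$''. That implication is precisely \Cref{lem:never leave}, whose proof in the paper relies on the continuity of $\theta\mapsto\htheta(\theta)$ established by the very lemma you are proving (it is used to pass to the limit in~\eqref{eq:preliminary_W} along $\theta_n\nearrow\theta_0$). As written, your proof is therefore circular. The fix is easy and is what the paper does: drop the case split entirely. Your Step~1 works for an arbitrary endpoint, the first half of Step~2 ($\limsup\shh_n\leq\shh_0$) never uses the structure of the excursion, and in the second half the hypothesis $\shh_{n_k}\to\sigma<\sigma'<\shh_0\leq 0$ already forces $\shh_{n_k}<0$ for large $k$, so the trajectories are given by~\eqref{eq:traj_optim} on $[\sigma',0]$ without any appeal to the threshold $\Thetastar$. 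Only the forward direction $\theta<\Thetastar\Rightarrow\shh<0$ (\Cref{lem:exit the line}) is available at this stage, and you do not actually need it. A smaller caveat, at the same level of informality as the paper's own argument: $(\alpha,\theta_n)$ lies on the boundary $\{x=\alpha\}$ of the convexity domain $[\alpha,\infty)\times\R_+^*$, so the bound on $A_n=\bp^n(0)$ should be stated as a one-sided derivative estimate (bounded above by a difference quotient, bounded below by $0$ via \Cref{prop:trajectories}.(vi)) rather than by appealing to local Lipschitz continuity of a convex function at an interior point.
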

\begin{proof} 
We begin with the continuity properties. Let $(\alpha,\theta_n)\to (\alpha,\theta)$ be a sequence of endpoints, with the associated sequence of optimal trajectories $(\by_n,\beeta_n)$. Examining the proof of Lemma \ref{lem:Lipschitz} we find a locally uniform $H^1_{\rm loc}$ bound on both $(\by_n)$ and $(\beeta_n)$. By a diagonal extraction argument, we can extract a subsequence such that $(\by_{n_k},\beeta_{n_k})$ converges to some trajectory $(\by,\beeta)$ weakly in $H^1_{\rm loc}$. Fatou's lemma and the lower semi-continuity of $\bL_\alpha$ enables us to conclude, as in the proof of \Cref{lem:minimizer_existence}, that
\[
	\int_{-\infty}^0  \bL_\alpha(\by, \beeta, \dot \by, \dot \beeta) e^s ds
		\leq \liminf_{n\to\infty} U_\alpha(\alpha, \theta_n).
\]
Since $(\by,\beeta)\in \bA(x,\theta)$, then the left hand side is no smaller than $U_\alpha(\alpha,\theta)$. 
On the other hand, the convexity of $U_\alpha$ implies its continuity and, hence, that $\liminf_{n\to\infty} U_\alpha(\alpha,\theta_n) = U_\alpha(\alpha,\theta)$.   Taken together, this implies that $(\by,\beeta)$ is the minimizing trajectory associated to $(\alpha,\theta)$.

Next, for the sake of contradiction, consider a subsequence $\left (\shh(\theta_{n_k})\right )$ converging to some $s_0\neq \shh(\theta)$. For any $\delta>0$, we have $\by_{n_k}(s) = \alpha$ on $(-\infty,s_0-\delta)$ for all $k$ sufficiently large. Passing to the limit $k\to \infty$, and then $\delta\to 0$, we get that $\by(s) = \alpha$ on $(-\infty,s_0)$, and therefore $s_0\leq \shh(\theta)$.

We use the rigidity of the expression of the optimal trajectories from~\eqref{eq:traj_optim} in order to rule out possible jumps. Indeed, suppose that $s_0<\shh(\theta)$. Then, passing to the limit on the parameters $A_{n_k}$ and $B_{n_k}$ (up to another extraction), we get a polynomial function (in the variable $\tau = e^{s/2}$) which coincides with $\alpha$ on $(s_0, \shh)$, due to the convergence of $\by_{n_k}$ to $\by$. This can only happen if $A = B = 0$.  In this case, $\shh(\theta) =0$.  On the other hand, we find that $A_{n_k}, B_{n_k} \to 0$, and, thus, $\shh(\theta_{n_k}) \to 0$.  This implies that $s_0 = 0 = \shh(\theta)$, which is a contradiction.  We conclude that the whole sequence $\left (\shh(\theta_{n})\right )$ converges to $\shh(\theta)$. Therefore, $\theta\mapsto \shh(\theta)$ is continuous.

The same conclusion holds for  $\theta\mapsto   \beeta(\shh(\theta))$ because $(\beeta_n)$ converges locally uniformly thanks to the $H^1_{\rm loc}$ estimate.  
\end{proof}

As already discussed to motivate the statement in Lemma \ref{lem:exit the line}, we have $\shh<0$ if the endpoint is such that $\theta < \Thetastar$. In fact, the converse is true. 

\begin{lemma}\label{lem:never leave}
	If $\theta \geq \Thetastar$, then $\shh = 0$.
\end{lemma}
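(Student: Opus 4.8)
The statement to prove is: if $\theta \geq \Thetastar$, then the contact time satisfies $\shh(\theta) = 0$, i.e., the optimal trajectory ending at $(\alpha,\theta)$ sticks to the line $\{y=\alpha\}$ for all negative times and makes no free excursion. The plan is to argue by contradiction: suppose $\theta \geq \Thetastar$ but $\shh(\theta) < 0$, so that there is a genuine free-motion excursion on some interval $(\shh,0)$ with $\by(s) > \alpha$ there and $\by(\shh) = \alpha$. I will exploit the monotonicity of $\beeta$ (\Cref{lem:monotonicity}) together with the relation $\bq = \bQ_\alpha(\beeta)$ on the line (\Cref{lem:Airy}) and the matching conditions at the contact time (from \Cref{lem:matching p lem}), exactly as in the proof of \Cref{lem:one D turn}.

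The key steps, in order, are as follows. First, since $\beeta$ is nonincreasing by \Cref{lem:monotonicity}, we have $\htheta := \beeta(\shh) \geq \beeta(0) = \theta \geq \Thetastar$. Next, at the contact time $\shh < 0$, \Cref{lem:matching p lem} gives $\dot\by(\shh) = 0$ and $\bp(\shh) = \overline\alpha/\htheta$, and the continuity of $\bq$ (also from \Cref{lem:matching p lem}) together with \Cref{lem:Airy} gives $\bq(\shh) = \bQ_\alpha(\htheta)$. Since $\htheta \geq \Thetastar$ and $\bQ_\alpha(\theta)/\theta$ is strictly increasing (\Cref{lem:Qtheta}.(ii), rescaled via \eqref{eq:alpha_bar}), with $\bQ_\alpha(\Thetastar_\alpha) = \Thetastar_\alpha/4$, I get $\bq(\shh) = \bQ_\alpha(\htheta) \geq \htheta/4$, i.e., $4\bq(\shh) \geq \beeta(\shh)$. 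Now on the free portion $(\shh,0)$, the dynamics \eqref{eq:hamiltonian syst2} hold, so $\dot\beeta + \beeta = 2\bq$ there; combined with $\dot\beeta \leq 0$ this forces $2\bq \leq \beeta$, hence $\bq(s) \leq \beeta(s)/2$ throughout $(\shh,0)$. By continuity of $\bq$ at $\shh$ this gives $4\bq(\shh) \leq 2\beeta(\shh)$, which is consistent, so I need to push harder at the boundary.

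The sharper contradiction comes from examining the second-order behavior of $\by$ as $s \nearrow \shh$ — the same device used in \Cref{lem:one D turn}. Translating time so $\shh = 0$, I have $\by(0) = \alpha$ a local minimum with $\dot\by(0) = 0$, so $\limsup_{s\nearrow 0}\ddot\by(s) \geq 0$. Using \eqref{eq:hamiltonian syst2}, $\ddot\by = -\tfrac32\dot\by + 2(-\beeta + 2\bq)\bp + \beeta\bp$, which at $s = 0$ evaluates to $(4\bq(0) - \theta_0)\bp(0)$ with $\theta_0 = \htheta$, $\bp(0) = \overline\alpha/\theta_0 > 0$. Therefore the local-minimum condition forces $4\bq(0) \geq \theta_0$, i.e., $\bQ_\alpha(\theta_0) \geq \theta_0/4$, which holds as an equality only at $\theta_0 = \Thetastar_\alpha$ and is \emph{reversed} for $\theta_0 < \Thetastar_\alpha$ — but by the opposite inequality from the free-motion side ($2\bq \leq \beeta$ near $0^-$, giving $4\bq(0) \leq 2\theta_0$), one sees there is no room unless in fact the trajectory never detaches. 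More precisely, I expect the clean contradiction to be: the strict monotonicity of $\bQ_\alpha(\theta)/\theta$ and $\htheta \geq \Thetastar$ gives $4\bq(\shh) \geq \htheta$ with equality iff $\htheta = \Thetastar$; meanwhile \Cref{lem:exit the line} already forbids sticking with $\beeta < \Thetastar$, and a dynamic-programming/optimality comparison (replace the excursion by the steady line segment at height $\htheta$, as in \Cref{lem:No single crossing}) shows the excursion strictly decreases the cost — contradicting optimality. I would compute the running cost of the free arc via \eqref{eq:running cost}, the cost of the line segment via \eqref{eq:running cost line}, and show the difference is strictly negative using $\bq(\shh) = \bQ_\alpha(\htheta) \geq \htheta/4$ and $\htheta \geq \Thetastar$.

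The main obstacle is handling the boundary behavior rigorously: the optimal trajectory is only known to be $C^{1,1/2}_{\rm loc}$ in $\by$ (not $C^2$), so I cannot assert $\ddot\by(\shh) \geq 0$ directly, only the $\limsup$ version, and I must make sure the comparison-trajectory construction genuinely produces an admissible competitor in $\bA(\alpha,\theta)$ with strictly smaller cost. The algebra in the cost comparison — isolating the sign of $J_{\rm orig} - J_{\rm new}$ using the constraint $\bq(\shh) = \bQ_\alpha(\beeta(\shh))$ and the threshold characterization $\bQ_\alpha(\Thetastar_\alpha) = \Thetastar_\alpha/4$ — is the delicate computational heart, but it parallels closely the arguments already carried out in \Cref{lem:No single crossing}, \Cref{lem:No downward U-turn}, and \Cref{lem:one D turn}, so no fundamentally new technique is required.
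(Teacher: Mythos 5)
There is a genuine gap: neither of the two mechanisms you propose for deriving a contradiction actually closes, and the paper's proof uses a different device entirely. First, the second-derivative test points the wrong way. At the contact time $\shh<0$ the free excursion lies on the side $s>\shh$ (for $s\nearrow\shh$ the trajectory is still on the line, where $\ddot\by\equiv 0$), so the local-minimum condition is $\lim_{s\searrow \shh}\ddot \by(s)=(4\bq(\shh)-\htheta)\,\bp(\shh)\geq 0$, i.e.\ $4\bQ_\alpha(\htheta)\geq \htheta$. When $\htheta\geq\Thetastar$ this is exactly \emph{satisfied} (by monotonicity of $\bQ_\alpha(\eta)/\eta$), so the convexity condition is consistent with departure rather than contradictory; as you half-observe, the bracketing $\htheta/4\leq\bq(\shh)\leq\htheta/2$ leaves plenty of room. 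Second, the fallback cost comparison is only sketched and does not work as stated: the ``steady line segment at height $\htheta$'' is not an admissible competitor (it ends at $(\alpha,\htheta)$, not $(\alpha,\theta)$), and the natural projection competitor $(\alpha,\beeta)$ gives a cost difference $\int_{\shh}^0\bigl(\beeta(s)A^2e^{s}-\overline\alpha^2/\beeta(s)\bigr)e^{s}\,ds$ whose integrand vanishes at $s=\shh$ and has no controlled sign on $(\shh,0)$ — the inequalities $\bq(\shh)=\bQ_\alpha(\htheta)\geq\htheta/4$ and $\htheta\geq\Thetastar$ are not enough to determine it. Since excursions genuinely are optimal for $\theta<\Thetastar$, any such comparison must be sharp at the threshold, and no soft argument of this type is available.

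The paper instead writes out the matching conditions at $s=\shh$ explicitly via \eqref{eq:traj_optim}, \Cref{lem:matching p lem} and \Cref{lem:Airy}, and eliminates $A$, $B$ and $\theta$ to obtain the exact relation \eqref{eq:preliminary_W}, namely $3\bQ(\htheta)/\htheta=(1-\tauh^2)/(\tauh^2\htheta^3)+\tauh(\tauh+2)/(1+\tauh)^2$, valid whenever $\shh<0$. It then considers $\theta_0=\inf\{\theta:\htheta(\theta)=\theta\}$ (nonempty by the dynamic programming principle applied at $\htheta(0)$), notes $\shh(\theta_0)=0$, takes $\theta_n\nearrow\theta_0$ with $\shh(\theta_n)<0$, and passes to the limit in \eqref{eq:preliminary_W} using the continuity of $\shh$ and $\htheta$ from \Cref{lem:hitting_monotonicity}; since $\tauh(\theta_n)\to 1$, the relation degenerates to $3\bQ(\theta_0)/\theta_0=3/4$, forcing $\theta_0=\Thetastar$, after which dynamic programming concludes. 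This continuity/fixed-point limit argument is the essential idea missing from your proposal.
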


\begin{proof}
We consider $\overline{\alpha} = 1$ without loss of generality.
To begin with, we collect some useful identities at the time of contact.  
By definition, we have $\by(\shh) = \alpha$ and $\beeta(\shh) = \htheta$.
By (\ref{eq:traj_optim}), \Cref{lem:matching p lem},  and \Cref{lem:Airy}, we have,
\[
A e^{\shh/2}  = \bp(\shh) = \frac{1}{\htheta}, \quad \text{and}\quad B + A^2 (1 - e^{\shh}) =\bq(\shh) = \bQ(\htheta),
\]
which, with the usual notation  $\tauhh = e^{\shh/2}$, yields,
\begin{equation}\label{eq:A_B}
A = \frac{1}{\tauhh\htheta}, \quad \text{and}\quad  B= \bQ(\htheta) - \frac{1}{\htheta^2 \tauh^2}(1-\tauh^2).
\end{equation}
On the one hand, since $\by(\shh) = 4/3$, the expression for $\by$ in (\ref{eq:traj_optim}) implies,
\begin{equation*}
\begin{split}
	\frac43 \tauh^3 &= \by(\sh) \tauh^{3}
		= \frac43 + 2\theta  A(\tauh^2-1) + 2BA(1-\tauh^2)^2 + \frac{2}{3} A^3(1 - \tauh^2)^3\\
		&=\frac43
			+ 2\theta \frac{1}{\htheta \tauh}(\tauh^2-1)
			+ 2\left(\bQ(\htheta) - \frac{1}{\tauh^2 \htheta^2}(1 - \tauh^2)\right)\frac{1}{\tauh \htheta}(1-\tauh^2)^2
			+ \frac{2}{3} \left(\frac{1}{\tauh \htheta}\right)^3(1 - \tauh^2)^3.
\end{split}
\end{equation*}
Multiplying both sides by $\tauh/(2(1-\tauh^2))$ and re-arranging the terms implies,
\begin{equation}\label{eq:c35}
	\frac{\theta}{\htheta}
		= \frac{2}{3}\frac{\tauh - \tauh^4}{(1-\tauh^2)}
		+ \frac{\bQ(\htheta)}{ \htheta}(1-\tauh^2)
			- \frac{2}{3} \frac{1}{\tauh^2 \htheta^3}(1 - \tauh^2)^2.
\end{equation}
On the other hand, since $\beeta(\sh) = \htheta$, we have, from the expression for $\beeta$ in (\ref{eq:traj_optim}) and (\ref{eq:A_B}),
\begin{equation*}
	\tauh^2 \htheta
		= \tauh^2 \beeta(\sh)
		= \theta - 2(1-\tauh^2)\left(\bQ(\htheta) - (1 - \tauh^2)\frac{1}{\htheta^2 \tauh^2}\right) - \frac{1}{\htheta^2\tauh^2}(1-\tauh^2)^2.
\end{equation*}
Re-arranging this to obtain an expression for the ratio $\theta/\htheta$, and then plugging it into~\eqref{eq:c35} yields
\begin{equation}\label{eq:preliminary_W}
	3\frac{\bQ(\htheta)}{\htheta}
		= \frac{1}{\tauh^2\htheta^3}(1-\tauh^2) + \frac{\tauh(\tauh+2)}{(1+\tauh)^2}.
\end{equation}

We have  $\htheta(0) \geq \Thetastar$ by Lemma \ref{lem:exit the line}. We deduce from the dynamic programming principle  that $\htheta(\htheta(0)) =  \htheta(0)$. From this observation, we can define 
$\theta_0=\inf \left\{\theta \,| \, \htheta(\theta)=\theta\right\}$. We have $\theta_0\in [\Thetastar, \htheta(0)]$, and also $\htheta(\theta_0)=\theta_0$ by continuity of the map $\theta\mapsto\htheta$ established in Lemma \ref{lem:hitting_monotonicity}. 
Our goal is to show that $\theta_0 = \Thetastar$.

First, we notice that, as a simple consequence of the dynamic programming principle and the uniqueness of optimal trajectories, $\shh(\theta_0) = 0.$

Next, consider a sequence of points $\theta_n \nearrow \theta_0$. Note from the choice of $\theta_0$ that $\sh(\theta_n)<0$, for all $n$. Then, \eqref{eq:preliminary_W} implies that 
$$
 3\frac{\bQ(\htheta(\theta_n))}{\htheta(\theta_n)}
= \frac{1}{\tauh^2(\theta_n)\htheta(\theta_n)^3}(1-\tauh^2(\theta_n)) + \frac{\tauh(\theta_n) (\tauh(\theta_n)+2)}{(1+\tauh(\theta_n))^2}.
$$
We then let $n\to \infty$ and use the continuity of $\tauh$, $\htheta$ and $\bQ$ to obtain that
\[
	3\frac{\bQ(\theta_0)}{\theta_0} = \frac{1 (1 + 2)}{(1 + 1)^2} = \frac{3}{4}.
\]
Due to the definition of $\Thetastar$ in (\ref{eq:thetadiam}), this implies that $\theta_0 = \Thetastar$, as claimed. Finally, it follows from the  dynamic programming principle that $\by(s) = \alpha$ for all $s\in (-\infty,0]$ when $\theta\geq \Thetastar$. This concludes the proof that $\shh(\theta) = 0$ for all $\theta\geq \Thetastar$. 
\end{proof}

\subsection{The complete picture of the trajectories - \Cref{prop:trajectories}}\label{sec:proposition_compilation}

\begin{proof}[Proof of \Cref{prop:trajectories}]
There are a number of items to check.
\begin{enumerate}[(i)]
	\item   The existence of the contact time $\shh$ is  a consequence of Lemma \ref{lem:hits_line} and Lemma \ref{lem:one D turn}. The continuity of the map $\theta\mapsto \beeta\left (\shh(\theta)\right )$ is the purpose of Lemma \ref{lem:hitting_monotonicity}. 
	\item   The property $\beeta(\shh(\theta)) \geq \Thetastar$ is a consequence of Lemma \ref{lem:exit the line}.  
	\item   The fact that $\shh = 0$ if and only if $\theta \geq \Thetastar$ is a consequence of  Lemma \ref{lem:exit the line} and Lemma \ref{lem:never leave}.
	\item  We can separate the dynamics on and off the line, respectively for $s\in (-\infty,\shh)$ and $(\shh,0)$.  On
each interval, the Lagrangian is continuous and so the classical theory can be applied. 
Moreover, $(\by,\beeta,\bp,\bq)$  is globally continuous provided we define $\bp(s) =\overline{\alpha}/\beeta(s) $ for $s\leq \shh$, as shown in Lemma \ref{lem:matching p lem}.  As a by-product of the  classical theory, we have in particular 
 $U_\alpha(\alpha,\theta) = - \bH_\alpha(\alpha,\theta,\bp(0),\bq(0))$.   
	\item   The derivation of the first integral of motion  $\bq(s) = \bQ_\alpha(\beeta(s))$ is the purpose of Lemma \ref{lem:Airy};
	\item   The formula for $A$ at the contact time  is clear (see, e.g.,~\eqref{eq:A}).  The formula for $B$ follows from the continuity of $\bq$ along with the matching condition at $s=\sh$ coming from the combination of \eqref{eq:traj_optim} and \Cref{lem:Qtheta}.
\end{enumerate}
\end{proof}

\section{Conclusion and perspectives}

We have shown a weak propagation result for the cane toad equation \eqref{eq:intro_toads}. More precisely, we have proven that the front spreads slower than the linear problem without saturation. In fact, the linear problem was previously shown to spread as $(4/3) t^{3/2}$, in contrast to the rate $\alpha^* t^{3/2}$, where $\alpha^* \approx 1.315$, obtained here. However, our spreading result is quite weak, and oscillatory behavior could not be ruled out. 

Dumont performed intensive numerical computations on a large domain to investigate the long time asymptotics of \eqref{eq:intro_toads}. The methods and the results  are described in the following appendix. He does not report any oscillatory behavior. The spatial density appears to be monotonic non-increasing with respect to the space variable. In addition, all level lines propagate at the same rate $\mathcal O(t^{3/2})$ with the same prefactor. Furthermore, the numerical spatial density converges to a Heaviside function with unit saturated value $\1_{ \left\{x< \alpha_h t^{3/2}\right\}}$ in the self-similar spatial variable $x/t^{3/2}$, for some numerical critical value $\alpha_h$. 

This suggests that  Theorem \ref{thm:propagation} could be strengthened towards a strong spreading result stating that all level lines  propagate as $\alpha^* t^{3/2}$. Accordingly, we conjecture that the value function $U_{\alpha,1}$ is a good candidate to describe the asymptotic behavior associated with the exponential ansatz discussed in \Cref{sec:strategy} as $t\to \infty$. An alternative would be to seek a stationary profile adapted to the various scales of the problem, as discussed in Figure \ref{fig:data-thierry-2}. 

We are not aware of any other reaction-diffusion problem related to the Fisher-KPP equation where the saturation term hinders the propagation at first order. Usually, the non-linear term acts on the next order correction  of the front location, as in the Bramson logarithmic delay \cite{Bramson83,HNRR13,BHR_LogDelay, Penington}. Our analysis unravels the interplay between unbounded diffusion, curved trajectories due to the twisted Laplacian $\theta \partial^2_x + \partial^2_\theta$, and non-local competition among individuals at the same  location, but having different dispersal abilities, as shown in Figure \ref{fig:linear determinacy}.  We believe that the methodology developed here could be extended to other related problems.

\newpage

\appendix

\begin{figure}
\begin{center}
\subfloat[]{
\includegraphics[width = 0.48\linewidth]{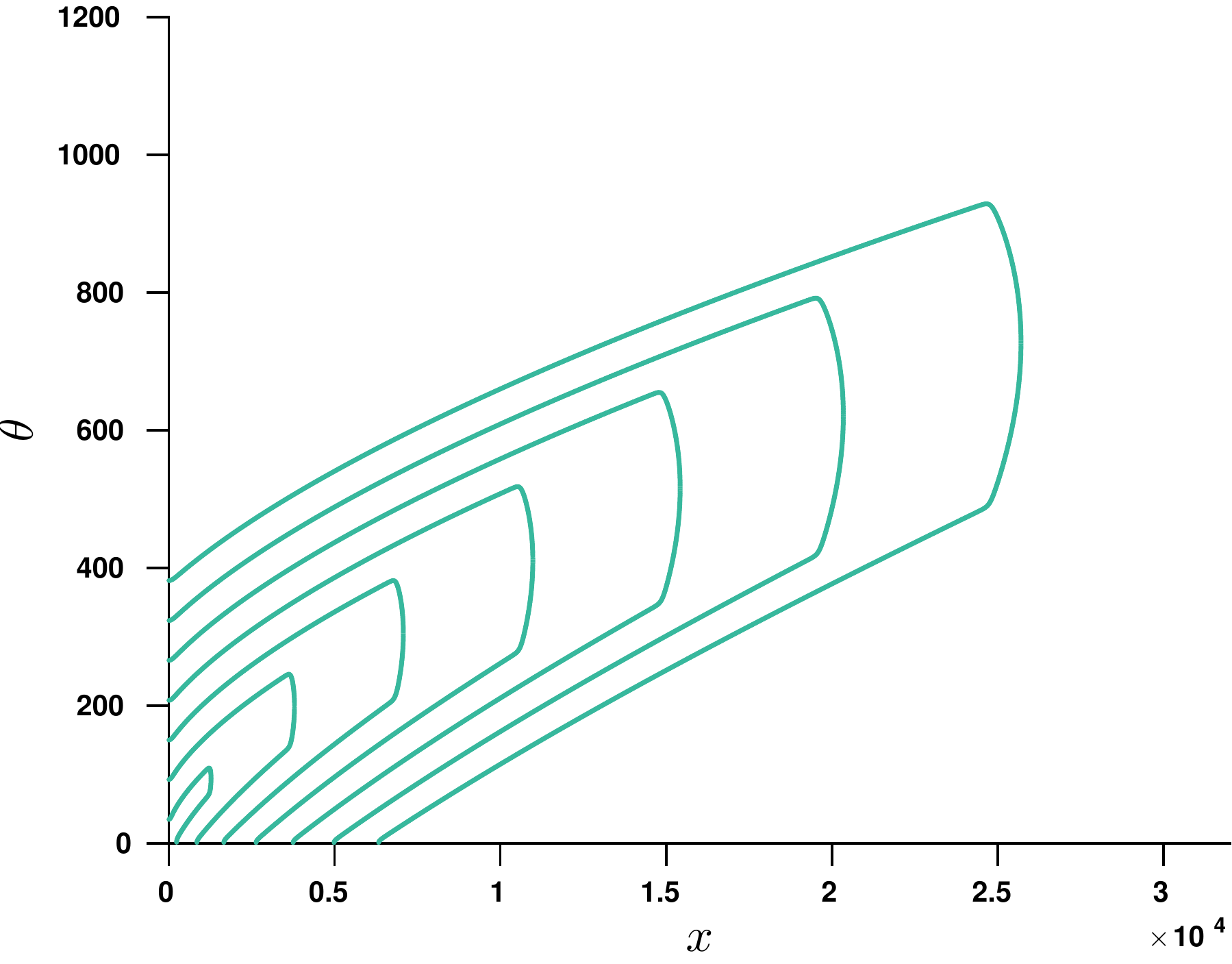}}
\;
\subfloat[]{
\includegraphics[width = 0.48\linewidth]{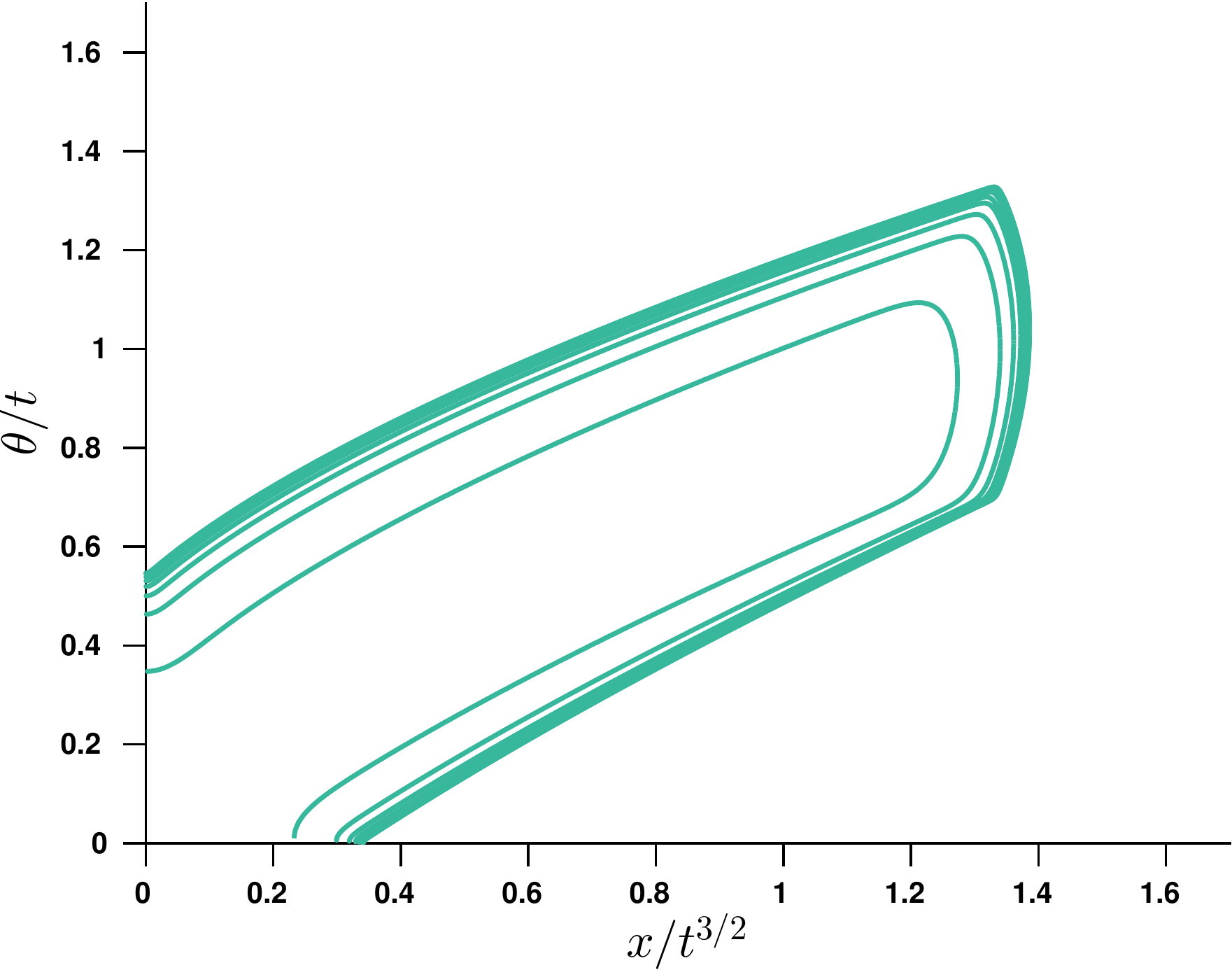}} \caption{One level line of the function $U = -t\log f$ is plotted for successive times $t = 100, 200, 300, 400, 500, 600, 700$ with respect to the original variables $(x,\theta)$ in (A) or with respect to the self-similar variables $(x/t^{3/2},\theta/t)$ in (B). One clearly sees the joint propagation in $(x,\theta)$ towards larger $x$ and higher $\theta$. Moreover, the function $U$ seems to converge to a stationary profile in the self-similar variables, in agreement with the heuristic argument of \Cref{sec:strategy} that $u_h \to u$ where $u_h$ is given by~\eqref{eq:u_h}, $u$ solves~\eqref{eq:formal_equation}, and $u$ and $U$ are connected by a change of variables as in \Cref{sec:self-similar}.}
\label{fig:data-thierry-1}
\end{center}
\end{figure}

\begin{figure}
\begin{center}
\subfloat[]{
\includegraphics[width = 0.48\linewidth]{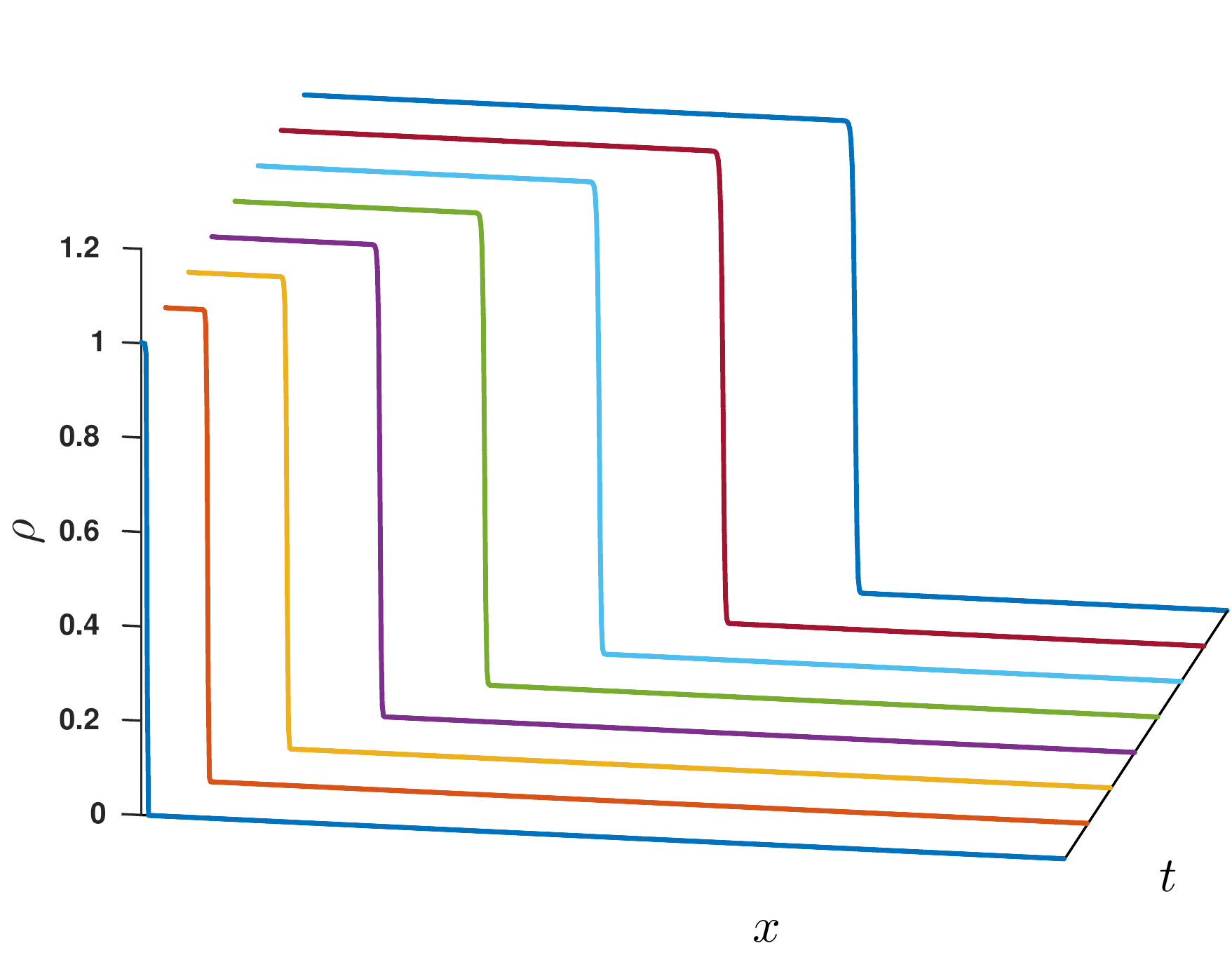}} 
\;
\subfloat[]{
\includegraphics[width = 0.48\linewidth]{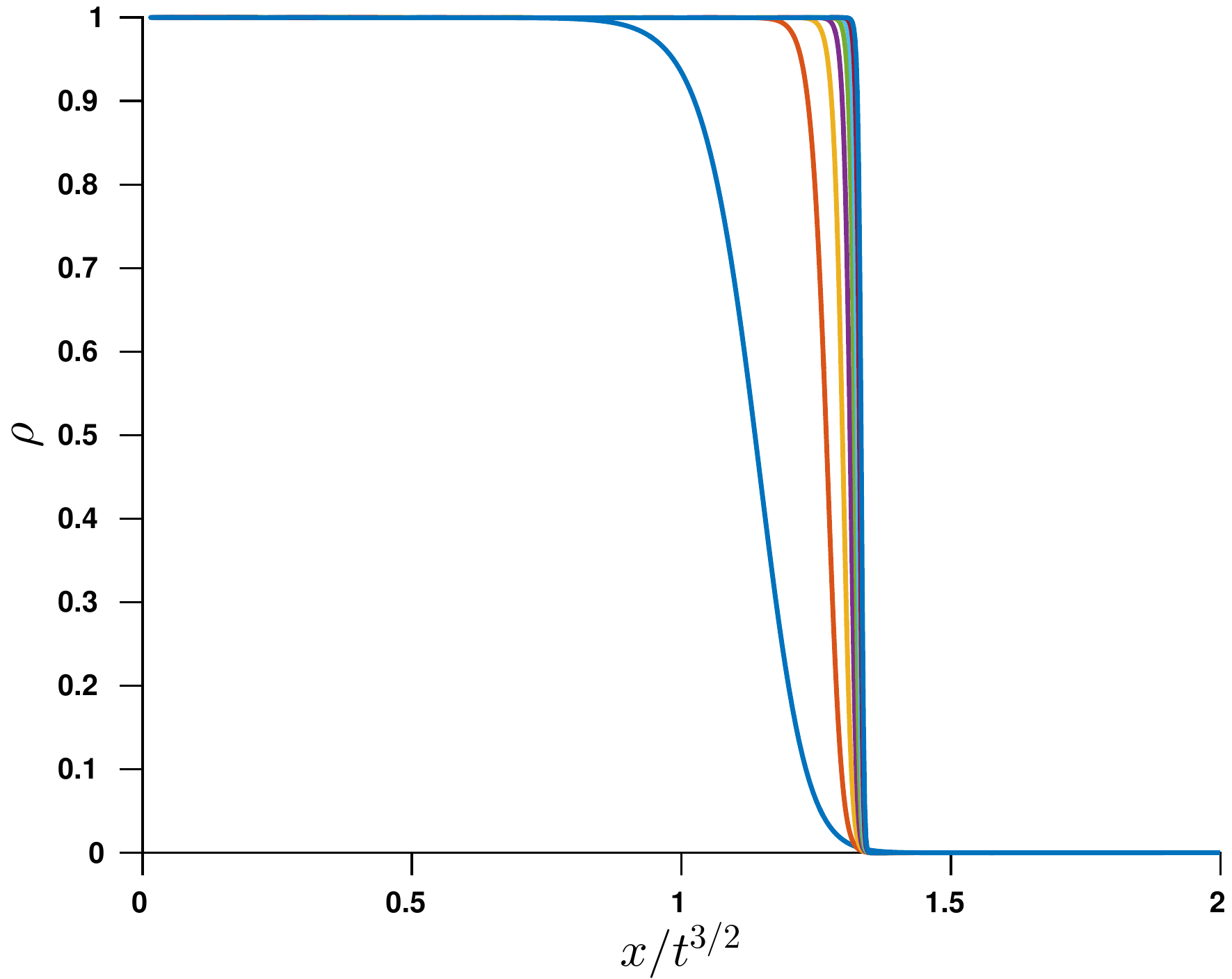}}
\\
\subfloat[]{
\includegraphics[width = 0.48\linewidth]{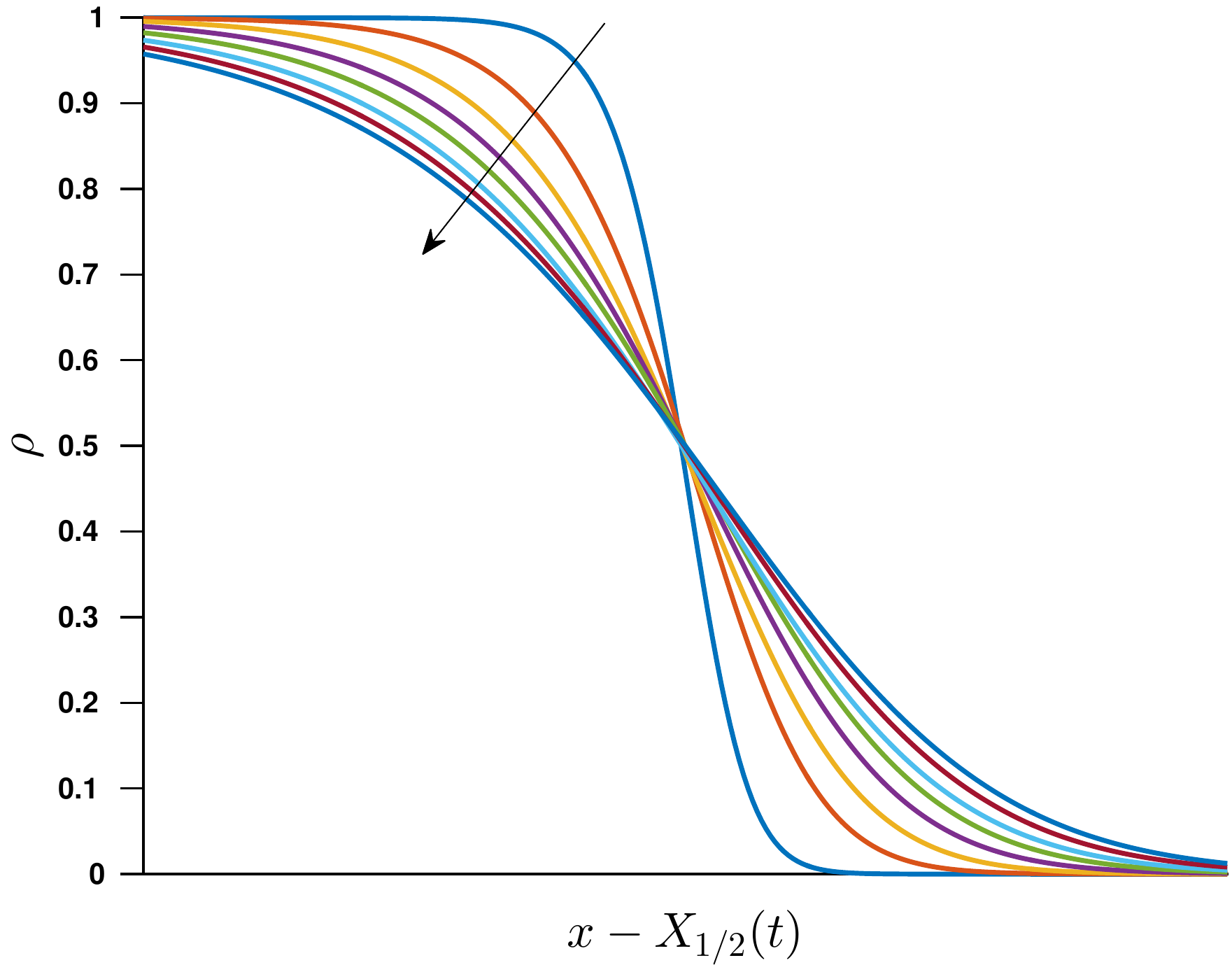}}\;
\subfloat[]{
\includegraphics[width = 0.48\linewidth]{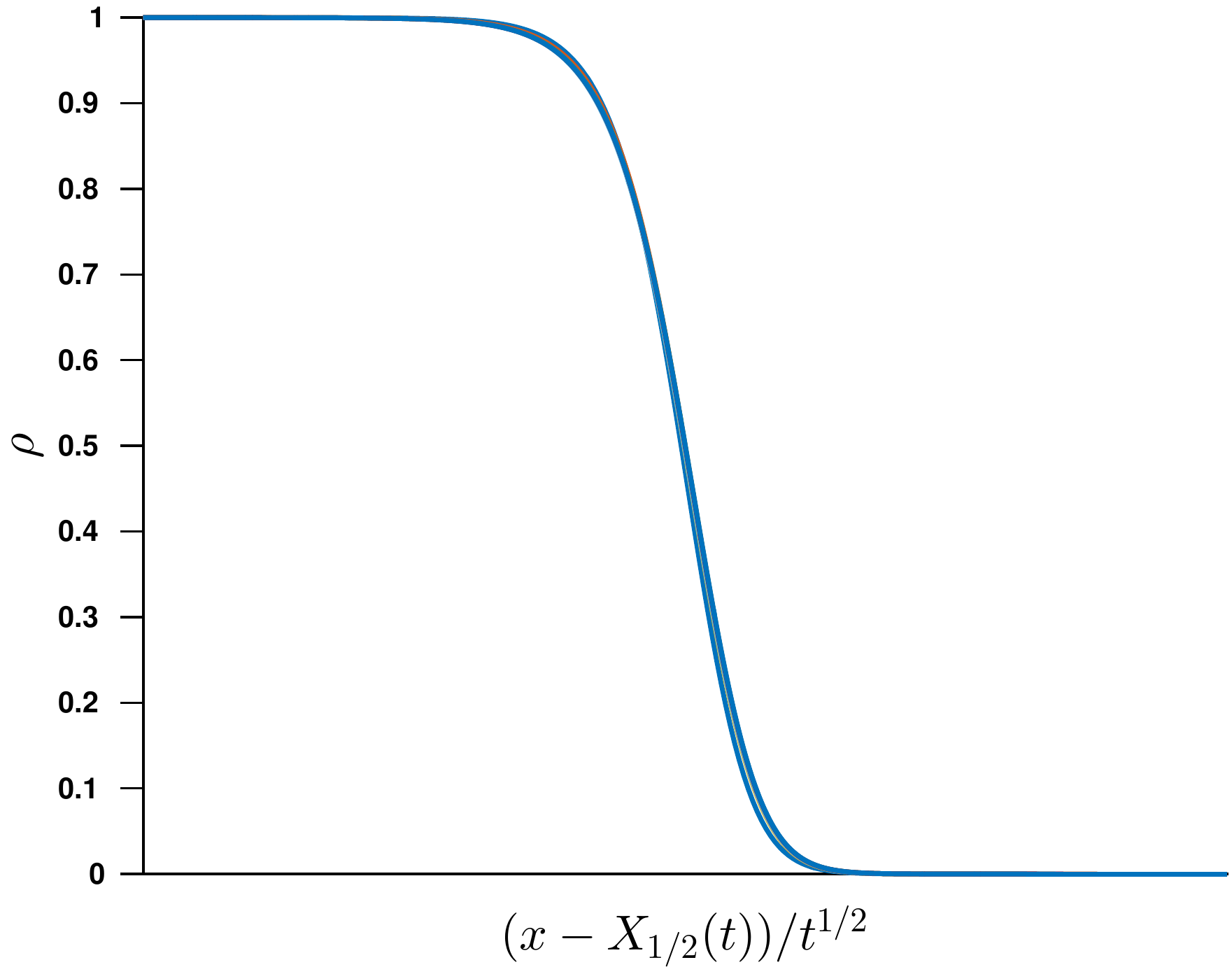}} 
\caption{Spatial propagation. (A) The numerical function $\rho(t,x)$ is plotted for successive times at regular intervals. (B) The spatial density converges to a Heaviside function in the self-similar variable $x/t^{3/2}$, in agreement with the analysis performed in the article.
(C) The same curves are plotted, but in the frame centered at the abscissa $X_{1/2}(t)$ corresponding to the value $\rho=1/2$. Increasing times are figured by an arrow. The front flattens as time increases. (D) By playing with scales, I found that the typical width of the front is of order $t^{1/2}$, as all curves are superposed in this frame.}
\label{fig:data-thierry-2}
\end{center}
\end{figure}

\section{Numerical computations of the long time asymptotics}
The numerical approximation of the Cauchy problem \eqref{eq:intro_toads} raises several challenges:
\begin{enumerate}
\item Handling the non local reaction term  with an implicit
  scheme would result in full non linear systems to be solved at each time step.
\item But as time $t$ increases, diffusion triggers faster and faster time  scales as the solution propagates in $x$. Therefore, an implicit stable  discretization seems necessary~\cite{MR1439506}.
\item Experience shows that a large domain and a thin discretization  is necessary to achieve good spreading numerical results. Thus, the numerical  simulation, whatever the method opted for, requires a large amount
  of computing time, even with a parallel procedure. A strategy for  reducing the computing time appears to be necessary.
\end{enumerate}
\subsection{Methods}
I opted for standard operator splitting techniques. These techniques date back to the  1950's. However, it as been shown recently that they are 
 well adapted to difficult and even very stiff
problems \cite{MR1836914,MR2869398}. 
Being given an initial value and a time step $h$ for the problem  $df/dt = L f +R(f)$, decomposed into the linear (diffusion) part and the non-linear (reaction part), I advance  from time $n h$ to time $(n+1) h$ by solving only partial problems:
$L_{h}:$ $ df/dt= Lf$ and $R_{h}:$  $df/dt = R(f)$ during the time step
$h$. Denoting by $f_n$ the numerical solution
at time $nh$, the Strang scheme \cite{MR0235754}
\[f_{n+1}= L_{h/2} \circ R_{h} \circ L_{h/2}\ (f_{n})\]
is of order 2 provided $L_{h/2}$ and $R_{h}$ are also
approximated by numerical schemes of order 2.
It can be generalized to three operators \cite{MR2869398}, keeping order
2. Define the three partial problems as: 
$L^x_{h}: df/dt= \theta \partial^2 f/\partial x^2$,
$L^\theta_{h}: df/dt= \partial^2 f/\partial \theta^2$, and
$R_{h}: df/dt = R(f)$, and the corresponding Strang scheme as follows 
\[f_{n+1}=  L^x_{h/2} \circ L^\theta_{h/2} \circ R_{h} \circ
L^\theta_{h/2} \circ  L^x_{h/2}\ (f_{n}).\]

\subsubsection{Numerical computation of each sub-problem}
I approximated the operators  $L^x_{h}$ and $L^\theta_{h}$ using the
Crank-Nicolson method, which is of order 2 and A-stable \cite{MR1439506}. The non-local reaction term $R_{h}$  is non 
stiff and was approximated by a second order explicit Runge-Kutta method
(RK2) \cite{hairer1993solving}. 

The spatial discretization was made via  second
order  finite 
differences, on an uniform grid of size $N_x \times N_\theta$. Notice that $L^x_{h}$ and
$L^\theta_{h}$ can be decomposed further  in independent sub-problems acting respectively on the
rows and on the columns of the finite difference grid. Hence, each sub-problem boils down to solving
banded tridiagonal linear systems. Hence the cost
of advancing one step in time these two discrete operators reduces to
only $\mathcal{O} (N_x \times N_\theta)$. Moreover the sub-problems
of $L^x_{h}$ and $L^\theta_{h}$ can be computed in parallel.

The non-local
reaction term  $R_{h}$ involves the rate of growth  $1 - \rho(t,x) = 1- \int_1^\infty f(t,x,\theta) d\theta$ which does not depend on $\theta$. Hence, it can  
be computed   in parallel for each value of $x$ on the grid.

\subsubsection{Time step control} I used the first order
Euler splitting scheme in order  to compute
\[
f^*_{n+1}=  L_{x_{h}} \circ  L_{\theta_h} \circ R_{h}\ (f_{n}).\]
I used the quantity
$||f^*_{n+1}-f_{n+1}||_{L^2} = \mathcal{O}(h^2)$ as an error indicator  to adapt the time step $h$, as usually done for solving ordinary differential equations \cite{hairer1993solving}.

\subsubsection{Implementation}
The code was implemented in \textsc{C++}, \textsc{OpenMP} parallel (in shared
memory). The size of the domain was $(x,\theta)\in [0, 4.5E4]\times[0,1.6E3]$.  
The  size of the grid was $N_x \simeq 2800$ and  $N_\theta = 1000$.   
The time step was adapted at each iteration for small time
$t$, then every 5 steps afterwards. The total wall clock  computing time of the
simulation was approximately 4 days, on a 32 cores (2.2 Ghz clock
frequency) computer.

\subsection{Results}
Starting from an initial datum as in \eqref{eq:initial_data}, but restricted to the positive values of $x$, the spatial propagation was observed in the long term with rate $\mathcal O(t^{3/2})$. The numerical value of the prefactor seems to converge to a value lying between 1.34 and 1.35. This looks misleading. However, it is known that the numerical computation of spreading for monostable travelling waves are challenging, even for the basic equation \eqref{eq:FKPP}. In particular, the speed of the wave is difficult to estimate accurately in the latter problem. 

The analysis performed in this article suggests that accurate numerical schemes should be developed on the auxiliary function $u = -t\log f$, in the self-similar variables, in order to match with the ansatz in \Cref{sec:strategy} and \Cref{sec:self-similar}.  I checked that the numerical approximation of $u = -t\log f$ did converge in self-similar variables to a stationary function (Figure \ref{fig:data-thierry-1}). This stationary solution is likely to be an approximation of the value function $U_{\alpha}$. There is indeed a good match (comparison not shown). 

To investigate further the consistency of the analysis performed in the article, I checked whether the spatial density $\rho(t,x)$ resembles a Heaviside function $\mu \1_{\left\{x <  \alpha t^{3/2}\right\}}$. First, I noticed that, despite the lack of maximum principle, the numerical spatial density $\rho(t,x)$ remains below the unit carrying capacity: $\rho\leq 1$. Moreover, it is monotonic non-increasing in space, and non-decreasing in time, see Figure \ref{fig:data-thierry-2}. The numerical results suggest that the spatial density indeed converges to the Heaviside function $\1_{\left\{ x <  \alpha_h t^{3/2}\right\}}$, where the critical value $\alpha_h$ depends on the numerical approximation of the scheme. 

No stationary behavior seems to be reached in the long term asymptotics (Figure \ref{fig:data-thierry-2}C). More precisely, the shape of the front flattens as time increases. The typical width appears to be of order $\mathcal{O}(t^{1/2})$.

\bibliographystyle{abbrv}
\bibliography{refs}

\begin{thebibliography}{10}

\bibitem{AbramowitzStegun}
M.~Abramowitz and I.~A. Stegun.
\newblock {\em Handbook of mathematical functions with formulas, graphs, and
  mathematical tables}, volume~55 of {\em National Bureau of Standards Applied
  Mathematics Series}.
\newblock For sale by the Superintendent of Documents, U.S. Government Printing
  Office, Washington, D.C., 1964.

\bibitem{AlfaroBerestyckiRaoul}
M.~Alfaro, H.~Berestycki, and G.~Raoul.
\newblock The effect of climate shift on a species submitted to dispersion,
  evolution, growth, and nonlocal competition.
\newblock {\em SIAM Journal on Mathematical Analysis}, 49(1):562--596, 2017.

\bibitem{AlfaroCovilleRaoul}
M.~Alfaro, J.~Coville, and G.~Raoul.
\newblock Travelling waves in a nonlocal reaction-diffusion equation as a model
  for a population structured by a space variable and a phenotypic trait.
\newblock {\em Comm. Partial Differential Equations}, 38(12):2126--2154, 2013.

\bibitem{ArnoldDesvillettesPrevost}
A.~Arnold, L.~Desvillettes, and C.~Pr{\'e}vost.
\newblock Existence of nontrivial steady states for populations structured with
  respect to space and a continuous trait.
\newblock {\em Commun. Pure Appl. Anal.}, 11(1):83--96, 2012.

\bibitem{AronsonWeinberger2}
D.~G. Aronson and H.~F. Weinberger.
\newblock Multidimensional nonlinear diffusion arising in population genetics.
\newblock {\em Adv. in Math.}, 30(1):33--76, 1978.

\bibitem{barles_discontinuous_1987}
G.~Barles and B.~Perthame.
\newblock Discontinuous solutions of deterministic optimal stopping time
  problems.
\newblock {\em ESAIM: Mathematical Modelling and Numerical Analysis},
  21(4):557--579, 1987.

\bibitem{ben_arous_developpement_1988}
G.~Ben~Arous.
\newblock Développement asymptotique du noyau de la chaleur hypoelliptique
  hors du cut-locus.
\newblock {\em Annales scientifiques de l'École normale supérieure},
  21(3):307--331, 1988.

\bibitem{BenichouEtAl}
O.~B\'enichou, V.~Calvez, N.~Meunier, and R.~Voituriez.
\newblock Front acceleration by dynamic selection in fisher population waves.
\newblock {\em Phys. Rev. E}, 86:041908, 2012.

\bibitem{BerestyckiHamel02}
H.~Berestycki and F.~Hamel.
\newblock Front propagation in periodic excitable media.
\newblock {\em Comm. Pure Appl. Math.}, 55(8):949--1032, 2002.

\bibitem{berestycki_speed_2005}
H.~Berestycki, F.~Hamel, and N.~Nadirashvili.
\newblock The speed of propagation for {KPP} type problems. {I}: {Periodic}
  framework.
\newblock {\em Journal of the European Mathematical Society}, pages 173--213,
  2005.

\bibitem{BerestyckiJinSilvestre}
H.~Berestycki, T.~Jin, and L.~Silvestre.
\newblock Propagation in a non local reaction diffusion equation with spatial
  and genetic trait structure.
\newblock {\em Nonlinearity}, 29(4):1434--1466, 2016.

\bibitem{berestycki_non-local_2009}
H.~Berestycki, G.~Nadin, B.~Perthame, and L.~Ryzhik.
\newblock The non-local {Fisher}–{KPP} equation: travelling waves and steady
  states.
\newblock {\em Nonlinearity}, 22(12):2813--2844, Dec. 2009.

\bibitem{BerestyckiMouhotRaoul}
N.~Berestycki, C.~Mouhot, and G.~Raoul.
\newblock Existence of self-accelerating fronts for a non-local
  reaction-diffusion equations.
\newblock {\em http://arxiv.org/abs/1512.00903}.

\bibitem{BouinCalvez}
E.~Bouin and V.~Calvez.
\newblock Travelling waves for the cane toads equation with bounded traits.
\newblock {\em Nonlinearity}, 27(9):2233--2253, 2014.

\bibitem{bouin_large_2016}
E.~Bouin, V.~Calvez, E.~Grenier, and G.~Nadin.
\newblock Large deviations for velocity-jump processes and non-local
  {Hamilton}-{Jacobi} equations.
\newblock {\em arXiv:1607.03676 [math]}, July 2016.
\newblock arXiv: 1607.03676.

\bibitem{BCMetal}
E.~Bouin, V.~Calvez, N.~Meunier, S.~Mirrahimi, B.~Perthame, G.~Raoul, and
  R.~Voituriez.
\newblock Invasion fronts with variable motility: phenotype selection, spatial
  sorting and wave acceleration.
\newblock {\em C. R. Math. Acad. Sci. Paris}, 350(15-16):761--766, 2012.

\bibitem{BouinCalvezNadin}
E.~Bouin, V.~Calvez, and G.~Nadin.
\newblock Propagation in a kinetic reaction-transport equation: travelling
  waves and accelerating fronts.
\newblock {\em Arch. Ration. Mech. Anal.}, 217(2):571--617, 2015.

\bibitem{BouinChanHendersonKim}
E.~Bouin, M.~H. Chan, C.~Henderson, and P.~S. Kim.
\newblock Influence of a mortality trade-off on the spreading rate of cane
  toads fronts.
\newblock {\em arXiv preprint arXiv:1702.00179}, 2017.

\bibitem{bouin_thin_2018}
E.~Bouin, J.~Garnier, C.~Henderson, and F.~Patout.
\newblock Thin {Front} {Limit} of an {Integro}-differential {Fisher}-{KPP}
  {Equation} with {Fat}-{Tailed} {Kernels}.
\newblock {\em SIAM Journal on Mathematical Analysis}, 50(3):3365--3394, Jan.
  2018.

\bibitem{BHR_LogDelay}
E.~Bouin, C.~Henderson, and L.~Ryzhik.
\newblock The {B}ramson logarithmic delay in the cane toads equations.
\newblock {\em Quart. Appl. Math.}, 75(4):599--634, 2017.

\bibitem{BHR_acceleration}
E.~Bouin, C.~Henderson, and L.~Ryzhik.
\newblock Super-linear spreading in local and non-local cane toads equations.
\newblock {\em Journal de math{\'e}matiques Pures et Appliqu{\'e}es}, 2017.

\bibitem{BouinMirrahimi}
E.~Bouin and S.~Mirrahimi.
\newblock A {H}amilton-{J}acobi approach for a model of population structured
  by space and trait.
\newblock {\em Commun. Math. Sci.}, 13(6):1431--1452, 2015.

\bibitem{Bramson83}
M.~Bramson.
\newblock Convergence of solutions of the {K}olmogorov equation to travelling
  waves.
\newblock {\em Mem. Amer. Math. Soc.}, 44(285):iv+190, 1983.

\bibitem{CabreRoquejoffre}
X.~Cabr{\'e} and J.-M. Roquejoffre.
\newblock The influence of fractional diffusion in {F}isher-{KPP} equations.
\newblock {\em Comm. Math. Phys.}, 320(3):679--722, 2013.

\bibitem{ChampagnatMeleard}
N.~Champagnat and S.~M{\'e}l{\'e}ard.
\newblock Invasion and adaptive evolution for individual-based spatially
  structured populations.
\newblock {\em J. Math. Biol.}, 55(2):147--188, 2007.

\bibitem{clobert_dispersal_2012}
J.~Clobert, M.~Baguette, T.~G. Benton, and J.~M. Bullock.
\newblock {\em Dispersal ecology and evolution}.
\newblock Oxford University Press, 2012.

\bibitem{CoulonRoquejoffre}
A.-C. Coulon and J.-M. Roquejoffre.
\newblock Transition between linear and exponential propagation in
  {F}isher-{KPP} type reaction-diffusion equations.
\newblock {\em Comm. Partial Differential Equations}, 37(11):2029--2049, 2012.

\bibitem{CrandallLionsSouganidis}
M.~G. Crandall, P.-L. Lions, and P.~E. Souganidis.
\newblock Maximal solutions and universal bounds for some partial differential
  equations of evolution.
\newblock {\em Arch. Rational Mech. Anal.}, 105(2):163--190, 1989.

\bibitem{crooks_travelling_2003}
E.~C.~M. Crooks.
\newblock Travelling fronts for monostable reaction-diffusion systems with
  gradient-dependence.
\newblock {\em Adv. Differential Equations}, 8(3):279--314, 2003.

\bibitem{cuesta_traveling_2012}
C.~M. Cuesta, S.~Hittmeir, and C.~Schmeiser.
\newblock Traveling {Waves} of a {Kinetic} {Transport} {Model} for the
  {KPP}-{Fisher} {Equation}.
\newblock {\em SIAM Journal on Mathematical Analysis}, 44(6):4128--4146, Jan.
  2012.

\bibitem{MR1836914}
S.~Descombes.
\newblock Convergence of a splitting method of high order for
  reaction-diffusion systems.
\newblock {\em Math. Comp.}, 70(236):1481--1501 (electronic), 2001.

\bibitem{MR2869398}
M.~Duarte, Z.~Bonaventura, M.~Massot, A.~Bourdon, S.~Descombes, and T.~Dumont.
\newblock A new numerical strategy with space-time adaptivity and error control
  for multi-scale streamer discharge simulations.
\newblock {\em J. Comput. Phys.}, 231(3):1002--1019, 2012.

\bibitem{EvansSouganidis}
L.~C. Evans and P.~E. Souganidis.
\newblock A {PDE} approach to geometric optics for certain semilinear parabolic
  equations.
\newblock {\em Indiana Univ. Math. J.}, 38(1):141--172, 1989.

\bibitem{Fisher}
R.~Fisher.
\newblock The wave of advance of advantageous genes.
\newblock {\em Ann. Eugenics}, 7:355--369, 1937.

\bibitem{Freidlin1}
M.~Freidlin.
\newblock {\em Functional integration and partial differential equations},
  volume 109 of {\em Annals of Mathematics Studies}.
\newblock Princeton University Press, Princeton, NJ, 1985.

\bibitem{Freidlin2}
M.~Freidlin.
\newblock Limit theorems for large deviations and reaction-diffusion equations.
\newblock {\em Ann. Probab.}, 13(3):639--675, 1985.

\bibitem{freidlin_geometric_1986}
M.~I. Freidlin.
\newblock Geometric {Optics} {Approach} to {Reaction}-{Diffusion} {Equations}.
\newblock {\em SIAM Journal on Applied Mathematics}, 46(2):222--232, Apr. 1986.

\bibitem{Garnier}
J.~Garnier.
\newblock Accelerating solutions in integro-differential equations.
\newblock {\em SIAM J. Math. Anal.}, 43(4):1955--1974, 2011.

\bibitem{girardin_non-cooperative_2018}
L.~Girardin.
\newblock Non-cooperative {Fisher}–{KPP} systems: {Asymptotic} behavior of
  traveling waves.
\newblock {\em Mathematical Models and Methods in Applied Sciences},
  28(06):1067--1104, June 2018.

\bibitem{girardin_non-cooperative_2018-1}
L.~Girardin.
\newblock Non-cooperative {Fisher}–{KPP} systems: traveling waves and
  long-time behavior.
\newblock {\em Nonlinearity}, 31(1):108, 2018.

\bibitem{hadeler_travelling_1975}
K.~P. Hadeler and F.~Rothe.
\newblock Travelling fronts in nonlinear diffusion equations.
\newblock {\em Journal Of Mathematical Biology}, 2(3):251--263, 1975.

\bibitem{hairer1993solving}
E.~Hairer, S.~P. N{\o}rsett, and G.~Wanner.
\newblock Solving ordinary differential equations. i, volume 8 of springer
  series in computational mathematics, 1993.

\bibitem{MR1439506}
E.~Hairer and G.~Wanner.
\newblock {\em Solving Ordinary Differential Equations. {II}}, volume~14 of
  {\em Springer Series in Computational Mathematics}.
\newblock Springer-Verlag, Berlin, second edition, 1996.
\newblock Stiff and differential-algebraic problems.

\bibitem{HNRR13}
F.~Hamel, J.~Nolen, J.-M. Roquejoffre, and L.~Ryzhik.
\newblock A short proof of the logarithmic {B}ramson correction in
  {F}isher-{KPP} equations.
\newblock {\em Netw. Heterog. Media}, 8(1):275--289, 2013.

\bibitem{HamelRyzhik}
F.~Hamel and L.~Ryzhik.
\newblock On the nonlocal {F}isher-{KPP} equation: steady states, spreading
  speed and global bounds.
\newblock {\em Nonlinearity}, 27(11):2735--2753, 2014.

\bibitem{HendersonPerthameSouganidis}
C.~Henderson, B.~Perthame, and P.~Souganidis.
\newblock Super-linear propagation for a general, local cane toads model.
\newblock {\em arXiv preprint arXiv:1705.04029}, 2017.

\bibitem{KPP}
A.~Kolmogorov, I.~Petrovskii, and N.~Piskunov.
\newblock {\'E}tude de l'\'equation de la chaleurde mati\`ere et son
  application \`a un probl\`eme biologique.
\newblock {\em Bull. Moskov. Gos. Univ. Mat. Mekh.}, 1:1--25, 1937.

\bibitem{lewis_spreading_2002}
M.~A. Lewis, B.~Li, and H.~F. Weinberger.
\newblock Spreading speed and linear determinacy for two-species competition
  models.
\newblock {\em Journal of Mathematical Biology}, 45(3):219--233, Sept. 2002.

\bibitem{li_spreading_2005}
B.~Li, H.~F. Weinberger, and M.~A. Lewis.
\newblock Spreading speeds as slowest wave speeds for cooperative systems.
\newblock {\em Mathematical Biosciences}, 196(1):82--98, July 2005.

\bibitem{lucia_linear_2004}
M.~Lucia, C.~B. Muratov, and M.~Novaga.
\newblock Linear vs. nonlinear selection for the propagation speed of the
  solutions of scalar reaction-diffusion equations invading an unstable
  equilibrium.
\newblock {\em Communications on Pure and Applied Mathematics}, 57(5):616--636,
  May 2004.

\bibitem{leandre_majoration_1987}
R.~Léandre.
\newblock Majoration en temps petit de la densité d'une diffusion
  dégénérée.
\newblock {\em Probability Theory and Related Fields}, 74(2):289--294, June
  1987.

\bibitem{leandre_minoration_1987}
R.~Léandre.
\newblock Minoration en temps petit de la densité d'une diffusion
  dégénérée.
\newblock {\em Journal of Functional Analysis}, 74(2):399--414, Oct. 1987.

\bibitem{macarthur_theory_2001}
R.~H. MacArthur and E.~O. Wilson.
\newblock {\em The theory of island biogeography}.
\newblock Princeton University Press, 2001.

\bibitem{MajdaSouganidis}
A.~J. Majda and P.~E. Souganidis.
\newblock Large-scale front dynamics for turbulent reaction-diffusion equations
  with separated velocity scales.
\newblock {\em Nonlinearity}, 7(1):1--30, 1994.

\bibitem{mirrahimi_singular_2018}
S.~Mirrahimi.
\newblock Singular limits for models of selection and mutations with
  heavy-tailed mutation distribution.
\newblock {\em arXiv:1807.10475 [math]}, July 2018.
\newblock arXiv: 1807.10475.

\bibitem{meleard_singular_2015}
S.~Méléard and S.~Mirrahimi.
\newblock Singular {Limits} for {Reaction}-{Diffusion} {Equations} with
  {Fractional} {Laplacian} and {Local} or {Nonlocal} {Nonlinearity}.
\newblock {\em Communications in Partial Differential Equations},
  40(5):957--993, May 2015.

\bibitem{nadin_generalized_2017}
G.~Nadin and L.~Rossi.
\newblock Generalized transition fronts for one-dimensional almost periodic
  {Fisher}-{KPP} equations.
\newblock {\em Archive for Rational Mechanics and Analysis}, 223(3):1239--1267,
  Mar. 2017.
\newblock arXiv: 1511.03104.

\bibitem{Penington}
S.~Penington.
\newblock The spreading speed of solutions of the non-local fisher--kpp
  equation.
\newblock {\em Journal of Functional Analysis}, 2018.

\bibitem{phillips2006invasion}
B.~L. Phillips, G.~P. Brown, J.~K. Webb, and R.~Shine.
\newblock Invasion and the evolution of speed in toads.
\newblock {\em Nature}, 439(7078):803--803, 2006.

\bibitem{ronce_how_2007}
O.~Ronce.
\newblock How {Does} {It} {Feel} to {Be} {Like} a {Rolling} {Stone}? {Ten}
  {Questions} {About} {Dispersal} {Evolution}.
\newblock {\em Annual Review of Ecology, Evolution, and Systematics},
  38(1):231--253, 2007.

\bibitem{shine}
R.~Shine, G.~P. Brown, and B.~L. Phillips.
\newblock An evolutionary process that assembles phenotypes through space
  rather than through time.
\newblock {\em Proceedings of the National Academy of Sciences},
  108(14):5708--5711, 2011.

\bibitem{MR0235754}
G.~Strang.
\newblock On the construction and comparison of difference schemes.
\newblock {\em SIAM J. Numer. Anal.}, 5:506--517, 1968.

\bibitem{thomas_ecological_2001}
C.~D. Thomas, E.~J. Bodsworth, R.~J. Wilson, A.~D. Simmons, Z.~G. Davies,
  M.~Musche, and L.~Conradt.
\newblock Ecological and evolutionary processes at expanding range margins.
\newblock {\em Nature}, 411(6837):577--581, 2001.

\bibitem{travis_dispersal_2002}
J.~M.~J. Travis and C.~Dytham.
\newblock Dispersal evolution during invasions.
\newblock {\em Evolutionary Ecology Research}, 4(8):1119--1129, 2002.

\bibitem{travis_accelerating_2009}
J.~M.~J. Travis, K.~Mustin, T.~G. Benton, and C.~Dytham.
\newblock Accelerating invasion rates result from the evolution of
  density-dependent dispersal.
\newblock {\em Journal of Theoretical Biology}, 259(1):151--158, 2009.

\bibitem{Turanova}
O.~Turanova.
\newblock On a model of a population with variable motility.
\newblock {\em Math. Models Methods Appl. Sci.}, 25(10):1961--2014, 2015.

\bibitem{urban_toad_2008}
M.~C. Urban, B.~L. Phillips, D.~K. Skelly, and R.~Shine.
\newblock A {Toad} {More} {Traveled}: {The} {Heterogeneous} {Invasion}
  {Dynamics} of {Cane} {Toads} in {Australia}.
\newblock {\em The American Naturalist}, 171(3):E134--E148, Mar. 2008.

\bibitem{Villani}
C.~Villani.
\newblock A review of mathematical topics in collisional kinetic theory.
\newblock In {\em Handbook of mathematical fluid dynamics, {V}ol. {I}}, pages
  71--305. North-Holland, Amsterdam, 2002.

\bibitem{weinberger_sufficient_2012}
H.~Weinberger.
\newblock On sufficient conditions for a linearly determinate spreading speed.
\newblock {\em Discrete and Continuous Dynamical Systems - Series B},
  17(6):2267--2280, May 2012.

\bibitem{weinberger_analysis_2002}
H.~F. Weinberger, M.~A. Lewis, and B.~Li.
\newblock Analysis of linear determinacy for spread in cooperative models.
\newblock {\em Journal of Mathematical Biology}, 45(3):183--218, Sept. 2002.

\end{thebibliography}

\end{document}